\numberwithin{equation}{section}
\theoremstyle{plain}
\newtheorem*{theorem*}{Theorem}
\newtheorem*{lemma*}{Lemma}
\newtheorem{theorem}{Theorem}
\newtheorem{lemma}{Lemma}[section]
\newtheorem{corollary}[lemma]{Corollary}
\newtheorem{proposition}[lemma]{Proposition}
\newenvironment{customthm}[1]
  {\innercustomthm}
  {\endinnercustomthm}
\theoremstyle{definition}
\newtheorem{definition}[lemma]{Definition}
\newtheorem{remark}[lemma]{Remark}
\newtheorem{example}[lemma]{Example}
\newtheorem*{remark*}{Remark}
\newtheorem*{example*}{Example}
\newtheorem*{er*}{Examples and Remarks}
\def\pr{\partial}
\def\rd{\partial}
\def\V{\Vert}
\def\teta{\tilde{\eta}} 
\begin{document}

	
\title{On Singular Vortex Patches, I: Well-posedness Issues}
\author{Tarek M. Elgindi\textsuperscript{1} \and In-Jee Jeong\textsuperscript{2}}
\footnotetext[1]{Department of Mathematics, UC San Diego. E-mail: telgindi@ucsd.edu.}
\footnotetext[2]{School of Mathematics, Korea Institute for Advanced Study. E-mail: ijeong@kias.re.kr}
\date{\today}

\maketitle
\begin{abstract}
	The purpose of this work is to discuss the well-posedness theory of singular vortex patches. Our main results are of two types: well-posedness and ill-posedness. On the well-posedness side, we show that globally $m-$fold symmetric vortex patches with corners emanating from the origin are globally well-posed in natural regularity classes as long as $m\geq 3.$ In this case, all of the angles involved solve a \emph{closed} ODE system which dictates the global-in-time dynamics of the corners and only depends on the initial locations and sizes of the corners.  {Along the way we obtain a global well-posedness result for a class of symmetric patches with boundary singular at the origin, which includes logarithmic spirals.} On the ill-posedness side, we show that \emph{any} other type of corner singularity in a vortex patch cannot evolve continuously in time except possibly when all corners involved have precisely the angle $\frac{\pi}{2}$ for all time. Even in the case of vortex patches with corners of angle $\frac{\pi}{2}$ or with corners which are only locally $m-$fold symmetric, we prove that they are generically ill-posed. We expect that in these cases of ill-posedness, the vortex patches actually cusp immediately in a self-similar way and we derive some asymptotic models which may be useful in giving a more precise description of the dynamics. In a companion work \cite{SVP2}, we discuss the long-time behavior of symmetric vortex patches with corners and use them to construct patches on $\mathbb{R}^2$ with interesting dynamical behavior such as cusping and spiral formation in infinite time.
\end{abstract}
\tableofcontents
\section{Introduction}

\subsection{The notion of vortex patches}

In this paper, we investigate the dynamics of singular vortex patches, which are patch-like solutions to the 2D Euler equations with non-smooth boundaries. We first recall that the 2D Euler equations on $\mathbb{R}^2$, in vorticity form, are given by \begin{equation}\label{eq:2DEuler}
\begin{split}
&\pr_t \omega + (u\cdot\nabla)\omega = 0, 
\end{split}
\end{equation} where at each moment of time, $u$ is determined from $\omega$ by \begin{equation}\label{eq:BS}
\begin{split}
u(t,x) = \frac{1}{2\pi} \int_{ \mathbb{R}^2 } \frac{(x-y)^\perp}{|x-y|^2} \omega(t,y) dy. 
\end{split}
\end{equation} The transport nature of \eqref{eq:2DEuler} suggests that if the initial vorticity $\omega_0(x)$ is given by the characteristic function of a domain $\Omega_0 \subset \mathbb{R}^2$, the solution should take the form of the characteristic function of a domain that moves with time. We shall refer to such a solution as a vortex patch. Indeed, the theorem of Yudovich in \cite{Y1} gives that for any $\omega_0 \in L^1 \cap L^\infty(\mathbb{R}^2)$, there exists a unique solution to \eqref{eq:2DEuler} in the class $\omega(t,x) \in C^0_*(\mathbb{R}; L^1 \cap L^\infty(\mathbb{R}^2) )$ with $\omega(0,x) = \omega_0$, where $C^0_*$ denotes that $\omega(t,\cdot)$ is weak-star continuous in time. It turns out that this regularity is just sufficient to make sense of the flow maps $\Phi(t,\cdot) $ as homeomorphisms of $\mathbb{R}^2$ for all $t \in \mathbb{R}$: the velocity vector field satisfies the following log-Lipschitz estimate \begin{equation*}
\begin{split}
|u(t,x) - u(t,x')| \le C\V \omega_0\V_{L^\infty \cap L^1}|x-x'| \log\left(1 +  \frac{1}{|x-x'|} \right)
\end{split}
\end{equation*} which gives rise to a unique solution to the following ordinary differential equation \begin{equation*}
\begin{split}
\frac{d}{dt}\Phi(t,x) = u(t,\Phi(t,x)),\quad \Phi(0,x) = x.  
\end{split}
\end{equation*} As a particular case, if the initial data is given by $\omega_0(x) = \chi_{\Omega_0}$ for some bounded measurable set $\Omega_0$, the associated unique solution to \eqref{eq:2DEuler} takes the form \begin{equation*}
\begin{split}
\omega(t,x) = \chi_{\Omega(t)}, \quad \Omega(t) = \Phi_t^{-1}(\Omega_0)
\end{split}
\end{equation*} where $\Phi_t^{-1}$ is the inverse of $\Phi(t,\cdot)$. Therefore the following \textit{vortex patch problem} is well-defined: \begin{equation*}
\begin{split}
\text{Given a bounded measurable set } \Omega_0, \text{ what can be said about the sets } \Omega(t) \text{ for  } t \ne 0 ?  
\end{split}
\end{equation*} Before we proceed further, let us point out a simple consequence of the following Yudovich estimate: \begin{equation*}
\begin{split}
|x-x'|^{e^{ct\V\omega_0\V_{L^\infty \cap L^1}}} \le |\Phi(t,x) - \Phi(t,x')| \le |x-x'|^{e^{-ct\V\omega_0\V_{L^\infty \cap L^1}}}
\end{split}
\end{equation*} for all $x, x' \in \mathbb{R}^2$ with $|x-x'| < 1/2$ where $c > 0$ is an absolute constant. It guarantees that, if the boundary of $\Omega_0$ is given by a Jordan curve, this property holds for all of the domains $\Omega(t)$. However, since the estimate deteriorates with time, in general no uniform regularity can be obtained for all $\partial\Omega(t)$. 

Often, a vortex patch could mean the following more general object: a solution of the 2D Euler equations in the form \begin{equation*}
\begin{split}
\omega(t,x) = \sum_{i = 1}^N f_i(t,x) \chi_{\Omega_i(t)}
\end{split}
\end{equation*} where $N \ge 1$ is an integer, $\Omega_i(t)$ are mutually disjoint bounded measurable sets that move with time, and $f_i(t,x)$ are functions describing the profiles of vorticity. In this case, it is reasonable to require that $f_i(t,\cdot)$ is at least continuous on $\Omega_i(t)$. Moreover, the fluid domain could be a bounded domain in $\mathbb{R}^2$, the 2D torus, or some other surface. Unless otherwise stated, we shall restrict ourselves to simple $(N = 1)$ patches on $\mathbb{R}^2$, with the normalization $f_1 \equiv 1$. 

\subsection{Smooth versus singular patches}

Given Yudovich's theorem, it is natural to ask the smooth version of the above vortex problem: that is, if $\partial\Omega_0$ is given by a smooth curve, does this property hold for all $\partial\Omega(t)$? It turns out that the answer is positive: precisely, if  $\partial\Omega_0$ is a $C^{k,\alpha}$ H\"older continuous curve for some $k \ge 1$ and $0<\alpha<1$, then $\partial\Omega(t)$ is $C^{k,\alpha}$-regular for all $t$. In particular, the boundary remains a $C^\infty$-curve for all times if it is so initially.   {This was established first by Chemin \cite{C,C3}.} There are two separate issues for this smooth vortex patch problem, namely propagation of smoothness locally and globally in time. 

Note that even local propagation is non-trivial as $\omega(t) \in L^1\cap L^\infty(\mathbb{R}^2)$ does not give that the corresponding velocity $u(t)$ is Lipschitz in space, which is \textit{necessary} to keep the boundary smooth.\footnote{Actually, $u(t)$ is \textit{never} $C^1$-smooth across the boundary of the patch simply because $\omega = \pr_xu^2-\pr_yu^1$.} What saves us is the following special property of the double Riesz transforms (stated somewhat roughly): \begin{equation*}
\begin{split}
\text{If } \omega \text{ is } C^{k-1,\alpha}\text{-smooth along a } C^{k-1,\alpha}\text{ vector field, then } R_iR_j\omega \text{ has the same property.} 
\end{split}
\end{equation*} Here we need $k \ge 1$ and $0 < \alpha <1$, and $R_i$ denote the Riesz transform with $i, j \in \{ 1, 2\}$. Applying this fact to the case $\omega = \chi_{\Omega}$, we obtain that if the boundary is $C^{k,\alpha}$-smooth, then the velocity field belongs to $C^{k,\alpha}(\overline{\Omega})$ and also to $C^{k,\alpha}(\overline{\mathbb{R}^2\backslash\Omega})$. Here we have taken the closures to emphasize that the $C^{k,\alpha}$-regularity is valid uniformly up to the boundary. This ``frozen-time'' fact alone suffices to show local propagation of the boundary regularity. Note also that as long as the smooth solution exists, the flow maps $\Phi(t,\cdot) : \Omega_0 \rightarrow \Omega(t)$ are actually $C^{k,\alpha}$-regular diffeomorphisms in this case. 

The issue of global regularity,  {which was a subject of debate (\cite{But,DrMc}) and then resolved in \cite{C,BeCo}}, is much more subtle and really hinges on the vectorial nature of the velocity field defined by the 2D Biot-Savart kernel. Still, it is relatively straightforward to obtain the following statement on the propagation of regularity: \begin{equation*}
\begin{split}
\text{If }\partial\Omega_0\text{ is }C^{k,\alpha}\text{-smooth and somehow }\V \nabla u(t,x)\V_{L^\infty([0,T];L^\infty(\mathbb{R}^2))}<\infty,\text{ then }\Omega(t)\text{ is }C^{k,\alpha}\text{ up to time }T. 
\end{split}
\end{equation*} Of course, this is reminiscent of the classical estimate for smooth solutions to the Euler equations:
\begin{equation*}
\begin{split}
\frac{d}{dt}\V \omega(t) \V_{C^{k,\alpha}}  \lesssim_{k,\alpha} \V\nabla u(t)\V_{L^\infty} \V \omega(t) \V_{C^{k,\alpha}}  , 
\end{split}
\end{equation*}
which guarantees that the vorticity retains its initial H\"older regularity as long as the velocity remains Lipschitz. Indeed, in several respects, the regularity theory for smooth patches is parallel to the one for smooth vorticities.

At this point, it is worth emphasizing that the Yudovich theory is not relevant (probably even misleading) for the smooth vortex patch problem (both local and global); the latter is really about the anisotropic regularity statement for certain singular integral transforms. Hence it should not be surprising that even for systems such as the surface quasi-geostrophic equations and the 3D Euler equations, smooth patches can be solved locally in time. The Yudovich theorem only guarantees unique existence of a solution after the potential blow-up time (which does not happen for the 2D Euler equations, anyways). 

The story is completely different for patches without smooth boundaries. Let us even imagine an initial patch whose boundary is completely smooth except at a point where it is no better than $C^1$ (e.g. a slice of pizza). Then in general the corresponding initial velocity will fail to be Lipschitz (which is necessary to propagate regularity), and we are in the Yudovich regime, where the velocity is only log-Lipschitz. Here, let us clarify a theorem of Danchin  {(\cite{Da})}  which shows that for an initial patch with isolated singularities in the boundary (and otherwise smooth), the patch boundary remains smooth away from the trajectories of the singular points by the flow. However, it does not show propagation of piecewise smoothness \textit{uniform} up to each singularity, which may be valid for the initial patch as a slice of pizza does. Indeed, one of our results here shows that \textit{any} uniform regularity strictly better than $C^1$ is instantaneously lost for such a data. Then, of course, the right question is to ask what \textit{exactly} happens, and this is what this work makes progress on.


\subsection{Motivations for vortex patches}

Before we show some explicit computations on vortex patches, let us give a few motivations towards the vortex patch problem in general, with some emphasis on its singular version. The following items are indeed deeply related with each other. 

\begin{itemize}
	\item Vortex patches as idealized physical objects: It is reasonable to use vortex patches to model physical situations where a strong eddy-like motion is observed, e.g. a hurricane. In particular, a motivation for studying patches with corner singularities in aerodynamics is discussed in the introduction of \cite{CS}. For more information, one may consult classical textbooks on vortex dynamics (\cite{Lamb,Saff}). It in particular motivates the study of vortex patches on the 2-sphere (\cite{Dr,DP,PD,SSK}).
	\item Long-time behavior of smooth solutions: Regarding the 2D Euler equations, one of the most important problems is to understand the asymptotic behavior of smooth solutions as time goes to infinity. The strongest conservation law is the $L^\infty$-norm for the vorticity, and it is possible that any higher regularity blows up for $T = +\infty$ (this explicitly happens near the so-called Bahouri-Chemin solution; see \cite{KS,X} and Subsection \ref{subsec:computations} below). Hence $L^\infty$ is the natural space to study the long-time behavior.  
	\item Critical phenomena: The space $L^\infty$ in terms of the vorticity is a critical space, in the sense that the associated velocity field barely fails to be a Lipschitz function in space. This leads to interesting phenomena such as instantaneous cusp/spiral formation which is impossible with Lipschitz velocity fields. Moreover, recently there have been significant progress on understanding the Cauchy problem with critical initial data \cite{BL1,BL2,EJ,EM1,MY1,MY2}. For instance it has been shown that the incompressible Euler equations are \textit{ill-posed} in critical Sobolev and H\"older spaces. The corresponding problem for patches is a (folklore) open problem: what happens to the initial patch whose boundary is exactly $C^k$ or $C^{k-1,1}$ with some $k \ge 1$? Note that, as in \cite{EM1}, the case $k=1$ seems to be much more difficult than the case $k\geq 2$. This is because there is much better control on the velocity field in the latter case.
	\item Construction of special solutions: There has been a lot of interest in constructing solutions of the Euler equations with certain dynamical behavior. In this context, the class of vortex patches provides a whole variety of interesting solutions to the 2D Euler equations. Even in situations where one needs smooth solutions, a strategy that has proven useful is to consider patch solutions with the same dynamics and then try to ``smooth out'' the patch. (See a recent work \cite{CCG} where the authors constructed compactly supported and smooth rotating solutions to the 2D Euler equations.)
	\begin{itemize}
		\item $V$-states: Patches which simply rotate with some constant angular speed are called $V$-states (\cite{Bu,HM,HM2,HMV,DZ,dlHHMV,CCG,CCG2,SS,ST}). One may bifurcate from radial profiles to obtain $m$-fold symmetric $V$-states, and it is expected that in  {certain limiting regimes} one obtains $V$-states with either $90^\circ$ corners or cusps (\cite{WOZ,Ove,XJM}). See \cite{HMW} for recent rigorous progress on this problem. 
		\item Solutions with infinite norm growth: In two dimensions, Sobolev and H\"older norms of smooth Euler solutions can grow at most double exponentially in time. This sharp rate was achieved in the presence of a physical boundary in \cite{KS} by smoothing out the Bahouri-Chemin solution. In terms of vortex patches, the relevant question is whether two disjoint patches can approach each other double exponentially in time as $t \rightarrow +\infty$ (see \cite{Den3}). 
		\item Instantaneous instability: On the other hand, one may ask for initial vorticity configurations which maximize a certain functional (such as palenstrophy); see \cite{AP1,AP2,AP3} and references therein. It seems that in certain cases the maximizer takes the form of a (slightly regularized) vortex patch; the work \cite{EJ} shows this for the case of the $H^1$-norm in terms of the vorticity. 
	\end{itemize}
	In the opposite direction, one may consider patches as smoother alternatives for even more singular constructs, such as vortex sheets or point vortices. The study of singular vortex patches becomes relevant in this regard; for instance, one may take the vanishing angle limit of the patch supported on a sector, keeping the $L^1$-norm  {constant}. In the limit one obtains a sheet with linearly growing intensity from the corner which was numerically studied by Pullin \cite{Pull, Pull2, Pull3}.
\end{itemize}

\subsection{Main results and ideas of the proof}

As we have mentioned earlier, the primary goal in this paper is to understand the dynamics of patches initially supported on either a corner or a union of corners meeting at a point. In one sentence, our conclusion is that such a corner structure propagates continuously in time if and only if the initial patch satisfies an appropriate rotational symmetry condition at the origin, namely $m$-fold symmetry with some $m \ge 3$.\footnote{This is with the exception of special angles $0, \pi/2, \pi$, and $3\pi/2$, which we discuss separately.} We actually show that when such a symmetry condition is satisfied, then the propagation is global in time.

Our main well-posedness result concerns rotationally symmetric patches which have corners meeting at a point {. The main result shows that the uniform regularity of the patch boundary  (up to the corner) propagates for all time. For the economy of presentation, we give a somewhat rough statement here; detailed statements are given in Theorem \ref{thm:symmetric_corner} and Corollary \ref{cor:dynamics_angle}. }
\begin{customthm}{A}\label{mainthm:wellposedness}
	 {Fix some $0 < \alpha <1 $ and consider $\omega_0 = \chi_{\Omega_0}$, where $\Omega_0$ is $m$-fold rotationally symmetric around the origin for some $m \ge 3$, $\partial\Omega_0$ is $C^{1,\alpha}$-smooth away from the origin, and can be mapped by a $C^{1,\alpha}$-diffeomorphism $\Psi_0$ of $\mathbb{R}^2$ to a union of non-intersecting sectors. That is, we have \begin{equation}\label{eq:local_expression}
	\begin{split}
	\Psi_0(\Omega_0) \cap B_0(r_0) = \bigcup_{k = 0}^{m-1} \bigcup_{i = 1}^N \{ (r,\theta) : 0 < r < r_0,   a_{i,0} + 2\pi k/m < \theta < b_{i,0} + 2\pi k/m \}
	\end{split}
	\end{equation} for some $r_0 > 0$. 
	
	Then, the corresponding patch solution $\Omega(t)$ enjoys the same properties for all $t > 0$, with some $C^{1,\alpha}$-diffeomorphism $ \Psi(t)$ and $r(t) > 0$. To be more precise, $\Omega(t)$ is $m$-fold symmetric, $C^{1,\alpha}$-smooth away from the origin, and \begin{equation*}
	\begin{split}
	\Psi(t)(\Omega(t)) \cap B_0(r(t)) = \bigcup_{k = 0}^{m-1} \bigcup_{i = 1}^N \{ (r,\theta) : 0 < r < r(t),   a_{i}(t) + 2\pi k/m < \theta < b_{i}(t) + 2\pi k/m \}. 
	\end{split}
	\end{equation*} 
	
	Moreover, the corner angles of $\Omega(t)$ evolve according to a closed system of ordinary differential equations; in the simplest case of $N = 1$ in \eqref{eq:local_expression}, the corners rotate with a constant angular speed for all time, which is determined only by the initial angle and $m$. }
\end{customthm}

In the statement, ``$C^{1,\alpha}$'' can be replaced by ``$C^{k,\alpha}$'' throughout, for any integer $k \ge 1$. In particular if the initial boundary is uniformly $C^\infty$-smooth up to the corner, the boundary will remain so for all time. A prototypical example of a patch satisfying the assumption above is given by the region  \begin{equation*}
\begin{split}
\{ (r,\theta) : 0 < r < \sin(m\theta) \}
\end{split}
\end{equation*} with some $m \ge 3$; see Figure \ref{fig:petal} for the case $m = 3$. 
\begin{figure}
	\includegraphics[scale=0.4]{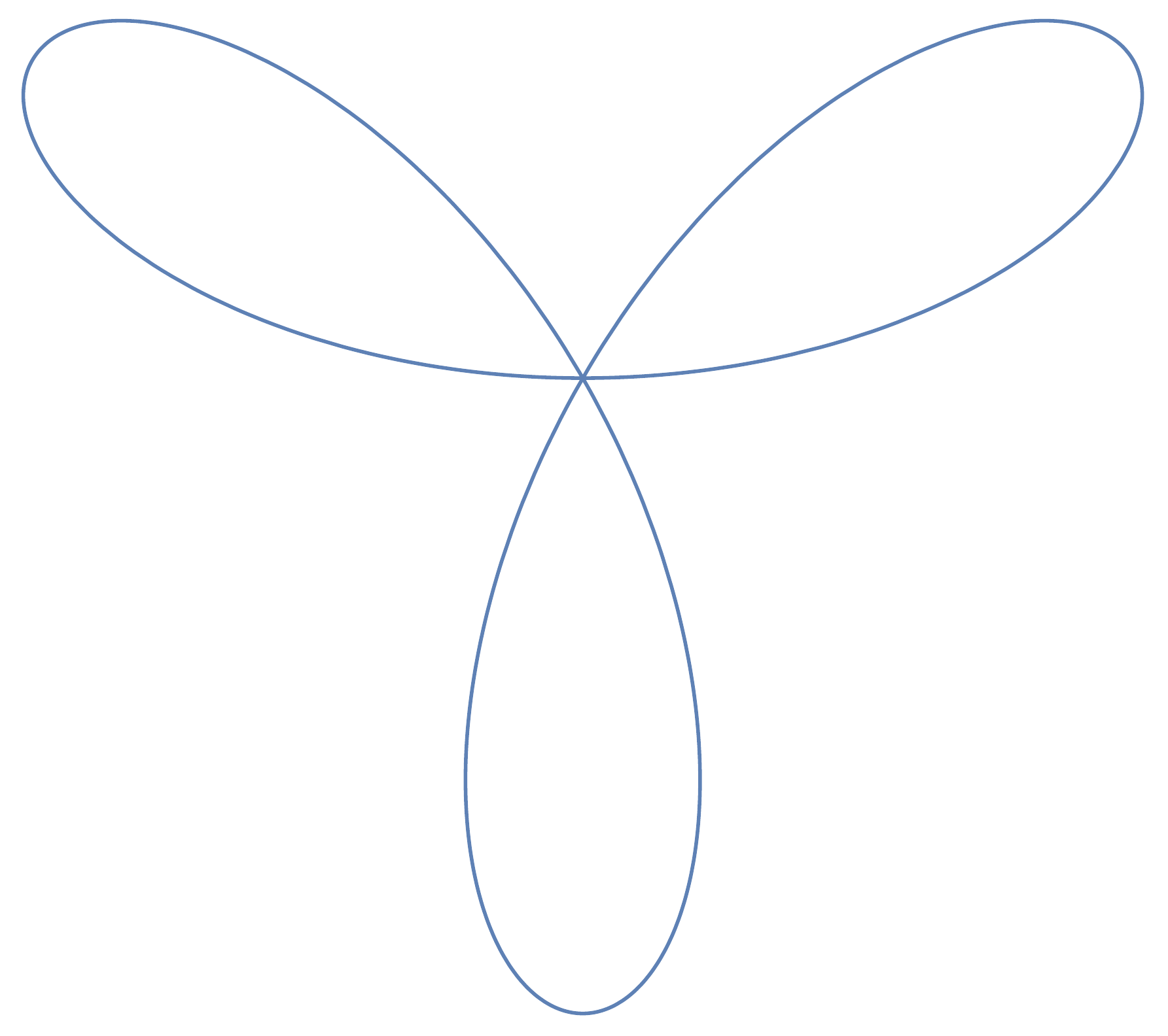} 
	\centering
	\caption{Symmetric corners with smooth boundary}
	\label{fig:petal}
\end{figure} Since $N =1 $ in this case, our result dictates that near the corner, the motion of the patch is given by a uniform rotation for all time. This is completely consistent with the existence of $V$-states which take a similar form as in Figure \ref{fig:petal} reported by numerical analysts (\cite{LF1,LF2}).\footnote{Interestingly such $V$-states can be found numerically by carefully bifurcating from a $V$-state consisting of three chunks of vorticity arranged symmetrically around the origin.} It turns out that the angular speed of rotation is a monotonic function of the initial angle. Therefore, if we perturb the circular patch in $L^1$ so that locally it looks as in Figure \ref{fig:petal}, there is a discrepancy between the speeds of rotation near the corner and at the bulk for all time, from a well-known stability result for the circular patch. Combining this with some topological and measure-theoretic arguments, we conclude infinite in time spiral formation in the companion work \cite{SVP2}. 

Our analysis is not limited to the case of $N = 1$, but also covers the case when there are multiple corners in a fundamental domain of the $m$-fold rotation. For an example, one can consider the domain obtained by the $3$-fold symmetrization of \begin{equation*}
\begin{split}
\ \{ (r,\theta) : 0 < r < \sin(6\theta), 0 < \theta < \pi/6 \} \cup \{ (r,\theta) : 0 < r < 2\sin(6\theta), \pi/3 < \theta < \pi/2 \}.
\end{split}
\end{equation*}   In such cases, the corner angles satisfy an interesting system of ODEs which we briefly study in Subsection \ref{subsec:cusp_formation}. We emphasize that this system is completely closed by itself, so that the local asymptotic shape of the patch for any $t > 0$ is determined from the initial corner angles. 

The statement regarding the angles might be counter-intuitive; after all, strong non-locality in the Biot-Savart kernel of the incompressible Euler equations is its main difficulty. However, consider for instance a radial vorticity $\omega = f(r)$ which is supported away from the origin. Then the velocity near the origin is identically zero; that is, symmetry introduces cancellations. For our purpose, which is to localize the dynamics of the angles, it suffices to guarantee that $u_{far}(x) = o(|x|)$ for $|x| \ll 1$ where $u_{far}$ is the non-local contribution to the velocity. As we will show in this work, it suffices to assume $m$-fold symmetry with $m \ge 3$. 


It turns out that the proof for the local in time statement is rather straightforward, and follows readily from the explicit computations that we shall demonstrate in the next section. Let us give the main points here: For local propagation of regularity, it suffices to establish that the velocity restricted onto the patch boundary is $C^{1,\alpha}$-smooth. However, for a patch given in the statement of Theorem \ref{mainthm:wellposedness}, the corresponding velocity can be considered as a sum of main part coming from exact sectors and remainder associated with cusp regions. The latter component of the velocity is smooth on the boundary. On the other hand, the velocity generated by a symmetric union of exact sectors takes the form $\nabla^\perp(r^2H(\theta))$, with $H \in W^{2,\infty}([0,2\pi))$. The log-Lipschitz part vanishes by symmetry, and it is not hard to see using this explicit expression that it is $C^{1,\alpha}$ along any $C^{1,\alpha}$-curve emanating from the origin. Essentially, this concludes the proof for local well-posedness. 

Unfortunately, the global well-posedness statement for such patches does not seem to follow from a simple adaptation of any of the existing arguments showing global well-posedness for smooth patches. For instance, let us explain the difficulty with respect to the ``geometric'' approach of Bertozzi and Constantin (see Subsection \ref{subsec:Bertozzi_Constantin} below for a brief review of their approach). In this framework, the patch boundary regularity is encoded by a level set function $\phi : \mathbb{R}^2 \rightarrow \mathbb{R}$, characterized by the property that $\phi > 0$ exactly in the interior of the patch. Then, the $C^{1,\alpha}$ norm of $\phi$ is (roughly) associated with the $C^{1,\alpha}$-regularity of the patch boundary, under the condition that $\nabla\phi$ is non-degenerate. Note that if we want such a level set function for the domain in Figure \ref{fig:petal}, $\phi$ certainly cannot be better than Lipschitz. To encode the information that the patch boundary is uniformly piecewise $C^{1,\alpha}$ up to the corner, we need to either give up that $\nabla\phi$ is non-degenerate, or use multiple level set functions to characterize the boundary. None of these variations seemed to work out well.\footnote{However, see a recent work Kiselev, Ryzhik, Yao, and Zlatos \cite{KRYZ} where the authors overcome a similar type of difficulty on the upper half-plane with brute force estimates.}

\begin{figure}
	\includegraphics[scale=0.8]{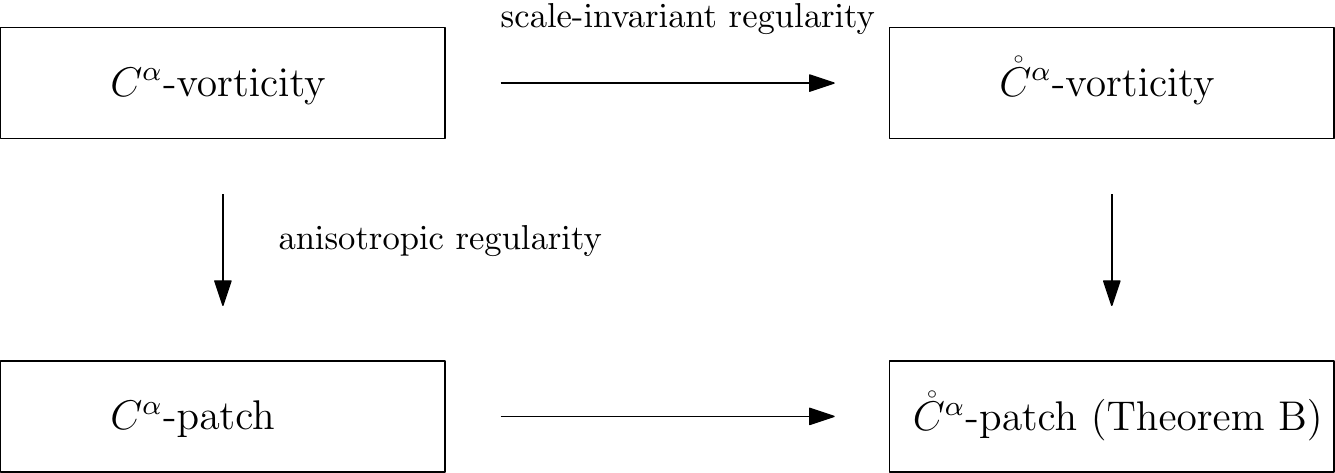} 
	\centering
	\caption{Global in time propagation of H\"older regularity of vorticity}
	\label{fig:square}
\end{figure}

Our approach was to go around this problem by first ``completing the square'' (see Figure \ref{fig:square}) and extract the global-in-time bound on the Lipschitz norm of the velocity from it. This piece of information combined with a Beale-Kato-Majda type argument was sufficient to conclude Theorem \ref{mainthm:wellposedness}. Let us now briefly explain Figure \ref{fig:square}; on the top left side, the classical result on the global well-posedness of $C^\alpha$ vorticity is placed. Then, the vertical and horizontal arrows correspond to the properties of the Euler equations which propagate anisotropic and scale-invariant H\"older regularity of the vorticity, respectively. The latter holds only in the presence of $m$-fold rotational symmetry with $m \ge 3$. The notation $\mathring{C}^\alpha$ was introduced in \cite{EJ1} and encodes scale-invariant $C^\alpha$-regularity; roughly, ``homogeneous'' derivatives $\rd_{\theta}^\alpha \omega$ and $r^\alpha \rd_r^\alpha \omega$ should be bounded, where $\rd_{\theta}^\alpha$ and $\rd_r^\alpha$ denote the $\alpha$-fractional derivative in the angle and radius, respectively. The global well-posedness of $\mathring{C}^\alpha$-vorticity under symmetry was established in \cite{EJ1}, and it is natural to consider the patch version of this result. On the other hand, one can equivalently consider the scale-invariant version of the $C^\alpha$-patch result. This is the content of the following result:
\begin{customthm}{B}\label{mainthm:wellposedness-critical}
	Consider a patch $\Omega_0$ which is $m$-fold symmetric for some $ m \ge 3$ and the piece of boundary at distance $O(r)$ from the origin is $C^{1,\alpha}$-smooth with Lipschitz norm bounded uniformly in $r$ and $C^{1,\alpha}$-norm bounded by $Cr^{-\alpha}$ for some $C > 0$ and $0 < \alpha < 1$. Then the patch solution $\Omega(t)$ retains this property for all $t > 0$.  
\end{customthm}
It is easy to see that the patches considered in Theorem \ref{mainthm:wellposedness} satisfy this condition. Note that the logarithmic spirals  {(e.g. functions of the form $\chi_{\{a<\theta + c\ln r<b\}}$ in polar coordinates for some constants $a,b,c$; see Section \ref{subsec:computations} and Figure \ref{fig:spiral})} satisfy this assumption as well, so that Theorem \ref{mainthm:wellposedness-critical} establishes global-in-time regularity propagation for them. The uniform Lipschitz assumption in Theorem \ref{mainthm:wellposedness-critical} in particular requires that the patch domain is \textit{weakly} Lipschitz, in the sense that near every point $p \in \partial\Omega_0$, there is a bi-Lipschitz map of $\mathbb{R}^2$ sending a neighborhood of $p$ intersected with $\Omega_0$ and $\partial\Omega_0$ to the upper half-plane and the boundary of the upper half-plane, respectively. Indeed, the logarithmic spirals are well-known examples of weakly Lipschitz domains which are not \textit{strongly} Lipschitz (near every point on the boundary, the boundary of the domain is given by the graph of a Lipschitz function); see \cite{AMc, Dac}. Hence, this result shows that even weakly Lipschitz domains propagate its regularity if we assume symmetry and scale-invariant H\"older condition. We shall give more details on the ideas of the proofs in the beginning of Sections \ref{sec:intermediate} and \ref{sec:main}.  

We now state our main ill-posedness result, which states roughly that when the symmetry condition in the above well-posedness statements are not satisfied, then the corner structure is lost immediately. 

\begin{customthm}{C}\label{mainthm:illposedness}
	Assume that $\omega(t)=\chi_{\Omega(t)}$ is a patch-type solution to the 2D Euler equation with a corner singularity whose initial angle is less than $180^\circ$ and propagates continuously in time on some interval $[0,\delta).$ Then, either the corner has angle $90^\circ$ for all $t\in [0,\delta)$ or the vortex patch is locally $m$-fold symmetric with respect to the corner for some $m\geq 3$ for all $t\in [0,\delta)$. Moreover, there exist initially locally $m-$fold symmetric patches and patches with a single $90^\circ$ corner which \emph{do not} propagate continuously in time. 
\end{customthm}

In addition to this, we shall show in Theorem  {\ref{thm:local-symmetry}} that the \textit{exact} $m$-fold symmetry condition is essential even for local well-posedness: for an initial vortex patch which is $m$-fold symmetric with $m \ge 3$ only locally at the origin, it is possible for the velocity to lose Lipschitz continuity immediately. 

Lastly, we discuss the important question of what is the actual dynamics of a corner without any symmetries. In Subsection \ref{subsec:computations} below, we shall carry out some computations for vortex patches supported on cusps and spirals, as possible candidates for describing the evolution of the corner. Let us explain here why we expect the corner to immediately cusp or spiral: To begin with, the passive transport by the initial velocity indicates that the corner rotates $45^\circ$ instantaneously and form a cusp there. However, as soon as this happens, if the vorticity near the point of singularity is ``thick'' enough in the angle, then the new velocity can make the patch rotate even further, up to another $45^\circ$. Then, either this process can go on indefinitely so that the resulting patch has formed an (infinite) spiral, or stop at some point that the patch is just a cusp. The difficulty is that this entire process is supposed to happen exactly at $t = 0$. Therefore, it makes sense to define a new variable incorporating both time and length scales, which rescales the instantaneous behavior of the patch to occur on a non-zero interval in this variable. It turns out that the natural change of variables is to introduce new time variable $\tau = t\ln \frac{1}{r}$. With this variable, we derive a formal evolution equation (a second order system of ordinary differential equations in terms of $\tau$) which is supposed to describe the boundary evolution near the corner at least for a short period of time. This procedure is comparable with introducing a self-similar variable in the study of vortex sheets supported on algebraic spirals. 


\subsection{Historical background} 

The celebrated 1963 theorem of Yudovich \cite{Y1} made it possible to pose the vortex problem, without any regularity assumptions on the patch boundary. Later this well-posedness result was extended in various directions (see e.g. \cite{Coz, CozK,  BH, BK, Y2, E2, Vi1, MR, BMP, S1, S2, AKLN, K1, Ta, TTY, EJ1}). We just note that when the  {patch} satisfies $m$-fold symmetry with some $m \ge 3$, global existence and uniqueness can be proved with just $L^\infty$ of vorticity, which makes it possible to treat patches with non-compact support in $\mathbb{R}^2$ (\cite{EJ1}). 

The dynamics of vortex patches, either numerically or theoretically, are usually considered using the contour dynamics equation (see \cite{ZHR,ZOWZ}), which reduce the 2D dynamics to a 1D evolution equation in terms of the boundary parametrization. It is required that the patch boundary is at least piecewise $C^1$. In the context of 2D Euler patches, the corresponding CDE seems to have first appeared in the work \cite{ZHR} published in 1979, in the context of providing reliable numerical scheme for the 2D Euler equations. In the thesis of Bertozzi \cite{B}, a local well-posedness theorem for smooth vortex patches was proved based on the CDE.  {At the time of that work, the problem of global regularity was open and patches with a corner were investigated therein as a possible candidate for the profile of the patch at the blow-up time.} In the late 80s and early 90s there have been a lot of numerical and theoretical works investigating the possibility of finite time singularity, which seemed highly likely (\cite{But,Maj,CT,A,DrMc,Dr}). 

This issue was settled by Chemin \cite{C,C3} in 1991 who showed global well-posedness using paradifferential calculus. Then several other proofs, based on different arguments, followed \cite{BC,S2}. See also more recent works \cite{BK2,Hu} as well as textbooks \cite{C2,BCD} which cover the proof of global well-posedness. The works of Danchin \cite{Da,Da2} cover global well-posedness for (regular) cusps as well as propagation of patch boundary regularity away from singularities. In the case when the physical domain has a boundary, it is more delicate to propagate regularity globally in time for smooth patches touching the boundary (see \cite{Dep, KRYZ} and references therein). In \cite{HP}, it was shown that a corner supported on the boundary of the Half-plane cusps immediately as $t > 0$. Here, the physical boundary significantly simplifies the analysis -- we revisit this result in the section on illposedness (Section \ref{sec:illposed}). The works \cite{CS, CD} numerically investigate the dynamics of a corner; the pictures suggest that initial angles smaller than $90^\circ$ shrink and those larger than $90^\circ$ expand for $t > 0$.

Many interesting dynamic problems regarding vortex patches are wide open. For patches in 3D and higher, it is certainly a challenging problem to prove whether smooth vortex patches can become singular in finite time. Regarding 2D patches, it is not known whether a (signed) patch can initially have finite diameter and the diameter grows without a uniform bound as $t \rightarrow \infty$. Non-trivial upper bounds on the diameter growth are known (and they are polynomial in time; see e.g. \cite{Mar1,ISG,ILN}). In the case when both signs are allowed, \cite{ISG} shows that the patch diameter can grow linearly in time  {(which is sharp)}. A similar question can be asked for the perimeter. In contrast with the diameter case, there is a possibility for a patch with rectifiable boundary to instantaneously lose this property for $t > 0$. However, the result \cite{Kim} which gives upper bounds on the growth of the Dirichlet eigenvalues for the Laplacian with little assumption on the boundary regularity suggests that such a behavior is unlikely. The study of patches with $90^\circ$ corners are left out in this work (but for an ill-posedness result, see Proposition \ref{prop:90}). As we have seen in the above, the difficulty is that the log-Lipschitz part of velocity only exists in the direction tangent to the patch boundary. It would be interesting to rigorously show existence of not only rotating patches with $90^\circ$ corners but also translating ones with an odd symmetry (see figures from \cite{LF3} and references therein).

\subsection{Outline of the paper}

The rest of this paper is organized as follows. The notations that we use throughout the paper are collected in the first subsection of Section \ref{sec:classical} which is followed by some useful explicit computations and a brief review of results that are necessary to the proof of our well-posedness results in Sections \ref{sec:intermediate} and \ref{sec:main}. Then in Section \ref{sec:intermediate}, we prove Theorem \ref{mainthm:wellposedness-critical}, which is global well-posedness for symmetric patches whose boundaries are $C^\alpha$-smooth in the angle. Using this result together with a tedious local calculation, we conclude Theorem \ref{mainthm:wellposedness} in Section \ref{sec:main}. Possible extensions to this main result are sketched at the end of that section. Ill-posedness results, including Theorem \ref{mainthm:illposedness}, are proved in Section \ref{sec:illposed}. Finally in Section \ref{effective}, we formally write down the effective system which describes the dynamics of the patch with a single corner. The necessary local well-posedness results for symmetric patches that we consider in Sections \ref{sec:intermediate} and \ref{sec:main} are proved in the Appendix for completeness. We emphasize that the work consists of two \emph{different} results whose proofs are independent of each other: well-posedness and ill-posedness. As such, a reader interested in the well-posedness results may focus solely on Sections \ref{sec:classical}, \ref{sec:intermediate}, and \ref{sec:main} while a reader interested mainly in the ill-posedness results may read Sections \ref{sec:classical}, \ref{sec:illposed}, and \ref{effective}.

\section*{Acknowledgments} 

I.-J. Jeong thanks T. Yoneda and N. Kim for their interest in this work and numerous conversations. T.M. Elgindi thanks N. Masmoudi and A. Zlato\v{s} for helpful discussions. T.M. Elgindi was partially supported by NSF DMS-1817134. I.-J. Jeong has been supported by the POSCO Science Fellowship of POSCO TJ Park Foundation and the National Research Foundation of Korea (NRF) grant (No. 2019R1F1A1058486). 

\section{Background Material}\label{sec:classical}

This section goes through some useful background material for the benefit of the reader. We begin by going through a few simple computations which give the reader a sense of the difficulties associated with vortex patches in general and singular vortex patches in particular. Then we discuss two prior works which are important to know: Chemin's global well-posedness result for smooth vortex patches, particularly the proof of Bertozzi-Constantin and our previous result on scale-invariant H\"older regularity for $m$-fold symmetric solutions to 2D Euler. 

\subsection{Notations and definitions}

Let us collect a few definitions and conventions that will be used throughout the paper. 

\begin{itemize}
	\item For $\theta \in [0,2\pi)$, we let $R_{\theta}$ be the matrix of counterclockwise rotation around the origin by the angle $\theta$. Using this notation, we say that a scalar-valued function $f : \mathbb{R}^2 \rightarrow \mathbb{R}$ (e.g. vorticity, level-set function, stream function) is $m$-fold symmetric if $f(x) = f(R_{2\pi/m}x)$ for any $x \in \mathbb{R}^2$. On the other hand, a vector field $v : \mathbb{R}^2 \rightarrow \mathbb{R}^2$ (e.g. velocity, flow maps) is $m$-fold symmetric if $v(R_{2\pi/m}x) = R_{2\pi/m} v(x)$.
	
	\item Given a vector $f = (f_1,f_2)$, we denote the counterclockwise $90^\circ$ rotation by $f^\perp = (-f_2,f_1)$. Similarly, $\nabla^\perp \phi = (-\pr_2\phi, \pr_1\phi)$ for a scalar function $\phi : \mathbb{R}^2 \rightarrow \mathbb{R}$. 
	
	\item The classical H\"older spaces are defined as follows: for $0 < \alpha \le 1$, \begin{equation*}
	\begin{split}
	\V f \V_{C^\alpha(\overline{U})} &= \V f\V_{L^\infty(U)} + \V f\V_{{C}_*^\alpha(\overline{U})} \\
	&= \sup_{x \in U} |f(x)| + \sup_{x \ne x'} \frac{|f(x) - f(x')|}{|x-x'|^\alpha}.
	\end{split}
	\end{equation*} We shall often use the ``inf'', defined by \begin{equation*}
	\begin{split}
	\V f \V_{\inf(F)} = \inf_{x \in F} |f(x)|. 
	\end{split}
	\end{equation*}
	\item We say that $\omega$ is a patch if it is a characteristic function on some (open) set $\Omega \in \mathbb{R}^2$. It will be assumed that the boundary $\partial\Omega$ is either a Jordan curve, or a union of a few Jordan curves intersecting only at the origin. We often identify the function $\omega$ with the set $\Omega$. 
	\item We denote the Biot-Savart kernel as \begin{equation*}
	\begin{split}
	K(x) = \frac{1}{2\pi} \frac{x^\perp}{|x|^2},
	\end{split}
	\end{equation*} and $\nabla K$ as its gradient. Convolution against $\nabla K$ is defined in the sense of principal value integration. 
	\item For functions depending on time and space, we write $f(t,\cdot) = f_t(\cdot)$. The latter notation is not to be confused with the partial derivative in time, which we always denote as $\pr_t$. 
	\item The flow $\Phi$ is defined as a map $[0,\infty) \times \mathbb{R}^2 \rightarrow \mathbb{R}^2$. For each fixed $t \ge 0$, $\Phi(t,\cdot) = \Phi_t$ is a homeomorphism of $\mathbb{R}^2$ whose inverse is denoted by $\Phi_t^{-1}$.
	\item A point in $\mathbb{R}^2$ is denoted by $x = (x_1,x_2)$ or by $y = (y_1,y_2)$. Often we slightly abuse notation and consider polar coordinates $(r,\theta)$, where $r = |x|$ and $\theta = \arctan(x_2/x_1)$. 
	\item Given $x \in \mathbb{R}^2$ and $r > 0$, we define $B_x(r) = \{ y  \in \mathbb{R}^2 : |x-y| < r \}$.
	\item Given two angles $0 \le \theta_1 < \theta_2 < 2\pi$, we define the sector  \begin{equation*}
	\begin{split}
	S_{\theta_1,\theta_2} = \{ (r,\theta) : \theta_1 < \theta < \theta_2 \}.
	\end{split}
	\end{equation*}
	
	 {  	\item Given $f:  \mathbb{R}^n \rightarrow \mathbb{R}^m$ and $g : \mathbb{R}^m \rightarrow \mathbb{R}^k$, we define the composition of $g$ and $f$ by $g \circ f(x) = g(f(x))$ as a map $\mathbb{R}^n\rightarrow\mathbb{R}^k$ . }
	
\end{itemize}

As it is usual, we use letters $C, c, \cdots$ to denote various positive absolute constants whose values may vary from a line to another (and even within a line). Moreover, we write $A \lesssim B$ if there is an absolute constant $C > 0$ such that $A \le CB$. We also use $A \approx B$ when we have $A \lesssim B$ and $A \gtrsim B$. We fix some value of $0 < \alpha <1$ throughout the paper, and the constants $C, c$ may depend on $\alpha$ as well.

\subsection{A few explicit computations}\label{subsec:computations}

In this subsection, we perform some simple computations which already illustrate key issues related to the vortex patch problem. 

\subsubsection*{Case of the disc} Consider the patch supported on the unit disc. Then, using the Biot-Savart law (it is much easier to use its radial version), one can explicitly compute that the corresponding velocity is given by \begin{equation*}
\begin{split}
u(x) = \begin{cases}
\frac{1}{2}x^\perp & \mbox{ if } |x| \le 1, \\
\frac{1}{2}\frac{x^\perp}{|x|^2} & \mbox{ if } |x| \ge 1 . 
\end{cases}
\end{split}
\end{equation*} Note that in the regions $\{ x : |x| \le 1 \}$ and $\{ x : |x| \ge 1 \}$, the velocity is $C^\infty$-smooth, respectively. 

This simple computation can be used as a basis for the following general result mentioned earlier in the introduction: for a patch $U$ bounded by a $C^{k,\alpha}$-curve, the velocity is a $C^{k,\alpha}$ function inside the patch. The point is that there exists a $C^{k,\alpha}$-diffeomorphism of the plane $\Psi$ which maps the unit disc onto $U$. Then after a change of variables, each component of $\nabla^k\nabla^\perp\Delta^{-1}\chi_U$ has an explicit integral representation involving derivatives of $\Psi$ in the unit disc. Working directly with this expression, the H\"older estimate can be achieved. We leave the details of this (tedious) computation to the interested reader. 

\subsubsection*{Bahouri-Chemin solution} On the torus $\mathbb{T}^2 = [-1,1)^2$, $\omega(x) = \mathrm{sgn}(x_1)\mathrm{sgn}(x_2)$ defines a stationary patch solution, which is often called the Bahouri-Chemin solution after the work \cite{BC}. Stationarity follows since the particle trajectories cannot cross the axes by odd symmetry. Here we consider a configuration in $\mathbb{R}^2$ which is odd with respect to both variables $x_1, x_2$ and  given by 
$\mathrm{sgn}(x_1)\mathrm{sgn}(x_2)$ near the origin, say on the unit disc for concreteness. It is well-known that the associated velocity field is only log-Lipschitz at the origin. For a computation based on Fourier series, see \cite{Den2}. Here we present a simpler way to see it by working in polar coordinates. To begin with, observe that the Bahouri-Chemin solution can be locally written as \begin{equation*}
\begin{split}
\omega(x) = \sum_{ k \ge 0} \frac{\sin(2(2k+1)\theta)}{2k+1} ,\qquad |x| \le 1 
\end{split}
\end{equation*} where $\tan(\theta) = x_2/x_1$ and therefore to compute $u = \nabla^\perp\Delta^{-1}\omega$, it suffices to invert the Laplace operator for functions $\sin(m\theta)$. Note that \begin{equation*}
\begin{split}
\Delta(r^2\sin(m\theta)) = \left(\rd_r^2 + \frac{1}{r}\rd_r + \frac{1}{r^2}\rd_\theta^2\right) (r^2\sin(m\theta)) = (4 - m^2) \sin(m\theta). 
\end{split}
\end{equation*} From this computation, it can be argued that for $m \ge 3$, \begin{equation*}
\begin{split}
\Delta^{-1}(\sin(m\theta)) = -\frac{1}{m^2 -4 } r^2\sin(m\theta). 
\end{split}
\end{equation*} (Strictly speaking $\sin(m\theta)$ on both sides needs to be appropriately truncated for $r \ge 1$.) Therefore, from straightforward estimates one can show that  \begin{equation*}
\begin{split}
\nabla^2\Delta^{-1} \left( \sum_{ k \ge 1} \frac{\sin(2(2k+1)\theta)}{2k+1}   \right) = -\nabla^2 \left( r^2 \sum_{ k \ge 1}\frac{1}{4(2k+1)^2 -4 } \frac{\sin(2(2k+1)\theta)}{2k+1} \right)
\end{split}
\end{equation*} is summable; that is, the corresponding velocity field is Lipschitz continuous. On the other hand, for $m = 2$ we have instead \begin{equation*}
\begin{split}
\Delta\left(r^2\ln\frac{1}{r}\sin(2\theta)\right) = -4\sin(2\theta), 
\end{split}
\end{equation*} so that \begin{equation*}
\begin{split}
\rd_{x_1}\rd_{x_2} \Delta^{-1} (\sin(2\theta)) = -\frac{1}{4} \ln\frac{1}{|x|} + \mbox{bounded}. 
\end{split}
\end{equation*} We conclude that $\rd_{x_1}u_1 = -\rd_{x_2}u_2$ are divergent logarithmically at the origin. In particular on the separatrices $\{ x_1 = 0 \}$ and $\{ x_2 = 0 \}$, this stationary velocity produces double exponential in time contraction and expansion, respectively. 


\subsubsection*{Patches supported on sectors} Generalizing the previous computation, we perform a similar calculation for patches that are locally supported on a union of sectors, which are the main object of study in this work. Explicit computations have appeared in several places (e.g. \cite{B, CS}) but again we provide a shortcut using polar coordinates. The arguments here which might seem formal can be justified either using directly the Biot-Savart kernel or arguments based on the uniqueness of $\Delta^{-1}$. 

We consider vorticity which takes the form $\omega(x) = h(\theta)$ for $|x| \le 1$, with some bounded function $h$ of the angle. Taking in particular $h$ to be the characteristic function on a union of intervals in $[0,2\pi)$, we obtain a vortex patch supported on a union of sectors meeting at the origin. The computations below go through for any function $h$. In view of the above, we know that the inverse Laplacian of a bounded function of $\theta$ may involve a logarithm. This suggests us to prepare an ansatz \begin{equation*}
\begin{split}
\Delta^{-1} h = r^2 H(\theta) + r^2 \ln \frac{1}{r} G(\theta),
\end{split}
\end{equation*} where $H$ and $G$ are functions to be determined. Here, we are neglecting possible constant and linear terms on the right hand side (this can be justified for instance when $h$ has some symmetries), which does not affect the velocity gradient in any essential way. Then, \begin{equation*}
\begin{split}
h &= \Delta \left( r^2 H(\theta) + r^2 \ln \frac{1}{r} G(\theta) \right) \\
& = 4H + H'' + (4G + G'')\ln\frac{1}{r} - 4G. 
\end{split}
\end{equation*} This forces $4G + G'' = 0$, or $G = c \cos(2\theta) + s\sin(2\theta)$ for some constants $c, s$, which are determined by multiplying both sides of the above equation by $\cos(2\theta)$ and $\sin(2\theta)$, respectively and integrating on $[0,2\pi)$: \begin{equation*}
\begin{split}
\begin{pmatrix}
c \\
s
\end{pmatrix} = -\frac{1}{4\pi}\int_0^{2\pi} h(\theta)\begin{pmatrix}
\cos(2\theta) \\
\sin(2\theta)
\end{pmatrix} d\theta .
\end{split}
\end{equation*}
Then $H$ is determined uniquely by \begin{equation*}
\begin{split}
(I + \rd_{\theta\theta})^{-1}(h + 4G) = H ,
\end{split}
\end{equation*} which is well-defined since $h + 4G$ is orthogonal to $\cos(2\theta)$ and $\sin(2\theta)$ (see \cite{EJ1} for a proof). An explicit kernel expression for $H$  {has} been derived in \cite{EJ1}; see also Section 2. Note that the velocity gradient coming from $r^2 H$ is bounded. Therefore, we conclude that \begin{equation}\label{eq:velgrad_sectors}
\begin{split}
\nabla u (x) = \begin{pmatrix}
\rd_{x_1} u_1 & \rd_{x_2} u_1 \\
\rd_{x_1} u_2 & \rd_{x_2} u_2 
\end{pmatrix} = \ln\frac{1}{|x|} \begin{pmatrix}
- 2s & 2c \\
2c & 2s
\end{pmatrix} + \mbox{bounded}. 
\end{split}
\end{equation} Interestingly, the non-Lipschitz part of the velocity is simply a constant multiple of the log-linear function, where the constant is determined only by the second-order Fourier coefficients of the vorticity profile. In particular, if $h$ has zero second-order coefficients, then the corresponding velocity gradient is bounded! This happens when $h$ is $m$-fold symmetric with some $m \ge 3$, but this is certainly not a necessary condition; for example one can take $h = \chi_{[-\theta_0,\theta_0]} + \chi_{[\pi/2-\theta_0,\pi/2+\theta_0]}$ (see \cite{E1} for a necessary and sufficient condition). 

We also compute the eigenvectors for the gradient matrix, which correspond to the separatrices generated by the flow: \begin{equation*}
\begin{split}
\begin{pmatrix}
-c \\
s + \sqrt{s^2 + c^2}
\end{pmatrix}, \qquad \begin{pmatrix}
s + \sqrt{s^2 + c^2} \\
c
\end{pmatrix},
\end{split}
\end{equation*} which are orthogonal to each other. 

Now, let us take the concrete case of $h(\theta) = \chi_{[-\theta_0,\theta_0]}$ for some $0 < \theta_0 < \pi/2$. Then, $s = 0$ and $c = \frac{1}{4\pi} \sin(2\theta_0)$. The separatrices are always given by the diagonals $\{ x_1 = x_2 \}$ and $\{ x_1 = -x_2\}$, independent of $\theta$. On the other hand, if we 2-fold symmetrize $h$, that is, take $\chi_{[-\theta_0,\theta_0]} + \chi_{[\pi-\theta_0,\pi + \theta_0]}$, then $s = 0$ again and $c$ is simply multiplied by 2. This shows that the effect of 2-fold symmetrization is just rescaling time by 2, modulo the effect of the bounded term, which is negligible for $|t|, |x| \ll 1$. 

Note that for $0 < \theta_0 < \pi/2$, the log-Lipschitz part of the velocity which is normal to the patch boundary vanishes only for $\theta_0 = \pi/4$, which corresponds to the $90^\circ$-corner. It gives some possibility for a patch with $90^\circ$-corners to retain its shape, which happens explicitly for the Bahouri-Chemin solution and conjecturally happens for certain $V$-states (\cite{WOZ,Ove}).

\subsubsection*{Cusps and logarithmic spirals}

Lastly, we consider vortex patches supported on cusps and logarithmic spirals. For us, a cusp (naively) refers to the region bounded by two $C^1$ curves which meet at a point with the same tangent vectors. By a logarithmic spiral, we mean a spiral where the distances between the turns are related by a geometric progression. These objects are not only of significant interest by themselves, but they are particularly relevant for our study as main candidates which would describe the evolution of a single corner. Indeed, instantaneous cusping or (logarithmic) spiraling of a corner is possible under the flow by a log-Lipschitz velocity. If one consider the passive transport of the patch supported on a corner (whose angle is between $0$ and $\pi/2$) by the flow associated with the initial velocity, then the corner immediately becomes a cusp tangent to one of the separatrices. On the other hand, one may transport the corner with the time-independent velocity $v(x) = x^\perp \log|x|$, and this would cause the corner to immediately become a logarithmic spiral.

An elementary but important fact regarding cusps is that the associated velocity is smooth as long as two pieces of the boundary curves meeting at the cusping point are smooth. To see this, for concreteness we take a patch $\Omega$ whose intersection with a small square $\Omega \cap (-\delta,\delta)^2$ is given by the region \begin{equation*}
\begin{split}
\{ (x_1,x_2) : 0 < x_1 < \delta,  g(x_1) < x_2 < f(x_1) \}
\end{split}
\end{equation*} with some $C^{1,\alpha}$ functions $g, f $ on $[0,\delta]$ satisfying $g'(0) = f'(0) = 0$. Then, locally near the origin, $\Omega \cap (-\delta,\delta)^2= A \backslash B$, where \begin{equation*}
\begin{split}
A = \{ (x_1, x_2) : -\delta < x_1 \le 0, x_2 < 0  \} \cup  \{ (x_1, x_2) :0 < x_1 \le  \delta , x_2 < f(x_1)  \}
\end{split}
\end{equation*} and \begin{equation*}
\begin{split}
B =  \{ (x_1, x_2) : -\delta < x_1 \le 0 , x_2 \le  0  \} \cup  \{ (x_1, x_2) : 0 < x_1 \le  \delta , x_2 \le  g(x_1)  \} .
\end{split}
\end{equation*} Then, the domains $A$ and $B$ have $C^{1,\alpha}$ boundaries on $[-\delta,\delta]^2$, so that the velocities associated with $\chi_A$ and $\chi_B$ are $C^{1,\alpha}$ near the origin in the interior of their respective domains. Hence the velocity of $\chi_{\Omega}$ is $C^{1,\alpha}$ in the interior of $\Omega \cap [-\delta/2,\delta/2]^2$, uniformly up to the boundary. 

Similarly, it follows that the velocity $\nabla^\perp\Delta^{-1}\chi_U$ is $C^{k,\alpha}(\bar{U})$ near the origin if $g, f \in C^{k,\alpha}$ for some $k \ge 1$ and $0 < \alpha <1$. If the preceding argument is somehow not convincing, one can compute explicitly the Biot-Savart kernel for $\nabla^\perp\Delta^{-1}\chi_U$ by first integrating out the second coordinate variable and check directly that the resulting function is $C^{1,\alpha}$. Indeed, we carry out such a computation (in a more complicated setting of a patch consisting of a corner with cusps attached to its sides) in Section \ref{sec:main} in the course of proving our main well-posedness result. 

In the meanwhile, this frozen-time argument already shows that the $C^{1,\alpha}$-cusps should be (at least) locally well-posed: in particular, there is no hope of starting from a corner and immediately becoming a $C^{1,\alpha}$-cusp. Actually such cusps are globally well-posed, and we sketch the argument below in Section \ref{sec:classical}. Therefore, while a cusp may be considered as a singularity, as long as the boundary curves are smooth, the corresponding vortex patch will not lose any regularity in time. 

We now turn to spirals. For simplicity we consider (locally) self-similar spirals with some bounded profile $h$: using polar coordinates, we can write \begin{equation*}
\begin{split}
\omega(r,\theta) = h(c\ln\frac{1}{r} + \theta),\qquad r \le 1
\end{split}
\end{equation*} for some $h\in L^\infty([0,2\pi))$ and $c > 0$.  {In the particular case when $h$ is a characteristic function of the interval, say $h = \chi_{[a,b]}$, then note that $\omega = \chi_\Omega $ where $\partial\Omega$ consists of two logarithmic curves \begin{equation*}
	\begin{split}
	\theta = a - c\ln \frac{1}{r}, \quad \theta = b - c\ln \frac{1}{r}.
	\end{split}
	\end{equation*} See Figure \ref{fig:spiral} for the case when $h$ is a characteristic function of three intervals.} Let us compute the velocity associated with it. We proceed formally by taking the following ansatz for the stream function: \begin{equation*}
\begin{split}
\Psi = r^2 H(c\ln\frac{1}{r}+\theta).  
\end{split}
\end{equation*} Then the relation $\Delta\Psi = \omega$ gives that \begin{equation*}
\begin{split}
4H - 4cH' + (1 + c^2) H'' = h. 
\end{split}
\end{equation*} It is not difficult to show that there exists a unique solution $H \in L^2([0,2\pi))$ (e.g. using Fourier series) and it actually belongs to $W^{2,\infty}([0,2\pi))$. Using $u_r := u \cdot e_r = -\frac{1}{r}\rd_\theta\Psi$ and $u_\theta = u \cdot e_\theta = -\rd_r\Psi$, we deduce that \begin{equation*}
\begin{split}
u_r(r,\theta) = rH'(c\ln\frac{1}{r} + \theta),\qquad u_\theta(r,\theta) = (2rH - crH')\circ(c\ln\frac{1}{r} + \theta)
\end{split}
\end{equation*} near $r = 0$. Taking another $\rd_r$ and $r^{-1}\rd_\theta$, it follows in particular that $u$ is indeed Lipschitz continuous. This fact could be somewhat surprising, especially since when one takes the limit $c \rightarrow 0^+$ in the above, then we are back to the radially homogeneous case where the velocity is explicitly log-Lipschitz. This transition can be seen in terms of the Biot-Savart kernel: the cancellations introduced by the spiral removes the logarithmic divergence of the gradient.  

Moreover, using the above ansatz for $\Psi$, we may write down a closed evolution equation in terms of $h$: \begin{equation*}
\begin{split}
\rd_{t} h + 2H \rd_{\theta} h = 0, \qquad 4H - 4cH' + (1 + c^2) H'' = h.
\end{split}
\end{equation*} The conservation of $ {\|h\|_{L^\infty}}$ ensures that this is globally well-posed. Note that in stark contrast to the radially homogeneous case, whose evolution equation for $h$ requires rotational symmetry, no such assumption is needed for the spiral case. While showing global well-posedness for the logarithmic spirals rigorously may take some work (we achieve this in the presence of rotational symmetry in Section \ref{sec:intermediate}), this is very plausible as the above ansatz for the stream function is correct modulo  {a} perturbation which is $C^\infty$ near the origin. In the end, it suggests that a corner cannot become a logarithmic spiral, and even if it spirals, the turns should be sparser than those of a logarithmic spiral. 

 { Note that instead of taking $h$ to be a bounded function, we may take it as a signed measure. Since $H$ is two orders more regular than $h$, we have that $H$ is Lipschitz continuous, which can be used to show that there is still a local-in-time unique measure-valued solution to \begin{equation*} 
	\begin{split}
	& \rd_t h + 2H\rd_\theta h = 0.
	\end{split}
	\end{equation*} In particular, one can take $h_0$ to be a finite sum of Dirac deltas, possibly with weights. Then this corresponds to a vortex sheet supported on logarithmic spirals which are famously known as Alexander spirals \cite{Alex}. Hence we have obtained, in a very simple manner, the evolution equation corresponding to multi-branched Alexander spirals (see Kaneda \cite{Kan} and also a very recent work of Elling-Gnann \cite{EG}). }

\subsection{Smooth vortex patches: approach by Bertozzi and Constantin}\label{subsec:Bertozzi_Constantin}

In this subsection, let us provide a brief outline of the elegant proof of Bertozzi and Constantin \cite{BeCo} on global regularity of smooth vortex patches. We restrict ourselves to domains $\Omega$ (bounded open set in $\mathbb{R}^2$) which has a level set $\phi : \mathbb{R}^2 \rightarrow \mathbb{R}$ such that: \begin{itemize}
	\item We have $\phi(x) > 0$ if and only if $x \in \Omega$ (hence $\phi$ vanishes precisely on $\partial\Omega$).
	\item The tangent vector field of $\phi$ satisfies $\nabla^\perp \phi \in C^{\alpha}(\mathbb{R}^2)$. 
	\item The function $\phi$ is non-degenerate near $\partial \Omega$, i.e., $ \V \nabla^\perp\phi\V_{\inf(\partial\Omega)}:= \inf_{x \in \partial \Omega} |\nabla^\perp\phi| \ge c > 0$. 
\end{itemize}
Then we say that the patch $\Omega$ is $C^{1,\alpha}$-regular, or a $C^{1,\alpha}$-patch. Given such a $\phi$, we associate the following characteristic quantity: \begin{equation*}
\begin{split}
\Gamma = \left( \frac{\V \nabla^\perp\phi\V_{C^\alpha_*(\mathbb{R}^2)}}{\V\nabla^\perp \phi\V_{\inf(\partial\Omega)}} \right)^{1/\alpha},
\end{split}
\end{equation*} which quantifies the $C^{1,\alpha}$-regularity of $\Omega$.  {Recall that $C^\alpha_*$ denotes the homogeneous $C^\alpha$-norm: \begin{equation*}
	\begin{split}
	\V f \V_{C^\alpha_*} = \sup_{x \ne x'} \frac{|f(x)-f(x')|}{|x-x'|^\alpha}. 
	\end{split}
	\end{equation*}} Note that it has units of inverse length, so that $\Gamma^{-1}$ provides a $C^{1,\alpha}$-characteristic length scale for $\Omega$. An alternative way of defining $C^{1,\alpha}$ patches is to require that, for any point $x \in \partial \Omega$, there exists a ball $B_x(r)$ with some radius $r > 0$ uniform over $x$ such that the intersection $B_x(r) \cap \partial U$ is given by the graph of a $C^{1,\alpha}$ function, after rotating the patch if necessary. Indeed, given $\Gamma$, one may take $r$ to be $1/(10\Gamma)$ and vice versa; given $r > 0$ for each $x \in \partial\Omega$, one may construct a level set function $\phi$.

Taking the initial vorticity to be the characteristic function $\omega_0 = \chi_{\Omega}$, we may denote its unique solution by $\chi_{\Omega_t}$. Since the vorticity is simply being transported by the flow, once we define the evolution of $\phi$ via \begin{equation}\label{eq:evolv_phi}
\begin{split}
\pr_t \phi + (u\cdot \nabla) \phi = 0, 
\end{split}
\end{equation} then it follows that \begin{equation*}
\begin{split}
\phi(t,x ) > 0 \quad \mbox{ if and only if }  \quad x \in  \Omega_t. 
\end{split}
\end{equation*}
To show that $\Omega_t$ stays as a $C^{1,\alpha}$-patch for all times, it suffices to establish an a priori bound on $\Gamma_t$. In Bertozzi-Constantin \cite{BeCo}, the authors have provided a proof that $\Gamma_t$ remains bounded for all time, based on the following two ``frozen-time'' lemmas:
\begin{lemma}[$L^\infty$-bound on $\nabla u$]
	Consider the velocity $u(x) = K*\chi_\Omega (x)$, where $\Omega$ is a $C^{1,\alpha}$-patch with a level set $\phi$. Then, we have a bound \begin{equation}\label{eq:geometric_lemma}
	\begin{split}
	\V \nabla u\V_{L^\infty(\mathbb{R}^2)} \le C \left( 1 + \log\left( 1+ \frac{\V \nabla^\perp\phi\V_{{C}^\alpha(\mathbb{R}^2)}}{\V \nabla^\perp\phi\V_{\inf(\partial\Omega)}} \right) \right).
	\end{split}
	\end{equation}
\end{lemma}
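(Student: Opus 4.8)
The plan is to treat $\nabla u = \nabla K * \chi_\Omega$ as a Calder\'on--Zygmund singular integral and to extract the $L^\infty$ bound by exploiting the geometric cancellation encoded in the tangential field $\nabla^\perp\phi$. First I would write $\partial_i u_j(x) = \text{p.v.}\int (\partial_i K_j)(x-y)\,\chi_\Omega(y)\,dy + c_{ij}\,\chi_\Omega(x)$; the diagonal term is harmless since $\|\chi_\Omega\|_{L^\infty}=1$, and the matrix kernel $\nabla K$ is homogeneous of degree $-2$, smooth away from the origin, and has vanishing mean on circles, so that $\text{p.v.}\int_{\{a<|z|<b\}}\nabla K(z)\,dz = 0$. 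Since $\|\nabla u\|_{L^\infty}$ is scale-invariant for a patch (replacing $\Omega$ by $\lambda\Omega$ leaves it unchanged), I would normalize the size of $\Omega$ and set the characteristic regularity length to be $L := \Gamma^{-1}$, where $\Gamma$ is the quantity defined above. The task then reduces to bounding the principal value integral at a fixed point $x$, uniformly over its position relative to $\partial\Omega$, by splitting into the near field $\{|x-y|<L\}$ and the far field $\{|x-y|\ge L\}$.

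For the near field I would use a flat comparison. Let $\bar x\in\partial\Omega$ be a nearest boundary point. The non-degeneracy $\|\nabla^\perp\phi\|_{\inf(\partial\Omega)}\ge c>0$ together with $\nabla^\perp\phi\in C^\alpha$ guarantees, via a quantitative implicit function argument, that on the ball $B_{\bar x}(cL)$ the boundary $\partial\Omega$ is the graph of a $C^{1,\alpha}$ function over its tangent line at $\bar x$, with $C^{1,\alpha}$-norm comparable to $\Gamma^\alpha$ at this scale. Writing $\chi_\Omega = \chi_P + (\chi_\Omega - \chi_P)$, where $P$ is the tangent half-plane at $\bar x$, the half-plane contribution $\nabla K * \chi_P$ is a bounded constant (this is precisely the no-logarithm case $\theta_0=\pi/2$ of the sector computation in Subsection \ref{subsec:computations}). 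The remainder $\chi_\Omega-\chi_P$ is supported in a cusp-thin region whose height at arclength $s$ from $\bar x$ is $O(\Gamma^\alpha s^{1+\alpha})$, so that $\int_{B_{\bar x}(cL)}|\nabla K(x-y)|\,|\chi_\Omega-\chi_P|(y)\,dy \lesssim \Gamma^\alpha\int_0^{L}s^{\alpha-1}\,ds \lesssim 1$. Hence the near field contributes $O(1)$, uniformly even when $x$ lies on $\partial\Omega$.

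The logarithm comes from the far field. For $|x-y|\ge L$ I would decompose into dyadic annuli $A_k=\{2^kL\le|x-y|<2^{k+1}L\}$ up to the scale $\text{diam}(\Omega)$. On each annulus the vanishing-mean property lets me subtract the local average of $\chi_\Omega$, giving $\big|\int_{A_k}\nabla K(x-y)\chi_\Omega(y)\,dy\big|\lesssim \text{osc}_{A_k}\chi_\Omega \le 1$, since $|\nabla K|\lesssim (2^kL)^{-2}$ and $|A_k|\sim(2^kL)^2$. Only annuli actually crossed by $\partial\Omega$ contribute, and summing over the $\lesssim \log(\text{diam}(\Omega)/L)$ relevant scales yields a bound $\lesssim 1 + \log(\Gamma\,\text{diam}(\Omega))$, which after the size normalization becomes $\lesssim 1 + \log(1+\|\nabla^\perp\phi\|_{C^\alpha}/\|\nabla^\perp\phi\|_{\inf(\partial\Omega)})$. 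Adding the diagonal, near-field, and far-field bounds gives \eqref{eq:geometric_lemma}.

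I expect the main obstacle to be the near-field step: converting the global level-set data $\|\nabla^\perp\phi\|_{C^\alpha}$ and $\|\nabla^\perp\phi\|_{\inf(\partial\Omega)}$ into a genuinely uniform local $C^{1,\alpha}$ graph representation on balls of radius comparable to $L=\Gamma^{-1}$, with all constants quantitative and independent of the boundary point, and then justifying the principal-value cancellations in the flat comparison uniformly over every position of $x$ (interior, exterior, and on the boundary, at all distances $d=\text{dist}(x,\partial\Omega)$). A secondary technical point is ensuring that the far-field count of straddling annuli truly reproduces $\log$ of the regularity ratio rather than an uncontrolled diameter factor, which is where the scale-invariance of $\|\nabla u\|_{L^\infty}$ must be invoked.
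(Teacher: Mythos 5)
Your near-field step is fine, and it is essentially the same cancellation that Bertozzi--Constantin use and that this paper reproduces (in its $\mathring{C}^\alpha$ adaptation) in the proof of Lemma \ref{lem:key_estimate}: comparing $\Omega$ with the tangent half-plane on a ball of radius $\Gamma^{-1}$ is equivalent to the ``Geometric Lemma'' quoted there, which measures the angular deviation of $\Omega\cap\partial B_x(\rho)$ from a half-circle and integrates $\int_d^{\delta}R_\rho\,\rho^{-1}d\rho$; the routine uniformity issues you flag as the main obstacle are not where the difficulty lies. The genuine gap is in the far field, precisely at the step you call secondary. What your dyadic counting actually proves is
\begin{equation*}
\Vert \nabla u\Vert_{L^\infty} \le C\left(1+\log\left(1+\Gamma\,\mathrm{diam}(\Omega)\right)\right),
\qquad
\Gamma=\left(\frac{\Vert\nabla^\perp\phi\Vert_{C^\alpha_*}}{\Vert\nabla^\perp\phi\Vert_{\inf(\partial\Omega)}}\right)^{1/\alpha},
\end{equation*}
and the conversion of this into \eqref{eq:geometric_lemma} ``after the size normalization'' is a non sequitur. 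The left-hand side is dilation invariant, but the ratio $\Vert\nabla^\perp\phi\Vert_{C^\alpha}/\Vert\nabla^\perp\phi\Vert_{\inf(\partial\Omega)}$ built from the \emph{inhomogeneous} norm is not: under $x\mapsto\lambda x$ it becomes $\left(\Vert\nabla^\perp\phi\Vert_{L^\infty}+\lambda^{-\alpha}\Vert\nabla^\perp\phi\Vert_{C^\alpha_*}\right)/\Vert\nabla^\perp\phi\Vert_{\inf(\partial\Omega)}$. So after rescaling $\Omega$ to unit size you have only bounded $\Vert\nabla u\Vert_{L^\infty}$ by the \emph{rescaled} ratio, which carries the factor $\mathrm{diam}(\Omega)^\alpha$ in front of the seminorm; that factor cannot be absorbed, because the diameter is not controlled by any of the level-set quantities. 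Concretely, a union of $N$ parallel unit-width strips of length $\ell\gg N$ (with rounded ends) admits a level set with $\Vert\nabla^\perp\phi\Vert_{L^\infty}$, $\Vert\nabla^\perp\phi\Vert_{C^\alpha_*}$, $\Vert\nabla^\perp\phi\Vert_{\inf(\partial\Omega)}$ all of order one, so the right side of \eqref{eq:geometric_lemma} is $O(1)$ and indeed $\Vert\nabla u\Vert_{L^\infty}=O(1)$; but your scheme outputs $\approx\log N$, since each of the $\sim\log N$ crossed annuli has patch-area fraction $\approx 1/2$ and your per-annulus oscillation bound is then genuinely of order one. The loss is intrinsic to estimating annulus by annulus.

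Two remarks to put this in perspective. First, the target inequality, read literally with a scale-blind constant, cannot be proved by any method: take a patch of unit size whose boundary has a corner rounded at scale $\epsilon$ (so $\Vert\nabla u\Vert_{L^\infty}\sim\log\frac{1}{\epsilon}$) and dilate it by $\lambda=1/\epsilon$; the dilated level set has $\Vert\nabla^\perp\phi\Vert_{C^\alpha}/\Vert\nabla^\perp\phi\Vert_{\inf(\partial\Omega)}=O(1)$ while the left side is unchanged. The correct, scale-invariant form of the Bertozzi--Constantin proposition carries the patch area (equivalently, one normalizes $|\Omega|$, which is harmless in the Gronwall argument downstream since $|\Omega_t|$ is conserved); their own far-field estimate produces exactly this factor through the $L^1$-norm of the vorticity, just as yours produces the diameter. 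In other words, what you have actually proved is the true statement, and the step where you claim to remove the size factor is the (unfillable) gap relative to the statement as transcribed. Second, note how the paper's own analogue, Lemma \ref{lem:key_estimate}, avoids the issue altogether: the norm $\mathring{C}^\alpha$ is scale invariant by construction, the splitting is performed at the scale $\delta|x|$ rather than at a fixed length, and the region $\{|x-y|\ge 10|x|\}$ is controlled by the $m$-fold symmetry hypothesis rather than by the size of the patch. Without a symmetry assumption, some global quantity ($|\Omega|$, $\mathrm{diam}(\Omega)$, or $\Vert\omega\Vert_{L^1}$) must enter the far-field bound, and that is exactly what your argument detects.
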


\begin{lemma}[Directional $C^\alpha$-bound on $\nabla u$]
	We have a pointwise identity \begin{equation}\label{eq:key_identity}
	\begin{split}
	\nabla u \nabla^\perp\phi (x) = \frac{1}{2\pi} \int_{\Omega}  \nabla K(x-y) \left( \nabla^\perp\phi(x) - \nabla^\perp\phi(y) \right) dy,
	\end{split}
	\end{equation}
	and in particular, this gives a bound \begin{equation}\label{eq:key_estimate}
	\begin{split}
	\V \nabla u \nabla^\perp\phi \V_{C^\alpha(\mathbb{R}^2)} \le C \V \nabla u\V_{L^\infty(\mathbb{R}^2)} \Vert\nabla^\perp\phi\V_{C^\alpha(\mathbb{R}^2)}.
	\end{split}
	\end{equation}
\end{lemma}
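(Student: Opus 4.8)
The plan is to prove the two assertions in turn, deriving the pointwise identity \eqref{eq:key_identity} first by an integration-by-parts argument, and then reading off the bound \eqref{eq:key_estimate} from the resulting weakly singular representation by a standard near-field/far-field splitting.

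For the identity, I start from the principal-value representation $\nabla u(x) = \text{p.v.}\int_\Omega \nabla K(x-y)\,dy$ and contract with the constant vector $\nabla^\perp\phi(x)$; modulo this constant, the claim \eqref{eq:key_identity} amounts to showing that $\text{p.v.}\int_\Omega \nabla K(x-y)\,\nabla^\perp\phi(y)\,dy$ reduces to the local term by which the genuine gradient $\nabla u$ differs from the truncated integral. Two structural facts drive this. First, $\partial_{x_j}K(x-y) = -\partial_{y_j}K(x-y)$, so each component $\sum_j \partial_{x_j}K^i(x-y)(\nabla^\perp\phi(y))_j$ is, up to sign, a $y$-divergence. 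Second, $\nabla^\perp\phi$ is divergence free, $\text{div}\,\nabla^\perp\phi = 0$, so that $\sum_j \partial_{y_j}K^i(x-y)(\nabla^\perp\phi(y))_j = \text{div}_y\!\big[K^i(x-y)\nabla^\perp\phi(y)\big]$. Applying the divergence theorem on $\Omega\setminus B_x(\epsilon)$ and letting $\epsilon\to 0$ leaves a boundary term on $\partial\Omega$ and one on the excised circle $\partial B_x(\epsilon)$. The $\partial\Omega$ term vanishes identically because $\nabla^\perp\phi$ is tangent to the level set $\{\phi = 0\} = \partial\Omega$, so $\nabla^\perp\phi\cdot n = 0$ there. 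The excised-circle term does not vanish, but in the limit it reproduces exactly the local Calderón--Zygmund constant implicit in the definition of the principal-value operator $\nabla u$; these two contributions cancel, and the identity holds with $\nabla u$ the true velocity gradient. The crucial gain is that the integrand on the right of \eqref{eq:key_identity} is only weakly singular: since $|\nabla K(x-y)|\lesssim |x-y|^{-2}$ and $|\nabla^\perp\phi(x)-\nabla^\perp\phi(y)|\lesssim \Vert\nabla^\perp\phi\Vert_{C^\alpha_*}|x-y|^\alpha$, the integrand is $O(|x-y|^{-2+\alpha})$ and the integral converges absolutely, with no principal value needed.

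Granting the identity, the $L^\infty$ part of \eqref{eq:key_estimate} is immediate from $|\nabla u\,\nabla^\perp\phi|\le \Vert\nabla u\Vert_{L^\infty}\Vert\nabla^\perp\phi\Vert_{L^\infty}$. For the Hölder seminorm I write $F := \nabla u\,\nabla^\perp\phi$, fix $x,x'$ with $d := |x-x'|$ small, and split $\Omega$ into a near part $\Omega\cap B_x(Cd)$ and its complement. On the near part I estimate the two integrals (at $x$ and at $x'$) separately using the weak singularity, which contributes $\lesssim \Vert\nabla^\perp\phi\Vert_{C^\alpha_*}\,d^\alpha$. On the far part I expand the integrand difference as $[\nabla K(x-y)-\nabla K(x'-y)](\nabla^\perp\phi(x)-\nabla^\perp\phi(y)) + \nabla K(x'-y)(\nabla^\perp\phi(x)-\nabla^\perp\phi(x'))$. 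The first piece is handled by the kernel bound $|\nabla K(x-y)-\nabla K(x'-y)|\lesssim d\,|x-y|^{-3}$, which after integration over $|x-y|>Cd$ again gives $\lesssim \Vert\nabla^\perp\phi\Vert_{C^\alpha_*}\,d^\alpha$.

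The main obstacle is the second far-field piece, $(\nabla^\perp\phi(x)-\nabla^\perp\phi(x'))\int_{\Omega\setminus B_x(Cd)}\nabla K(x'-y)\,dy$: here the difference of the field is a constant of size $\lesssim \Vert\nabla^\perp\phi\Vert_{C^\alpha_*}d^\alpha$, but it multiplies a truncated singular integral of $\chi_\Omega$, which carries no built-in cancellation. Controlling $|\int_{\Omega\setminus B_x(Cd)}\nabla K(x'-y)\,dy|$ by $\Vert\nabla u\Vert_{L^\infty}$ (up to an absolute constant) is precisely a maximal-truncation, Cotlar-type estimate for the Calderón--Zygmund operator $\chi_\Omega\mapsto\nabla u$: the truncated integral differs from the principal value $\nabla u(x')$ only by contributions from the region where $\partial\Omega$ cuts the truncation ball, which the mean-zero property of $\nabla K$ keeps under control. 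This is exactly the step that injects the factor $\Vert\nabla u\Vert_{L^\infty}$ into \eqref{eq:key_estimate}. Combining the near-field and far-field bounds yields $|F(x)-F(x')|\lesssim \Vert\nabla u\Vert_{L^\infty}\Vert\nabla^\perp\phi\Vert_{C^\alpha_*}\,d^\alpha$, and together with the trivial $L^\infty$ bound this is \eqref{eq:key_estimate}. The genuinely delicate points throughout are the two arguments resting on the cancellation of $\nabla K$ --- the excised-ball limit in the identity and the maximal-truncation control in the estimate --- rather than the elementary size estimates.
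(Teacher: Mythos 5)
Your proposal is correct and follows essentially the same route as the paper: the identity comes from the divergence theorem using exactly the two facts the paper isolates ($\nabla^\perp\phi$ is divergence free and tangent to $\partial\Omega$), and your near-field/far-field splitting, with the constant-difference term controlled by the truncated singular integral of $\chi_\Omega$ bounded via $\V \nabla u\V_{L^\infty}$ (your Cotlar-type step) and the remaining far-field term handled by the smoothness of $\nabla K$, is precisely the Bertozzi--Constantin argument that the paper cites here and reproduces, in scale-invariant form, in its proof of the $\mathring{C}^\alpha$ analogue (terms $I$--$IV$ there match your decomposition term by term). No gaps; the two delicate cancellation points you flag are exactly the ones the paper's version relies on.
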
 \noindent The point of \eqref{eq:key_estimate} is that we do not need to take the $C^\alpha$-norm of the velocity gradient. 

Given these lemmas, one can finish the global well-posedness proof with a simple Gronwall estimate (details of this argument can be found in \cite{BeCo}). We differentiate \eqref{eq:evolv_phi} to obtain \begin{equation*}
\begin{split}
\pr_t \nabla^\perp\phi + (u\cdot\nabla)\nabla^\perp\phi = \nabla u \nabla^\perp\phi.
\end{split}
\end{equation*} Working on the Lagrangian coordinates, and using the bound \eqref{eq:key_estimate} and then the logarithmic estimate \eqref{eq:geometric_lemma} allows one to close the estimates in terms of $\V \nabla^\perp \phi\V_{C^\alpha}$ to show the bound \begin{equation*}
\begin{split}
\V \nabla^\perp \phi(t)\V_{C^\alpha(\mathbb{R}^2)} \le C\exp(C\exp(Ct))
\end{split}
\end{equation*} as well as \begin{equation*}
\begin{split}
\V \nabla^\perp\phi(t)\V_{\inf(\partial\Omega)} \ge c\exp(-ct)
\end{split}
\end{equation*} with positive constants depending only on the initial data $\nabla^\perp \phi_0$ (and $0 < \alpha <1$). 

We would like to point out that, although it was not necessary in the above global well-posedness argument, the velocity gradient is indeed uniformly $C^\alpha$ inside the patch, up to the boundary. There are a number of ways to obtain this piece of information. One approach, due to Serfati \cite{Ser1}, is that from the directional H\"older regularity $(\nabla^\perp\phi \cdot \nabla) u \in C^\alpha$ that we already have, one can ``invert'' this using $\nabla\cdot u = 0$ and $\nabla\times u =1$ (inside the patch) to recover $ {\nabla u \in C^\alpha}$. We exploited this idea in our proof of local well posedness (see Lemma \ref{lem:velocity_circle}). Alternatively, Friedman and Velazquez \cite{FV} have shown, directly working with the Biot-Savart kernel, the following estimate: \begin{lemma}[Friedman and Velazquez \cite{FV}]
	Assume that a $C^{1,\alpha}$-patch $\Omega$ is tangent to the horizontal axis at the origin, and that near the origin, $\partial\Omega$ is described as the graph of a $C^{1,\alpha}$-function: \begin{equation*}
	\begin{split}
	\partial\Omega \cap [-\delta,\delta]^2 = \{ (x_1,x_2) : x_2 = f(x_1) \}, \qquad f \in C^\alpha([-\delta,\delta]),  \qquad \sup_{[-\delta,\delta]}|f'| \le 1.
	\end{split}
	\end{equation*} Then, the velocity $u = K* \chi_{\Omega}$ is $C^{1,\alpha}$ along this portion of the boundary: \begin{equation*}
	\begin{split}
	\V \nabla u(x_1,f(x_1)) \V_{C^\alpha_{x_1}[-\delta/10,\delta/10]} \le C \V f \V_{C^{1,\alpha}[-\delta,\delta]} \log\left( 1 + \frac{1}{\delta} \right).
	\end{split}
	\end{equation*}
\end{lemma}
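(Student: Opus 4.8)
The plan is to follow the route indicated in Subsection~\ref{subsec:computations}: work directly with the Biot--Savart kernel and integrate out one variable, reducing the two-dimensional singular integral defining $\nabla u$ to a one-dimensional singular (Cauchy-type) integral along $\partial\Omega$, for which the boundary $C^\alpha$-regularity is classical. It is cleanest to pass to complex notation, identifying $x=(x_1,x_2)$ with $z=x_1+ix_2$ and $y$ with $w$, and writing $u=u_1+iu_2$. A direct computation gives $\bar u(z)=-\frac{i}{2\pi}\int_\Omega \frac{dA(w)}{z-w}$, so that the principal-value derivative $\partial_z\bar u = \frac{i}{2\pi}\,\mathrm{p.v.}\!\int_\Omega (z-w)^{-2}\,dA(w)$ (plus the bounded local term produced by $\bar\partial$), which together with the algebraic constraints $\nabla\cdot u=0$ and $\nabla\times u=\chi_\Omega$ determines every entry of $\nabla u$, is the object to control. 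Our task is to show that the restriction of these quantities to the graph $x_1\mapsto(x_1,f(x_1))$ is $C^\alpha_{x_1}$ on $[-\delta/10,\delta/10]$ with the stated bound.

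First I would carry out the reduction to a contour integral. Writing $(z-w)^{-2}=-\partial_w(z-w)^{-1}$ and using the complex form of Stokes' theorem $\int_\Omega \partial_w g\,dA(w)=\frac{1}{2i}\oint_{\partial\Omega} g\,d\bar w$, the area integral collapses to the Cauchy integral $\oint_{\partial\Omega}\frac{d\bar w}{z-w}$ along the boundary; this is exactly the analytic incarnation of ``integrating out the second coordinate variable.'' Splitting $\partial\Omega$ into the local graph $\gamma_\delta=\{(y_1,f(y_1)):|y_1|\le\delta\}$ and the remainder, the contribution of the remainder is non-singular for $z$ near the origin, while on $\gamma_\delta$ we parametrize by $y_1$ and obtain a one-dimensional integral with kernel $\big((x_1-y_1)+i(f(x_1)-f(y_1))\big)^{-1}$ against the density $(1+if'(y_1))\,dy_1$. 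Since $\sup|f'|\le 1$, the denominator is comparable to $x_1-y_1$, so evaluated on the boundary this is precisely the singular Cauchy integral of a $C^\alpha$ density along a $C^{1,\alpha}$ curve.

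The $C^\alpha$-estimate for the local piece then follows from the classical Plemelj--Privalov theory of the Cauchy integral on $C^{1,\alpha}$ curves, which I would reprove by hand via the standard near-field/far-field split: subtracting the value of the density at $x_1$ controls the principal value near the diagonal $y_1\approx x_1$ directly from the $C^\alpha$-modulus of $f'$, while on the complement the mean value theorem applied to the (one-dimensional, degree $-1$) kernel gains a factor $|x_1-x_1'|$ and an extra power of $|x_1-y_1|^{-1}$, so that integrating $|x_1-y_1|^{\alpha-2}$ yields the clean bound $O(\V f'\V_{C^\alpha_*}\,|x_1-x_1'|^\alpha)$. The only logarithm appears in the $L^\infty$-size of $\nabla u$ and comes from the non-local piece: integrating the degree $-2$ kernel $(z-w)^{-2}$ over the portion of $\Omega$ lying between the regularity scale $\delta$ and the diameter produces $\int_\delta^{O(1)}\frac{dr}{r}\sim\log(1+1/\delta)$, exactly the $L^\infty$-mechanism behind \eqref{eq:geometric_lemma}. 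Collecting the local Cauchy-integral bound, the logarithmic $L^\infty$-contribution, and the bounded local $\bar\partial$-term gives the asserted inequality.

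The main obstacle is the rigorous control of the singular Cauchy integral along the diagonal, i.e.\ showing that the modulation by $f$ in the kernel $\big((x_1-y_1)+i(f(x_1)-f(y_1))\big)^{-1}$ does not spoil the cancellation that makes the principal value converge, and that it enters only at the $C^\alpha$-level with the correct dependence on $\V f\V_{C^{1,\alpha}}$. This is in essence a Calder\'on-commutator estimate, and the quantitative book-keeping of the truncation scale $\delta$---separating the clean, scale-invariant H\"older bound of the local Cauchy integral from the genuinely logarithmic $L^\infty$-growth of the far field---is where the care is needed; once this is in place, the endpoint and far-field terms are routine.
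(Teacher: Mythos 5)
Your proposal is correct and is essentially the route the paper itself takes when it re-proves this estimate (Lemma \ref{lem:corner_Holder} in Subsection \ref{subsec:local_estimate}): your complex Stokes reduction produces exactly the paper's one-dimensional kernels---$\frac{h(x)-f(z)}{(x-z)^2+(h(x)-f(z))^2}$ is precisely the imaginary part of your Cauchy kernel $\bigl((x_1-y_1)+i(f(x_1)-f(y_1))\bigr)^{-1}$ against the density $(1+if'(y_1))\,dy_1$---and your Plemelj--Privalov-style near/far diagonal splitting is the same estimate the paper carries out by hand (its Claim, Lemma \ref{lem:m0}, and the power-series treatment of the $f$-dependent denominator). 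The differences are cosmetic: complex notation and an appeal to classical Cauchy-integral theory, versus the paper's Cartesian integration in $y_2$ and fully hands-on H\"older estimates (done there in the harder symmetric-corner setting, where endpoint logarithms must cancel across sectors).
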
 With elliptic regularity, the above lemma immediately implies that for $C^{1,\alpha}$-patches, the velocity gradient is uniformly $C^\alpha$ up to the boundary. 

The above lemma of Friedman and Velazquez actually gives $C^{1,\alpha}$-regularity for  {the velocity field} coming from a $C^{1,\alpha}$-cusp: consider the domain $\Omega$ satisfying \begin{equation*}
\begin{split}
\Omega \cap [-\delta,\delta]^2 = \{  (x_1,x_2) \subset [0,\delta]\times[-\delta,\delta] : g(x_1) < x_2 < f(x_1)  \}
\end{split}
\end{equation*} where $g < f$ are $C^{1,\alpha}[0,\delta]$-functions with $g(0) = f(0) = 0$ and $g'(0) = f'(0) = 0$. Then, applying the lemma first with a $C^{1,\alpha}$ domain obtained by taking $(x_1,f(x_1))$ and the semi-axis $\{ (x_1,0) : x_1 \le 0 \}$ as a portion of its boundary, and then using the lemma another time with a domain using $(x_1,g(x_1))$ instead of $f$ establishes that $\nabla u$ is uniformly $C^\alpha$ in $[0,\delta/10]\times [-\delta,\delta] \cap \Omega$.  {We essentially re-prove this estimate in this work and use it in several places.}

\subsection{Euler equations in critical spaces under symmetry}\label{subsec:symmetries_and_critical}

In this subsection, let us provide a brief review of some of the results from \cite{EJ1}. The contents of Sections \ref{sec:intermediate} and \ref{sec:main} may be viewed as generalizations of the results below to the class of vortex patch solutions.

\subsubsection*{Well-posedness of the 2D Euler equations in critical spaces}

The following result shows that in the $L^1 \cap L^\infty(\mathbb{R}^2)$-theory of Yudovich, one can actually drop the $L^1$ assumption under $m$-fold rotational symmetry for some $m\ge 3$. 

\begin{theorem}[{{\cite[Theorem 4]{EJ1}}}]
	Assume that $\omega_0 \in L^\infty(\mathbb{R}^2)$ and $m$-fold symmetric for some $m \ge 3$. Then, there is a unique solution to the 2D Euler equation $\omega \in L^\infty([0,\infty); L^\infty(\mathbb{R}^2))$ and $m$-fold symmetric. Here, $u$ is the unique solution to the system \begin{equation*}
	\begin{split}
	\nabla \times  u = \omega , \qquad \nabla\cdot u = 0. 
	\end{split}
	\end{equation*} under the assumptions $|u(x)| \le C|x|$ and $m$-fold symmetric. It is well-defined pointwise by \begin{equation*}
	\begin{split}
	u(t,x) = \lim_{R \rightarrow \infty} \frac{1}{2\pi} \int_{|y| \le R} \frac{(x-y)^\perp}{|x-y|^2} \omega(t,y)dy.
	\end{split}
	\end{equation*}
\end{theorem}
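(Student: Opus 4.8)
The plan is to establish existence, uniqueness, and the pointwise formula for the velocity in three conceptually separate stages, all hinging on the crucial cancellation afforded by $m$-fold symmetry with $m \ge 3$. The central analytic fact I would isolate first is a \emph{growth estimate}: if $\omega \in L^\infty(\mathbb{R}^2)$ is $m$-fold symmetric with $m \ge 3$, then the truncated Biot--Savart integral
\begin{equation*}
u_R(x) = \frac{1}{2\pi} \int_{|y| \le R} \frac{(x-y)^\perp}{|x-y|^2}\, \omega(y)\, dy
\end{equation*}
converges as $R \to \infty$ to a limit $u(x)$ satisfying $|u(x)| \le C\V\omega\V_{L^\infty}|x|$, and moreover $u$ is the unique divergence-free, curl-$\omega$ field in the class of $m$-fold symmetric vector fields with at most linear growth. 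The mechanism is exactly the one flagged in Subsection~\ref{subsec:computations}: the obstruction to convergence/linear-growth is the second-order angular Fourier mode (the coefficients $c,s$ in \eqref{eq:velgrad_sectors}), and $m$-fold symmetry with $m \ge 3$ forces those modes to vanish. Concretely, I would decompose the kernel for $|y| \gg |x|$ and observe that the would-be logarithmically divergent contribution is precisely the projection of $\omega$ onto $\cos(2\theta), \sin(2\theta)$ on each annulus, which integrates to zero by symmetry; the remaining terms are absolutely convergent and give the $O(|x|)$ bound. The same symmetrized kernel bound yields the log-Lipschitz estimate
\begin{equation*}
|u(x) - u(x')| \le C\V\omega\V_{L^\infty}|x-x'|\log\left(2 + \frac{1}{|x-x'|}\right)
\end{equation*}
locally, now \emph{without} using $L^1$: the far-field that previously needed integrability is controlled by the linear-growth/symmetry cancellation instead.

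With this frozen-time velocity theory in hand, the second stage is to run the Yudovich scheme. The log-Lipschitz bound gives, for each fixed time, a unique flow map $\Phi_t$ solving $\frac{d}{dt}\Phi(t,x) = u(t,\Phi(t,x))$, $\Phi(0,x)=x$, as an orientation-preserving homeomorphism of $\mathbb{R}^2$; here I would need to check that the linear growth $|u(x)| \le C|x|$ prevents trajectories from escaping to infinity in finite time, which follows from a Gronwall argument giving $|\Phi(t,x)| \le |x|e^{Ct}$. One then defines $\omega(t,\cdot) = \omega_0 \circ \Phi_t^{-1}$, which automatically lies in $L^\infty$ with the norm conserved and inherits $m$-fold symmetry since the flow commutes with $R_{2\pi/m}$ (because $u$ is $m$-fold symmetric as a vector field). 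A standard fixed-point / Picard iteration in the Lagrangian formulation, closed using the log-Lipschitz modulus, produces a solution on $[0,\infty)$; global-in-time existence is immediate since the only conserved quantity required, $\V\omega(t)\V_{L^\infty}$, never blows up.

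The third stage is uniqueness, which I would handle by the classical Yudovich energy-type argument adapted to the symmetric setting. Given two solutions with the same data, I would estimate the growth of a quantity measuring the separation of their flow maps (for instance $\int |\Phi^1_t(x) - \Phi^2_t(x)|^{?}$ against a suitable weight, or the $L^2$ difference of vorticities on a bounded region), using the log-Lipschitz bound to derive a differential inequality of Osgood type whose only solution with zero initial value is identically zero. The one genuinely new point relative to textbook Yudovich uniqueness is that we lack $L^1$ integrability, so the usual global $L^2$ pairing must be localized; I would restrict to balls $B_0(R)$ and absorb boundary contributions using the linear-growth bound on $u$, noting that the symmetry cancellation again controls the flux through $\partial B_0(R)$.

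The main obstacle I anticipate is the first stage, specifically making the linear-growth and log-Lipschitz estimates fully rigorous for a \emph{merely bounded}, non-integrable vorticity. One must justify the principal-value/limit definition of $u$, verify that the symmetry-induced cancellation of the second Fourier mode is robust (it is a genuine cancellation on each dyadic annulus, not an absolutely convergent bound), and confirm that the limit is independent of the exhaustion. Everything downstream---the flow, the transport of $\omega$, preservation of symmetry, and Osgood uniqueness---is then a relatively routine adaptation of the Yudovich framework, with the linear-growth control on $u$ replacing the role ordinarily played by the $L^1$ assumption. Since this is quoted as \cite[Theorem 4]{EJ1}, I would in practice cite that paper for the detailed estimates, but the proof sketch above is the natural self-contained route.
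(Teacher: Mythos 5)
Your proposal is correct and takes essentially the same route as the paper: the paper itself does not reprove this theorem but quotes it from \cite{EJ1}, and it identifies as the key step exactly the scale-invariant log-Lipschitz estimate (\cite[Lemma 2.7]{EJ1}) that you derive from the cancellation of the first and second angular Fourier modes under $m$-fold symmetry with $m \ge 3$ (the same mechanism displayed in the paper's explicit sector computations and in the symmetrized kernel bound $|\nabla K|\lesssim |x|/|y|^3$ used later). Your remaining stages --- the Yudovich/Lagrangian construction with the linear-growth bound $|u(x)|\le C|x|$ replacing the $L^1$ hypothesis, preservation of symmetry by the flow, and Osgood-type uniqueness --- match the structure of the cited proof.
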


Under the assumption of the above theorem, the velocity is only log-Lipschitz, just as in the case of Yudovich theory, but now one has the following scale-invariant log-Lipschitz estimate, which is a key step in the proof. 

\begin{lemma}[{{\cite[Lemma 2.7]{EJ1}}}]
	Under the $m$-fold symmetry assumption for $m \ge 3$, we have \begin{equation*}
	\begin{split}
	|u(x) - u(x') | \le C\V \omega \V_{L^\infty} |x - x'| \log\left( \frac{c \max(|x|,|x'|)}{|x-x'|} \right).
	\end{split}
	\end{equation*}
\end{lemma}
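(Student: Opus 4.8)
The plan is to exploit the $m$-fold symmetry to symmetrize the Biot--Savart kernel and thereby extract enough decay at large scales to upgrade the usual (non-scale-invariant) log-Lipschitz estimate. Since $\omega$ is $m$-fold symmetric, the change of variables $y \mapsto R_{2\pi j/m} y$ leaves both $\omega$ and the truncation ball $\{|y|\le R\}$ invariant, so that
\begin{equation*}
u(x) = \frac{1}{m}\lim_{R\to\infty}\int_{|y|\le R} \tilde K(x,y)\,\omega(y)\,dy, \qquad \tilde K(x,y) := \sum_{j=0}^{m-1} K\big(x - R_{2\pi j/m}y\big).
\end{equation*}
First I would record the algebraic heart of the matter: because $\nabla K$ is homogeneous of degree $-2$ with purely quadrupolar angular dependence (its entries are second derivatives of $\frac{1}{2\pi}\log|\cdot|$, carrying only the Fourier modes $e^{\pm 2i\theta}$ in the angle), the rotational average $\sum_{j=0}^{m-1}\nabla K(R_{2\pi j/m}z)$ vanishes identically precisely when $m\ge 3$. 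This is the same mechanism — vanishing of the second-order angular Fourier coefficients — that removes the logarithmic part of $\nabla u$ in the explicit sector computation of Subsection~\ref{subsec:computations}, and it is the only place the hypothesis $m\ge 3$ enters.

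Using this cancellation I would Taylor-expand $\tilde K$ in its base point. Writing $x_s = x' + s(x-x')$ and differentiating, $\nabla_x\tilde K(x_s,y)=\sum_j \nabla K(x_s - R_{2\pi j/m} y)$; the value of this sum at $x_s=0$ is $\sum_j\nabla K(-R_{2\pi j/m} y)=0$ by the above, so a first-order expansion in $x_s$ leaves $|\nabla_x\tilde K(x_s,y)|\lesssim_m |x_s|/|y|^3 \lesssim_m |x|/|y|^3$ whenever $|y|\ge 2|x|$. Consequently $|\tilde K(x,y)-\tilde K(x',y)|\lesssim_m |x-x'|\,|x|/|y|^3$ in the far zone, while the same argument without differencing gives $|\tilde K(x,y)|\lesssim_m |x|^2/|y|^3$, which in particular legitimizes the symmetrized representation above and its agreement with the limit defining $u$.

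For the estimate itself, set $r:=|x|=\max(|x|,|x'|)$ and $d:=|x-x'|$; the regime $d\gtrsim r$ is disposed of by the sublinear bound $|u(x)|\le C\V\omega\V_{L^\infty}|x|$, so assume $d\le r/2$. Writing $u(x)-u(x')=\frac{1}{m}\int[\tilde K(x,y)-\tilde K(x',y)]\omega(y)\,dy$, I would split the $y$-integral into (i) $|y|\ge 2r$, (ii) $|y|\le r/2$, and (iii) the annulus $r/2\le|y|\le 2r$. On (i) the far bound integrates to $\lesssim_m \V\omega\V_{L^\infty}\, d\,r\int_{2r}^\infty \rho^{-2}\,d\rho\lesssim_m \V\omega\V_{L^\infty}\,d$. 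On (ii) every $|x-R_{2\pi j/m}y|\gtrsim r$, so the elementary difference estimate $|K(x-R_{2\pi j/m}y)-K(x'-R_{2\pi j/m}y)|\lesssim d/r^2$, summed over $j$ and integrated over area $\lesssim r^2$, again gives $\lesssim_m \V\omega\V_{L^\infty}\,d$. On (iii) the nontrivial rotates ($j\ne 0$) sit at distance $\gtrsim_m r$ from $x$ (here the $m$-fold geometry is used, through a separation factor of order $\sin(\pi/m)$), so those terms contribute $\lesssim_m \V\omega\V_{L^\infty}\,d$, while the single $j=0$ term $K(x-y)-K(x'-y)$ is handled classically: the part $|y-x|\le 2d$ yields the integrable-singularity bound $\lesssim \V\omega\V_{L^\infty}\,d$, and the part $2d\le|y-x|\lesssim r$ yields $\lesssim \V\omega\V_{L^\infty}\,d\int_{2d}^{Cr}\rho^{-1}\,d\rho\lesssim \V\omega\V_{L^\infty}\,d\log\frac{cr}{d}$. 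Summing the three regions produces the claimed inequality with a constant $C=C(m)$.

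The routine part is region (iii) — the classical near/intermediate log-Lipschitz split. The crux, and the step I expect to be most delicate, is the far-field analysis: establishing the exact cancellation $\sum_j \nabla K(R_{2\pi j/m}z)=0$ for $m\ge 3$, controlling the Taylor remainders uniformly to secure the $|x|^2/|y|^3$ and $|x-x'|\,|x|/|y|^3$ decay, and using the latter both to bound region (i) and, upstream, to justify that the symmetrized principal-value integral is well-defined and equals $u$. One must also keep track that all implied constants depend on $m$ through the minimal-separation factor $\sin(\pi/m)$ of the rotated copies, which is harmless since $m$ is fixed.
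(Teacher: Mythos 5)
Your overall strategy---symmetrizing the Biot--Savart kernel, using the vanishing of $\sum_{j=0}^{m-1}\nabla K(R_{2\pi j/m}z)$ for $m\ge 3$ (the quadrupolar cancellation) to obtain the far-field decay $|x|/|y|^3$, and then running a near/intermediate/far splitting---is sound, and it matches the mechanism this paper actually relies on. Note that the paper itself does not prove this lemma: it is quoted from \cite{EJ1}. The closest argument written out here, the proof of Lemma \ref{lem:key_estimate}, uses exactly your symmetrized far-field bound $\frac{1}{m}\left|\sum_j \nabla K(x-R_{2\pi j/m}y)\right|\le C|x|/|y|^3$ for $|y|\gtrsim |x|$, citing \cite[Lemma 2.17]{EJ1}; your identification of the second-harmonic cancellation as the only place $m\ge 3$ enters is also correct.

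There is, however, one step that is wrong as written. In region (iii) you claim that for $y$ in the annulus $r/2\le |y|\le 2r$ the rotated points $R_{2\pi j/m}y$ with $j\ne 0$ stay at distance $\gtrsim_m r$ from $x$. This is false: as $y$ ranges over the annulus, so does $R_{2\pi j/m}y$ (rotations preserve the annulus), and taking $y=R_{-2\pi j/m}x$---which lies in the annulus since $|y|=|x|=r$---gives $R_{2\pi j/m}y=x$ exactly, so the $j$-th term is just as singular as the $j=0$ term. The separation factor $\sin(\pi/m)$ you are recalling concerns the $m$ singular points $y=R_{-2\pi j/m}x$ of the map $y\mapsto \tilde K(x,y)$ being separated \emph{from each other}, not the $j\ne 0$ terms being nonsingular. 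Fortunately the fix is immediate and costs nothing: for each fixed $j$, the area-preserving change of variables $z=R_{2\pi j/m}y$ maps the annulus to itself and turns the $j$-th contribution into $\int |K(x-z)-K(x'-z)|\,dz$, an exact copy of the $j=0$ integral; hence every one of the $m$ terms is handled by the classical split (integrable singularity on $|z-x|\le 2d$, mean value theorem on $2d\le |z-x|\lesssim r$) and contributes $\lesssim \Vert\omega\Vert_{L^\infty}\, d\log(cr/d)$, so the sum is still $\lesssim_m \Vert\omega\Vert_{L^\infty}\, d\log(cr/d)$, which is all you need. A smaller defect of the same kind occurs in region (ii): with $|y|\le r/2$ and only $d\le r/2$, points $x_s$ on the segment satisfy $|x_s|\ge r-d$, which can equal $r/2$, so $|x_s-R_{2\pi j/m}y|$ is not bounded below by a multiple of $r$; shrinking region (ii) to $|y|\le r/4$ (or assuming $d\le r/4$, with the complementary case absorbed into the sublinear bound $|u(x)|\le C\Vert\omega\Vert_{L^\infty}|x|$) repairs this. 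With these two adjustments your proof is correct.
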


In particular, under symmetry, vortex patches can have infinite mass and the evolution is still well-defined. This allows us to treat infinite patches in the setup of Sections \ref{sec:intermediate} and \ref{sec:main} (assuming that the boundary regularity of the initial patch as $|x| \rightarrow +\infty$ satisfies suitable bounds), but we shall not pursue this generalization.

It turns out that under the symmetry assumption, one can prove higher regularity in the angular direction. A model situation is when the vorticity takes the form $\omega = h(\theta) + \tilde{\omega}$, where $h(\cdot):S^1 \rightarrow \mathbb{R}$ defines a radially homogeneous function on $\mathbb{R}^2$, and $\tilde{\omega}$ is smooth on $\mathbb{R}^2$. Then, one sees that while $\omega$ cannot be better than $L^\infty(\mathbb{R}^2)$ in the $C^{k,\alpha}$-scale (unless $h$ is trivial), but one can take as many angular derivatives $\pr_{\theta}$ as $h$ allows. In this setup, one would like to say that the Euler dynamics propagates this regularity. To this end, we have introduced the scale-invariant spaces $\mathring{C}^\alpha(\mathbb{R}^2)$: for any $0 < \alpha \le 1$, consider the  norm \begin{equation*}
\begin{split}
\V f\V_{\mathring{C}^\alpha(\mathbb{R}^2)} &:= \V f \V_{L^\infty(\mathbb{R}^2)} + \V |x|^\alpha f(x)\V_{\dot{C}^\alpha(\mathbb{R}^2)} \\
& = \sup_x |f(x)| + \sup_{x \ne x'} \frac{\left||x|^\alpha f(x) - |x'|^\alpha f(x') \right|}{|x-x'|^\alpha}.
\end{split}
\end{equation*} Note that if $f$ is a function of the angle, $f(x) = h(\theta)$,   then \begin{equation*}
\begin{split}
\V f\V_{\mathring{C}^\alpha(\mathbb{R}^2)} \approx \V h(\theta)\V_{C^\alpha(S^1)}. 
\end{split}
\end{equation*}

\begin{theorem}[{{\cite[Theorem 11]{EJ1}}}]
	Assume that $\omega_0 \in \mathring{C}^\alpha(\mathbb{R}^2)$ is $m$-fold symmetric for some $m \ge 3$. Then, the unique solution in $L^\infty([0,\infty);L^\infty(\mathbb{R}^2))$ actually belongs to $L^\infty_{loc}\mathring{C}^\alpha$ with a bound \begin{equation*}
	\begin{split}
	\V \omega(t)\V_{\mathring{C}^\alpha} \le C \exp(c_1 \exp(c_2t)),
	\end{split}
	\end{equation*} with constants depending only on $0 < \alpha \le 1$ and the initial data. 
\end{theorem}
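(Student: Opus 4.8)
The plan is to run a Lagrangian (frozen-time) argument adapted to the scale-invariant norm, closing a double-exponential Gronwall inequality. The decisive structural point is that $m$-fold symmetry with $m\ge 3$ upgrades the velocity from merely log-Lipschitz to genuinely Lipschitz, so that the flow is bi-Lipschitz in the appropriate \emph{scale-invariant} distance and no H\"older exponent is lost in transport. As a first step I would record a geometric reformulation of the seminorm: introducing the scale-invariant distance $d(x,x') = |x-x'|/\max(|x|,|x'|)$ and using that $t\mapsto t^\alpha$ is $\alpha$-H\"older with constant one while $\big||x|-|x'|\big|\le|x-x'|$, one checks the equivalence
\[ \V f\V_{\mathring C^\alpha(\mathbb R^2)} \approx \V f\V_{L^\infty} + [f]_{\mathring\alpha}, \qquad [f]_{\mathring\alpha} := \sup_{x\neq x'}\frac{|f(x)-f(x')|}{d(x,x')^\alpha}. \]
This replaces the weight $|x|^\alpha$ by a purely scale-invariant modulus of continuity, which is the object that transports cleanly.

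The heart of the matter is a pair of frozen-time elliptic facts under symmetry. Since $\omega$ is $m$-fold symmetric with $m\ge 3$, the velocity vanishes at the origin ($u(0)=R_{2\pi/m}\,u(0)$ forces $u(0)=0$) and, crucially, the log-divergent second angular Fourier mode of the Biot--Savart kernel is annihilated by the symmetry --- precisely the cancellation exhibited in the sector computation of Subsection~\ref{subsec:computations}, where the non-Lipschitz part of $\nabla u$ is shown to be a constant multiple of the second-order Fourier coefficients of the vorticity profile. Consequently $\nabla u\in L^\infty$, and interpolating the resulting symmetry-restricted scale-invariant Calder\'on--Zygmund bound between $L^\infty$ and $\mathring C^\alpha$ gives the scale-invariant Beale--Kato--Majda estimate
\[ \V\nabla u\V_{L^\infty(\mathbb R^2)} \le C\V\omega\V_{L^\infty}\log\!\left(e+\frac{\V\omega\V_{\mathring C^\alpha}}{\V\omega\V_{L^\infty}}\right). \]
I expect this estimate to be the main obstacle: establishing boundedness of $\nabla u$ under symmetry together with the correct logarithmic dependence on the higher norm is exactly where the hypothesis $m\ge 3$ is genuinely used, and it is what distinguishes the present situation from the generic log-Lipschitz (Yudovich) regime, where the argument below would leak an exponent.

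Granting $\Lambda(t):=\V\nabla u(t)\V_{L^\infty}$, the flow estimate is immediate and loses no exponent. From $|u(x)|=|u(x)-u(0)|\le\Lambda|x|$ and $|u(x)-u(x')|\le\Lambda|x-x'|$ one gets, along trajectories $x(t)=\Phi_t(a)$, $x'(t)=\Phi_t(a')$, that both $|x(t)|/|a|$ and $|x(t)-x'(t)|/|a-a'|$ lie in $[e^{-\int_0^t\Lambda},\,e^{\int_0^t\Lambda}]$, whence the scale-invariant distance is distorted by at most $d(a,a')\le e^{2\int_0^t\Lambda}\,d(x(t),x'(t))$. Since $\omega$ is constant along trajectories, $\omega(t,x)=\omega_0(\Phi_t^{-1}x)$ yields
\[ [\omega(t)]_{\mathring\alpha}\le e^{2\alpha\int_0^t\Lambda(s)\,ds}\,[\omega_0]_{\mathring\alpha}, \qquad \V\omega(t)\V_{L^\infty}=\V\omega_0\V_{L^\infty}. \]
Being a difference-quotient computation, this needs no differentiability of $\omega$ and so applies directly to merely $L^\infty$ data, after a routine mollification/approximation to justify the transport identity and the velocity representation of the cited $L^\infty$ well-posedness theorem.

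Finally I would close the loop. Writing $N(t)=\V\omega(t)\V_{\mathring C^\alpha}$ and feeding the BKM estimate into the transport bound gives $N(t)\lesssim N(0)\exp\big(C\V\omega_0\V_{L^\infty}\int_0^t\log(e+N(s))\,ds\big)$; setting $y(t)=\log(e+N(t))$ this reads $y(t)\le y(0)+C'\int_0^t y(s)\,ds$, and Gronwall gives $y(t)\le y(0)e^{C't}$, that is
\[ \V\omega(t)\V_{\mathring C^\alpha}\le \exp\!\big(c_1\exp(c_2 t)\big), \]
with $c_1,c_2$ depending only on $\alpha$ and the initial data. The whole scheme is the scale-invariant mirror of the classical $C^\alpha$-propagation proof, with the role of ``$\nabla u$ bounded because $\omega\in C^\alpha$'' played here by ``$\nabla u$ bounded because $\omega$ is $m$-fold symmetric.''
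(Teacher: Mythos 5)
Your proposal is correct and is essentially the paper's own approach: the theorem is imported from \cite{EJ1}, and the argument behind it --- which the paper spells out through its quoted key lemmas and mirrors for patches in the proof of Theorem \ref{thm:intermediate} --- is exactly your scheme of taking the symmetry-based scale-invariant estimate $\V \nabla u\V_{L^\infty} \lesssim \V\omega\V_{L^\infty}\log\left(e + \V\omega\V_{\mathring{C}^\alpha}/\V\omega\V_{L^\infty}\right)$ as the key frozen-time input (both you and the paper treat this lemma as the crux and import it rather than reprove it here), then propagating the scale-invariant modulus along the bi-Lipschitz flow fixing the origin, and closing with a logarithmic Gronwall inequality to get the double exponential. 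Your reformulation via the scale-invariant distance $d(x,x')=|x-x'|/\max(|x|,|x'|)$ is a clean repackaging of the composition estimate \eqref{eq:composition} of Lemma \ref{lem:calculus}, which serves the same purpose in the original argument.
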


A key ingredient is the following scale-invariant bounds on the velocity gradient:

\begin{lemma}[{{\cite[Lemma 2.14]{EJ1}}}]
	The velocity gradient satisfies 
	\begin{equation*}
	\begin{split}
	\V \nabla u \V_{L^\infty} &\le C_\alpha \Vert\omega\Vert_{L^\infty} \left( 1 + \log\left(1 + c_\alpha \frac{\V \omega \V_{\mathring{C}^\alpha}}{\V \omega \V_{L^\infty}} \right) \right) 
	\end{split}
	\end{equation*} and \begin{equation*}
	\begin{split}
	\V \nabla u\V_{\mathring{C}^\alpha} &\le C_\alpha \V \omega \V_{\mathring{C}^\alpha}.
	\end{split}
	\end{equation*}
\end{lemma}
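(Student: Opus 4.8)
The plan is to write $u=\nabla^\perp\Delta^{-1}\omega$, so that $\nabla u = T\omega$ where $T$ is the matrix of Calderón--Zygmund operators with kernel $\nabla K$ (plus a bounded local multiple of $\omega$). Each entry of $\nabla K(z)$ is, in polar coordinates $z=\rho(\cos\psi,\sin\psi)$, of the form $\rho^{-2}$ times a pure second angular harmonic -- a combination of $\cos 2\psi$ and $\sin 2\psi$ with no zeroth harmonic (this is exactly the structure already visible in the sector computation \eqref{eq:velgrad_sectors}, where the non-Lipschitz part is governed solely by the second Fourier mode). The whole point is to use the $m$-fold symmetry of $\omega$ to kill this second harmonic. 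Concretely, I would symmetrize the kernel: substituting $y\mapsto R_{-2\pi j/m}y$ and using $\omega\circ R_{-2\pi j/m}=\omega$ together with rotation invariance of Lebesgue measure, one gets
\[\nabla u(x)=\int_{\mathbb R^2}\bar K(x,y)\,\omega(y)\,dy,\qquad \bar K(x,y):=\frac1m\sum_{j=0}^{m-1}\nabla K\big(x-R_{2\pi j/m}y\big).\]
Because the leading homogeneous-degree-$(-2)$ part of $\nabla K$ is a second harmonic and $\frac1m\sum_j e^{2i\cdot 2\pi j/m}=0$ precisely when $m\ge 3$, this leading part of $\bar K$ cancels. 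Hence $\bar K$ decays faster than $\nabla K$ away from the diagonal: one expects $|\bar K(x,y)|\lesssim |x|\,|y|^{-3}$ for $|y|\ge 2|x|$ and $|\bar K(x,y)|\lesssim |y|\,|x|^{-3}$ for $|y|\le|x|/2$, while near $|y|\approx|x|$ it keeps the usual $|x-y|^{-2}$ singularity.

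For the $L^\infty$ bound I would split $\mathbb R^2$ into the far region $|y|>2|x|$, the inner region $|y|<|x|/2$, and the intermediate annulus $|y|\approx|x|$. The improved kernel decay makes the far and inner regions contribute $O(\V\omega\V_{L^\infty})$ with no logarithm (e.g. $\int_{|y|>2|x|}|x|\,|y|^{-3}\,dy\approx 1$). In the intermediate annulus -- which contains the point $x$ and hence the CZ singularity -- I would split once more, at a radius $\delta\in(0,|x|)$ around $x$. On $\{|x-y|<\delta\}$ I use the mean-zero property of $\nabla K$ on circles to subtract $\omega(x)$ and then the localized Hölder bound $|\omega(y)-\omega(x)|\lesssim \V\omega\V_{\mathring C^\alpha}|x|^{-\alpha}|x-y|^\alpha$ (valid at scales $\lesssim|x|$, which is exactly how $\mathring C^\alpha$ localizes to ordinary $C^\alpha$ with constant $\V\omega\V_{\mathring C^\alpha}|x|^{-\alpha}$), giving a contribution $\lesssim \V\omega\V_{\mathring C^\alpha}|x|^{-\alpha}\delta^\alpha$; on $\{\delta<|x-y|\lesssim|x|\}$ I use $|\nabla K|\lesssim|x-y|^{-2}$ and $L^\infty$, giving $\lesssim\V\omega\V_{L^\infty}\log(|x|/\delta)$. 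Optimizing by choosing $(\delta/|x|)^\alpha\approx \V\omega\V_{L^\infty}/\V\omega\V_{\mathring C^\alpha}$ balances the two terms and produces exactly $\V\omega\V_{L^\infty}(1+\log(1+c_\alpha\V\omega\V_{\mathring C^\alpha}/\V\omega\V_{L^\infty}))$. The role of symmetry is precisely to cap the upper end of the logarithmically divergent region at scale $|x|$ rather than at the diameter of the support; without it the estimate would not be scale-invariant.

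For the scale-invariant Hölder bound $\V\nabla u\V_{\mathring C^\alpha}\le C_\alpha\V\omega\V_{\mathring C^\alpha}$, the cleanest viewpoint is to pass to logarithmic (cylindrical) coordinates $x=e^{s}(\cos\theta,\sin\theta)$, under which the $\mathring C^\alpha$ norm becomes an ordinary $C^\alpha(\mathbb R\times S^1)$ norm (the $2\pi/m$-periodicity in $\theta$ encoding the symmetry) and $T$ becomes a convolution-type singular operator on the cylinder whose kernel inherits both the decay in $|s-s'|$ (from the cancellation in $\bar K$) and the CZ smoothness off the diagonal. One then runs the standard Hölder-continuity argument for CZ operators: to estimate $|x|^\alpha\nabla u(x)-|x'|^\alpha\nabla u(x')$ one splits $y$ according to $|x-y|\lesssim|x-x'|$ versus $|x-y|\gtrsim|x-x'|$, controlling the first piece by the kernel size together with the $\mathring C^\alpha$ modulus and the second by the kernel's gradient (mean-value) bound; the extra decay from the symmetrization guarantees that the $s$-integrals converge without any logarithm, yielding the clean constant $C_\alpha$. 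The $L^\infty$ part of $\V\nabla u\V_{\mathring C^\alpha}$ is already supplied by the first bound, since $t\log(1+A/t)\lesssim A$.

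The genuinely delicate step -- the main obstacle -- is the rigorous bookkeeping of $\bar K$: one must simultaneously establish (i) the off-diagonal decay coming from the vanishing second harmonic, keeping careful track of the matrix conjugation that accompanies rotation of the argument, and (ii) the CZ size and smoothness estimates near the diagonal needed for the Hölder bound, all uniformly and compatibly with the principal-value definition and the boundary terms produced when subtracting $\omega(x)$. Making the localization of $\mathring C^\alpha$ to ordinary $C^\alpha$ at scale $|x|$ precise, and handling the transition region $|y|\approx|x|$ where neither the far-field decay nor the naive local estimate is sharp, is where most of the technical work lies.
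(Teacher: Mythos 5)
Your proposal is correct and is essentially the approach this result rests on: note that the paper itself does not reprove this lemma (it is quoted from \cite{EJ1}), but its in-paper analogues for patches, Lemmas \ref{lem:key_identity} and \ref{lem:key_estimate}, are proved with exactly your architecture --- a scale-relative three-region splitting, a cancellation estimate near the singularity, a logarithm from the intermediate annulus $\delta\lesssim|x-y|\lesssim|x|$ with the optimization $(\delta/|x|)^\alpha\approx \Vert\omega\Vert_{L^\infty}/\Vert\omega\Vert_{\mathring{C}^\alpha}$, and the symmetrized-kernel decay $\bigl|\tfrac1m\sum_j\nabla K(x-R_{2\pi j/m}y)\bigr|\lesssim |x|/|y|^3$ for $|y|\gtrsim|x|$ (the paper's citation of \cite[Lemma 2.17]{EJ1}) for the far field. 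Your substitution of the localized H\"older modulus of $\omega$ for the Bertozzi--Constantin geometric lemma is the right move, since this lemma concerns general $\mathring{C}^\alpha$ vorticity rather than a patch; and your cylindrical repackaging of the $\mathring{C}^\alpha$ bound is sound, because the symmetrized kernel decays exponentially in the log-radial variable on both sides, which is what makes the standard Calder\'on--Zygmund H\"older argument close (the paper's Lemma \ref{lem:key_identity} performs the equivalent splitting directly in the plane).

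Two details need repair, both harmless. First, your claimed inner-region decay $|\bar K(x,y)|\lesssim|y|\,|x|^{-3}$ for $|y|\le|x|/2$ is false: the symmetrization acts only in $y$, so $\bar K(x,0)=\nabla K(x)\ne 0$; what vanishes for $m\ge 3$ is $\tfrac1m\sum_j\nabla K(-R_{2\pi j/m}y)$, not anything involving $\nabla K(x)$. This costs nothing, since on $\{|y|\le|x|/2\}$ one has $|x-R_{2\pi j/m}y|\ge|x|/2$, so the naive size bound $|\bar K|\lesssim|x|^{-2}$ already gives a contribution $\lesssim\Vert\omega\Vert_{L^\infty}$ with no logarithm. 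Second, after symmetrization the kernel $\bar K(x,\cdot)$ has $m$ singularities $y=R_{-2\pi j/m}x$, all of which lie in your intermediate annulus, not just the one at $y=x$; since that annulus is invariant under rotation by $2\pi/m$ and $\omega$ is symmetric, you should simply revert to the unsymmetrized kernel $\nabla K(x-y)$ there (the two integrals agree on any rotation-invariant region), which restores the single singularity that your $\delta$-splitting presupposes. With these two adjustments the argument goes through as written.
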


It is important to keep in mind the following Bahouri-Chemin \cite{BC} counterexample, which is only 2-fold rotationally symmetric. Take $\omega(x_1,x_2) = \mathrm{sign}(x_1)\mathrm{sign}(x_2)\chi_R$, where $\chi_R$ is some smooth radial cutoff. This belongs to $L^\infty$ but near the origin, it can be computed that $u(x_1,0) \approx Cx_1 \log x_1$, so that in particular the estimate $|u(x)|\le C|x|$ fails.\footnote{Here, we are using $\approx$ to say that both sides coincide up to a smooth function vanishing at the origin.} Moreover, even if we smooth it our in the angular direction, for instance by putting $\omega(x_1,x_2) = \cos(2\theta) \chi_R$, then $\omega$ belongs to $\mathring{C}^\alpha$ but still one has $u(x_1,0) \approx C'x_1 \log x_1$. 

\subsubsection*{The 1D system for radially homogeneous vorticity}

The $L^\infty$-theorem described above gives rise to a class of (infinite) vortex patch solutions to the 2D Euler equation, by taking vorticity which is radially homogeneous. 

Indeed, when the initial data is of the form $\omega_0 = h_0(\theta)$ with $h_0 \in L^\infty(S^1)$, then the unique solution must stay radially homogeneous for all time, and therefore the dynamics reduces to a one-dimensional equation on $h(t)$. We have derived this evolution equation in \cite[Section 3]{EJ1}: 
\begin{theorem}[{{\cite[Proposition 3.5]{EJ1}}}]
	Consider the following transport equation on $S^1 = [-\pi,\pi)$ \begin{equation*}
	\begin{split}
	\pr_t h + 2H\pr_{\theta} h = 0, 
	\end{split}
	\end{equation*} where the initial data $h_0$ is $m$-fold rotationally symmetric on $S^1$ for some $m \ge 3$. Here, $H$ is the unique solution of \begin{equation*}
	\begin{split}
	h = 4H + H'', \qquad \frac{1}{2\pi} \int_{-\pi}^{\pi} H(\theta)\exp(\pm 2i\theta)d\theta = 0. 
	\end{split}
	\end{equation*} Alternatively, \begin{equation*}
	\begin{split}
	H(\theta) = \frac{1}{2\pi}\int_{-\pi}^{\pi} K_{S^1}(\theta - \theta') h(\theta')d\theta',
	\end{split}
	\end{equation*} with \begin{equation*}
	\begin{split}
	K_{S^1}(\theta) := \frac{\pi}{2} \sin(2\theta) \frac{\theta}{|\theta|} - \frac{1}{2} \sin(2\theta)\theta - \frac{1}{8} \cos(2\theta). 
	\end{split}
	\end{equation*} The system is globally well-posed for either $h_0 \in L^\infty$ or $h_0 \in C^{\alpha}$ for $0 < \alpha \le 1$. 
	
	By taking \begin{equation*}
	\begin{split}
	\omega(t,x) &= h(t,\theta),\\
	u(t,x) &= 2H(t,\theta) \begin{pmatrix}
	-x_2 \\ x_1
	\end{pmatrix} - \pr_{\theta}H(t,\theta) \begin{pmatrix}
	x_1 \\ x_2
	\end{pmatrix},
	\end{split}
	\end{equation*} we obtain the unique solution to the 2D Euler equation with initial data $\omega_0(x) = h_0(\theta)$. 
\end{theorem}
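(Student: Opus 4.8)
The plan is to treat the statement in three stages: first establish that the map $h \mapsto H$ is well-defined with the claimed kernel, then prove well-posedness of the resulting $1$D transport equation, and finally verify that the ansatz reproduces the unique $2$D Euler solution.

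First I would analyze the solution operator $h \mapsto H$ via Fourier series on $S^1$. Writing $H = \sum_k \hat H_k e^{ik\theta}$, the relation $h = 4H + H''$ becomes $(4-k^2)\hat H_k = \hat h_k$. The operator $4 + \partial_{\theta\theta}$ is therefore invertible on every mode except $k = \pm 2$, where it annihilates $e^{\pm 2i\theta}$. Solvability forces $\hat h_{\pm 2} = 0$, and \emph{this is exactly where the hypothesis $m \ge 3$ enters}: an $m$-fold symmetric $h$ has $\hat h_k = 0$ unless $m \mid k$, and for $m \ge 3$ the index $\pm 2$ is never a multiple of $m$, so the obstruction vanishes automatically. (For $m = 2$ this fails and one is forced into the logarithmic ansatz seen in the Bahouri--Chemin computation above.) Setting $\hat H_k = \hat h_k/(4-k^2)$ for $k \neq \pm 2$ and $\hat H_{\pm 2} = 0$ defines $H$ uniquely subject to the stated orthogonality constraint. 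To obtain the explicit kernel $K_{S^1}$, I would solve the Green's-function problem $(4 + \partial_{\theta\theta})K_{S^1} = \delta_0 - P_2\delta_0$ on $S^1$, where $P_2$ projects onto $\mathrm{span}\{e^{\pm 2i\theta}\}$; the sign factor $\theta/|\theta|$ accounts for the jump in $K_{S^1}'$ at the source, while the remaining terms are the particular solution orthogonal to the second modes. One then checks directly that the Fourier coefficients of the displayed $K_{S^1}$ agree with $(4-k^2)^{-1}$ for $k \neq \pm 2$ and vanish for $k = \pm 2$.

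The decisive structural fact is that inverting the second-order operator $4 + \partial_{\theta\theta}$ gains two derivatives: $h \in L^\infty(S^1)$ yields $H \in W^{2,\infty}(S^1) \hookrightarrow C^{0,1}(S^1)$, and $h \in C^\alpha$ yields $H \in C^{2,\alpha}$, with $\V H\V_{\mathrm{Lip}} \lesssim \V h\V_{L^\infty}$. Hence the transport velocity $2H$ is \emph{always} Lipschitz, with Lipschitz constant controlled by $\V h\V_{L^\infty}$ alone. With this in hand I would set up well-posedness in Lagrangian form: define the flow $\Theta(t,\cdot)$ on $S^1$ by $\tfrac{d}{dt}\Theta = 2H(t,\Theta)$, which by Picard--Lindel\"of is a unique bi-Lipschitz homeomorphism, and set $h(t,\cdot) = h_0 \circ \Theta(t,\cdot)^{-1}$. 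Local existence follows from a contraction-mapping argument on the map ``$h \mapsto H \mapsto$ flow $\mapsto$ transported $h$'' over a short time interval, in $C^\alpha$ (or, for rough data, directly via the Lagrangian formula). Because the equation is pure transport, $\V h(t)\V_{L^\infty} = \V h_0\V_{L^\infty}$ is exactly conserved; this keeps the velocity uniformly Lipschitz for all time and upgrades local to \emph{global} existence, with $C^\alpha$ norms of $h$ growing at most exponentially through the flow-map distortion. Uniqueness reduces to uniqueness of the Lipschitz flow via a Gronwall estimate on the difference of two solutions, valid already for $h_0 \in L^\infty$ since $H$ is Lipschitz irrespective of the roughness of $h$.

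Finally I would confirm the $2$D Euler connection by the stream-function ansatz $\Psi = r^2 H(\theta)$. A direct computation in polar coordinates gives $\Delta\Psi = 4H + H'' = h = \omega$, and $\nabla^\perp\Psi$ expands precisely to the displayed formula for $u$; together with the growth bound $|u(x)| \le C|x|$ and $m$-fold symmetry, this identifies $u$ as the unique Biot--Savart velocity of the preceding $L^\infty$ existence theorem. Since $\omega = h(\theta)$ depends only on the angle, $u \cdot \nabla\omega = u_\theta\, r^{-1}\partial_\theta h = 2H\,\partial_\theta h$, so $\partial_t\omega + u\cdot\nabla\omega = \partial_t h + 2H\partial_\theta h = 0$ is exactly the $1$D equation; radial homogeneity is preserved in time, so by the uniqueness in that theorem this is the unique Euler solution with data $h_0(\theta)$. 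I expect the main obstacle to be the rough-data ($L^\infty$) well-posedness --- existence and especially uniqueness when $h$ is merely bounded --- but the two-derivative smoothing $h \mapsto H$ defuses it, since the Lipschitz regularity of the velocity holds uniformly and depends only on the conserved quantity $\V h\V_{L^\infty}$; the explicit derivation of $K_{S^1}$ is the most computation-heavy but conceptually routine step.
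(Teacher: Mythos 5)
Your proposal is correct and follows essentially the same route as the paper (which defers the proof to \cite{EJ1} and only remarks that one verifies $\nabla\times u = 4H+H''=\omega$ and $\nabla\cdot u = 0$ to characterize the velocity): Fourier inversion of $4+\partial_{\theta\theta}$ with the resonant modes $e^{\pm 2i\theta}$ killed by $m$-fold symmetry for $m\ge 3$, Lipschitz velocity from the two-derivative gain giving transport well-posedness, and the stream-function ansatz $\Psi = r^2H(\theta)$ identifying the 1D flow with the unique symmetric 2D Euler solution. The only point treated lightly is $L^\infty$-uniqueness, where the flow on $S^1$ is not measure-preserving so Jacobians enter the Gronwall argument, but the $W^{2,\infty}$ bound on $H$ controls these and the argument closes as you indicate.
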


Indeed, one may check with direct computations that the velocity defined in the above formula satisfies $\nabla\times u = 4H + H'' = \omega$ and $\nabla\cdot u = 0$, which characterizes the velocity. 

The kernel $K_{S^1}$ is simply the Biot-Savart kernel, restricted to the case of radially homogeneous vorticity. Since the vorticity has $m$-fold symmetry, it is more efficient to symmetrize the kernel as well: we have \begin{equation*}
\begin{split}
K^{(4)}_1(\theta):= \frac{1}{4} \sum_{j=0}^{3} K_{S^1}(\theta + j\pi/2) = \frac{\pi}{8} |\sin(2\theta)|.
\end{split}
\end{equation*} In general, \begin{equation*}
\begin{split}
K^{(m)}_1(\theta):= \frac{1}{m} \sum_{j=0}^{m-1} K_{S^1}(\theta + 2j\pi/m) = c_1^{(m)} |\sin(m\theta/2)| + c_2^{(m)}
\end{split}
\end{equation*} for some constants $c_1 > 0$ and $c_2$. We shall use these expressions in Subsection \ref{subsec:cusp_formation}. 

In the special case when $h_0 $ is the ($m$-fold symmetric) characteristic function of a disjoint union of intervals in $S^1$, we obtain a vortex patch solution on the plane, which is a union of sectors and whose boundary is a union of straight lines passing through the origin. The dynamics of these lines determine the evolution of the patch, and it takes the form of a system of ODEs, which we derive and briefly study in Subsection \ref{subsec:cusp_formation}. 

Lastly, consider the situation where the initial vorticity is the sum of a radially homogeneous function and a smooth function vanishing at the origin. Then, the next result says that near the origin, the dynamics is determined by the 1D evolution of the radially homogeneous part. 

\begin{theorem}[{{cf. \cite[Theorem 23]{EJ1}}}]
	Assume that the initial vorticity $\omega_0 \in \mathring{C}^\alpha(\mathbb{R}^2)$ is $m$-fold symmetric for some $m \ge 3$ and satisfies \begin{equation*}
	\begin{split}
	\omega_0(x) = h_0(\theta) + \tilde{\omega}_0(x),
	\end{split}
	\end{equation*} where $h_0 \in \mathring{C}^\alpha(S^1)$ and $\tilde{\omega}_0 \in C^{1,\alpha}(\mathbb{R}^2)$ with $\tilde{\omega}_0(0) = 0$. Then, the solution satisfies \begin{equation*}
	\begin{split}
	\omega(t,x) = h(t,\theta) + \tilde{\omega}(t,x)
	\end{split}
	\end{equation*} where $h(t,\cdot)$ is the unique solution to the 1D equation with initial data $h_0$, and $\tilde{\omega}(t,\cdot) \in C^{1,\alpha}(\mathbb{R}^2)$ with $\tilde{\omega}(t,0) = 0$. 
\end{theorem}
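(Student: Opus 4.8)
The plan is to treat the statement as a propagation result for this specific decomposition, leaning on the already-cited global well-posedness in $\mathring{C}^\alpha$ under $m$-fold symmetry for the full solution and on the global well-posedness of the $1$D system for the homogeneous profile. First I would fix the homogeneous part: let $h(t,\cdot)$ be the unique solution of $\partial_t h + 2H\partial_\theta h = 0$ with data $h_0$, and let $\omega_h(t,x) = h(t,\theta)$ carry its associated radially homogeneous velocity $u_h$, which by the $1$D theorem is an exact solution of $2$D Euler, is Lipschitz, homogeneous of degree one, and vanishes at the origin. Setting $\tilde\omega := \omega - \omega_h$ and $\tilde u := u - u_h = u[\tilde\omega]$, the goal becomes to show $\tilde\omega(t,\cdot) \in C^{1,\alpha}(\mathbb{R}^2)$ with $\tilde\omega(t,0) = 0$. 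By uniqueness in the $\mathring{C}^\alpha$ class, it suffices to produce the decomposition with these regularities via an a priori estimate, or a fixed point for the pair $(h,\tilde\omega)$ in $C^\alpha(S^1) \times C^{1,\alpha}(\mathbb{R}^2)$.

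The decisive structural input is a symmetry cancellation at the origin. If $\tilde\omega(t,\cdot)$ is $m$-fold symmetric, lies in $C^{1,\alpha}$, and vanishes at the origin, then $\tilde u = \nabla^\perp\Delta^{-1}\tilde\omega$ is $C^{2,\alpha}$ near the origin with $\tilde u(0) = 0$ and, crucially, $\nabla\tilde u(0) = 0$. Indeed, $\nabla\tilde u(0)$ is trace-free by incompressibility, its antisymmetric part is $\tfrac12\tilde\omega(0)=0$, and for $m \ge 3$ the only symmetric trace-free matrix commuting with $R_{2\pi/m}$ is zero; since $m$-fold symmetry forces $\nabla\tilde u(0)$ to commute with $R_{2\pi/m}$, the whole matrix vanishes. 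Consequently $\tilde u(x) = O(|x|^2)$ and $\nabla\tilde u(x)=O(|x|)$ near the origin, so the velocity correction is genuinely higher order than $u_h = O(|x|)$. This localizes the dynamics: near the origin the transporting field $u$ is a higher-order perturbation of $u_h$, so the angular trajectories of the full flow converge to those of the homogeneous flow as $r\to 0$. This already yields $\tilde\omega(t,0)=0$, since $u(t,0)=0$ keeps the origin fixed and the angular profile of $\omega$ at the origin is advected by the homogeneous angular ODE $\dot\theta = 2H$ that defines $h(t,\cdot)$.

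For the $C^{1,\alpha}$ regularity I would subtract the two Euler equations to obtain the transport equation $\partial_t\tilde\omega + u\cdot\nabla\tilde\omega = -\tilde u\cdot\nabla\omega_h$, and propagate the $C^{1,\alpha}$ norm along the flow in a Gronwall scheme using the cited bounds $\|\nabla u\|_{L^\infty}\lesssim 1+\log(1+\|\omega\|_{\mathring{C}^\alpha}/\|\omega\|_{L^\infty})$ and $\|\nabla u\|_{\mathring{C}^\alpha}\lesssim\|\omega\|_{\mathring{C}^\alpha}$. The transport part is standard once the forcing is understood, and the vanishing $\tilde u(0)=0$ makes the forcing vanish at the origin, consistently with $\tilde\omega(t,0)=0$.

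The main obstacle is the forcing $-\tilde u\cdot\nabla\omega_h = -\tfrac{\tilde u_\theta}{r}\,h'(\theta)$, which formally differentiates the profile $h$ that is only $C^\alpha$; taken literally $h'$ is merely a distribution, so a naive estimate would cost a derivative of $h$ and break the $C^{1,\alpha}$ bound on $\tilde\omega$. The way through is to never differentiate $h$: write the forcing in conservative form $-\nabla\cdot(\tilde u\,\omega_h)$ and integrate along characteristics, transferring the derivative onto the smooth factor $\tilde u$, whose vanishing to second order at the origin (from the symmetry cancellation) supplies the $|x|$-gain that exactly compensates the $r^{-1}$ in $\nabla\omega_h$. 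Equivalently, one compares the full flow with the homogeneous flow $\eta_t$ in logarithmic polar coordinates $(\ln r,\theta)$, where $\tilde u$ contributes an angular perturbation of size $O(r)$, and shows that the resulting difference of the two transported profiles is $C^{1,\alpha}$ rather than merely continuous. Carrying out this last step rigorously—showing the rough forcing does not degrade $\tilde\omega$ below $C^{1,\alpha}$—is the technical heart of the argument, and it is precisely here that the hypotheses $m\ge 3$ and $\tilde\omega(t,0)=0$ are indispensable.
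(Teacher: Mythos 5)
Your proposal cannot be compared line-by-line with a proof in this paper, because the paper gives none: immediately after the statement the authors note that the result was used in \cite{EJ1} without proof and that one should adapt the argument for the SQG analogue \cite[Theorem 23]{EJ1}. Judged on its own, your skeleton is the natural one, and your key lemma is correct and genuinely important: for an $m$-fold symmetric $\tilde{\omega} \in C^{1,\alpha}$ with $\tilde{\omega}(0)=0$, the matrix $\nabla\tilde{u}(0)$ is trace-free by incompressibility, its antisymmetric part is $\tfrac{1}{2}\tilde{\omega}(0)=0$, and its symmetric trace-free part is a spin-$2$ object which can commute with $R_{2\pi/m}$, $m\ge 3$, only if it vanishes; hence $\tilde{u}(x)=O(|x|^2)$. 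This is exactly the structural input that makes $m \ge 3$ essential.

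However, the step you yourself call ``the technical heart'' is not a technicality to be deferred --- it is the entire content of the theorem --- and neither of your two suggested repairs survives scrutiny. (a) Rewriting the forcing as $-\nabla\cdot(\tilde{u}\,\omega_h)$ and ``integrating along characteristics'' transfers nothing: Duhamel produces $\int_0^t [\nabla\cdot(\tilde{u}\,\omega_h)](s,\Phi_s(x))\,ds$, a spatial distribution evaluated pointwise along a time curve, which is meaningless; a derivative can only be moved off $\omega_h$ by integrating by parts in space, i.e.\ in a weak formulation, and there the pointwise $C^{1,\alpha}$ control you need is lost. (b) The log-polar flow comparison runs into a quantitative obstruction rather than around one. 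Your own cancellation lemma is sharp: for $m=3$ the quadratic Taylor part of $\tilde{u}$ is a generically nonzero harmonic field (of the form $c\bar{z}^2$ in complex notation, which is divergence-free, curl-free and $3$-fold symmetric), so the angular discrepancy between the full flow and the homogeneous flow is of size $\sim t|x|$ and no smaller. Now take a generic $h_0 \in \mathring{C}^\alpha(S^1)$ with a H\"older crease $|\theta-\theta_*|^\alpha$: the solution $\omega(t)$ carries that crease along the \emph{curved} image of the ray $\{\theta=\theta_*\}$ under the full flow, while $h(t,\cdot)$ carries it along a straight ray, so $\tilde{\omega}(t)=\omega(t)-h(t,\theta)$ oscillates by $(t|x|)^\alpha$ across a spatial gap of width $\sim t|x|^2$, giving a local Lipschitz constant of order $(t|x|)^{\alpha-1}|x|^{-1} \rightarrow \infty$ as $|x| \rightarrow 0$. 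In other words, the size estimates available in your scheme give the \emph{opposite} of the desired conclusion; closing the argument requires exhibiting an exact cancellation (or renormalizing the homogeneous part, or measuring the remainder in a norm weaker than $C^{1,\alpha}$) that your sketch does not contain. The same hand-waving affects your derivation of $\tilde{\omega}(t,0)=0$, which appeals to an ``angular profile at the origin'' that is not a well-defined pointwise object. Until a mechanism for the rough forcing is supplied, the proposal is an outline of the correct framework with its decisive step missing.
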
  {This result was stated and used (implicitly) in the work \cite{EJ1} without a proof. For the proof, one can easily adapt the arguments given in the proof of \cite[Theorem 23]{EJ1}, which establishes the corresponding statement for the SQG (surface quasi-geostrophic) equation.} In particular, $\sup_{t \in [0,T]} |\tilde{\omega}(t,x)| \le C(T) |x|^{1+\alpha}$ for some constant $C(T)$ depending on $T$ and initial data, and therefore, it is negligible relative to $h(t,\theta)$ in the regime $|x| \ll 1$ (unless $h_0$ were trivial to begin with).

\section{Global well-posedness for symmetric patches in an intermediate space}\label{sec:intermediate}

In this section, we show that if a vortex patch admits a level set whose gradient is, roughly speaking, $C^\alpha$ in the angle and non-degenerate, then the corresponding Yudovich solution retains this property for all time. As a consequence, we shall have that the velocity, and hence the flow map and its inverse, are Lipschitz functions in space  {for all finite time.} In this setup, it is necessary to impose that the patch is $m$-fold rotationally symmetric for some $m \ge 3$. 

\begin{definition}\label{def:level_set_circle}
	Let us say that a domain $\Omega$ is a $\mathring{C}^{1,\alpha}$-patch, if it admits a level set $\phi : \mathbb{R}^2 \rightarrow \mathbb{R}$ such that: \begin{itemize}
		\item We have $\phi(x) > 0$ if and only if $x \in \Omega$. 
		\item The tangent vector field of $\phi$ satisfies $\nabla^\perp \phi \in \mathring{C}^{\alpha}(\mathbb{R}^2)$ (In particular $\phi$ is Lipschitz). 
		\item The function $\phi$ is non-degenerate near $\partial \Omega$, i.e., $ \V \nabla^\perp\phi\V_{\inf(\partial\Omega)}:= \inf_{x \in \partial \Omega} |\nabla^\perp\phi| \ge c > 0$. 
	\end{itemize}
\end{definition}

Let us present a practical sufficient condition for a domain $\Omega$ to satisfy Definition \ref{def:level_set_circle}. Observe first that the definition is invariant under composition with bi-Lipschitz $\mathring{C}^{1,\alpha}$ maps. Indeed, assume that $\Omega_0$ is a $\mathring{C}^{1,\alpha}$ patch with corresponding level set function $\phi_0$ and that $\Psi$ is bi-Lipchitz and $\nabla\Psi\in\mathring{C}^{0,\alpha}$. Now consider $\Omega:=\Psi(\Omega_0)$. Consider the new level set function $\phi=\phi_0\circ\Psi^{-1}$. Observe that \[\{x: \phi(x)>0\}=\{x: \phi_0\circ\Psi^{-1}(x)=0\}=\Psi(\{x:\phi_0(x)>0\})=\Omega.\] Also observe that $\nabla\phi\in\mathring{C}^{0,\alpha}$. Moreover, since $\Phi$ is Bi-Lipschitz we know that:
\[\|\nabla\phi\|_{\inf(\partial\Omega)}=\|\nabla\Psi^{-1}\circ\Psi(x)\nabla\phi_0(x)\|_{\inf(\partial\Omega_0)}\geq kc_0>0,\] since we know that $\|\nabla\Psi^{-1}v\|\geq k\|v\|$ for all vectors $v\in\mathbb{R}^2$ for some constant $k>0$ while $\phi_0$ is non-degenerate near $\partial\Omega_0$. This concludes the proof that the definition is invariant under composition with certain bi-Lipschitz maps. A non-smooth example of such an $\Omega_0$ is just the domain $\Omega_0=\{x_1x_2>0\}$ where we take $\phi_0(x_1,x_2)=\frac{x_1x_2}{|x|}$. In this case we see that $\nabla\phi_0\in\mathring{C}^{0,\alpha}$ easily while $|\nabla\phi|=1$ along $\partial\Omega_0$. The above then tells us that any sufficiently nice bi-Lipschitz deformation of this domain is also a $\mathring{C}^{1,\alpha}$ patch.

We are ready to state our main result of this section. \begin{theorem}\label{thm:intermediate}
	Assume that the initial patch $\Omega_0$ is $m$-fold symmetric for some $m \ge 3$ and admits a level set $\phi_0$ described in Definition \ref{def:level_set_circle}. Then, the Yudovich solution $\Omega_t$ continues to have this property; more specifically, by defining $\phi(t)$ as the solution of \eqref{eq:evolv_phi}, we have a global-in-time bounds \begin{equation}\label{eq:Ccirclealpha_phi_expexp}
	\begin{split}
	\V \nabla^\perp\phi(t)\V_{\mathring{C}^\alpha(\mathbb{R}^2)} \le C\exp(C\exp(Ct)),
	\end{split}
	\end{equation} \begin{equation}\label{eq:inf_bound}
	\begin{split}
	\V \nabla^\perp\phi(t)\V_{\inf(\partial\Omega_t)} \ge c\exp(-ct),
	\end{split}
	\end{equation} and \begin{equation}\label{eq:Linfty_vel_gradient}
	\begin{split}
	\V \nabla u(t)\V_{L^\infty(\mathbb{R}^2)} \le C \exp(Ct),
	\end{split}
	\end{equation} with constants $C, c > 0$ depending only on $\nabla^\perp\phi_0$ and $0 < \alpha <1$.
\end{theorem}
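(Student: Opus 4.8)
The plan is to transplant the Bertozzi--Constantin argument reviewed in Subsection \ref{subsec:Bertozzi_Constantin} into the scale-invariant H\"older class $\mathring{C}^\alpha$, replacing each occurrence of $C^\alpha$ by $\mathring{C}^\alpha$ and supplying the two ``frozen-time'' lemmas in scale-invariant form. Two structural features make this possible. First, since $\omega_0$ is $m$-fold symmetric with $m\ge 3$, the Yudovich velocity is $m$-fold symmetric for all time, so $\Omega_t$ and the transported level set $\phi(t)$ solving \eqref{eq:evolv_phi} stay $m$-fold symmetric; in particular $u(t,0)=0$, the origin is a fixed point of the flow, and the scaling structure underlying $\mathring{C}^\alpha$ is respected near the origin. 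Second, the geometric cancellation underlying \eqref{eq:key_identity} is purely algebraic --- it uses only that $\nabla^\perp\phi$ is divergence-free and tangent to $\partial\Omega$ --- and therefore survives verbatim here.

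First I would establish the directional estimate in scale-invariant form. Starting from \eqref{eq:key_identity}, whose right-hand side is the commutator integral $\int_\Omega \nabla K(x-y)(\nabla^\perp\phi(x)-\nabla^\perp\phi(y))\,dy$, I would prove $\V \nabla u\,\nabla^\perp\phi\V_{\mathring{C}^\alpha}\le C\V\nabla u\V_{L^\infty}\V\nabla^\perp\phi\V_{\mathring{C}^\alpha}$, the analog of \eqref{eq:key_estimate}. The new ingredient relative to \cite{BeCo} is bookkeeping of the weight $|x|^\alpha$: since membership in $\mathring{C}^\alpha$ encodes $C^\alpha$-regularity at scale $|x|$, one has $|\nabla^\perp\phi(x)-\nabla^\perp\phi(y)|\lesssim |x-y|^\alpha\max(|x|,|y|)^{-\alpha}\V\nabla^\perp\phi\V_{\mathring{C}^\alpha}$, which is exactly the scaling needed to run the usual $C^\alpha$ kernel estimates at each dyadic distance from the origin and then sum. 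The second frozen-time lemma, the scale-invariant $L^\infty$ bound
\[ \V\nabla u\V_{L^\infty}\le C\Bigl(1+\log\bigl(1+\V\nabla^\perp\phi\V_{\mathring{C}^\alpha}/\V\nabla^\perp\phi\V_{\inf(\partial\Omega)}\bigr)\Bigr), \]
is where the symmetry hypothesis is essential: away from the origin the Bertozzi--Constantin bound \eqref{eq:geometric_lemma} applies at the local scale, while near the origin a generic $\mathring{C}^{1,\alpha}$ patch looks like a union of sectors, whose velocity gradient \eqref{eq:velgrad_sectors} contains a $\log$-divergent part governed by the second angular Fourier coefficients $(c,s)$. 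The $m\ge 3$ symmetry forces $c=s=0$, removing the divergence; combining the two regimes through the preserved scaling yields the uniform logarithmic bound.

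With the two lemmas in hand, I would differentiate \eqref{eq:evolv_phi} to obtain $\partial_t\nabla^\perp\phi+(u\cdot\nabla)\nabla^\perp\phi=\nabla u\,\nabla^\perp\phi$ and run the Gronwall scheme of Subsection \ref{subsec:Bertozzi_Constantin}. Along the (origin-fixing, symmetry-preserving) flow, the stretching term on the right is controlled in $\mathring{C}^\alpha$ by the directional estimate, and the transport term costs an extra factor $\V\nabla u\V_{L^\infty}$ from the distortion of distances by a flow whose gradient is bounded by $\V\nabla u\V_{L^\infty}$, giving $\frac{d}{dt}\V\nabla^\perp\phi\V_{\mathring{C}^\alpha}\le C\V\nabla u\V_{L^\infty}\V\nabla^\perp\phi\V_{\mathring{C}^\alpha}$. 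Simultaneously, differentiating $|\nabla^\perp\phi|$ along each boundary trajectory yields $\frac{d}{dt}\V\nabla^\perp\phi\V_{\inf(\partial\Omega_t)}\ge -C\V\nabla u\V_{L^\infty}\V\nabla^\perp\phi\V_{\inf(\partial\Omega_t)}$. Feeding the $L^\infty$ bound into these inequalities closes a coupled logarithmic Gronwall system for $\log\V\nabla^\perp\phi\V_{\mathring{C}^\alpha}$ and $\log\V\nabla^\perp\phi\V_{\inf(\partial\Omega_t)}$, producing the double-exponential bound \eqref{eq:Ccirclealpha_phi_expexp} and the lower bound \eqref{eq:inf_bound}; substituting both back into the $L^\infty$ lemma gives the single-exponential bound \eqref{eq:Linfty_vel_gradient}.

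The main obstacle I anticipate is the scale-invariant $L^\infty$ lemma: making rigorous the passage from the explicit sector computation \eqref{eq:velgrad_sectors}, valid for exact homogeneous profiles, to a general $\mathring{C}^{1,\alpha}$ patch near the fixed origin, and matching it uniformly to the away-from-origin regime across all dyadic scales. This requires quantifying that the vanishing of the second Fourier coefficients is stable under the $\mathring{C}^\alpha$-perturbation separating a true sector from the actual patch, so that the residual velocity gradient is genuinely bounded --- not merely of lower order than $\log$ --- uniformly in the scale. A secondary technical point is verifying that the weighted $\mathring{C}^\alpha$ seminorm is propagated, rather than amplified beyond the stated rate, by the Yudovich flow, whose near-origin H\"older exponent degenerates like $e^{\pm ct}$; here the fact that the origin is an exact fixed point, together with the conservation of $\V\omega\V_{L^\infty}$, is what keeps the scaling argument intact.
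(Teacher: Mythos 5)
Your overall architecture coincides with the paper's: the same two frozen-time lemmas (a directional $\mathring{C}^\alpha$ estimate for $\nabla u\,\nabla^\perp\phi$ based on the identity \eqref{eq:key_identity_singular}, and a logarithmic $L^\infty$ bound for $\nabla u$), followed by the Bertozzi--Constantin Gronwall scheme along the flow. Your treatment of the directional estimate (bookkeeping the weight $|x|^\alpha$, using that membership in $\mathring{C}^\alpha$ encodes $C^\alpha$-regularity at scale $|x|$) and your final Gronwall loop are essentially identical to the paper's Lemma \ref{lem:key_identity} and its proof of Theorem \ref{thm:intermediate}.

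The gap is in your plan for the logarithmic $L^\infty$ lemma, which is precisely the step you flag as the main obstacle. You propose to handle the influence of the symmetry by viewing the patch near the origin as a perturbation of a union of exact sectors, invoking \eqref{eq:velgrad_sectors} so that $m$-fold symmetry kills the second Fourier coefficients $(c,s)$, and then "quantifying stability" of this vanishing under the perturbation separating a true sector from the actual patch. This cannot work as stated: a general patch satisfying Definition \ref{def:level_set_circle} need not resemble any union of sectors near the origin --- the logarithmic spirals of the paper's Example (ii) belong to the class and wind indefinitely as $r\rightarrow 0$, so there is no homogeneous profile to perturb from, and the discrepancy is not small in any norm that would control angular Fourier coefficients scale by scale. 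The paper's Lemma \ref{lem:key_estimate} requires no structural assumption on the patch at the origin at all. It splits $\int_\Omega \nabla K(x-y)\,dy$ into three regions: $|x-y|<\delta|x|$, handled by the Bertozzi--Constantin geometric lemma run at the local scale (this is where $\nabla^\perp\phi\in\mathring{C}^\alpha$ enters, with local $C^\alpha$-norm $\approx|x|^{-\alpha}$); the intermediate annulus $\delta|x|\le|x-y|<10|x|$, which trivially costs $C\log(\delta^{-1})$; and the far field $|x-y|\ge 10|x|$, where the $m$-fold symmetry of the set $\Omega$ is used to replace $\nabla K(x-y)$ by the symmetrized average $\frac{1}{m}\sum_{j=0}^{m-1}\nabla K(x-R_{2\pi j/m}y)$, which by \cite[Lemma 2.17]{EJ1} decays pointwise like $|x|/|y|^3$ and therefore integrates to an absolute constant. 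The cancellation is thus an exact algebraic property of the symmetrized kernel, valid for any $m$-fold symmetric set with $m\ge 3$; it requires neither a sector approximation nor any Fourier decomposition of the domain. Replacing your near-origin argument with this kernel symmetrization closes the lemma, and with it your proof.
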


\begin{remark}
	Note that, in the above theorem, we do not require the initial patch $\Omega_0$ to have compact support. However, we do require that the gradient $\nabla^\perp\phi_0$ to have  {uniformly bounded $\mathring{C}^\alpha$-norm on all of $\mathbb{R}^2$.}
\end{remark}

Recall from Subsection \ref{subsec:symmetries_and_critical} that the 2D Euler equation is globally well-posed with $\omega_0 \in \mathring{C}^\alpha$ under symmetry. Therefore, the global well-posedness of the patch admitting a level set (under the same symmetry assumption) with $\nabla^\perp\phi_0 \in \mathring{C}^\alpha$ is a natural analogue of the classical global well-posedness result of $C^{1,\alpha}$-patches. As an immediate consequence of the above theorem, we have that, 
\begin{corollary}
	Under the assumptions of Theorem \ref{thm:intermediate}, the flow map $\Phi_t$ is a Lipschitz bijection of the plane with a Lipschitz inverse for all times $t \ge 0$.
\end{corollary}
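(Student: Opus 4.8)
The plan is to derive the bi-Lipschitz bounds for $\Phi_t$ directly from the global-in-time velocity gradient bound \eqref{eq:Linfty_vel_gradient} via a standard Gronwall argument; all of the analytic difficulty has already been absorbed into Theorem \ref{thm:intermediate}, so what remains is essentially an ODE estimate. First I would record that \eqref{eq:Linfty_vel_gradient} makes $u(t,\cdot)$ globally Lipschitz in space, with Lipschitz constant $L(t) := \V \nabla u(t)\V_{L^\infty(\mathbb{R}^2)} \le C\exp(Ct)$; since $\mathbb{R}^2$ is convex, this yields $|u(t,a) - u(t,b)| \le L(t)|a-b|$ for all $a,b \in \mathbb{R}^2$. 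The flow map $\Phi_t$ is already known to be a homeomorphism of $\mathbb{R}^2$ from the Yudovich theory recalled in the introduction, so bijectivity itself is not in question; the content of the corollary is the quantitative two-sided Lipschitz control on $\Phi_t$ and $\Phi_t^{-1}$.

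Next I would fix two distinct points $x,y$ and study the scalar quantity $d(t) := |\Phi(t,x) - \Phi(t,y)|$. Working with $d(t)^2$ to sidestep the non-differentiability of the absolute value, one computes $\frac{d}{dt} d(t)^2 = 2\big(\Phi(t,x) - \Phi(t,y)\big)\cdot\big(u(t,\Phi(t,x)) - u(t,\Phi(t,y))\big)$, and the Lipschitz bound on $u$ gives $\left|\frac{d}{dt} d(t)^2\right| \le 2L(t)\, d(t)^2$. Hence $\left|\frac{d}{dt}\log d(t)\right| \le L(t)$ so long as $d(t) > 0$, and integrating in time produces the two-sided estimate
\[
|x-y|\exp\left(-\int_0^t L(s)\,ds\right) \le |\Phi(t,x) - \Phi(t,y)| \le |x-y|\exp\left(\int_0^t L(s)\,ds\right),
\]
where $\int_0^t L(s)\,ds < \infty$ for every fixed $t$ by \eqref{eq:Linfty_vel_gradient}.

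The upper bound is precisely the assertion that $\Phi_t$ is Lipschitz with constant $\exp\!\big(\int_0^t L\big)$. For the inverse, I would substitute $\xi = \Phi(t,x)$ and $\eta = \Phi(t,y)$, so that $x = \Phi_t^{-1}(\xi)$ and $y = \Phi_t^{-1}(\eta)$; the lower bound then rearranges to $|\Phi_t^{-1}(\xi) - \Phi_t^{-1}(\eta)| \le |\xi-\eta|\exp\!\big(\int_0^t L\big)$, giving the Lipschitz bound for $\Phi_t^{-1}$ with the same constant. As a byproduct, the lower bound shows that trajectories issued from distinct points never collide, reconfirming injectivity of $\Phi_t$ and retroactively justifying that $d(t) > 0$ throughout the argument.

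I do not anticipate a genuine obstacle here, as the corollary is a soft consequence of having a spatially Lipschitz velocity field. The only point that deserves a word of care is the differentiation of $d(t)$ at instants where it could vanish; this is cleanly handled either by passing to the smooth quantity $d(t)^2$ or by invoking the lower bound itself, which guarantees $d(t) > 0$ for all $t$ whenever $x \ne y$. The genuinely hard work — establishing that the velocity gradient stays in $L^\infty$ for all time, which relies on the $m$-fold symmetry with $m \ge 3$ — is exactly the substance of Theorem \ref{thm:intermediate} and is here taken as given.
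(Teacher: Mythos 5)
Your proof is correct and is precisely the argument the paper intends: the corollary is stated as an immediate consequence of the velocity gradient bound \eqref{eq:Linfty_vel_gradient}, and your Gronwall estimate on $|\Phi(t,x)-\Phi(t,y)|^2$, yielding the two-sided bounds $|x-y|\exp(-\int_0^t L) \le |\Phi(t,x)-\Phi(t,y)| \le |x-y|\exp(\int_0^t L)$ and hence the Lipschitz bound for $\Phi_t^{-1}$, is exactly the standard deduction being invoked. Your care about differentiating at points where the distance could vanish (handled via $d(t)^2$) is appropriate and does not change the substance.
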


Before we proceed to the proof, let us describe a few classes of vortex patches satisfying the requirements of Definition \ref{def:level_set_circle}. 

\begin{er*} Theorem \ref{thm:intermediate} establishes global well-posedness for each of the following  {classes of examples}, under the assumption of $m$-fold rotational symmetry with some $m \ge 3$. 
	
	\begin{itemize}
		\item[(i)] Sectors: Assume that for some ball $B_0(r)$, the intersection $\Omega_0 \cap B_0(r)$ is a union of sectors meeting at the origin (see Figures \ref{fig:petal}, \ref{fig:multiple_corners} for symmetric examples). In addition, assume that  $\partial\Omega_0$ is $C^{1,\alpha}$-smooth in the complement of $B_0(r)$. Then, one may take a level set locally by $\phi_0(x) = r h_0(\theta)$ in polar coordinates with some $h_0(\cdot) \in C^{1,\alpha}(S^1)$, where $h_0$ can be appropriately chosen that $\phi_0$ satisfies Definition \ref{def:level_set_circle}. Moreover, the same holds for the image $\Psi(\Omega_0)$ of such a patch $\Omega_0$ under a global $C^{1,\alpha}$-diffeomorphism of the plane $\Psi$ satisfying $|\Psi(x)|\le C|x|^{1+\alpha}$ for some $C > 0$. These facts are proved in Lemma \ref{lem:admit_level_set} of the next section, where we study in detail the evolution of such vortex patches, under the assumption of $m$-fold symmetry.  
		
		This class of vortex patches (which are locally the $C^{1,\alpha}$-diffeomorphic image of a union of sectors meeting at the origin) are studied in great detail in Section \ref{sec:main}. Unfortunately, the fact that $\nabla^\perp\phi$ stays in $\mathring{C}^\alpha$ for all time is not sufficient to conclude that the evolved patch is still given by the image of some $C^{1,\alpha}$-diffeomorphism. Therefore, a careful local analysis should be supplemented to recover this information (see Subsection \ref{subsec:local_estimate}).  
		
		\item[(ii)] Logarithmic spirals: Take some periodic indicator function $\chi_I$ where $I$ is some interval of $S^1 = [0,2\pi)$ and consider a patch $\Omega_0$ which is locally given by \begin{equation*}
		\begin{split}
		\omega_0(r,\theta) = \chi_I \circ \left( -c \log(r) + \theta \right), \qquad r < 1/2
		\end{split}
		\end{equation*} where $c > 0$ is some constant. Taking $h_0 \in C^{1,\alpha}(S^1)$ vanishing precisely on the endpoints of the interval $I$ with non-zero derivatives, and then by setting $\phi_0 = rh_0(-c\log(r)+\theta)$, one may check that this function satisfies the requirements of Definition \ref{def:level_set_circle} (assuming for instance $\Omega_0$ is a $C^{1,\alpha}$ patch in  {the region $\{r  \ge 1/2\}$}). This boils down to checking that, for a given function $\zeta \in C^\alpha(S^1)$ with $0 < \alpha \le 1$, $\zeta \circ (- c\ln r + \theta) \in \mathring{C}^\alpha(\mathbb{R}^2)$. For simplicity, take the case $\alpha =1 $, and then \begin{equation*}
		\begin{split}
		\frac{1}{r}\pr_{\theta} \zeta = \frac{1}{r}\zeta', \qquad \pr_r \zeta = - \frac{c}{r}\zeta', 
		\end{split}
		\end{equation*} so that switching to rectangular coordinates, $|x||\nabla \zeta(x)| \in L^\infty(\mathbb{R}^2)$, or equivalently $\zeta(x) \in \mathring{C}^1(\mathbb{R}^2)$. Similarly as in the case of (i), one can treat patches which are given as the image of an exact spiral by a $C^{1,\alpha}$-diffeomorphism of the plane fixing the origin. 
		
		In the special case when the initial vorticity is given exactly by $\omega_0 = h_0(-c\log r + \theta)$, then as we have seen in the introduction, a 1D evolution equation satisfied by $h(t,\cdot)$ can be derived, so that $\omega(t,x) := h(-c\log r + \theta, t)$ solves the 2D Euler equations. This remark is due to Julien Guillod (private communication).
		
		It is interesting question to see if one can start with a patch which locally looks like a union of sectors (as in the case (i)) and converges to a logarithmic spiral when $t \rightarrow + \infty$. 
		
		The patch corresponding to the case $c = 5$ and $I = [0,5\pi/24]$, with 3-fold symmetrization, is given in Figure \ref{fig:spiral}. 
		
		\item[(iii)] Cusps: Consider the (infinite) region bounded by two tangent $C^{1,\alpha}$-functions $f_0, g_0 : [0,\infty) \rightarrow \mathbb{R}$: \begin{equation*}
		\begin{split}
		\Omega_0 =  \{ (x_1,x_2) : g_0(x_1) < x_2 < f_0(x_1) \}, \qquad f'_0(0) = g_0'(0) = 0, \qquad g_0 < f_0 \mbox{ on } (0,\infty).
		\end{split}
		\end{equation*} Here, we require that $f_0$ and $g_0$ are uniformly $C^{1,\alpha}$ in all of $\mathbb{R}$. A model case is provided by taking $f_0(x_1) = x_1^{1+\alpha}$ and $g_0(x_1) = -x_1^{1+\alpha}$ (locally for $x_1$ near 0). One may take a number of such cusps (possibly with different boundary profiles for each of them) and rotate each of them around the origin to make them disjoint. In particular, the resulting union of cusps can be $m$-fold symmetric for any $m \ge 3$. In this setting, it is convenient to consider the complement $\mathbb{R}^2 \backslash \Omega$, which is more-or-less a union of corners. Then one may take some $\phi_0$ with $\nabla^\perp\phi_0 \in \mathring{C}^\alpha$ defined on  $\mathbb{R}^2 \backslash \Omega_0$. It can be taken to be $C^{1,\alpha}$ smooth when one ``crosses'' each of the cusps (see Figure \ref{fig:cusp}). We discuss them in some detail in Subsection \ref{subsec:extensions}.  
		
		Danchin has shown in \cite{Da} that the cusp-like singularities in a smooth vortex patch propagates globally in time. We are also aware of works of Serfati in this direction. It is likely that the following alternative argument for the global well-posedness would go through: first apply Theorem \ref{thm:intermediate} to obtain global propagation in the intermediate class $\mathring{C}^\alpha$, and supply an additional local argument to recover $C^{1,\alpha}$-regularity up to the point of singularity. 
		
		\item[(iv)] Bubbles accumulating at the origin: Take a sequence of smooth $C^{1,\alpha}$-patches $\{ U_n \}_{n \ge 0}$, which for simplicity are assumed to have comparable diameters (say less than 1/2) and $C^{1,\alpha}$-characteristic scales. Now rescale the $n$-th patch $U_n$ by a factor of $2^{-n}$, denote it by $\tilde{U}_n$, and place it inside the annulus $A_n = \{ x : 2^{-n} < |x| < 2^{-n+1} \}$. Then define $\Omega_0$ as the union of rescaled patches $\cup_{n \ge 0} \tilde{U}_n$. It can be easily arranged that, by placing several disjoint patches in each annulus region, the entire set $\Omega_0$ is $m$-fold symmetric for some $m \ge 3$. 
		
		Assuming $m$-fold symmetry, Theorem \ref{thm:intermediate} applies to show that the evolution of the (rescaled) $n$-th patch $\tilde{U}_n$ has boundary in $C^{1,\alpha}$ with its characteristic satisfying \begin{equation*}
		\begin{split}
		c(T) 2^{ n} \le \Gamma_n(t) \le C(T) 2^{  n}
		\end{split}
		\end{equation*} for any $T > 0$ and $t \in [0,T]$. In particular, by rescaling each of $\tilde{U}_n$ back to a patch of diameter $O(1)$, we have that their $C^{1,\alpha}$-characteristics are uniformly bounded from above and below. Even without the symmetry, it can be shown that the boundary of each $\tilde{U}_n$ stays in $C^{1,\alpha}$ for all time. However, a uniform bound (after rescaling) cannot hold in general. Indeed, such a non-uniform growth was utilized in the work of Bourgain-Li \cite{BL1,BL2} (see also \cite{EJ,JY}), after smoothing out the patches appropriately, to produce examples of $\omega_0 \in H^1(\mathbb{R}^2)$ which escapes $H^1(\mathbb{R}^2)$ instantaneously for $t > 0$. 
	\end{itemize}
\end{er*}

\begin{figure}
	\includegraphics[scale=0.25]{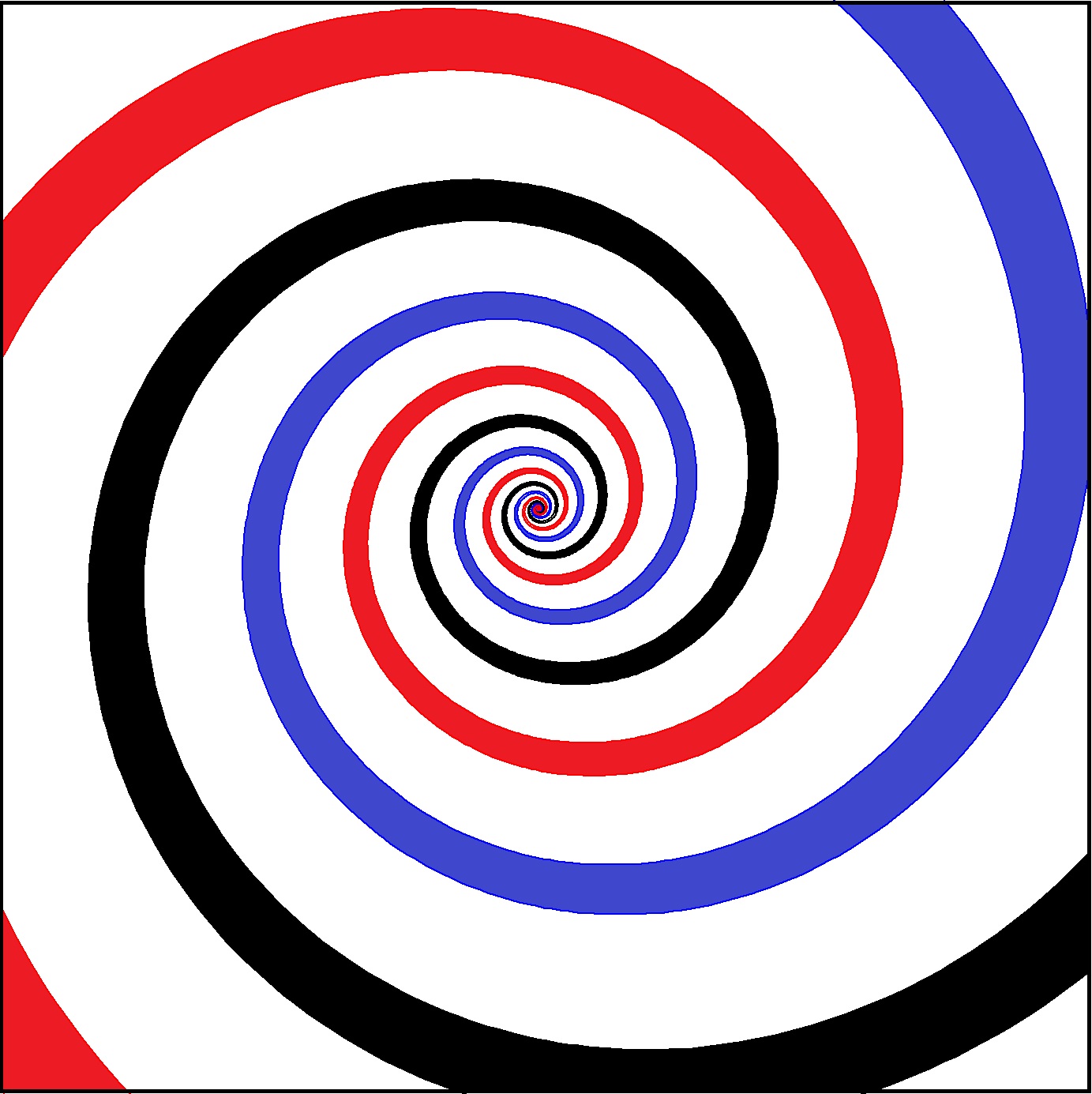} 
	\centering
	\caption{A 3-fold symmetric logarithmic spiral.}
	\label{fig:spiral}
\end{figure}

The proof of Theorem \ref{thm:intermediate} is parallel to the one given in \cite{BeCo} and based on two ``frozen time'' estimates, except that the $m$-fold rotational symmetry gets involved in the current setup.  We first observe that in this setting, an identity of the form \eqref{eq:key_identity} still holds: \begin{equation}\label{eq:key_identity_singular}
\begin{split}
\nabla u \nabla^\perp\phi (x) =  \int_{\Omega} \nabla K(x-y) \left( \nabla^\perp\phi(x) - \nabla^\perp\phi(y) \right) dy,
\end{split}
\end{equation} since all that was necessary to establish the above formula is to have the vector field $\nabla^\perp\phi$ divergence free and tangent to the boundary of the patch. Given the identity \eqref{eq:key_identity_singular}, we can prove the following estimate: \begin{lemma}\label{lem:key_identity}
	Assume that a domain $\Omega$ admits a level set $\phi$ satisfying Definition \ref{def:level_set_circle}.	Then, we have a bound \begin{equation*}
	\begin{split}
	\V \nabla u \nabla^\perp\phi\V_{\mathring{C}^\alpha(\mathbb{R}^2)} \le C \left( 1 + \V \nabla u\V_{L^\infty(\mathbb{R}^2)} \right) \V \nabla^\perp\phi\V_{\mathring{C}^\alpha(\mathbb{R}^2)}.
	\end{split}
	\end{equation*}
\end{lemma}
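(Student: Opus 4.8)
The plan is to establish the scale-invariant H\"older bound for the directional derivative $\nabla u\,\nabla^\perp\phi$ by working directly from the pointwise identity \eqref{eq:key_identity_singular}. The key structural observation is that this identity represents $\nabla u\,\nabla^\perp\phi$ as a singular integral in which the kernel singularity of $\nabla K(x-y) \sim |x-y|^{-2}$ is tamed by the factor $\nabla^\perp\phi(x) - \nabla^\perp\phi(y)$, which vanishes like $|x-y|^\alpha$ up to the homogeneous $\mathring{C}^\alpha$-seminorm. Because we are working in the scale-invariant space $\mathring{C}^\alpha$ rather than the ordinary $C^\alpha$, I expect the whole argument to reduce to the analogous computation in \cite{BeCo}, except that every estimate must be made homogeneous under the scaling $x \mapsto \lambda x$, and one must exploit the $m$-fold symmetry to control the contributions from $y$ far away from $x$.

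**First I would** recall the definition of the $\mathring{C}^\alpha$-norm and reduce the claim to two pieces: the $L^\infty$-bound $\||x|^\alpha (\nabla u\,\nabla^\perp\phi)(x)\|_{L^\infty}$ and the homogeneous seminorm $\||x|^\alpha (\nabla u\,\nabla^\perp\phi)\|_{\dot C^\alpha}$. For the $L^\infty$ part I would insert the factor $|x|^\alpha$ inside the integral in \eqref{eq:key_identity_singular}, split the domain of integration into the near region $\{|x-y| \le |x|/2\}$ and the far region $\{|x-y| > |x|/2\}$. On the near region, $|y| \approx |x|$, so I can replace $|x|^\alpha$ by $|y|^\alpha$ up to constants and use $|\nabla^\perp\phi(x) - \nabla^\perp\phi(y)| \lesssim \||x|^\alpha\nabla^\perp\phi\|_{\dot C^\alpha}|x-y|^\alpha / |x|^\alpha$ against the integrable singularity $|x-y|^{-2+\alpha}$; this is the mechanism that converts the scale-invariant seminorm into a clean bound. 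On the far region the kernel is bounded and I use $\|\nabla^\perp\phi\|_{L^\infty}$ together with the standard Biot–Savart $L^\infty$-bound, which is where the log-Lipschitz structure and hence the factor $(1 + \|\nabla u\|_{L^\infty})$ enters.

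**Next I would** treat the seminorm by fixing two points $x, x'$ and estimating $||x|^\alpha (\nabla u\,\nabla^\perp\phi)(x) - |x'|^\alpha (\nabla u\,\nabla^\perp\phi)(x')|$ relative to $|x-x'|^\alpha$. The standard dichotomy applies: if $|x-x'|$ is comparable to $\max(|x|,|x'|)$ the difference is controlled directly by the two terms' $L^\infty$-bounds already obtained; if $|x-x'| \ll \max(|x|,|x'|)$ one differences the integrand, using a second region decomposition relative to the annulus of radius $|x-x'|$ around $x$ and $x'$, and estimates the kernel difference $|\nabla K(x-y) - \nabla K(x'-y)| \lesssim |x-x'|\,|x-y|^{-3}$ on the outer region. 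Throughout, the factor $\nabla^\perp\phi(x)-\nabla^\perp\phi(y)$ (and its $x'$-analogue) supplies the $\alpha$-power of $|x-y|$ needed to make every integral converge, while the $\mathring{C}^\alpha$ seminorm of $\nabla^\perp\phi$ keeps all constants scale-invariant. The $m$-fold symmetry is what guarantees that the far-field velocity does not produce an anomalous $|x|\log|x|$ growth, so the $\|\nabla u\|_{L^\infty}$ appearing on the right is finite and controlled via the first frozen-time lemma of \cite{EJ1}.

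**The hard part will be** bookkeeping the homogeneity carefully in the seminorm estimate: because $|x|^\alpha$ and $|x'|^\alpha$ are evaluated at \emph{different} base points while the integrals run over the same $\Omega$, one must repeatedly trade $|x|^\alpha$ against $|y|^\alpha$ and $|x'|^\alpha$ without losing scale-invariance, and the intermediate region $|x'| \lesssim |x-y| \lesssim |x|$ requires the sharpest care to avoid a spurious logarithm. I expect that, as in \cite{BeCo}, the non-degeneracy condition $\|\nabla^\perp\phi\|_{\inf(\partial\Omega)} \ge c > 0$ is not actually needed for \emph{this} lemma (it enters only later, when extracting the geometric length scale $\Gamma$), so the proof should close using only the $\mathring{C}^\alpha$-regularity of $\nabla^\perp\phi$ and the $L^\infty$-bound on $\nabla u$, yielding exactly the claimed inequality with its multiplicative $(1 + \|\nabla u\|_{L^\infty})$ factor.
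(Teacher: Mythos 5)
Your overall route is the same as the paper's: both proofs start from the identity \eqref{eq:key_identity_singular}, dispose of pairs of points at comparable distance ($|x-x'|\gtrsim |x|$) using only the $L^\infty$ bound, and for $|x-x'|\ll |x|$ split the difference of the two integrals into near-region terms over $\{|x-y|\lesssim |x-x'|\}$ (where the $\mathring{C}^\alpha$-seminorm of $\nabla^\perp\phi$ turns the kernel into the integrable $|x-y|^{-2+\alpha}$), a truncated-singular-integral term, and a far-region term controlled by $|\nabla K(x-y)-\nabla K(x'-y)|\lesssim |x-x'|\,|x-y|^{-3}$; this is exactly the paper's four-term decomposition $I+II+III+IV$ adapted from \cite{BeCo}. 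However, two points in your plan are wrong and would need repair before the argument closes.

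First, the $L^\infty$ component of the $\mathring{C}^\alpha$-norm is $\Vert \nabla u\,\nabla^\perp\phi\Vert_{L^\infty}$, not $\Vert\, |x|^\alpha\, \nabla u\,\nabla^\perp\phi\,\Vert_{L^\infty}$: the weight $|x|^\alpha$ enters only inside the seminorm $\Vert\, |x|^\alpha f\,\Vert_{\dot{C}^\alpha}$. The quantity you propose to bound is generically infinite (the setting allows unbounded patches, and the weight grows), so inserting $|x|^\alpha$ into the integral and estimating region by region cannot succeed; indeed your own near-region computation, multiplied by the outer factor $|x|^\alpha$, yields $|x|^\alpha \Vert \nabla^\perp\phi\Vert_{\mathring{C}^\alpha}$, which is unbounded. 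The correct component needs no integral estimate at all: it follows pointwise from the product structure, $|\nabla u\,\nabla^\perp\phi| \le \Vert \nabla u\Vert_{L^\infty}\Vert \nabla^\perp\phi\Vert_{L^\infty}$, which is how the paper handles it. Relatedly, the factor $(1+\Vert\nabla u\Vert_{L^\infty})$ enters not through this $L^\infty$ piece but in the seminorm estimate, via the bound $\bigl|\int_{\{|x-y|\ge 2|h|\}\cap\Omega} \nabla K(x-y)\,dy\bigr| \le C\left(1+\Vert\nabla u\Vert_{L^\infty}\right)$ for the truncated integral multiplying the constant vector $|x|^\alpha\nabla^\perp\phi(x)-|x+h|^\alpha\nabla^\perp\phi(x+h)$.

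Second, $m$-fold symmetry is not a hypothesis of this lemma: Definition \ref{def:level_set_circle} contains no symmetry assumption, and the paper remarks explicitly that symmetry is unnecessary for this particular estimate. Your appeals to it ("one must exploit the $m$-fold symmetry to control the contributions from $y$ far away from $x$", and the claim that symmetry is what makes $\Vert\nabla u\Vert_{L^\infty}$ finite) are therefore out of place: $\Vert\nabla u\Vert_{L^\infty}$ sits on the right-hand side as a given quantity, its finiteness is not this lemma's concern, and every far-field integral in the seminorm estimate converges absolutely thanks to the $|x-x'|\,|x-y|^{-3}$ decay, with no symmetrization. Symmetry becomes essential only in the companion Lemma \ref{lem:key_estimate}. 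Your closing paragraph, which expects the proof to use only the $\mathring{C}^\alpha$-regularity of $\nabla^\perp\phi$ and the given $L^\infty$ bound on $\nabla u$, is the correct instinct, but it contradicts those earlier appeals.
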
 This is just a particular case of a general estimate about the space $\mathring{C}^\alpha$, which works in the setting of convolution against classical Calderon-Zygmund kernels. It is worth noting that the symmetry is not necessary for this particular lemma.  Next, \begin{lemma}\label{lem:key_estimate}
	Under the assumptions of Lemma \ref{lem:key_identity}, we have the following logarithmic bound: \begin{equation}\label{eq:key_estimate_singular}
	\begin{split}
	\V \nabla u\V_{L^\infty(\mathbb{R}^2)} \le C_\alpha \left( 1 + \log\left( 1+ \frac{\V \nabla^\perp\phi\V_{\mathring{C}^\alpha(\mathbb{R}^2)}}{\V \nabla^\perp\phi\V_{\inf(\partial\Omega)}} \right) \right)
	\end{split}
	\end{equation}
\end{lemma}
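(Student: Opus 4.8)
The plan is to follow the Bertozzi--Constantin argument for \eqref{eq:geometric_lemma} almost verbatim, replacing the classical $C^\alpha$ norm by the scale-invariant norm $\mathring{C}^\alpha$ and using the $m$-fold symmetry to control the far field. The starting point is the observation that every quantity appearing in \eqref{eq:key_estimate_singular} is \emph{scale-invariant}: writing $\tilde\phi(y) = \phi(\lambda y)$ one checks that both $\V\nabla^\perp\phi\V_{\mathring{C}^\alpha(\mathbb{R}^2)}$ and $\V\nabla^\perp\phi\V_{\inf(\partial\Omega)}$ scale by the same factor $\lambda$, so their ratio is unchanged, while $\nabla u$ is itself invariant under $y \mapsto \lambda y$. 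Since the $m$-fold symmetry is also preserved by dilations about the origin, I would first reduce the pointwise bound on $\nabla u(x)$ to the single case $|x| = 1$; the (measure-zero) point $x = 0$ needs no separate treatment for an $L^\infty$ bound, though one notes that symmetry with $m \ge 3$ forces $\nabla u(0)$ to commute with $R_{2\pi/m}$ and hence to be bounded directly.

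With $|x| = 1$ fixed, I would write $\nabla u(x) = \mathrm{p.v.}\int \nabla K(x-y)\chi_\Omega(y)\,dy$ and split the integral into three regions according to a cutoff $\delta := \Gamma^{-1}$, where $\Gamma = \big(\V\nabla^\perp\phi\V_{\mathring{C}^\alpha}/\V\nabla^\perp\phi\V_{\inf(\partial\Omega)}\big)^{1/\alpha}$ is the local $C^{1,\alpha}$-characteristic scale at unit distance from the origin. On the near region $|x-y| \le \delta$ the argument is identical to the classical one: using that $\nabla K$ has mean zero on spheres, one subtracts the contribution of the tangent half-plane at the boundary point nearest to $x$, so that only the thin symmetric-difference sliver survives. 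Because $\nabla^\perp\phi \in \mathring{C}^\alpha$ has local Hölder constant comparable to $\V\nabla^\perp\phi\V_{\mathring{C}^\alpha}$ at $|x| \approx 1$, and is non-degenerate, the unit tangent to $\partial\Omega$ is Hölder with constant $\lesssim \Gamma^\alpha$, so the boundary deviates from its tangent line by $O(\Gamma^\alpha |z|^{1+\alpha})$; integrating $|\nabla K(z)| \lesssim |z|^{-2}$ over the sliver gives $\lesssim \Gamma^\alpha \delta^\alpha/\alpha = 1/\alpha$. On the intermediate region $\delta \le |x-y| \le \rho_0$ (for a fixed $\rho_0 \sim 1$) the crude bound $|\nabla K| \lesssim |x-y|^{-2}$ together with $\chi_\Omega \le 1$ yields $\lesssim \log(\rho_0/\delta) \sim \log\Gamma$, which is precisely the logarithm in \eqref{eq:key_estimate_singular}.

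The genuinely new step is the far region $|x-y| \ge \rho_0$. In the classical compactly supported setting this is trivially bounded by the patch area, but here $\Omega$ may be unbounded and the naive estimate $\int_{|y| \ge 2} |y|^{-2}\,dy$ diverges logarithmically. This is exactly where $m \ge 3$ enters: I would replace $K$ by its $m$-fold symmetrized kernel $K^{(m)}(x,y) = \sum_{j=0}^{m-1} K(x - R_{2\pi/m}^{\,j} y)$ acting on a fundamental domain, for which the leading $|y|^{-2}$ tails cancel, so that $\int_{|y| \ge 2} |\nabla_x K^{(m)}(x,y)|\,dy$ converges; equivalently, one invokes directly the scale-invariant velocity-gradient bound and log-Lipschitz estimate of \cite{EJ1} recalled in Subsection \ref{subsec:symmetries_and_critical}, which already encode this cancellation. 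Summing the three contributions gives $\V\nabla u\V_{L^\infty} \lesssim_\alpha 1 + \log\Gamma$, which is \eqref{eq:key_estimate_singular}.

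I expect the far region to be the main obstacle. The difficulty is not the decay estimate per se but reconciling the localization with the symmetry: a naive cutoff of $\chi_\Omega$ destroys the $m$-fold symmetry and thus the cancellation one is relying on. The cleanest remedy is to symmetrize the kernel rather than cut off the vorticity, so that the singular part of the integral (producing the near-field $O(1)$ and the intermediate logarithm) is insensitive to the symmetrization while the far part converges unconditionally; verifying the requisite decay of $\sum_j \nabla K(x - R_{2\pi/m}^{\,j} y)$ for $m \ge 3$, uniformly for $|x| = 1$, is the technical heart of the estimate and is precisely the mechanism behind the scale-invariant bounds of \cite{EJ1}.
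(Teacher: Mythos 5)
Your proposal is correct and takes essentially the same route as the paper's proof: your rescaling to $|x|=1$ is just a reformulation of the paper's choice of cutoffs $\delta|x|$ and $10|x|$, the near and intermediate regions are handled exactly as in Bertozzi--Constantin (geometric lemma giving $O(1)$, then the crude bound giving $\log \delta^{-1}$), and the far field is controlled by the same device of symmetrizing the kernel, $\sum_{j}\nabla K(x-R_{2\pi j/m}y)$, whose $C|x|/|y|^{3}$ decay is quoted from \cite[Lemma 2.17]{EJ1}. Even the issue you flag about a cutoff breaking the symmetry is resolved in the paper exactly as you suggest: the kernel is symmetrized over the (symmetric) set $\Omega\cap\{|y|\ge 10|x|\}$, and the mismatch with $\{|x-y|\ge 10|x|\}$ is absorbed into the intermediate-region estimate.
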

The symmetry assumption is essential here; basically, the information that $\nabla^\perp\phi$ belongs to $\mathring{C}^\alpha$ gives an effective $C^{1,\alpha}$ bound on $\partial\Omega$ only in a region of $O(|x|)$ at a given point $x$, and the procedure of ``zooming out'' it to a region of size $O(1)$ will in general bring the logarithmic loss, unless the $m$-fold rotational symmetry for some $m \ge 3$ is imposed on the set $\Omega$.

Given these lemmas, let us give a sketch of the proof. 

\begin{proof}[Proof of Theorem \ref{thm:intermediate}]
	We assume that the local-in-time existence in the desired class is given, so that as long as the $\mathring{C}^\alpha$-characteristic for $\nabla^\perp\phi_t$ remains finite, the solution can be extended further. (This part is deferred to the Appendix.) 
	
	It suffices to obtain a global-in-time a priori estimate for the characteristic quantity\footnote{Note that, unlike the $C^{1,\alpha}$-characteristic quantity that appeared earlier in the case of smooth patches, this quantity is non-dimensional. We use the notation $\mathring{\Gamma}_t$ to emphasize this fact from now on.} \begin{equation*}
	\begin{split}
	\mathring{\Gamma}_t = \left( \frac{\V \nabla^\perp\phi_t \V_{\mathring{C}^\alpha(\mathbb{R}^2)}}{\V \nabla^\perp\phi_t\V_{\inf(\partial\Omega_t)}} \right)^{1/\alpha}.
	\end{split}
	\end{equation*}
	
	As we have mentioned earlier, this proof is completely parallel to the arguments of Bertozzi and Constantin \cite{BeCo}. We start with $W := \nabla^\perp\phi$, which satisfies \begin{equation*}
	\begin{split}
	\pr_t W + (u\cdot\nabla) W = \nabla u W. 
	\end{split}
	\end{equation*} Then, solving this equation along the flow, \begin{equation*}
	\begin{split}
	\frac{d}{dt} W(t,\Phi(t,x)) = \nabla u(t,\Phi(t,x)) W(t,\Phi(t,x)). 
	\end{split}
	\end{equation*} Integrating in time and then changing variables gives \begin{equation*}
	\begin{split}
	W(t,x) = W_0(\Phi_t^{-1}(x)) + \int_0^t (\nabla u W)(\Phi_{t-s}^{-1}(x),s) ds. 
	\end{split}
	\end{equation*} Using the bound on $\nabla\Phi_t^{-1}$ in terms of the velocity gradient, this implies, for points $x \ne x'$ satisfying $|x'| \le |x|$ and $|x-x'| \le |x|/2$, \begin{equation*}
	\begin{split}
	|W(t,x) - W(x',t)| &\le \V W_0\V_{\mathring{C}^\alpha} \exp\left( c\int_0^t \V \nabla u_s\V_{L^\infty} ds \right) \cdot \frac{|x-x'|^\alpha}{|x|^\alpha} \\
	&\qquad + \int_0^t \V \nabla u_s W_s \V_{\mathring{C}^\alpha} \exp\left( c \int_{s}^{t} \V \nabla u_{s'}\V_{L^\infty} ds' \right) ds \cdot \frac{|x-x'|^\alpha}{|x|^\alpha}. 
	\end{split}
	\end{equation*} Introducing $Q(s) =\V \nabla u_s \V_{L^\infty}$ and using Lemma \ref{lem:key_identity}, \begin{equation*}
	\begin{split}
	\V W_t \V_{\mathring{C}^\alpha} \le \V W_0 \V_{\mathring{C}^\alpha} \exp\left( c \int_0^t Q(s) ds \right) + C \int_0^t Q(s) \V W_s\V_{\mathring{C}^\alpha} \exp\left( c\int_s^t Q(s')ds' \right) ds. 
	\end{split}
	\end{equation*} (For a pair of points $x \ne x'$ and $|x'| \le |x|$ \textit{not} satisfying $|x - x'| \le |x|/2$, we can simply use the $L^\infty$-bound $\pr_t \V W_t \V_{L^\infty} \le Q_t \V W_t \V_{L^\infty}$.) Then, writing \begin{equation*}
	\begin{split}
	G(t) := \V W_t \V_{\mathring{C}^\alpha} \exp\left( -c \int_0^t Q(s)ds \right), 
	\end{split}
	\end{equation*} we have, after a little bit of manipulation, \begin{equation*}
	\begin{split}
	G(t) \le \V W_0 \V_{\mathring{C}^\alpha} + C \int_0^t Q(s) G(s) ds, 
	\end{split}
	\end{equation*} so that by Gronwall's Lemma, \begin{equation*}
	\begin{split}
	\V W_t \V_{\mathring{C}^\alpha} \le \V W_0 \V_{\mathring{C}^\alpha} \exp\left( (C + c) \int_0^t \V \nabla u_s\V_{L^\infty} ds \right). 
	\end{split}
	\end{equation*} On the other hand, we have trivially \begin{equation*}
	\begin{split}
	\V W_t \V_{\inf(\partial\Omega)} \ge \V W_0\V_{\inf(\partial\Omega)}  \exp \left( -\int_0^t \V \nabla u_s\V_{L^\infty} ds \right). 
	\end{split}
	\end{equation*} Combining these estimates, and then applying Lemma \ref{lem:key_estimate} finishes the proof. 
\end{proof}

\begin{proof}[Proof of Lemma \ref{lem:key_identity}]
	Let us set \begin{equation*}
	\begin{split}
	G(x) = \nabla u\nabla^\perp\phi(x).
	\end{split}
	\end{equation*} Then, we have trivially an $L^\infty$ bound: $|G(x)| \le \V\nabla u\V_{L^\infty} \V \nabla^\perp\phi\V_{L^\infty}$. Now the proof of the $C^\alpha$-estimate for $|x|^\alpha G(x)$ is strictly analogous to the proof of \eqref{eq:key_estimate} given in \cite[Proof of Corollary 1]{BeCo}. To see this, fix some $x, h$ and consider the difference \begin{equation*}
	\begin{split}
	|x|^\alpha G(x) - |x+h|^\alpha G(x+h).
	\end{split}
	\end{equation*} First, in the case $|h| > |x|/2$, after a rewriting the above expression is bounded in absolute value by \begin{equation*}
	\begin{split}
	\left| |x|^\alpha (G(x) - G(x+h)) + G(x+h)(|x|^\alpha - |x+h|^\alpha) \right| \le C|h|^\alpha \V G \V_{L^\infty} + |h|^\alpha \V G\V_{L^\infty}. 
	\end{split}
	\end{equation*} Therefore, we may assume that $|h| \le |x|/2$. Then, we write with $f := \nabla^\perp\phi$ \begin{equation*}
	\begin{split}
	&|x|^\alpha G(x) - |x+h|^\alpha G(x+h)  \\
	&\qquad=|x|^\alpha \int_{\Omega} \nabla K(x-y)(f(x) - f(y)) dy - |x+h|^\alpha \int_{\Omega} \nabla K(x+h-y)(f(x) - f(y)) dy \\
	&\qquad= |x|^\alpha \int_{\{ |x-y| < 2|h| \} \cap \Omega} \nabla K(x-y)(f(x)-f(y))dy \\
	&\qquad\quad -|x+h|^\alpha \int_{ \{|x-y| < 2|h|\} \cap \Omega } \nabla K(x+h-y) (f(x+h)-f(y)) dy \\
	&\qquad\quad +\int_{ \{|x-y| \ge 2|h|\} \cap \Omega }\nabla  K(x-y) (|x|^\alpha f(x) - |x+h|^\alpha f(x+h)) dy \\
	&\qquad\quad + \int_{ \{|x-y| \ge 2|h|\} \cap \Omega } \left(\nabla K(x-y)- \nabla K(x+h-y) \right) (|x+h|^\alpha f(x+h) - |x|^\alpha f(y)) dy \\
	&\qquad = I + II + III + IV.
	\end{split}
	\end{equation*} Then, \begin{equation*}
	\begin{split}
	|I| \le C |x|^\alpha \int_0^{2|h|} \V f\V_{\mathring{C}^{\alpha}} \frac{r^{\alpha-1}}{|x|^\alpha} \le C \V f\V_{\mathring{C}^{\alpha}} h^\alpha
	\end{split}
	\end{equation*} and similarly $|II| \le C \V f\V_{\mathring{C}^{\alpha}} h^\alpha$. For $III$, we note that \begin{equation*}
	\begin{split}
	|III| \le \left||x|^\alpha f(x) - |x+h|^\alpha f(x+h)\right| \cdot \left|\int_{ \{|x-y| \ge 2|h|\} \cap \Omega }\nabla  K(x-y)dy\right| \le C\V f\V_{\mathring{C}^{\alpha}}h^\alpha  \left( 1 + \V\nabla u\V_{L^\infty} \right),
	\end{split}
	\end{equation*} and finally for $IV$, simply rewrite \begin{equation*}
	\begin{split}
	|x+h|^\alpha f(x+h) - |x|^\alpha f(y) = (|x+h|^\alpha f(x+h) - |y|^\alpha f(y)) - f(y) (|x|^\alpha - |y|^\alpha)
	\end{split}
	\end{equation*} we use the decay of $\nabla\nabla K$ to bound both of them by \begin{equation*}
	\begin{split}
	|IV| \le C \left(\V f\V_{\mathring{C}^{\alpha}} + \V f \V_{L^\infty} \right) \int_{ \{|x-y| \ge 2|h|\} \cap \Omega } h \frac{1}{|x-y|^{3-\alpha}} dy \le C\V f\V_{\mathring{C}^{\alpha}} h^\alpha.
	\end{split}
	\end{equation*} This concludes the proof. 
\end{proof}

\begin{proof}[Proof of Lemma \ref{lem:key_estimate}]
	Let us take a (non-dimensional) parameter \begin{equation*}
	\begin{split}
	\delta = \frac{1}{10} \min\left\{ 1, \left( \frac{\V\nabla^\perp\phi\V_{\inf}}{ \V \nabla^\perp\phi\V_{\mathring{C}^\alpha} } \right)^{1/\alpha} \right\}.
	\end{split}
	\end{equation*} We then split the integral \begin{equation*}
	\begin{split}
	I(x) = \int_{\Omega } \nabla K(x-y) dy,
	\end{split}
	\end{equation*} (assuming that $x \ne 0$) as follows: \begin{equation*}
	\begin{split}
	\left[\int_{\Omega \cap \{ |x-y| < \delta |x| \} } + \int_{\Omega \cap \{ \delta|x| \le |x-y | < 10|x| \} } + \int_{\Omega \cap \{ 10|x| \le |x-y| \} } \right] \nabla K(x-y) dy =: I_1 + I_2 + I_3. 
	\end{split}
	\end{equation*} The bound for $I_1$  {follows from the ``geometric lemma''} of Bertozzi and Constantin \cite{BeCo}. To see this, first note that in the region $|x-y| < \delta |x|$, $\nabla^\perp\phi$ is a uniformly $C^\alpha$-function with norm $\approx |x|^{-\alpha}$. Then, we have 
	
	\medskip
	
	\textbf{Geometric Lemma.} For each $\rho > 0$, consider the total angle $R_\rho(x)$ of deviation of $\Omega \cap \partial B_{x}(\rho)$ from being a half-circle. Formally, \begin{equation*}
	\begin{split}
	R_\rho(x) :=S_\rho(x) \triangle \Sigma(x)
	\end{split}
	\end{equation*} (here $\triangle$ denotes the symmetric deference) with \begin{equation*}
	\begin{split}
	S_\rho(x)&:= \{ w : |w| = 1, x + \rho w \in \Omega \}, \qquad\quad
	\Sigma(x) := \{ w : |w| = 1, \nabla \rho(\tilde{x}) \cdot w \ge 0 \},
	\end{split}
	\end{equation*} where $\tilde{x}$ is a point on $\partial\Omega$ which achieves the minimum distance between $x$  and $\partial\Omega$\footnote{Indeed, one may imagine that $x \in \partial\Omega$ and hence $x = \tilde{x}$, as $\nabla u$ is potentially most singular at such points.}. Then, \begin{equation*}
	\begin{split}
	|R_\rho(x)| \le C \left( \frac{d(x,\partial\Omega)}{\rho} + \left(\frac{\rho}{\delta|x|}\right)^\alpha \right)
	\end{split}
	\end{equation*} as long as we take $\rho < \delta|x|$.  {This follows from the original geometric lemma in \cite{BeCo} since the $C^\alpha$ norm in $B_\rho(x)$ is comparable to $|x|^{-\alpha}$ if $\rho<\delta|x|$.}
	
	\medskip
	
	Given the above lemma, after using the fact that the averages of $\nabla K$ along half-circles vanish, we bound \begin{equation*}
	\begin{split}
	|I_1| = \left|\int_{\Omega \cap \{ |x-y| < \delta |x| \} } \nabla K(x-y) dy \right| \le C \int_{d(x,\partial\Omega)}^{\delta |x|} \frac{|R_\rho(x)|}{\rho} d\rho \le C. 
	\end{split}
	\end{equation*}
	
	Next, the bound on $I_2$ is straightforward; just taking absolute values, \begin{equation*}
	\begin{split}
	|I_2| \le \int_{\delta|x|}^{10|x|} \frac{C}{\rho}d\rho \le C \log(\delta^{-1}). 
	\end{split}
	\end{equation*}
	
	Finally, we use symmetry of the domain for $I_3$: note that  \begin{equation*}
	\begin{split}
	|I_3| \approx \frac{1}{m} \left| \int_{\Omega \cap \{ |y| \ge 10|x| \} } \sum_{j=0}^{m-1} \nabla K(x - R_{2\pi j/m}y) dy  \right|
	\end{split}
	\end{equation*} (Strictly speaking, the region $\Omega \cap \{ |x-y| \ge 10|x| \}$ is not really $m$-fold symmetric but the extra terms coming from the difference of the symmetrization of this set and $\Omega \cap \{ |y| \ge 10|x| \}$ can be bounded as in $I_2$) and then we use the fact that (see \cite[Lemma 2.17]{EJ1}) for $|y| \gtrsim |x|$, \begin{equation*}
	\begin{split}
	\frac{1}{m}\left|  \nabla K(x - R_{2\pi j/m}y) \right| \le C \frac{|x|}{|y|^3}.
	\end{split}
	\end{equation*} This gives $|I_3| \le C$, finishing the proof. 
\end{proof}

\section{Global well-posedness for symmetric $C^{1,\alpha}$-patches with corners}\label{sec:main}

\subsection{The geometric setup and the main statement}

In this section, we show global well-posedness of $C^{1,\alpha}$ vortex patches with corners meeting symmetrically at a point. Here we make this notion precise. For the convenience of the reader, let us recall some notations:  \begin{itemize}
	\item For a given angle $\theta$, we denote $R_\theta: \mathbb{R}^2 \rightarrow \mathbb{R}^2$ to be the counter-clockwise rotation of the plane by $\theta$ around the origin. 
	\item We define sectors using polar coordinates: \begin{equation*}
	\begin{split}
	S_{\beta,\beta+\zeta} := \{ (r,\theta) :  \beta < \theta < \beta + \zeta   \}.
	\end{split}
	\end{equation*}
\end{itemize}

\begin{definition}[$C^{1,\alpha}$-patches with symmetric corners]\label{def:C_1alpha_corner}
	We deal with patches $\Omega$ enjoying the following properties:
	\begin{itemize}
		\item (Symmetry) There exists an open domain $\Omega_1$, and some $m \ge 3$, such that \begin{equation*}
		\begin{split}
		\Omega = \cup_{j=0}^{m-1} R_{2\pi j/m}(\Omega_1)
		\end{split}
		\end{equation*} where the open sets $R_{2\pi j/m}(\Omega_1)$ are disjoint from each other and their closures intersect only at the origin. 
		\item ($C^{1,\alpha}$ away from the origin)  {For any $x\not=0$, there exists a small ball $B$ around $x$ such that $B \cap \partial\Omega$ is described by the graph of a $C^{1,\alpha}$-function (after rotating the patch if necessary).} 
		\item ($C^{1,\alpha}$ corner) There exists a $C^{1,\alpha}$-diffeomorphism $\Psi:\mathbb{R}^2 \rightarrow \mathbb{R}^2$ of the plane with $\Psi(0) = 0$ and $\nabla\Psi|_{x=0} = I$, such that for some $\delta > 0$, the image $\Psi(\Omega_1)$ is an exact sector of angle less than $2\pi/m$: \begin{equation}
		\Psi(\Omega_1) \cap B_0(\delta) = S_{\beta}(\zeta) \cap B_0(\delta)
		\end{equation} with some $0 < \zeta < 2\pi/m$ and $\beta \in [0,2\pi]$. 
	\end{itemize}
\end{definition} In the following, let us call such a patch by a ``symmetric $C^{1,\alpha}$-patch with corners'', or symmetric patch with corners for short. 

\begin{example}
	For each $m \ge 3$, the domain bounded by the set $\{ (r,\theta) : r = 1 + \cos(m\theta) \}$ (in polar coordinates)  {gives} an explicit example satisfying Definition \ref{def:C_1alpha_corner}. 
\end{example}

Quantifying $C^{1,\alpha}$-regularity of such a patch is a simple matter; one may use directly the $C^{1,\alpha}$-norm of the diffeomorphism $\Psi$, but we shall work with the following alternative description. By rotating the plane if necessary, we may assume that the boundary $\partial\Omega_1$ is locally described by the graph of two $C^{1,\alpha}$ functions $g < f$; that is, \begin{equation*}
\begin{split}
\partial\Omega_1 \cap [-\delta,\delta]^2 = \{ (x_1, f(x_1)) : 0 \le x_1 \le \delta \} \cup \{ (x_1, g(x_1)) : 0 \le x_1 \le \delta \}.
\end{split}
\end{equation*} Then, the last condition of Definition \ref{def:C_1alpha_corner} is equivalent to saying that $f$ and $g$ are $C^{1,\alpha}$-regular up to the boundary of the interval $[0,\delta]$. Then, the regularity of $\Omega$ may be quantified with the characteristic (note that it has the unit of inverse length) \begin{equation*}
\begin{split}
\Gamma(\Omega) := \V\nabla f\V_{C^\alpha[0,\delta]}^{1/\alpha} + \V\nabla g\V_{C^\alpha[0,\delta]}^{1/\alpha} + \Gamma(\Omega\backslash B_0(\delta/2))
\end{split}
\end{equation*} where $\Gamma(\Omega\backslash B_0(\delta))$ is the (usual) $C^{1,\alpha}$ characteristic of $\partial\Omega$ away from $B_0(\delta)$, where it is uniformly $C^{1,\alpha}$.

Our main statement of this section states that if $\Omega_0$ is a symmetric $C^{1,\alpha}$-patch with corners, then the unique Yudovich solution $\Omega(t,\cdot)$ remains so for all times $ t > 0$. We state it formally as follows: 

\begin{theorem}\label{thm:symmetric_corner}
	Let us assume that $\Omega_0$ is a vortex patch satisfying  Definition \ref{def:C_1alpha_corner}. Then the Yudovich solution $\Omega_t$ associated with $\Omega_0$ satisfy the same properties for all $t > 0$; that is, $ \Gamma(\Omega_t) < +\infty$. Moreover, the angles simply rotate with a constant angular speed for all time which is determined only by the size of the angle. In particular, the value of the angle does not change with time.  
\end{theorem}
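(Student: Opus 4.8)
The plan is to combine the global $\mathring{C}^\alpha$-theory of Theorem \ref{thm:intermediate} — which by itself yields only a Lipschitz velocity and a bi-Lipschitz flow, \emph{not} genuine $C^{1,\alpha}$-regularity up to the corner — with a frozen-time local analysis near the corner. First I would check that every $\Omega_0$ as in Definition \ref{def:C_1alpha_corner} admits a level set $\phi_0$ meeting Definition \ref{def:level_set_circle}: locally, after the straightening diffeomorphism $\Psi$, one takes $\phi_0 = r\,h_0(\theta)$ with $h_0 \in C^{1,\alpha}(S^1)$ vanishing simply at the two edges of the sector, and patches this with the $C^{1,\alpha}$-structure of $\partial\Omega_0$ away from the origin; the invariance of Definition \ref{def:level_set_circle} under bi-Lipschitz $\mathring{C}^{1,\alpha}$-maps recorded above absorbs $\Psi$. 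This is the content of Lemma \ref{lem:admit_level_set}. Theorem \ref{thm:intermediate} then provides, for all finite $t$, the bound \eqref{eq:Linfty_vel_gradient} on $\|\nabla u(t)\|_{L^\infty}$, the regularity $\nabla^\perp\phi(t)\in\mathring{C}^\alpha$, and the nondegeneracy \eqref{eq:inf_bound}, so the flow maps $\Phi_t,\Phi_t^{-1}$ are globally bi-Lipschitz. This is the ``completing the square'' step of Figure \ref{fig:square}.

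Next, at a fixed time I would decompose the velocity near the corner as $u = u_\Sigma + u_{\mathrm{rem}}$, where $u_\Sigma$ is generated by the exact $m$-fold symmetric union of sectors sharing the tangent rays of $\partial\Omega$ at the origin, and $u_{\mathrm{rem}}$ is generated by the remaining cusp slivers — each bounded by a ray and a $C^{1,\alpha}$-curve tangent to it — together with the far field $\Omega\setminus B_0(\delta)$. By the computation of Subsection \ref{subsec:computations}, $u_\Sigma = \nabla^\perp(r^2 H(\theta))$ with $H\in W^{2,\infty}(S^1)$: the log-Lipschitz second-Fourier part of \eqref{eq:velgrad_sectors} cancels \emph{precisely} because $m\ge 3$, so $u_\Sigma$ is $C^{1,\alpha}$ along any $C^{1,\alpha}$-curve emanating from the origin. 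For $u_{\mathrm{rem}}$, the cusp estimate (Friedman--Velazquez, which I would re-prove here by a direct Biot--Savart computation, integrating out one coordinate in the setting of a corner with cusps attached to its sides) gives a contribution that is $C^{1,\alpha}$ up to the boundary and whose leading homogeneous part at the corner vanishes, while symmetry renders the far-field part $o(|x|)$ with smooth gradient. Hence $u|_{\partial\Omega}$ is $C^{1,\alpha}$ uniformly up to the corner.

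Given this frozen-time boundary regularity, I would then propagate $C^{1,\alpha}$-regularity up to the corner in time: working in charts that follow the corner under the flow, one differentiates the boundary parametrization, uses the $C^{1,\alpha}$-bound on $u|_{\partial\Omega}$ as forcing, and closes a Gronwall inequality for $\Gamma(\Omega_t)$ whose integrating factor is controlled by the global Lipschitz bound \eqref{eq:Linfty_vel_gradient} — a Beale--Kato--Majda-type argument. This yields $\Gamma(\Omega_t)<\infty$ for every $t>0$. \textbf{The main obstacle} is exactly this step together with the frozen-time estimate feeding it: since Theorem \ref{thm:intermediate} only controls $\mathring{C}^\alpha$, it does not directly give that $\Omega_t$ is the $C^{1,\alpha}$-diffeomorphic image of a sector, and the delicate part is the uniform $C^{1,\alpha}$-bound on $u_{\mathrm{rem}}$ (the cusp piece, requiring the explicit Biot--Savart computation) and then closing the time-dependent estimate for $\Gamma(\Omega_t)$ up to the singular point.

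Finally, for the angle dynamics, the leading-order velocity near the corner is the scale-invariant field $u_\Sigma(r,\theta)=r\,V(\theta)$ whose angular component is $2H(\theta)$; since $u_{\mathrm{rem}}=o(r)$, a boundary ray at angle $\theta$ obeys $\dot\theta=2H(\theta)$, which is exactly the characteristic ODE of the reduced one-dimensional system $\partial_t h + 2H\partial_\theta h=0$, $\,4H+H''=h$ of Subsection \ref{subsec:symmetries_and_critical}, with $h$ the $m$-fold symmetric angular indicator of the sector. Because $\Psi(\Omega_1)$ is a single sector, there is exactly one sector per fundamental domain, and this configuration is invariant under the reflection about the sector's bisector; hence $H$ is symmetric about that bisector, forcing $H(\beta)=H(\beta+\zeta)$. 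The two edges then move at the common angular speed $2H(\beta)$, so the sector rotates rigidly, the opening angle $\zeta$ is preserved, and the speed is determined solely by $\zeta$ and $m$ through the equation $4H+H''=h$. (More corners per fundamental domain give, by the same characteristic argument, a closed ODE system in the endpoint angles, as recorded in Corollary \ref{cor:dynamics_angle}.)
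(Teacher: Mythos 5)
Your overall architecture coincides with the paper's: pass through Lemma \ref{lem:admit_level_set} and Theorem \ref{thm:intermediate} to get the global-in-time Lipschitz bound on $u$ and the $\mathring{C}^\alpha$ control away from the origin (the ``completing the square'' step), supplement this with a frozen-time $C^\alpha$-estimate for $\nabla u$ along the boundary up to the corner, close a Gronwall estimate for $\Gamma(\Omega_t)$ on short time intervals, and obtain the angle dynamics from the decomposition $\omega = \omega^{homog}+\omega^{cusp}+\omega^{far}$ together with $m$-fold symmetry. Your last two steps, and your reflection argument showing that a single sector per fundamental domain rotates rigidly, are essentially Subsection \ref{subsec:proof_main} and Corollary \ref{cor:dynamics_angle} with $k=1$.

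However, your frozen-time step has a genuine gap, and it is exactly the one the paper flags at the beginning of Subsection \ref{subsec:local_estimate} before choosing the brute-force route. You claim that $u_\Sigma = \nabla^\perp(r^2H(\theta))$ is $C^{1,\alpha}$ along \emph{any} $C^{1,\alpha}$-curve emanating from the origin, and that the ``cusp slivers'' between $\partial\Omega_1$ and the tangent rays can be treated by Friedman--Vel\'azquez. Both statements fail whenever a boundary curve $(x,f(x))$ oscillates around its tangent ray $\{x_2=Fx_1\}$, i.e.\ when $f(x)-Fx$ changes sign at points accumulating at the origin; Definition \ref{def:C_1alpha_corner} permits this. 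The matrix $\nabla u_\Sigma$ is $0$-homogeneous (it involves $H''=h-4H$, where $h$ is the angular indicator of the sectors) and jumps across the ray by $[\omega_\Sigma]\,\tau_R\otimes n_R$; hence at every crossing point $x_0$ the tangential derivative $\frac{d}{dx}\,u_\Sigma(x,f(x))$ has a jump of size comparable to $|f'(x_0)-F|$, generically nonzero. A function with jump discontinuities accumulating at $0$ is not even continuous on $[0,\delta]$, let alone $C^\alpha$. Symmetrically, the sliver is then not a cusp (its two ``boundary curves'' cross each other infinitely often), so the Friedman--Vel\'azquez lemma does not apply to it, and $\nabla u_{\mathrm{rem}}$ carries exactly the opposite jumps along $\partial\Omega_1$. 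Only the sum $\nabla u_\Sigma + \nabla u_{\mathrm{rem}} = \nabla u$ is continuous there, because the full vorticity has no jump across the ray; so the two pieces cannot be estimated separately along the boundary, and the cancellation must be captured inside a single estimate. This is precisely why Lemma \ref{lem:corner_Holder} integrates the Biot--Savart kernel directly over the actual patch, decomposed only into its symmetric rotates $\Omega_j$, and extracts the cancellation between adjacent rotates (e.g.\ the exact cancellation of the logarithms between $I_1^1$ and $I_1^2$), rather than between the patch and a comparison sector. Note also that the paper does invoke a Friedman--Vel\'azquez-type bound for $u^{cusp}$, but only in the angle-dynamics step, after the $C^{1,\alpha}$-regularity of $f_t,g_t$ has already been established, and only to get the $O(|x|^{1+\alpha})$ bound \emph{off} the support of $\omega^{cusp}$ --- not to prove the frozen-time H\"older estimate on which the Gronwall argument rests.
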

To be clear, part of the statement is that for any $t > 0$, one can find $\delta = \delta(t) > 0 $ such that in the ball $B_0(\delta)$, the boundary of $\Omega_1(t)$ is given by two $C^{1,\alpha}$-curves $f_t$ and $g_t$, and in the complement of the ball $B_0(\delta/2)$, the boundary of $\Omega_t$ is uniformly $C^{1,\alpha}$.

At this point, let us note that all the hard work necessary in establishing the above result is to establish the last property in Definition \ref{def:C_1alpha_corner}: the first is trivial in view of the uniqueness and non-collision of particle trajectories. The second property is well-known; more generally, if a vortex patch is $C^{1,\alpha}$ away from some closed set, the solution remains smooth away from the image of the closed set under the flow \cite{Da2}. Indeed, this is an immediate consequence of Theorem \ref{thm:intermediate}, as soon as we prove the following
\begin{lemma}\label{lem:admit_level_set}
	Assume that $\Omega$ is a $C^{1,\alpha}$-patch with a symmetric corner. Then it  admits a level set $\phi : \mathbb{R}^2 \rightarrow \mathbb{R}$ satisfying conditions of Definition \ref{def:level_set_circle}. 
\end{lemma}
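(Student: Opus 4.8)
The plan is to build the level set $\phi$ by patching together an explicit scale-invariant model near the corner with a standard level set away from it, and then to check the three conditions of Definition \ref{def:level_set_circle} on the overlap. First, for the exact sector picture produced by $\Psi$ I would write down the homogeneous model directly. Choose an $m$-fold symmetric profile $h \in C^{1,\alpha}(S^1)$ (one may even take $h \in C^\infty$) which is strictly positive exactly on the open angular intervals $(\beta + 2\pi j/m,\, \beta+\zeta+2\pi j/m)$ for $j=0,\dots,m-1$, strictly negative on the complementary intervals, and vanishing with nonzero one-sided derivatives at the $2m$ endpoints; this is possible precisely because $\zeta < 2\pi/m$, so the sectors and their rotates are pairwise disjoint. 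Set $\phi_{\mathrm{mod}}(x) = |x|\,h(\theta)$. Since $\phi_{\mathrm{mod}}$ is homogeneous of degree one, $\nabla\phi_{\mathrm{mod}} = h(\theta)\,e_r + h'(\theta)\,e_\theta$ is homogeneous of degree zero, so the equivalence $\|f\|_{\mathring{C}^\alpha}\approx\|h\|_{C^\alpha(S^1)}$ for functions of the angle (recorded in Subsection \ref{subsec:symmetries_and_critical}) gives $\nabla^\perp\phi_{\mathrm{mod}}\in\mathring{C}^\alpha$ with norm $\approx\|h\|_{C^{1,\alpha}(S^1)}$. On the bounding rays one has $h=0$ but $h'\neq0$, so $|\nabla^\perp\phi_{\mathrm{mod}}|=|h'|\ge c>0$ there; thus $\phi_{\mathrm{mod}}$ is a $\mathring{C}^{1,\alpha}$ level set for the symmetric union of exact sectors.

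Next I would transport this to $\Omega$. Near the origin $\Omega$ is $\Psi^{-1}$ of the exact sector configuration, so I set $\phi_{\mathrm{near}} = \phi_{\mathrm{mod}}\circ\Psi$; its zero set is $\Psi^{-1}$ of the sector boundary, i.e.\ $\partial\Omega\cap B_0(\delta)$, and its sign tracks $\Omega$ because $\Psi$ is a homeomorphism. Restricted to a bounded neighborhood of the origin the weight $|x|^\alpha$ is bounded, so $\nabla\Psi\in C^\alpha$ upgrades to $\nabla\Psi\in\mathring{C}^\alpha$ there, and the composition-invariance of Definition \ref{def:level_set_circle} established before the examples applies verbatim: $\nabla^\perp\phi_{\mathrm{near}}\in\mathring{C}^\alpha$ and stays nondegenerate on $\partial\Omega$ near $0$, where nondegeneracy uses the bi-Lipschitzness of $\Psi$ (with $\nabla\Psi(0)=I$ ensuring the model orientation is preserved). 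Away from the origin the second bullet of Definition \ref{def:C_1alpha_corner} gives a uniformly $C^{1,\alpha}$ boundary, from which I build a standard level set $\phi_{\mathrm{far}}$ on $\{|x|>\delta/2\}$ (e.g.\ a smoothed signed distance) with $\nabla^\perp\phi_{\mathrm{far}}\in C^\alpha$ and $|\nabla^\perp\phi_{\mathrm{far}}|\ge c$ on $\partial\Omega$; on this region $|x|$ is bounded below, so $C^\alpha$ and $\mathring{C}^\alpha$ are equivalent. Finally I glue by fixing a radial cutoff $\chi$ supported in the overlap annulus $\delta/2<|x|<\delta$ and setting $\phi = \chi\,\phi_{\mathrm{near}} + (1-\chi)\,\phi_{\mathrm{far}}$.

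The content to verify is that this glued $\phi$ still satisfies all three conditions, and the decisive point — the step I expect to be the main obstacle — is nondegeneracy across the gluing region, since a priori $\nabla^\perp\phi$ could acquire a zero from the cutoff term $\nabla\chi\,(\phi_{\mathrm{near}}-\phi_{\mathrm{far}})$. This is dispatched by the observation that both $\phi_{\mathrm{near}}$ and $\phi_{\mathrm{far}}$ vanish on the \emph{same} curve $\partial\Omega$ and carry the same sign, so on $\partial\Omega$ their gradients are both inward conormals, hence positive scalar multiples of one another; consequently every convex combination is again a nonzero conormal, the offending $\nabla\chi$ term vanishes on $\partial\Omega$ (where $\phi_{\mathrm{near}}=\phi_{\mathrm{far}}=0$), and $|\nabla^\perp\phi|\ge c>0$ on $\partial\Omega$ survives. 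The sign condition $\phi>0\iff x\in\Omega$ is automatic since both summands are positive exactly on $\Omega$, and the $\mathring{C}^\alpha$ bound follows by summing contributions, all computed on the bounded annulus where the weight is harmless. A minor remaining point is the vertex $x=0$: there $\nabla^\perp\phi$ is genuinely discontinuous, but $|x|^\alpha\nabla^\perp\phi\to0$, which is exactly what $\mathring{C}^\alpha$ permits, and the infimum in Definition \ref{def:level_set_circle} is realized along the boundary rays where $|\nabla^\perp\phi|=|h'|\ge c$. (For unbounded $\Omega$ one additionally asks that $\phi_{\mathrm{far}}$ be scale-invariantly regular at infinity; for the bounded patches of interest here $\phi_{\mathrm{far}}$ may be taken constant outside a large ball.)
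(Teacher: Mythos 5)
Your proposal is correct and follows essentially the same route as the paper: a degree-one homogeneous level set $|x|\,h(\theta)$ near the corner, with $h$ positive exactly on the sectors and $h' \neq 0$ at the boundary angles, composed with $\Psi$ using the invariance of Definition \ref{def:level_set_circle} under bi-Lipschitz maps with $\mathring{C}^\alpha$ gradient, and matched to an ordinary $C^{1,\alpha}$ level set away from the origin. The only substantive difference is that you make the near/far gluing and its nondegeneracy on $\partial\Omega$ explicit (via the observation that both gradients are inward conormals, so convex combinations cannot vanish), a point the paper leaves implicit; just note that your cutoff $\chi$ should equal $1$ on $B_0(\delta/2)$ and $0$ outside $B_0(\delta)$, with only $\nabla\chi$ supported in the annulus.
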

\begin{proof}
	
	We first check the statement in the case when $\Omega$ is given by an exact symmetric corner near the origin, that is, \begin{equation*}
	\begin{split}
	\Omega \cap B_0(\delta) = \cup_{j=0}^{m-1} R_{2\pi/m}^j (S_{\beta, \beta +\zeta}) \cap B_0(\delta)
	\end{split}
	\end{equation*} for some $\beta \in [0,2\pi)$ and $\zeta < 2\pi/m$. In this case, one may take a function $\phi$ which is $C^{1,\alpha}$ outside of the ball $B_0(\delta)$ and then locally \begin{equation*}
	\begin{split}
	\phi(x) = |x| \cdot g(\theta(x)),\quad \theta(x) = \tan^{-1}(x_2/x_1).
	\end{split}
	\end{equation*} Here, one may take a smooth function $g \in C^{1,\alpha}(S^1)$ so that $\phi(x)$ strictly positive inside the patch and negative outside, and also $g'$ is non-vanishing for angles which correspond to $\partial\Omega$. Then, taking the gradient one obtains for $|x| < \delta$ \begin{equation*}
	\begin{split}
	\nabla^\perp\phi(x) = \frac{x^\perp}{|x|} g(\theta) - \frac{x}{|x|}g'(\theta)
	\end{split}
	\end{equation*} and note that for $x \in \partial\Omega \cap B_0(\delta)$, \begin{equation*}
	\begin{split}
	|\nabla^\perp\phi(x)| = |g'(\theta)| > 0 
	\end{split}
	\end{equation*} by our choice of $g$ (here $\theta = \theta(x)$) and also \begin{equation*}
	\begin{split}
	\left||x|^\alpha \cdot \left( \frac{x}{|x|} g(\theta) + \frac{x^\perp}{|x|}g'(\theta) \right)\right|_{C^\alpha(\mathbb{R}^2)} \le C\V g\V_{C^{1,\alpha}(S^1)}.
	\end{split}
	\end{equation*}
	
	In the general case, recall that there is a map $\Psi : \mathbb{R}^2 \rightarrow \mathbb{R}^2$ that locally maps the path $\Omega$ to a union of exact symmetric corners. Then one just take the level set \begin{equation*}
	\begin{split}
	\phi(x) = \tilde{\phi} \circ \Psi(x),\quad \tilde{\phi}(z) = |z| \cdot g(\theta(z)).
	\end{split}
	\end{equation*} Then taking the gradient gives \begin{equation*}
	\begin{split}
	\nabla\phi = (\nabla\tilde{\phi}) \circ \Psi \cdot \nabla\Psi,
	\end{split}
	\end{equation*} and recalling that $\nabla\Psi = I + M$ with a matrix $M \in C^{1,\alpha}(\mathbb{R}^2)$ and $|M| \le C|x|^{1+\alpha}$, it is direct to show that, using bounds given in Lemma \ref{lem:calculus} in the Appendix, $|\nabla^\perp\phi|$ has a lower bound on $\partial\Omega$ as well as $\nabla^\perp\phi \in \mathring{C}^\alpha$. \end{proof}
 
\medskip

\textbf{A brief outline of the proof.} The proof of Theorem \ref{thm:symmetric_corner} will be completed in the following two subsections. In Subsection \ref{subsec:local_estimate}, we prove a frozen-time $C^{1,\alpha}$-estimate pertaining to the boundary of the patch near the corner. After that, we conclude the proof in Subsection \ref{subsec:proof_main} by combining the local estimate together with the $\mathring{C}^\alpha$-result. In the following subsections, we explore some consequences of Theorem \ref{thm:symmetric_corner} and several possible extensions. 
 
\subsection{Local $C^{1,\alpha}$-estimate near the corner}\label{subsec:local_estimate}

To complete the proof of the main statement, it needs to be argued that right at the origin, the $C^{1,\alpha}$ norms of the boundary curves does not blow up at any finite time. Near the corner, it does not seem appropriate to use a level set function which is uniformly $C^{1,\alpha}$. Instead, we will show via a direct computation that the velocity is uniformly $C^{1,\alpha}$ on the boundary, up to the origin. 

To get an idea of how such a statement could be true, one may first take the case of exact (either infinite or localized) sectors meeting symmetrically at the origin. While the velocity gradient associated with a single sector diverges logarithmically at the origin, with coefficient depending on the angle, it was established in the previous work of the first author \cite[Section 6]{E1}  that those logarithmic terms precisely cancel out when the sectors are arranged in an $m$-fold symmetric fashion. Even after cancellations of the divergent terms, the velocity gradient has a part which is a smooth function of the angle only (that is, a $C^\infty$-function of the variable $\tan^{-1}(x_2/x_1)$) and hence it only belongs to $L^\infty$ and not better. However, a key observation we make is that a smooth function of the angle on the plane is actually $C^{k,\alpha}$-smooth when restricted onto any $C^{k,\alpha}$-curve passing through the origin. 

Next, one can consider the case where the patch $\Omega_1$ is given (locally) by an exact sector with two $C^{1,\alpha}$-cusps attached at its sides.\footnote{Here, by a $C^{1,\alpha}$-cusp, we mean a region bounded between two $C^{1,\alpha}$-curves meeting at the origin with the same slope.} Then, from the above, we know that the velocity gradient coming from the sector is $C^{1,\alpha}$-smooth along the boundary curves of $\Omega_1$. Moreover, as a consequence of the work of Friedman and Vel\'azquez \cite{FV}, we also have that the velocity gradient coming from a $C^{1,\alpha}$-cusp is actually $C^{\alpha}$ along each piece of the boundary. 

In the general setting, though, it may happen that a boundary curve of $\Omega_1$ oscillates infinitely often around its tangent line at the origin. Therefore, we have simply chosen to estimate the $C^\alpha$-norm of $\nabla u$ with brute force by directly integrating the kernel. 

Before we begin the estimate, let us recall an explicit representation formula for the velocity gradient associated with vorticity $\chi_{\Omega}$ \cite{BeCo}: \begin{equation*}
\begin{split}
\nabla u(x) = \frac{1}{2\pi} p.v.\, \int_\Omega \frac{\sigma(x-y)}{|x-y|^2} dy + \frac{1}{2}\begin{pmatrix}
0 & -1 \\
1 & 0 
\end{pmatrix} \chi_{\Omega},
\end{split}
\end{equation*} where the characteristic function $\chi_{\Omega}$ is defined to be $1/2$ on $\partial\Omega$. The $2\times 2$ symmetric matrix $\sigma(z)$ is \begin{equation*}
\begin{split}
\frac{1}{|z|^2}\begin{pmatrix}
2z_1z_2 & z_2^2 - z_1^2 \\
z_2^2 - z_1^2 & -2z_1z_2
\end{pmatrix}.
\end{split}
\end{equation*} In particular, note that this formula provides a decomposition of $\nabla u$ into its symmetric and anti-symmetric parts. The anti-symmetric part is completely smooth on the patch, so it is only necessary to deal with the symmetric part, given by a principal value integration  {against a $-2$-homogeneous kernel.}

For the convenience of the reader, let us briefly recall the geometric setup for $\Omega$ and $\Omega_1$. We assume that a patch $\Omega_1$ is locally given by the region between two $C^{1,\alpha}$ curves meeting at the origin: to be precise, there exists some $\gamma > 0$, so that the boundary of $\Omega_1$ in the region $[-\gamma,\gamma]^2$ is given by \begin{equation*}
\begin{split}
\{ (y_1,y_2) : g(y_1) <  y_2 < f(y_1)  \}
\end{split}
\end{equation*} where $g, f$ belong to $C^{1,\alpha}[0,\gamma]$ with $f(0) = 0 = g(0)$ and $f'(0) > 0 > g'(0)$. 
Then, we consider the disjoint union $\Omega = \cup_{j=1}^4 \Omega_j$ where $\Omega_j  = R_{\pi(j-1)/2}(\Omega_1)$. Here, we are assuming that the patch is 4-fold symmetric ($m = 4$) just for the simplicity of notation. The other cases can be treated similarly, using the results of \cite{E1}. 

\begin{figure}
	\includegraphics[scale=1.0]{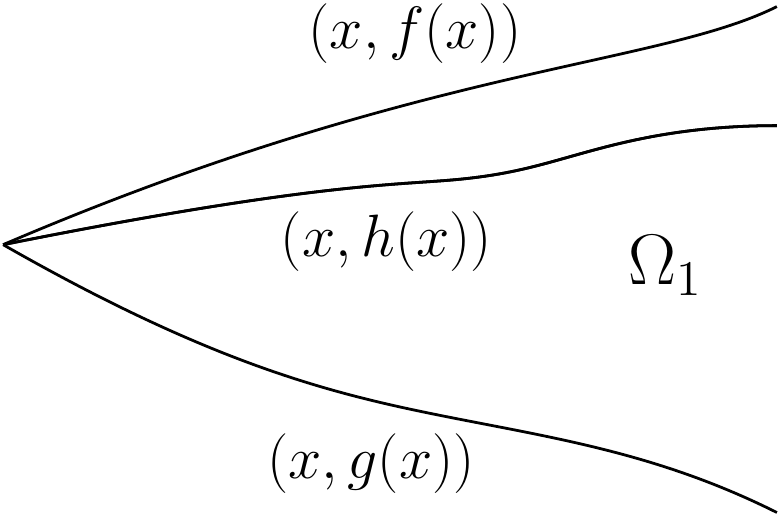} 
	\centering
	\caption{Description of the patch near the corner.}
	\label{fig:local_symm_patch}
\end{figure}

The value of $\delta > 0$  {is chosen as follows} \begin{equation*}
\begin{split}
\delta^\alpha = \frac{1}{10} \min\left\{ \left(\frac{\gamma}{2}\right)^\alpha, \V f\V^{-1}_{C^{1,\alpha}[0,\gamma]} ,\V g\V^{-1}_{C^{1,\alpha}[0,\gamma]} \right\}.
\end{split}
\end{equation*} Moreover, without loss of generality it can be assumed that on the interval $x \in [0,\delta]$, \begin{equation*}
\begin{split}
\frac{1}{2}|x| < |f(x)|, |g(x)| \le 2|x|, \qquad |f'(0)|, |g'(0)| \le 2
\end{split}
\end{equation*} (the specific values $1/2$ and $2$ will not play any essential role). 

\begin{lemma}[$C^{\alpha}$-estimate on the velocity gradient]\label{lem:corner_Holder}
	In the above setting, we have a bound \begin{equation}\label{eq:Holder_estimate}
	\begin{split}
	\left\V \frac{d}{dx}   u(x,f(x)) \right\V_{C^\alpha[0,\delta/10]} \le C \left( \V f\V_{C^{1,\alpha}} + \V g\V_{C^{1,\alpha}}\right) \left(1 + \V \nabla u\V_{L^\infty} \right) + C\delta^{-\alpha}.
	\end{split}
	\end{equation}
\end{lemma}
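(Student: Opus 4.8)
The plan is to control $\frac{d}{dx}u(x,f(x)) = \nabla u(x,f(x))\,(1,f'(x))^{T}$ directly from the representation formula for $\nabla u$. Write $P(x) = (x,f(x))$ and $\tau(x) = (1,f'(x))$. The antisymmetric part of $\nabla u$ equals a constant matrix times $\chi_\Omega$, which is constant ($=1/2$) along $\partial\Omega$, so it contributes a smooth term to $\frac{d}{dx}u(P(x))$; it therefore suffices to estimate the symmetric principal-value integral $\frac{1}{2\pi}\,\mathrm{p.v.}\int_\Omega \frac{\sigma(P-y)}{|P-y|^2}\,dy\,\tau(x)$. First I would separate the far part $\Omega\setminus B_0(\delta)$, where $|P-y|\gtrsim\delta$ for $P\in B_0(\delta/10)$; there the kernel and all its $x$-derivatives are bounded by negative powers of $\delta$, so this contribution is smooth in $x$ with $C^\alpha[0,\delta/10]$-norm $\le C\delta^{-\alpha}$, producing the last term of \eqref{eq:Holder_estimate}. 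The remaining work is on $\Omega\cap B_0(\delta)$.

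Conceptually, near the origin $\Omega_1$ is an exact sector (with edges along the tangent lines $y_2 = f'(0)y_1$ and $y_2 = g'(0)y_1$) modified by two cusp-shaped corrections of height $O(y_1^{1+\alpha})$. For the symmetric union of exact sectors I would use that, after the log-divergent contributions cancel by the $m$-fold symmetry with $m\ge 3$ — the computation of \cite{E1}, equivalently the vanishing of the second angular Fourier mode in \eqref{eq:velgrad_sectors} — the resulting velocity is exactly degree-one homogeneous, $u_{\mathrm{sec}}(x) = |x|\,V(\theta)$ with $V$ smooth, so that $\nabla u_{\mathrm{sec}}$ is a bounded smooth function of $\theta$ alone. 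Since $\theta(x,f(x)) = \arctan(f(x)/x)$ extends to a $C^\alpha$-function of $x$ up to $x=0$ (because $f(x)/x = f'(0) + \int_0^1 (f'(tx)-f'(0))\,dt$ is $C^\alpha$), the quantity $\nabla u_{\mathrm{sec}}(P(x))\,\tau(x)$ is a product of $C^\alpha$ functions, hence $C^\alpha[0,\delta/10]$. For the cusp corrections, the Friedman--Velázquez estimate reproduced in the excerpt already gives a $C^\alpha$ tangential bound.

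The brute-force step, which I expect to replace the clean cusp/sector split, is to integrate out the vertical variable. Using $\frac{2z_1z_2}{|z|^4} = \partial_{y_2}\frac{z_1}{|z|^2}$ and $\frac{z_2^2-z_1^2}{|z|^4} = \partial_{y_2}\frac{z_2}{|z|^2}$ with $z=P-y$, the two components of $\frac{\sigma(P-y)}{|P-y|^2}\tau(x)$ are exact $y_2$-derivatives of $(-1)$-homogeneous functions; integrating over $y_2\in(g(y_1),f(y_1))$ collapses the area integral to a one-dimensional singular integral over $y_1$ of Cauchy-type kernels such as $\frac{x-y_1}{(x-y_1)^2+(f(x)-f(y_1))^2}$. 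Near the diagonal this is the Hilbert-transform kernel $\frac{1}{(1+f'(x)^2)(x-y_1)}$ up to a remainder controlled by the $C^{1,\alpha}$-modulus of $f$ and $g$, so classical Plemelj/Privalov-type estimates for Cauchy integrals with $C^{1,\alpha}$ density give the $C^\alpha$-bound with the stated dependence on $\|f\|_{C^{1,\alpha}}$ and $\|g\|_{C^{1,\alpha}}$; the factor $(1+\|\nabla u\|_{L^\infty})$ enters through a geometric-lemma-type bound for the leading boundary term, exactly as the term $III$ in the proof of Lemma \ref{lem:key_identity}. The rotated copies $\Omega_2,\dots,\Omega_m$ are handled identically after integrating out the appropriate variable.

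I expect the main obstacle to be uniformity as $y_1\to 0$, i.e.\ right at the corner. There the boundary curve may, as the text notes, oscillate infinitely often across its tangent line, so the cusp cannot be peeled off as a single subdomain and the sector/cusp split above is only heuristic; all the estimates must instead be run on the raw kernel. The delicate point is to extract the symmetry cancellation of the logarithmic divergence simultaneously with the non-integrable boundary terms generated by the $y_2$-integration (the endpoints $y_2=f(y_1)$ and $y_2=g(y_1)$ both approach the singular evaluation point as $y_1\to x$ and as $y_1\to 0$), and to show that the surviving one-dimensional integrals remain Hölder in $x$ uniformly down to $x=0$. Organizing the cancellation so that no spurious logarithm in $1/x$ survives is, I think, the crux of the argument.
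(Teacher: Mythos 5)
Your roadmap coincides with the paper's own proof: discard the antisymmetric part, split off the bulk $\Omega\setminus[-\delta,\delta]^2$ to produce the $C\delta^{-\alpha}$ term, collapse the area integral over each rotated copy $\Omega_j\cap[-\delta,\delta]^2$ to a one-dimensional Cauchy-type integral by integrating out one variable (your exact-derivative identities are precisely what the paper uses), and pair $\Omega_1$ with its quarter-turn image $\Omega_2$ so that the endpoint logarithms cancel --- this pairing is exactly the content of the paper's Claim about \eqref{eq:Holder_expression}.

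However, the step you defer as ``the crux'' cannot be outsourced to classical Plemelj--Privalov theory and is, in effect, the entire proof. Privalov-type estimates give H\"older continuity of Cauchy integrals at points bounded away from the endpoints of the arc, whereas here the evaluation point $x$ tends to the endpoint $0$ of $[0,\delta]$, where $\mathrm{p.v.}\int_0^\delta\frac{dz}{x-z}=\log\frac{x}{\delta-x}$ genuinely diverges; so your near-diagonal argument alone only yields the estimate on $[\epsilon,\delta/10]$ with constants blowing up as $\epsilon\to 0$. What the paper does to close this is concrete: (a) subtract from the $\Omega_1$-kernel the model $\frac{f'(x)}{1+(f'(x))^2}\cdot\frac{1}{x-z}$ and from the $\Omega_2$-kernel the model $\frac{f'(x)}{1+(f'(x))^2}\cdot\frac{F}{x+f(z)}$ with $F=f'(0)$; (b) combine the two models, writing $\frac{F}{x+f(z)}=\frac{F}{x+Fz}+F\,\frac{Fz-f(z)}{(x+f(z))(x+Fz)}$, so that the explicitly computed sum $\int_0^\delta\bigl(\frac{1}{x-z}+\frac{F}{x+Fz}\bigr)dz=\log\bigl(\frac{F\delta+x}{\delta-x}\bigr)$ is bounded in $C^\alpha[0,\delta/10]$ --- the $\log\frac1x$ contributions cancel identically, not merely to leading order --- while the correction is controlled by $|Fz-f(z)|\le\Vert f\Vert_{C^{1,\alpha}}z^{1+\alpha}$; and (c) prove by hand, uniformly down to $x=0$, that the remainders $T_1,T_2$ are $C^\alpha$, which is done via Lemma \ref{lem:m0} together with a power-series expansion in $\frac{\tilde f(x)-\tilde f(z)}{x-z}$ (where $\tilde f(x)=f(x)-Fx$), see \eqref{eq:final}, using only $|f(x)-f(z)-f'(x)(x-z)|\le\Vert f\Vert_{C^{1,\alpha}}|x-z|^{1+\alpha}$. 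In addition, the lemma requires treating the terms where the evaluation curve and the boundary curve differ (the pair $f$, $g$), where one exploits the gap $\gtrsim|x|+|z|$ but still needs explicit evaluation of the resulting $\tan^{-1}$ and $\log$ expressions to see that the endpoint singularities cancel; your outline acknowledges these terms but does not handle them. Until (a)--(c) are carried out, the lemma is not proved.
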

\begin{proof}
	Let us write down explicitly the expression for $\nabla u$ along a $C^{1,\alpha}$-curve $(x,h(x))$, which lies between two boundary curves of $\Omega_1$: \begin{equation*}
	\begin{split}
	g(x) < h(x) \le f(x),\qquad 0 < x \le \delta. 
	\end{split}
	\end{equation*} We begin with $\frac{d}{dx}u_2$: \begin{equation*}
	\begin{split}
	&\frac{d}{dx}u_2(x,h(x)) = \frac{1}{2\pi} \frac{d}{dx} \int_{\Omega} \frac{x - y_1}{(x-y_1)^2 + (h(x)-y_2)^2} dy \\
	&\qquad= \frac{1}{2\pi} \int_{\Omega} \frac{(x-y_1)^2 - (h(x)-y_2)^2 + 2  {h'(x)}(x-y_1)(h(x)-y_2)}{\left( (x-y_1)^2 + (h(x) - y_2)^2 \right)^2} dy \\
	&\qquad= \frac{1}{2\pi} \left[ \int_{\Omega \cap [-\delta,\delta]^2} + \int_{\Omega \backslash [-\delta,\delta]^2} \right] \frac{(x-y_1)^2 - (h(x)-y_2)^2 + 2  {h'(x)}(x-y_1)(h(x)-y_2)}{\left( (x-y_1)^2 + (h(x) - y_2)^2 \right)^2} dy
	\end{split}
	\end{equation*} where we have separated contribution from the bulk of the patch.\footnote{Strictly speaking the integrals are defined by the principal value. We suppress from writing it out in the computations below.} The contribution from the bulk can be trivially bounded in $C^\alpha$ using the decay of the kernel:  \begin{equation*}
	\begin{split}
	&\frac{1}{|x-x'|^\alpha} \left|\int_{|y| > \delta} \left(\nabla K(x)  - \nabla K(x') \right) \chi_{\Omega} dy \right| \\
	&\qquad \le C|x-x'|^{1-\alpha} \int_{|y| > \delta} \frac{1}{|x-y|^3} + \frac{1}{|x'-y|^3} dy \le C\delta^{-\alpha}. 
	\end{split}
	\end{equation*}
	Now, let us separately consider two integrals \begin{equation*}
	\begin{split}
	I_1(x) := \int_{\Omega \cap [-\delta,\delta]^2} \frac{(x-y_1)^2 - (h(x)-y_2)^2}{\left( (x-y_1)^2 + (h(x) - y_2)^2 \right)^2} dy
	\end{split}
	\end{equation*} and \begin{equation*}
	\begin{split}
	I_2(x) :=  {h'(x)} \int_{\Omega \cap [-\delta,\delta]^2} \frac{2 (x-y_1)(h(x)-y_2)}{\left( (x-y_1)^2 + (h(x) - y_2)^2 \right)^2} dy.
	\end{split}
	\end{equation*} We shall only consider $I_1$, and just briefly comment on the other term $I_2$ below.
	One can further write: \begin{equation*}
	\begin{split}
	I_1(x) = \sum_{j=1}^4 I_1^j(x), 
	\end{split}
	\end{equation*} where \begin{equation*}
	\begin{split}
	I_1^j(x) = \int_{\Omega_j \cap [-\delta,\delta]^2} \frac{(x-y_1)^2 - (h(x)-y_2)^2}{\left( (x-y_1)^2 + (h(x) - y_2)^2 \right)^2} dy
	\end{split}
	\end{equation*} (recall that $\Omega_j := R_{\pi (j-1)/2}(\Omega_1)$). We have, after integrating in $y_2$, \begin{equation*}
	\begin{split}
	I_1^1(x) &= \int_{0 \le y_1 \le \delta} \int_{g(y_1) \le y_2 \le f(y_1)} \frac{(x-y_1)^2 - (h(x)-y_2)^2}{\left( (x-y_1)^2 + (h(x) - y_2)^2 \right)^2} dy_2 dy_1 \\
	&= \int_{0}^{\delta} \left[ \frac{h(x)-f(z)}{(x-z)^2 + (h(x) - f(z))^2} - \frac{h(x)-g(z)}{(x-z)^2 + (h(x) - g(z))^2} \right] dz
	\end{split}
	\end{equation*} (we have renamed $y_1$ by $z$ for simplicity). Similarly, \begin{equation*}
	\begin{split}
	I_1^2(x) &= \int_{0 \le y_2 \le \delta} \int_{-f(y_2) \le y_1 \le -g(y_2)} \frac{(x-y_1)^2 - (h(x)-y_2)^2}{\left( (x-y_1)^2 + (h(x) - y_2)^2 \right)^2} dy_1 dy_2 \\
	&= \int_0^\delta \left[ \frac{x + f(z)}{(x+f(z))^2 + (h(x) - z)^2} - \frac{x+g(z)}{(x+g(z))^2 + (h(x)-z)^2}  \right] dz .
	\end{split}
	\end{equation*} 
	
	\medskip
	
	\noindent \textbf{Claim.} The integral \begin{equation}\label{eq:Holder_expression}
	\begin{split}
	\int_0^\delta \left[\frac{h(x)-f(z)}{(x-z)^2 + (h(x) - f(z))^2} + \frac{x + f(z)}{(x+f(z))^2 + (h(x) - z)^2} \right] dz 
	\end{split}
	\end{equation} defines a $C^\alpha$-function of $0 \le x \le \delta/10$ with $C^\alpha$-norm bounded by the right hand side of \eqref{eq:Holder_estimate}. 
	
	\medskip
	
	Once we show the \textbf{Claim} (together with the upper bound stated in \eqref{eq:Holder_estimate}) for $h = f$ and $h = g$, this concludes the proof that $I_1(x)$ belongs to $C^\alpha$, since each of $I_1^1 + I_1^2$ and $I_1^3 + I_1^4$ belongs to $C^\alpha$, by symmetry. A similar argument can be given for the other term $I_2(x)$: we write it as \begin{equation*}
	\begin{split}
	I_2(x) = f'(x) \sum_{j=1}^4 I_2^j(x),
	\end{split}
	\end{equation*} where \begin{equation*}
	\begin{split}
	I_2^j(x) = \int_{ \Omega_j \cap [-\delta,\delta]^2} \frac{2(x-y_1)(h(x) - y_2)}{((x-y_1)^2 + (h(x) - y_2)^2)^2} dy . 
	\end{split}
	\end{equation*} Then, we can integrate each of $I_2^j$ once with respect to either $y_1$ or $y_2$, resulting in similar expressions as above.

	Let us consider the case $h(z) \equiv f(z)$, which is actually the most difficult case. In this specific case, we rewrite the integrand in \eqref{eq:Holder_expression} as \begin{equation*}
	\begin{split}
	&\left[\frac{f(x)-f(z)}{(x-z)^2 + (f(x) - f(z))^2} - \frac{f'(x)}{1+(f'(x))^2} \cdot \frac{1}{x-z} \right] \\ &\qquad+ \left[\frac{x + f(z)}{(x+f(z))^2 + (f(x) - z)^2} - \frac{f'(x)}{1+(f'(x))^2} \cdot \frac{F}{x+f(z)} \right] + \frac{f'(x)}{1+(f'(x))^2} \left( \frac{1}{x-z} + \frac{F}{x+f(z)} \right)
	\end{split}
	\end{equation*} where $F := f'(0) > 0$. Let us first estimate in $C^\alpha$ the last term, which we further rewrite as: \begin{equation*}
	\begin{split}
	\frac{f'(x)}{1 + (f'(x))^2}  \left[ \left( \frac{1}{x-z} + \frac{F}{x+Fz}\right) + F \cdot \frac{Fz - f(z)}{(x+f(z))(x+Fz)} \right].
	\end{split} 
	\end{equation*} Since $f' \in C^\alpha$, it suffices to estimate in $C^\alpha$ the integrals of two terms in the large brackets. Regarding the first term, one just explicitly evaluate that  \begin{equation*}
	\begin{split}
	\int_0^\delta \frac{1}{x-z} + \frac{F}{x+Fz} dz = \log\left( \frac{F\delta + x}{\delta - x} \right),
	\end{split}
	\end{equation*} (defined by the principal value) which is clearly bounded in $C^\alpha$ by the right hand side of \eqref{eq:Holder_expression}  {for $x\leq \frac{\delta}{10}$.} Note that the logarithmically divergent terms (as $x \rightarrow 0^+$) present in each of the integrals cancel each other exactly. Regarding the second term, we first note that it is uniformly bounded: \begin{equation*}
	\begin{split}
	\left| \int_0^\delta \frac{Fz - f(z)}{(x+f(z))(x+Fz)} dz \right| \le CF \int_0^\delta \frac{z}{x^2 + (Fz)^2} dz \le C \frac{F}{1+F^2}. 
	\end{split}
	\end{equation*} To bound the $C^\alpha$-norm, we need to estimate for $0 \le x < x'$  \begin{equation*}
	\begin{split}
	\int_{0}^\delta\frac{1}{|x-x'|^\alpha} \cdot |Fz-f(z)| \cdot \left| \frac{1}{(x+f(z))(x+Fz)} - \frac{1}{(x'+f(z))(x'+Fz)}  \right|dz,
	\end{split}
	\end{equation*} and simply using that $|Fz - f(z)| \le \V f\V_{C^{1,\alpha}} |z|^{1+\alpha}$, we bound the above by \begin{equation*}
	\begin{split}
	&C\V f\V_{C^{1,\alpha}} \cdot \int_0^\delta \frac{|x-x'|^{1-\alpha}z^{1+\alpha}\left(x + x' + Fz\right)}{(x+f(z))(x+Fz)(x'+f(z))(x'+Fz)} dz \\
	&\qquad\le C\V f\V_{C^{1,\alpha}} \cdot \int_0^\infty \frac{|x-x'|^{1-\alpha}z^{1+\alpha}\left(x + x' + z\right)}{(x+z)^2(x'+z)^2} dz \\
	&\qquad\le C\V f\V_{C^{1,\alpha}} \cdot \int_0^\infty \frac{x^{1-\alpha}z^{1+\alpha}\left(x + z\right)}{(x+z)^2z^2} dz \le C \V f\V_{C^{1,\alpha}}.
	\end{split}
	\end{equation*} It remains to estimate \begin{equation*}
	\begin{split}
	T_1(x) = \int_0^\delta \left[\frac{f(x)-f(z)}{(x-z)^2 + (f(x) - f(z))^2} - \frac{f'(x)}{1+(f'(x))^2} \cdot \frac{1}{x-z} \right]dz
	\end{split}
	\end{equation*} and \begin{equation*}
	\begin{split}
	T_2(x) = \int_0^\delta   \left[\frac{x + f(z)}{(x+f(z))^2 + (f(x) - z)^2} - \frac{f'(x)}{1+(f'(x))^2} \cdot \frac{F}{x+f(z)} \right] dz.
	\end{split}
	\end{equation*}
	We begin with $T_1(x)$. After a bit of re-arranging, we have \begin{equation*}
	\begin{split}
	T_1(x) &= \int_0^\delta \frac{1}{x-z} \left[ \frac{\frac{f(x)-f(z)}{x-z}}{1 + \left(\frac{f(x)-f(z)}{x-z}\right)^2} - \frac{f'(x)}{1 + (f'(x))^2} \right] dz  \\
	& = \int_0^\delta \frac{1}{x-z}   \left[ \left(\frac{1}{1 + \left(\frac{f(x)-f(z)}{x-z}\right)^2} - \frac{1}{1 + (f'(x))^2} \right) \frac{f(x) - f(z)}{x-z} \right. \\
	&\qquad\qquad\qquad \quad \left.  + \frac{1}{1 + (f'(x))^2}\left( \frac{f(x) - f(z)}{x-z} - f'(x) \right) 
  \right]dz.
	\end{split}
	\end{equation*} To begin with, we state as a lemma the $C^\alpha$-estimate for the latter term (dropping the multiplicative factor which belongs to $C^\alpha$):
	\begin{lemma}\label{lem:m0} We have 
	 \begin{equation*}
	\begin{split}
	\left\Vert \int_0^\delta \frac{1}{x-z} \left(\frac{f(x)-f(z)}{x-z} - f'(x)\right) dz \right\Vert_{C^\alpha[0,\frac{\delta}{10}]} \le C \Vert f\Vert_{C^{1,\alpha}}. 
	\end{split}
	\end{equation*} 
	\end{lemma}
	\begin{proof}[Proof of Lemma \ref{lem:m0}]
	Take two points $0 \le x < x' < \delta/10$, and let us further assume that $|x| \ge |x'-x|$ (the other case is simpler). We need to take \begin{equation}\label{eq:4integrals}
	\begin{split}
	&\frac{1}{|x-x'|^\alpha}\int_0^\delta \frac{1}{x-z}\left( \frac{{f}(x) -  {f}(z)}{x-z} - f'(x) \right) - \frac{1}{x'-z}\left( \frac{{f}(x') - {f}(z)}{x'-z}  - f'(x')\right) dz \\
	&\quad = \frac{1}{|x-x'|^\alpha} \left[ \int_0^{x-|x-x'|/2} + \int_{x-|x-x'|/2}^{x+|x-x'|/2} + \int_{x'-|x-x'|/2}^{x'+|x-x'|/2} +  \int_{x'+|x-x'|/2}^\delta \right] \, \cdots \, dz =: I + II + III+IV. 
	\end{split}
	\end{equation} To begin with, we treat the second term \eqref{eq:4integrals}: we simply use the bound \begin{equation*}
	\begin{split}
	| {f}(x) -  {f}(z) - f'(x)(x-z)| \le \V f\V_{C^{1,\alpha}}|x-z|^{1+\alpha}
	\end{split}
	\end{equation*} (and similarly for $x$ replaced by $x'$) to bound   \begin{equation*}
	\begin{split}
	|II| \le \frac{\V f\V_{C^{1,\alpha}}}{|x-x'|^\alpha}\int_{x-|x-x'|/2}^{x+|x-x'|/2} |x-z|^{\alpha-1} + \left( |x-x'| + |x-z| \right)^{\alpha-1}dz \le C \V f\V_{C^{1,\alpha}}. 
	\end{split}
	\end{equation*} The term $III$ from \eqref{eq:4integrals} can be treated in a parallel way. Turning to the first integral, we rewrite as \begin{equation*}
	\begin{split}
	I = \frac{1}{|x-x'|^\alpha}&\int_0^{x - |x-x'|/2} \frac{({f}(x) - {f}(z) - f'(x)(x-z)) - ({f}(x') - {f}(z) - f'(x')(x'-z)}{(x-z)^2}  \\ 
	&\qquad\qquad\qquad\qquad+ \left(\frac{1}{(x-z)^2} - \frac{1}{(x'-z)^2}\right) ({f}(x') - {f}(z) - f'(x')(x'-z))dz
	\end{split}
	\end{equation*} Note that the numerator of the first term equals \begin{equation*}
	\begin{split}
	\left(f(x) - f(x') - f'(x)(x-x') \right) + \left( (x' - x) + (x - z) \right) (f'(x') - f'(x)) ,
	\end{split}
	\end{equation*} and simply using the bounds \begin{equation*}
	\begin{split}
	|f(x) - f(x') - f'(x)(x-x') | \le C \V f \V_{C^{1,\alpha}}|x-x'|^{1+\alpha},\qquad |f'(x) -f'(x')| \le C \V f \V_{C^{1,\alpha}}|x-x'|^\alpha,
	\end{split}
	\end{equation*} we bound the first term by $C\V f \V_{C^{1,\alpha}}$. The second one can be bounded by \begin{equation*}
	\begin{split}
	\V f \V_{C^{1,\alpha}}|x-x'|^{1-\alpha} \int_0^{x - |x-x'|/2}  \frac{|x-z| + |x-x'|}{(x-z)^2} |x'-z|^{\alpha-1} dz,
	\end{split}
	\end{equation*} and after a change of variable $v := (x-z)/|x'-x|$, \begin{equation*}
	\begin{split}
	\le C\V f \V_{C^{1,\alpha}} |x-x'|^{1-\alpha} \cdot |x-x'|^{\alpha -1} \int_{1/2}^{\infty} \frac{(1+v)^{\alpha}}{v^2}dv \le C\V f \V_{C^{1,\alpha}} .
	\end{split}
	\end{equation*} Now the term $IV$ from \eqref{eq:4integrals} can be treated in an analogous fashion. This gives the lemma.
	\end{proof}
	To finish the estimate of $T_1$, we still need to consider the expression \begin{equation*}
	\begin{split}
	&\int_0^\delta \frac{1}{x-z} \cdot \frac{f(x)-f(z)}{x-z} \cdot \left[ \frac{1}{1 + \left(\frac{f(x)-f(z)}{x-z}\right)^2} - \frac{1}{1+(f'(x))^2} \right]dz \\
	&\qquad = -\frac{1}{1+(f'(x))^2} \int_0^\delta \frac{1}{x-z}  \cdot \frac{f(x)-f(z)}{x-z}  \cdot \frac{\left(\frac{{f}(x)-{f}(z)}{x-z} - f'(x) \right) \cdot \left(\frac{{f}(x)-{f}(z)}{x-z} + f'(x)\right)}{1 + \left( F + \frac{\tilde{f}(x)-\tilde{f}(z)}{x-z} \right)^2}dz,
	\end{split}
	\end{equation*} where $F = f'(0)$ and $\tilde{f}(x) = f(x) - Fx$. Consider the expansion \begin{equation*}
	\begin{split}
	&\frac{1}{1 + \left( F + \frac{\tilde{f}(x)-\tilde{f}(z)}{x-z} \right)^2} = \frac{1}{1 + F^2 + \left( F + \frac{\tilde{f}(x)-\tilde{f}(z)}{x-z} \right)^2 - F^2} \\
	&\qquad= \frac{1}{1+F^2} \cdot \sum_{m\ge 0} (-1)^m\left( \frac{1}{1+F^2}\right)^m \cdot \left( \left(F + \frac{\tilde{f}(x)-\tilde{f}(z)}{x-z} \right)^2 - F^2\right)^m \\
	&\qquad = \frac{1}{1+F^2} \cdot \sum_{m\ge 0} (-1)^m\left( \frac{1}{1+F^2}\right)^m \cdot \left( 2F + \frac{\tilde{f}(x)-\tilde{f}(z)}{x-z}   \right)^m  \left(\frac{\tilde{f}(x)-\tilde{f}(z)}{x-z} \right)^m
	\end{split}
	\end{equation*} which is convergent simply because $\V \tilde{f}'\V_{L^\infty[0,\delta]} \le 1/10$ from our choice of $\delta$. Inspecting the terms, to estimate $T_1$, it suffices to obtain boundeness of \begin{equation*}
	\begin{split}
	\sum_{ m \ge 0} \left( \frac{1}{1+F^2}\right)^m \left\Vert   \int_0^\delta \frac{1}{x-z} \left( 2F + \frac{\tilde{f}(x)-\tilde{f}(z)}{x-z}   \right)^m \left( \frac{\tilde{f}(x)-\tilde{f}(z)}{x-z} \right)^m \left( \frac{f(x) -f(z)}{x-z}  - f'(x) \right) dz \right\Vert_{C^\alpha[0,\frac{\delta}{10}]}
	\end{split}
	\end{equation*} and its simple variants.  For any $m \ge 1$, we claim the bound (recall that $\V \tilde{f}'\V_{L^\infty[0,\delta]} \le 1/10$) \begin{equation}\label{eq:final}
	\begin{split}
	\left\V \int_0^\delta \frac{1}{x-z} \left( \frac{\tilde{f}(x)-\tilde{f}(z)}{x-z} \right)^m \left( \frac{f(x) -f(z)}{x-z}  - f'(x) \right) dz \right\V_{C^\alpha[0,\frac{\delta}{10}]} \le \frac{Cm  }{10^m} \V f\V_{C^{1,\alpha}[0,\delta]}.
	\end{split}
	\end{equation} We have already treated the case $m = 0$ in Lemma \ref{lem:m0}.  Let us sketch the proof of \eqref{eq:final}, which is completely parallel. Defining \begin{equation*}
	\begin{split}
	H(x,z) = \frac{\tilde{f}(x)-\tilde{f}(z)}{x-z}, 
	\end{split}
	\end{equation*} we need to treat \begin{equation*}
	\begin{split}
	&\frac{1}{|x-x'|^\alpha}\int_0^{\delta} dz \left[ \frac{1}{x-z}H^m(x,z)\left( \frac{f(x) -f(z)}{x-z}  - f'(x) \right) - \frac{1}{x'-z}H^m(x',z)\left( \frac{f(x') -f(z)}{x'-z}  - f'(x') \right) \right] \\
	&\qquad = I + II + III + IV, 
	\end{split}
	\end{equation*} where, exactly as in the proof of Lemma \ref{lem:m0}, the terms $I, II, III, IV$ correspond to integration over $[0,x-|x-x'|/2], [x-|x-x'|/2,x+|x-x'|/2], [x+|x-x'|/2,x'+|x-x'|/2], [x'+|x-x'|/2,\delta]$, assuming $0\le x<x'<\delta/10$ and $|x|\ge |x'-x|$ for simplicity. In regions $II$ and $III$, one can simply use  \begin{equation*}
	\begin{split}
	| {f}(x) -  {f}(z) - f'(x)(x-z)| \le \V f\V_{C^{1,\alpha}}|x-z|^{1+\alpha}
	\end{split}
	\end{equation*} (and  for $x$ replaced by $x'$)  since the length of the integration domain is of order $|x-x'|$. For the term $I$ ($IV$ can be treated similarly), we just write \begin{equation*}
	\begin{split}
	& \left[ \frac{1}{x-z}H^m(x,z)\left( \frac{f(x) -f(z)}{x-z}  - f'(x) \right) - \frac{1}{x'-z}H^m(x',z)\left( \frac{f(x') -f(z)}{x'-z}  - f'(x') \right) \right]  \\
	&\qquad = \left[ \frac{1}{x-z}H^m(x,z)\left( \frac{f(x) -f(z)}{x-z}  - f'(x) \right) - \frac{1}{x'-z}H^m(x,z)\left( \frac{f(x') -f(z)}{x'-z}  - f'(x') \right) \right] \\
	&\qquad\quad + \left[ \frac{1}{x'-z}H^m(x,z)\left( \frac{f(x') -f(z)}{x'-z}  - f'(x') \right) - \frac{1}{x'-z}H^m(x',z)\left( \frac{f(x') -f(z)}{x'-z}  - f'(x') \right) \right] 
	\end{split}
	\end{equation*} and then the first term on the right hand side is treated in the exact same way as in Lemma \ref{lem:m0}, resulting in the constant $10^{-m}$ thanks to the size of $H^m$ in $L^\infty$. For the last term, we simply rewrite \begin{equation*}
	\begin{split}
	\frac{1}{x'-z}\left( \frac{f(x') -f(z)}{x'-z}  - f'(x') \right)(H(x,z)-H(x',z))(H^{m-1}(x,z) + H^{m-2}(x,z)H(x',z)+\cdots + H^{m-1}(x',z))
	\end{split}
	\end{equation*} and then it can be estimates again in the same way, resulting in the constant $m10^{-m}$. With yet another parallel argument, it is not difficult to see \begin{equation*}
	\begin{split}
	& \left\Vert   \int_0^\delta \frac{1}{x-z} \left( 2F + \frac{\tilde{f}(x)-\tilde{f}(z)}{x-z}   \right)^m \left( \frac{\tilde{f}(x)-\tilde{f}(z)}{x-z} \right)^m \left( \frac{f(x) -f(z)}{x-z}  - f'(x) \right) dz \right\Vert_{C^\alpha[0,\frac{\delta}{10}]} \\
	&\qquad \le \frac{Cm(1+2F)^m  }{10^m} \V f\V_{C^{1,\alpha}[0,\delta]}.
	\end{split}
	\end{equation*}  This concludes the argument for $T_1(x)$. The other term $T_2(x)$ can be treated similarly, and it is simpler since the corresponding integral is less singular than that of $T_1$.

	We now sketch a proof that \textbf{Claim} holds in the case $h(x) \equiv g(x)$. In this case, the arguments are simpler since we have a gap $|h(x) - g(x)| \gtrsim |x|$. It suffices to show that the differences \begin{equation*}
	\begin{split}
	\int_0^\delta \left[ \frac{g(x) - f(z)}{(x-z)^2 + (g(x) - f(z))^2} - \frac{Gx - Fz}{(x-z)^2 + (Gx - Fz)^2}  \right] dz,
	\end{split}
	\end{equation*} \begin{equation*}
	\begin{split}
	\int_0^\delta   \left[\frac{x + f(z)}{(x+f(z))^2 + (g(x) - z)^2} - \frac{x + Fz}{(x+Fz)^2 + (Gx - z)^2} \right] dz,
	\end{split}
	\end{equation*} and \begin{equation*}
	\begin{split}
	\int_0^\delta \left[ \frac{Gx - Fz}{(x-z)^2 + (Gx - Fz)^2} + \frac{x + Fz}{(x+Fz)^2 + (Gx - z)^2} \right] dz
	\end{split}
	\end{equation*} belong to $C^\alpha$ with appropriate bounds, where $G := g'(0) < 0$. To begin with, the last integral can be evaluated directly: \begin{equation*}
	\begin{split}
	&- \frac{1}{1+ F^2} \left[ \tan^{-1}\left(\frac{(-F+G)x}{-(1+FG)x + (1+F^2)z}\right) + \frac{F}{2} \log\left( (1+G^2)x^2 + (1+F^2)z^2- 2xz(1+FG) \right) \right. \\
	& \qquad\qquad\qquad + \left.\left. \tan^{-1}\left(\frac{x(1+FG)}{(F-G)x + (1+F^2)z}\right) - \frac{F}{2} \log\left( (1+G^2)x^2 + 2(F-G)xz + (1+F^2)z^2 \right) \right] \right|_0^\delta.
	\end{split}
	\end{equation*}  {We claim that the above expression gives a $C^\alpha$-function of $x$. To see this, evaluating the above at $z = \delta$ and $z= 0$, and subtracting gives the following terms (up to multiplicative constants): 
	\begin{equation*}
	\begin{split}
	&\tan^{-1}\left(\frac{(-F+G)x}{-(1+FG)x + (1+F^2)\delta}\right) - \tan^{-1}\left(\frac{-F+G}{-(1+FG)}\right)\\
	&\quad  + \tan^{-1}\left(\frac{x(1+FG)}{(F-G)x + (1+F^2)\delta}\right) - \tan^{-1}\left(\frac{1+FG}{F-G}\right) 
	\end{split}
	\end{equation*} and \begin{equation*}
	\begin{split}
	&\log\left( (1+G^2)x^2 + (1+F^2)\delta^2- 2x\delta(1+FG) \right) - \log\left( (1+G^2)x^2 + 2(F-G)x\delta + (1+F^2)\delta^2 \right). 
	\end{split}
	\end{equation*} Here it is crucial that the logarithmic terms evaluated at $z = 0$ cancel each other. To treat the terms involving $\tan^{-1}$, one can directly compute that \begin{equation*}
	\begin{split}
	\left\V \tan^{-1}\left( \frac{Ax}{Bx + \delta} \right) \right\V_{C^\alpha[0,\delta/10]} \le C(A,B)\delta^{-\alpha}
	\end{split}
	\end{equation*} for nonzero constants $A$ and $B$. Another explicit computation gives that \begin{equation*}
	\begin{split}
	\left\V\log\left( \frac{(1+G^2)x^2 + (1+F^2)\delta^2- 2x\delta(1+FG)}{(1+G^2)x^2 + 2(F-G)x\delta + (1+F^2)\delta^2} \right) \right\V_{C^\alpha[0,\delta/10]} \le C(F,G)\delta^{-\alpha}.
	\end{split}
	\end{equation*}
} Now we return to the first integral, which equals \begin{equation}
	\begin{split}
	&\int_0^\delta \left[ \frac{(x-z)^2 \left( (Gx - g(x)) - (Fz - f(z)) \right)}{\left( (x-z)^2 + (g(x) - f(z))^2 \right) \left( (x-z)^2 + (Gx - Fz)^2 \right) } \right. \\
	&\qquad\qquad + \left.\frac{(g(x) - f(z))(Gx - Fz)\left( (g(x) - f(z)) - (Gx - Fz) \right)}{\left( (x-z)^2 + (g(x) - f(z))^2 \right) \left( (x-z)^2 + (Gx - Fz)^2 \right)} \right] dz
	\end{split}
	\end{equation} Here, the key points are: \begin{itemize}
		\item On the numerator, we gain an extra power of $|x|^{\alpha}$ or $|z|^{\alpha}$, from H\"{o}lder continuity of $f'$ and $g'$. 
		\item The denominator is uniformly bounded from above and below by constant multiples of $x^2 + z^2$. 
	\end{itemize} We sketch the proof of $C^\alpha$-continuity for the first term only, since the second one can be treated similarly. We need to estimate \begin{equation}\label{eq:2terms}
	\begin{split}
	&\frac{1}{|x-x'|^\alpha} \int_0^\delta \left[ \frac{(x-z)^2 \left( (Gx - g(x)) - (Fz - f(z)) \right)}{\left( (x-z)^2 + (g(x) - f(z))^2 \right) \left( (x-z)^2 + (Gx - Fz)^2 \right) } \right. \\
	&\qquad\qquad \qquad\qquad \left. -\frac{(x'-z)^2 \left( (Gx' - g(x')) - (Fz - f(z)) \right)}{\left( (x'-z)^2 + (g(x') - f(z))^2 \right) \left( (x'-z)^2 + (Gx' - Fz)^2 \right) }\right] dz
	\end{split}
	\end{equation} and we may assume $|x-x'| \le |x|$. Let us even further assume that the denominators in \eqref{eq:2terms} are the same, as they are roughly of the same size (and bounded uniformly from below by a constant multiple of $x^2 + z^2$ and $x'^2 + z^2$, respectively). Then, the resulting difference is bounded by: \begin{equation*}
	\begin{split}
	C( \V f \V_{C^{1,\alpha}} + \V g \V_{C^{1,\alpha}}) \frac{1}{|x-x'|^\alpha} \int_0^\delta \frac{|x-x'|\left( |x| + |x-x'| + |z| \right)\left( |x|^{1+\alpha} + |z|^{1+\alpha} \right) }{(x^2 + z^2)^2 }dz
	\end{split}
	\end{equation*} and at this point, the $C^\alpha$-bound simply follows from rescaling the variable $z = xv$. The actual proof can be done for instance by expanding one of the denominators in \eqref{eq:2terms} around the other denominator in a power series as we have done earlier. 
	
	The argument for the other component $\frac{d}{dx}u_1$ is completely analogous. We just note that along a curve $(x,h(x))$, it has the form: \begin{equation*}
	\begin{split}
	\frac{d}{dx} u_1(x,h(x)) &= \frac{1}{2\pi} \frac{d}{dx} \int_{\Omega} \frac{-(h(x) - y_2)}{(x-y_1)^2 + (h(x) - y_2)^2} dy \\
	&= \frac{1}{2\pi} \int_{ \Omega} \frac{-h'(x)\left( (x-y_1)^2 + (h(x)-y_2)^2 \right) + 2(h(x) - y_2)\left( (x-y_1) + h'(x) (h(x)-y_2)\right)}{\left((x-y_1)^2 + (h(x) - y_2)^2\right)^2}dy.
	\end{split}
	\end{equation*}

	\noindent This finishes the proof. 
\end{proof}

\subsection{Proof of the main result}\label{subsec:proof_main}

We are now in a position to complete the proof of Theorem \ref{thm:symmetric_corner}. Let us recall that as a consequence of Theorem \ref{thm:intermediate}, for any $ T > 0$, we have $L^\infty$-bounds \begin{equation}\label{eq:apriori_infty}
\begin{split}
\sup_{t \in [0,T]} \left(\V \nabla \Phi_t \V_{L^\infty} + \V \nabla \Phi_t^{-1} \V_{L^\infty} + \V \nabla u_t\V_{L^\infty} \right) \le C(T),
\end{split}
\end{equation} and moreover, for any $r > 0$, \begin{equation}\label{eq:apriori_alpha}
\begin{split}
\sup_{t \in [0,T]} \left\V \nabla u_t \right\V_{C^{\alpha}(\mathbb{R}^2\backslash B_0(r) )} \le C(T) r^{-\alpha}. 
\end{split}
\end{equation} As in the case of Theorem \ref{thm:intermediate}, the issue of local well-posedness is deferred to the Appendix (Proposition \ref{prop:LWP2}); hence, we shall assume that at least for some short time interval $[0,T_1]$, each piece of the boundary of $\Omega_1(t)$ remains uniformly $C^{1,\alpha}$ up to the origin. 
\begin{proof}[Proof of Theorem \ref{thm:symmetric_corner}]
	 {
	We shall fix some $T > 0$ and obtain a priori estimates which guarantee that for all $t \in [0,T]$, the boundary of $\Omega_1(t)$ is given by two $C^{1,\alpha}$-curves $f_t$ and $g_t$, after rotating the plane if necessary, on some interval of $x \in [0,\delta_t]$ for $\delta_t > 0$. We initially take $\delta_t = \delta_0$ for all $t \in [0,T]$ but need to shrink its value whenever necessary (but in a way depending only on $T$) in the following argument. This will be sufficient as $T > 0$ was arbitrary. Then the a priori estimates can be justified along the proof of the local well-posedness given in the Appendix. 
	
	Since the patch in general rotates around the origin, we need to work within subintervals of time of the form $[mT_0, (m+1)T_0]$ for some $0 < T_0 \le T$ (to be determined below) which depends only on $T$; at the end of each subinterval, we initialize the patch again.  This is allowed as the a priori estimates we obtain will not depend on $m$ but only on $T$. From now on, we shall assume that $t \in [0,T_0]$.  
	
	At the initial time, we may assume that $f_0$ and $g_0$ satisfy \begin{equation*} 
	\begin{split}
	& 0 < c_0 \le  |f_0'(x)|, |g_0'(x)| \le C_0 < +\infty 
	\end{split}
	\end{equation*} on $x \in [0,\delta_0]$ for some constants $c_0, C_0$. Unfortunately, $f_t$ and $g_t$ themselves do not obey a simple evolution equation. We instead work directly with the particle trajectories \begin{equation*}
	\begin{split}
	\eta^1(t,x):= \Phi^1(t,(x,f_0(x))), \qquad \eta^2(t,x) := \Phi^2(t,(x,f_0(x))) 
	\end{split}
	\end{equation*} and \begin{equation*} 
	\begin{split}
	&\teta^1(t,x):= \Phi^1(t,(x,g_0(x))), \qquad \teta^2(t,x) := \Phi^2(t,(x,g_0(x))) 
	\end{split}
	\end{equation*} (which are well-defined on $x \in [0,\delta_0]$), and apply the inverse function theorem to recover bounds on $f_t$ and $g_t$. Then, since $\rd_t {\eta}(t,x) = u(t,\eta(t,x))$, we have upon differentiating \begin{equation}\label{eq:diff}
	\begin{split}
	\frac{\partial}{\partial t} \left(\frac{\partial}{\partial x} \eta(t,x)\right) = \nabla u(t, \eta(t,x)) \left(\frac{\partial}{\partial x} \eta(t,x)\right).
	\end{split}
	\end{equation} First, from \eqref{eq:apriori_infty} we have \begin{equation}\label{eq:infty}
	\begin{split}
	\sup_{x \in [0,\delta_0]}| \pr_x \eta_t| \le C(T) < + \infty, \qquad \inf_{ x \in [0,\delta_0]} |\pr_x\eta_t| \ge c(T) > 0.
	\end{split}
	\end{equation} Moreover, \begin{equation*} 
	\begin{split}
	& \inf_{ x \in [0,\delta_0]} |\rd_x\eta^1_t| \ge c_0- C(T)t 
	\end{split}
	\end{equation*} so that by taking $T_0 = T_0(T)$ small, we may guarantee that $|\rd_x\eta^1_t| > \frac{c_0}{2}$. This guarantees that the function $\eta^1_t$ is invertible, and we denote the inverse by $(\eta^1_t)^{-1}$, which is well-defined on $[0,c\delta_0]$ for some $c = c(T) > 0$. We take $\delta_t = c\delta_0$ on $t \in (0,T_0]$. Using the chain rule and the bound \eqref{eq:infty}, it is easy to obtain \begin{equation}\label{eq:flowinverse-bound}
	\begin{split}
	\sup_{x \in [0,\delta_t]} | \rd_x (\eta_t^1)^{-1} | \le C(T) 
	\end{split}
	\end{equation} and \begin{equation}\label{eq:flowinverse-bound2} 
	\begin{split}
	\V \rd_x (\eta^1_t)^{-1} \V_{C^\alpha[0,\delta_t]} \le \V \rd_x\eta^1_t \V_{C^\alpha[0,\delta_0]} \frac{ \sup | \rd_x (\eta_t^1)^{-1} |  }{( \inf |\rd_x \eta^1_t| )^2} \le C(T) \V \rd_x\eta^1_t \V_{C^\alpha[0,\delta_0]} . 
	\end{split}
	\end{equation} We argue similarly for the other part of the boundary: denoting the inverse of $\teta^1_t$ by $(\teta^1_t)^{-1}$ (which is defined on the same interval of $x$), it can be shown that and $(\teta^1_t)^{-1}$ satisfies the same bounds as in \eqref{eq:flowinverse-bound}--\eqref{eq:flowinverse-bound2}. We now may define \begin{equation*} 
	\begin{split}
	& f_t = \eta_t^2 \circ (\eta_t^1)^{-1}, \quad g_t = \teta_t^2 \circ (\teta_t^1)^{-1} 
	\end{split}
	\end{equation*} on $[0,\delta_t]$. A straightforward computation (again using the chain rule) shows that \begin{equation*} 
	\begin{split}
	& \V f_t \V_{C^{1,\alpha}[0,\delta_t]} \le C(T) ( \V \rd_x(\eta_t^1)^{-1}\V_{C^\alpha} + \V \rd_x \eta^2_t \V_{C^\alpha} ) \le C(T) (\V \rd_x \eta^1_t \V_{C^\alpha} + \V \rd_x \eta^2_t \V_{C^\alpha}  ). 
	\end{split}
	\end{equation*} As the boundary of $\Omega_1(t)$ is given by the graph of two $C^{1,\alpha}$ curves $f_t$ and $g_t$ on $[0,\delta_t]$, we obtain from Lemma \ref{lem:corner_Holder} and \eqref{eq:apriori_alpha} that \begin{equation*}
	\begin{split}
	\V \nabla u_t \circ \eta \V_{C^\alpha[0,\delta_0]} \le C(T) \left( \left\V f_t \right\V_{C^{1,\alpha}[0,\delta_t]} + \left\V g_t \right\V_{C^{1,\alpha}[0,\delta_t]} + \delta_t^{-\alpha} \right).
	\end{split}
	\end{equation*} Now using the previous bounds on $f_t$, $g_t$, and $\delta_t$, \begin{equation*} 
	\begin{split}
	&\V \nabla u_t \circ \eta \V_{C^\alpha[0,\delta_0]} \le C(T, \delta_0)  \V  \pr_x \eta(t) \V_{C^\alpha[0,\delta_0]}. 
	\end{split}
	\end{equation*} Returning to \eqref{eq:diff}, and using the algebra property of the space $C^\alpha$, we deduce an a priori bound \begin{equation*}
	\begin{split}
	\frac{d}{dt}\V  \pr_x \eta(t) \V_{C^\alpha[0,\delta_0]} &\le  \V \nabla u_t\V_{L^\infty} \V \pr_x \eta(t)\V_{C^\alpha[0,\delta_0]} + \V \nabla u_t \circ \eta_t \V_{C^\alpha[0,\delta_0]} \V \pr_x\eta(t)\V_{L^\infty} \\
	& \le  C(T, \delta_0) \V  \pr_x \eta(t) \V_{C^\alpha[0,\delta_0]}. 
	\end{split}
	\end{equation*}
	This shows that  $\V  \pr_x \eta \V_{C^\alpha} $  remains finite with an upper bound depending only on $T$ and $\delta_0$. 
}

	It remains to show the statement regarding the dynamics of the angles. For this purpose, let us decompose \begin{equation*}
	\begin{split}
	\omega = \omega^{homog} + \omega^{cusp} + \omega^{far}, 
	\end{split}
	\end{equation*} where $\omega^{homog}$ is the 0-homogeneous vorticity which is the characteristic function of the $m$-fold symmetrization of the infinite sector \begin{equation*}
	\begin{split}
	\{ (x_1,x_2) : 0 < x_1,  G_t x_1 < x_2 < F_t x_1 \}, \qquad F_t = f_t'(0),\quad G_t = g_t'(0),
	\end{split}
	\end{equation*} and $\omega^{cusp}$ is simply $\chi_{B_0(\delta_t)} \cdot \left( \chi_{\Omega_t} - \omega^{homog} \right)$. To be concrete, modulo $m$-fold symmetry, \begin{equation*}
	\begin{split}
	\omega^{cusp}(x_1,x_2) = \begin{cases}
	+1  &  \mbox{ if }\quad F_t x_1 < x_2 < f_t(x_1) \quad\mbox{ or }\quad g_t(x_1) < x_2 < G_t x_1  \\
-1  &\mbox{ if }\quad F_t x_1 > x_2 > f_t(x_1) \quad\mbox{ or }\quad g_t(x_1) > x_2 > G_t x_1
	\end{cases},
	\end{split}
	\end{equation*} inside the ball $B_0(\delta_t)$. Then $\omega^{far}$ is defined as $\omega - \omega^{homog} - \omega^{cusp}$, and one note that it is supported outside the ball $B_0(\delta_t)$. Then, accordingly, we obtain a decomposition of the velocity \begin{equation*}
	\begin{split}
	u = u^{homog} + u^{cusp} + u^{far},
	\end{split}
	\end{equation*} and we claim that $u^{cusp}$ and $u^{far}$ does not effect the dynamics of the tangent lines to the boundary curves $(x_1,f_t(x_1))$ and $(x_1,g_t(x_1))$ at the origin for all times. This clearly follows once we establish that $|u^{cusp}(x)|, |u^{far}(x)| \lesssim |x|^{1+\alpha}$. 
	
	To begin with, the radially homogeneous component $u^{homog}$ induces the same rotation speed on the tangent lines $(x_1,F_tx_1)$ and $(x_1,G_tx_1)$. However, since $\nabla u$ is bounded for all time, the angle between $(x_1,F_tx_1)$ and $(x_1,f_t(x_1))$, and also between $(x_1,G_tx_1)$ and $(x_1,g_t(x_1))$ stays zero. Next, we know that $u^{far}$ is $C^{1,\alpha}$ (indeed, $C^\infty$) inside $B_0(\delta_t/2)$. Therefore, the associated stream function $\psi^{far}$ is $C^{2,\alpha}$ in the ball.\footnote{Here, although $\omega^{far}$ may have non-compact support,  the Poisson problem $ \Delta \psi^{far} = \omega^{far}$ has a unique solution with $\psi^{far} \in W^{2,\infty}(\mathbb{R}^2)$ under $m$-fold rotational symmetry with $m \ge 3$ and $\psi^{far}(0) = 0$; see \cite[Lemma 2.6]{EJ1}.} Taylor expansion gives \begin{equation*}
	\begin{split}
	\psi^{far}(x_1, x_2) = A + Bx_1 + Cx_2 + D(x_1^2 + x_2^2) + Ex_1x_2 + O(|x|^{2+\alpha}).
	\end{split}
	\end{equation*} However, $A = 0$ by assumption and $B = C = E = 0$ is forced under the $m$-fold rotational symmetry. Furthermore, $\Delta\psi^{far} \equiv 0$ near 0 so that $D = 0$. In particular, \begin{equation*}
	\begin{split}
	u^{far}(x_1,x_2) = \nabla^\perp\psi^{far}(x_1,x_2) =  O(|x|^{1+\alpha})
	\end{split}
	\end{equation*}   as $|x| \rightarrow 0$. Lastly, it is known that within each connected component of the complement of the (closure of the) support of $\omega^{cusp}$, the associated velocity $u^{cusp}$ is uniformly $C^{1,\alpha}$ up to the boundary. It follows from our computations in Subsection \ref{subsec:local_estimate} but also directly from the arguments of Friedman and Vel\'azquez \cite{FV} (see the statement of their Lemma in Subsection \ref{subsec:Bertozzi_Constantin}). Then, an identical argument as in the case of $u^{far}$ shows that, this time, $u^{cusp}$ is of order $|x|^{1+\alpha}$ in the complement of the support of $\omega^{cusp}$.  
	The proof is now complete. 
\end{proof}

\subsection{Multiple corners}\label{subsec:cusp_formation}

In the above main result, we have only dealt with the case when there is a single corner in a sector of angle $2\pi/m$ (which serves as a fundamental domain for rotations by multiples of $2\pi/m$). In this case, we have seen that the angle of the patch is preserved for all time. However, one may consider the case when there are several corners (separated from each other by some angle; see Figure \ref{fig:multiple_corners}) in each fundamental domain, and then some interesting dynamics for the angles can be observed. 

We just note that an essentially identical proof carries over to this case to establish global well-posedness of such patches, and also the fact that the angles evolve exactly as in the case of infinite sectors, up to a constant overall rotation.  {In fact, we just need to apply Lemma \ref{lem:corner_Holder} to each piece of the patch.}

We just modify the last item from the Definition \ref{def:C_1alpha_corner} to allow such patches: \begin{itemize}
	\item (multiple $C^{1,\alpha}$ corners) There exists a $C^{1,\alpha}$ diffeomorphism $\Psi:\mathbb{R}^2 \rightarrow \mathbb{R}^2$ of the plane  with $\Psi(0) = 0$ and $\nabla\Psi|_{x=0} = I$, such that for some $\delta > 0$, the image $\Psi(\Omega_1)$ is a union of exact sectors with total angle less than $2\pi/m$: \begin{equation}
	\Psi(\Omega_1) \cap B_0(\delta) = \bigcup_{j = 1}^{k} S_{\beta_k,\beta_k+\zeta_k}  \cap B_0(\delta)
	\end{equation} with some $0 < \zeta_j$ and $-\pi \le \beta_j  < \pi$ satisfying \begin{equation*}
	\begin{split}
	\beta_j + \zeta_j < \beta_{j+1} \quad \mbox{and} \quad \beta_{j+1} + \zeta_{j+1} - \beta_1 < 2\pi/m  \quad \mbox{for all} \quad j = 1, \cdots, k - 1,
	\end{split}
	\end{equation*} (the ordering is well-defined on the interval $[-\pi,\pi]$, assuming without loss of generality that $-\pi \le \beta_1 < 0$). 
\end{itemize} Then as before, we define $\Omega = \cup_{i=0}^{m-1} R_{2\pi i/m}(\Omega_1)$. 

Alternatively, we may describe the patch locally as a union of approximate sectors with angles $\zeta_1, \cdots, \zeta_k$, in counter-clockwise order, with gaps between them $\gamma_{1+1/2}, \cdots, \gamma_{k-1+1/2}$ where $\gamma_{j+1/2} := \beta_{j+1} - \beta_j - \zeta_j$. Note that given some value of $m \ge 3$, the values $\zeta_1, \cdots, \zeta_k$ together with $\gamma_{1+1/2}, \cdots, \gamma_{k-1/2}$  determine the local shape of the patch, up to a rotation of the plane. 

\begin{figure}
	\includegraphics[scale=1.0]{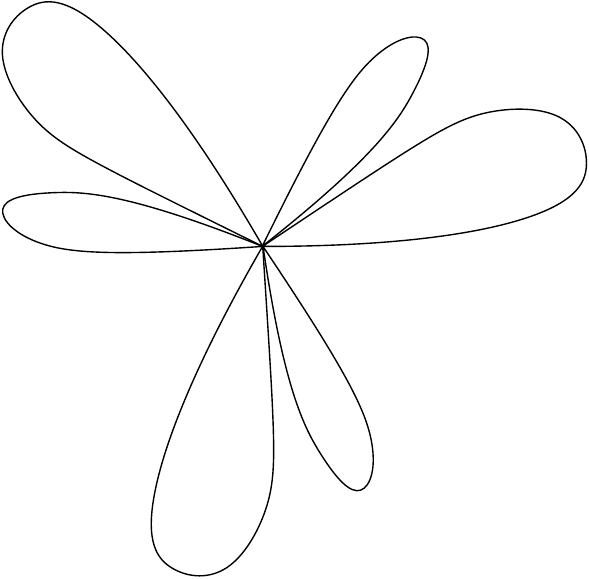} 
	\centering
	\caption{A 3-fold symmetric patch with multiple corners.}
	\label{fig:multiple_corners}
\end{figure}

\begin{corollary}[Dynamics of the angles]\label{cor:dynamics_angle}
	Assume that $\Omega_0$ is a symmetric $C^{1,\alpha}$-patch with multiple corners as defined in the above, with corner angles $\zeta_1(0), \cdots, \zeta_k(0)$ with separation angles $\gamma_{1+1/2}(0),\cdots,\gamma_{k-1/2}(0)$. Then, the angles evolve according to the following system of ordinary differential equations for all $t \in \mathbb{R}$: \begin{equation}\label{eq:ode1}
	\begin{split}
	\frac{d\zeta_j(t)}{dt}  =  C_m  \sin\left(\frac{m}{4}\zeta_j\right) &\left[   \sum_{l=1}^{j-1}  \sin\left(\frac{m}{4}\zeta_l\right) \cos\left( \frac{m}{4}\left( 2(\beta_j -\beta_l) +  (\zeta_j - \zeta_l) \right) \right) \right.\\ &\qquad \left. - \sum_{l=j+1}^k \sin\left(\frac{m}{4}\zeta_l\right) \cos\left( \frac{m}{4}\left( 2(\beta_j -\beta_l) +  (\zeta_j - \zeta_l) \right) \right) \right]
	\end{split}
	\end{equation} and \begin{equation}\label{eq:ode2}
	\begin{split}
	\frac{d\gamma_{j+ 1/2}(t) }{dt} = C_m \sin\left(\frac{m}{4}\gamma_{j+1/2}\right)  &\left[   \sum_{l=1}^{j}  \sin\left(\frac{m}{4}\zeta_l\right) \cos\left( \frac{m}{4}\left( (\beta_{j+1} -\beta_l) +  (\beta_j -\beta_l)  +  (\zeta_j - \zeta_l) \right) \right) \right.\\ &\qquad \left. - \sum_{l=j+1}^k \sin\left(\frac{m}{4}\zeta_j\right) \cos\left( \frac{m}{4}\left( (\beta_{j+1} -\beta_l) +  (\beta_j -\beta_l)  +  (\zeta_j - \zeta_l) \right) \right) \right]
	\end{split}
	\end{equation} with \begin{equation*}
	\begin{split}
	\beta_j - \beta_l &= \left(\gamma_{j-1/2} + \cdots + \gamma_{l+1/2}\right) + \left( \zeta_{j-1} + \cdots + \zeta_l \right), \qquad j > l 
	\end{split}
	\end{equation*} for some constant $C_m > 0$ depending only on $m$. 
\end{corollary}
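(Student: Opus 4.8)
The plan is to reduce everything to the one-dimensional, radially homogeneous dynamics already available from the proof of Theorem \ref{thm:symmetric_corner}. There it was shown that the motion of the tangent rays to the boundary at the origin is governed entirely by the $0$-homogeneous part $\omega^{homog}$ of the vorticity: the cusp and far-field contributions $u^{cusp}, u^{far}$ are $O(|x|^{1+\alpha})$ and so do not rotate the tangent lines. Writing $\omega^{homog} = h(\theta)$, where $h$ is the $m$-fold symmetric indicator of the union of sectors $\bigcup_l (\beta_l, \beta_l+\zeta_l)$, the associated velocity is $u = 2H(\theta)\,x^\perp - \partial_\theta H(\theta)\,x$ with $h = 4H + H''$, so the angular speed of a particle at angle $\theta$ is $u_\theta/|x| = 2H(\theta)$. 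Every boundary ray therefore obeys $\dot\theta = 2H(\theta)$, and since $\zeta_j$ and $\gamma_{j+1/2}$ are differences of such angles,
\begin{equation*}
\dot\zeta_j = 2H(\beta_j+\zeta_j) - 2H(\beta_j), \qquad \dot\gamma_{j+1/2} = 2H(\beta_{j+1}) - 2H(\beta_j + \zeta_j).
\end{equation*}

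Next I would make $H$ explicit via the symmetrized kernel of Subsection \ref{subsec:symmetries_and_critical}. Using $m$-fold symmetry, $H(\theta) = \tfrac{m}{2\pi}\int_{\mathrm{fund}} K_1^{(m)}(\theta - \theta')\,h(\theta')\,d\theta'$ with $K_1^{(m)}(\theta) = c_1^{(m)}|\sin(m\theta/2)| + c_2^{(m)}$; the constant $c_2^{(m)}$ is harmless, cancelling in every difference above. Thus the whole computation reduces to evaluating, for each sector $l$,
\begin{equation*}
\Phi_l(\theta) := \int_{\beta_l}^{\beta_l + \zeta_l} \left| \sin\!\big(\tfrac{m}{2}(\theta - \theta')\big) \right| d\theta'
\end{equation*}
at the two edges of sector $j$ (resp. of the gap). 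I would first record that, because the sectors and gaps all fit inside a fundamental domain of width $2\pi/m$, every relevant difference $\theta - \theta'$ lies in $(-2\pi/m, 2\pi/m)$, i.e. $\tfrac{m}{2}(\theta-\theta') \in (-\pi,\pi)$, where $|\sin|$ is just $\sin$ of the absolute value; this lets me drop the absolute value once the sign is fixed.

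The core calculation is then elementary. For $l \neq j$ the sign of $\sin(\tfrac m2(\theta-\theta'))$ is constant over the integration, so $\Phi_l$ is an explicit combination of two cosines; forming $\Phi_l(\beta_j+\zeta_j) - \Phi_l(\beta_j)$ and collapsing the four resulting cosines with the identities $\cos X + \cos Y = 2\cos\tfrac{X+Y}{2}\cos\tfrac{X-Y}{2}$ and $\cos(P-Q) - \cos(P+Q) = 2\sin P \sin Q$ produces precisely the factor $\sin(\tfrac m4\zeta_j)\sin(\tfrac m4\zeta_l)$ times $\cos\big(\tfrac m4[2(\beta_j-\beta_l) + (\zeta_j - \zeta_l)]\big)$, the cosine argument being $\tfrac m2$ times the separation of the two sector centers. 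The only bookkeeping is the sign of $|\sin|$: for $l < j$ the sector lies to one side of both evaluation edges and enters with a $+$ sign, for $l > j$ to the other side with a $-$ sign, and the self-term $l = j$ vanishes because $\Phi_j$ is symmetric about the center $\beta_j + \zeta_j/2$ of its own sector. Assembling these and tracking the prefactor $2\cdot\tfrac{m}{2\pi}c_1^{(m)}$ yields \eqref{eq:ode1} with $C_m = 8 c_1^{(m)}/\pi > 0$; repeating the identical computation with the gap (center $\tfrac12(\beta_j + \zeta_j + \beta_{j+1})$) in the role of sector $j$ gives \eqref{eq:ode2}. Global validity in $t$ is inherited from Theorem \ref{thm:symmetric_corner}, which keeps the configuration a genuine symmetric patch with corners, with no angle collisions, for all time.

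I expect the main obstacle to be exactly the sign-and-period bookkeeping in the absolute-value integral: one must verify that all pairwise angle differences genuinely stay within a single period of $|\sin(m\cdot/2)|$ so that the simplification $|\sin|=\sin|\cdot|$ is legitimate, and that the self-interaction term cancels exactly. Both facts hinge on the ordering constraints $\beta_j + \zeta_j < \beta_{j+1}$ and the total-angle bound $<2\pi/m$ built into the multiple-corner definition, so the delicate point is checking that these constraints are preserved along the flow rather than any analytic estimate.
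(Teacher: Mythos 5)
Your proposal is correct and follows essentially the same route as the paper's own proof: reduce to the $0$-homogeneous dynamics using the $\omega^{homog}+\omega^{cusp}+\omega^{far}$ decomposition (so that the corner angles obey the 1D radially homogeneous system), then compute the angular velocity differences $\tilde v(\beta_j+\zeta_j)-\tilde v(\beta_j)$ by integrating the symmetrized kernel $c_1^{(m)}\left|\sin\left(\tfrac{m}{2}(\theta-\theta')\right)\right|+c_2^{(m)}$ over each sector, with the constant term cancelling, the self-interaction term vanishing by symmetry about the sector center, and product-to-sum identities producing the $\sin\left(\tfrac m4\zeta_j\right)\sin\left(\tfrac m4\zeta_l\right)\cos(\cdots)$ structure. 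Your additional care with the sign of the absolute value inside a fundamental domain and the preservation of the ordering constraints is exactly the bookkeeping implicit in the paper's computation.
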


\begin{proof}
	 {Repeating the arguments given in Section \ref{subsec:proof_main}, it can be shown that the dynamics of the angles in the multiple corners case is identical to the dynamics for the homogeneous case.} It therefore suffices to obtain the 1D system describing the evolution of 0-homogeneous vorticities. On the unit circle, we are given initial vorticity \begin{equation*}
	\begin{split}
	h_0(\theta) = \sum_{i=0}^{m-1} \sum_{j=1}^k S_{\beta_j + 2\pi i/m, \beta_j + \zeta_j + 2\pi i/m}.
	\end{split}
	\end{equation*} Moreover, given $h$, the corresponding angular velocity (counter-clockwise rotation) on the circle is defined explicitly by \begin{equation*}
	\begin{split}
	v(\theta) =  \int_{-\pi/m}^{\pi/m} \left( c^1_m \sin\left(\frac{m}{2}|\theta - \theta'| \right) - c^2_m \right) h(\theta') d\theta'
	\end{split}
	\end{equation*} for some constants $c^1_m > 0$ and $c^2_m$ depending only on $m \ge 3$. Since the integral of $h$ over the circle is conserved in time, one may redefine the angular velocity to be 
	\begin{equation*}
	\begin{split}
	\tilde{v}(\theta) = c^1_m  \int_{-\pi/m}^{\pi/m}\sin\left(\frac{m}{2}|\theta - \theta'| \right)h(\theta')d\theta'
	\end{split}
	\end{equation*} up to an overall rotation. Therefore, \begin{equation*}
	\begin{split}
	\frac{d}{dt} \zeta_j(t) &= \tilde{v}(\beta_j + \zeta_j) - \tilde{v}(\beta_j) \\
	&= \sum_{l=1}^{j-1} c^1_m \int_{\beta_l}^{\beta_l + \zeta_l} \left[ \sin\left( \frac{m}{2}(\beta_j+ \zeta_j - \theta) \right) -  \sin\left( \frac{m}{2}(\beta_j -\theta) \right)  \right] d\theta \\
	&\qquad + \sum_{l=j+1}^{k} c^1_m \int_{\beta_l}^{\beta_l + \zeta_l} \left[ \sin\left( \frac{m}{2}(\theta-\beta_j-\zeta_j) \right) -  \sin\left( \frac{m}{2}(\theta-\beta_j) \right)  \right] d\theta \\
	&= \sum_{l=1}^{j-1} c'_m \sin\left(\frac{m}{4}\zeta_j\right)\sin\left(\frac{m}{4}\zeta_l\right) \cos\left( \frac{m}{4}\left( 2(\beta_j -\beta_l) +  (\zeta_j - \zeta_l) \right) \right) \\
	&\qquad - \sum_{l=j+1}^{k} c'_m \sin\left(\frac{m}{4}\zeta_j\right)\sin\left(\frac{m}{4}\zeta_l\right) \cos\left( \frac{m}{4}\left( 2(\beta_j -\beta_l) +  (\zeta_j - \zeta_l) \right) \right)
	\end{split}
	\end{equation*} (note that the contribution from the $j$-th sector cancels out) and the relations \begin{equation*}
	\begin{split}
	\beta_j - \beta_l &= \left(\gamma_{j-1/2} + \cdots + \gamma_{l+1/2}\right) + \left( \zeta_{j-1} + \cdots + \zeta_l \right), \qquad j > l 
	\end{split}
	\end{equation*} enables us to express the right hand side in terms of $\gamma$'s and $\zeta$'s. Similarly, \begin{equation*}
	\begin{split}
	\frac{d}{dt} \gamma_{j+1/2} (t) &= \tilde{v}(\beta_{j+1}) - \tilde{v}(\beta_j + \zeta_j)\\
	&= \sum_{l=1}^{j} c'_m \sin\left(\frac{m}{4}\gamma_{j+1/2}\right)\sin\left(\frac{m}{4}\zeta_l\right) \cos\left( \frac{m}{4}\left( (\beta_{j+1} -\beta_l) +  (\beta_j -\beta_l)  +  (\zeta_j - \zeta_l) \right) \right) \\
	&\qquad - \sum_{l=j+1}^{k} c'_m \sin\left(\frac{m}{4}\gamma_{j+1/2}\right)\sin\left(\frac{m}{4}\zeta_l\right) \cos\left( \frac{m}{4}\left( (\beta_{j+1} -\beta_l) +  (\beta_j -\beta_l)  +  (\zeta_j - \zeta_l) \right) \right)
	\end{split}
	\end{equation*}
	This finishes the proof. 
\end{proof}

\subsection{Extensions}\label{subsec:extensions}

\subsubsection*{Generality of Serfati and Chemin}

The results of Serfati \cite{Ser1,S1} and Chemin \cite{C,C2,C3} demonstrates that propagation of boundary regularity for smooth patches is just a special instance -- the Euler equations indeed propagates ``striated'' regularity of vorticity. Here we present the version given by Bae and Kelliher \cite{BK1,BK2}. 

To formally state the general result, assume that a family of $C^\alpha(\mathbb{R}^2)$ vector fields $\{ Y_0^\lambda \}_{\lambda \in \Lambda}$ is given, and satisfies the following properties: \begin{equation*}
\begin{split}
\inf_{x \in \mathbb{R}^2} \left( \sup_{\lambda} \left| Y_0^\lambda(x) \right| \right) \ge c_0 > 0
\end{split}
\end{equation*} and \begin{equation*}
\begin{split}
\sup_{\lambda} \left( \V Y_0^\lambda\V_{C^\alpha} + \V \nabla\cdot Y_0^\lambda\V_{C^\alpha} \right) < + \infty.
\end{split}
\end{equation*} Moreover, assume that the initial vorticity satisfies \begin{equation*}
\begin{split}
\omega_0 \in L^1\cap L^\infty(\mathbb{R}^2), \qquad \sup_{\lambda} \V (Y_0^\lambda \cdot \nabla )\omega_0 \V_{C^{\alpha -1}} < + \infty.
\end{split}
\end{equation*} The latter condition says that $\omega_0$ is $C^\alpha$-regular in the direction of $Y_0^\lambda$. The negative index H\"older spaces may be defined in terms of the Littlewood-Paley decomposition, but it can be avoided as the above condition is equivalent to $K * ((Y_0^\lambda\cdot\nabla)\omega_0) \in C^\alpha$ (see \cite{BK1}), where $K$ is the usual Biot-Savart kernel. 

We evolve the family of vector fields by \begin{equation*}
\begin{split}
Y^\lambda_t(\Phi(t,x)) := ( Y_0^\lambda(x) \cdot \nabla )\Phi(t,x).
\end{split}
\end{equation*}

\begin{theorem*}[{\cite[Theorem 8.1]{BK1}}]
		In the above setting, the Yudovich solution $\omega_t$ and the vector fields $Y^\lambda_t$ satisfy the global-in-time bounds
		\begin{equation}\label{eq:Serfati_bound_vorticity}
		\begin{split}
		\sup_{\lambda} \V (Y_t^\lambda \cdot \nabla) \omega_t \V_{C^{\alpha -1}} \le C\exp(\exp(ct))
		\end{split}
		\end{equation} and \begin{equation}\label{eq:Serfati_bounds_vectorfield}
		\begin{split}
		\sup_{\lambda} \left(\V Y^\lambda_t \V_{C^\alpha} + \V \nabla\cdot Y^\lambda_t \V_{C^\alpha} \right)  \le C \exp(\exp(ct)).
		\end{split}
		\end{equation} The associated velocity is Lipschitz in space and indeed uniformly $C^{1,\alpha}$ after being corrected by a smooth multiple of the vorticity. That is, there is a matrix $A_t$ with $\V A_t\V_{C^\alpha} \le C\exp(\exp(ct))$ such that \begin{equation}\label{eq:Serfati_bounds_velocity}
		\begin{split}
		\V \nabla u_t \V_{L^\infty} &\le C \exp(ct), \\
		\V \nabla u_t - \omega_t A_t \V_{C^\alpha} & \le C\exp(\exp(ct)) 
		\end{split}
		\end{equation} holds. Here, the constants $C, c > 0$ depend only on $0 < \alpha <1$ and the initial data. 
\end{theorem*}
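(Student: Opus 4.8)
The plan is to follow the Lagrangian/commutator strategy of Chemin and Serfati, reducing the statement to (i) a set of exact transport identities for the striated quantities and (ii) two stationary (``frozen-time'') estimates for the Biot--Savart operator under directional regularity, and then closing with a Gronwall argument. The architecture mirrors the Bertozzi--Constantin proof reviewed in Subsection \ref{subsec:Bertozzi_Constantin}, with the level-set gradient $\nabla^\perp\phi$ replaced by the non-degenerate family $\{Y^\lambda\}$.

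First I would record the commutation structure. Since $Y_t^\lambda=\nabla\Phi_t\,Y_0^\lambda$ is the pushforward of $Y_0^\lambda$ by the flow, it solves the Lie-advection equation $\pr_t Y^\lambda + (u\cdot\n)Y^\lambda = (Y^\lambda\cdot\n)u$. Differentiating $\pr_t\omega+(u\cdot\n)\omega=0$ in the $Y^\lambda$ direction, the two quadratic source terms cancel and the scalar $(Y^\lambda\cdot\n)\omega$ is \emph{purely transported}, $\pr_t[(Y^\lambda\cdot\n)\omega]+(u\cdot\n)[(Y^\lambda\cdot\n)\omega]=0$; an entirely analogous computation, using $\n\cdot u=0$, shows $\n\cdot Y^\lambda$ is purely transported as well. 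These identities reduce the propagation of \eqref{eq:Serfati_bound_vorticity} and of the divergence part of \eqref{eq:Serfati_bounds_vectorfield} to bounds on the flow map, i.e.\ to $\int_0^t\V\n u_s\V_{L^\infty}ds$, once one checks that transport loses only a controlled amount of regularity in the negative scale $C^{\alpha-1}$. As the excerpt suggests, I would avoid Littlewood--Paley entirely by working throughout with the equivalent positive-regularity surrogate $K*((Y^\lambda\cdot\n)\omega)\in C^\alpha$, and I would also verify that the non-degeneracy $\inf_x\sup_\lambda|Y_t^\lambda(\Phi_t(x))|\ge c(t)>0$ persists, which is immediate since $\n\Phi_t$ is invertible with a controlled inverse.

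The heart of the matter is the pair of stationary estimates, the analogues of Lemmas \eqref{eq:geometric_lemma}--\eqref{eq:key_estimate}. The first is the logarithmic bound
\[
\V\n u\V_{L^\infty}\;\lesssim\;\V\omega\V_{L^\infty}\Big(1+\log\big(e+c_0^{-1}\V\omega\V_{L^\infty}^{-1}N\big)\Big),
\]
where $N$ bundles the striated norms; I would prove it by splitting $\n K*\omega$ into near and far field, and in the near field choosing, at each base point, an index $\lambda$ with $|Y^\lambda|\ge c_0$ so that $\omega$ is $C^\alpha$ along a non-degenerate direction, then exploiting the mean-zero cancellation of the $-2$-homogeneous kernel $\sigma$ transverse to that direction. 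The second is the div--curl inversion of Serfati: from the directional regularity $(Y^\lambda\cdot\n)u\in C^\alpha$ (which follows by applying Biot--Savart to $(Y^\lambda\cdot\n)\omega\in C^{\alpha-1}$ modulo a benign commutator $[Y^\lambda\cdot\n,\,K*]\,\omega$), together with $\n\cdot u=0$ and $\n\times u=\omega$, one solves algebraically for all entries of $\n u$; because $\omega$ is only $L^\infty$, this recovers $\n u$ only modulo a term $\omega_t A_t$, with $A_t$ a matrix built from the non-degenerate family, which is exactly the content of \eqref{eq:Serfati_bounds_velocity}.

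Finally I would close the estimates. Writing $N(t):=\sup_\lambda\big(\V Y_t^\lambda\V_{C^\alpha}+\V\n\cdot Y_t^\lambda\V_{C^\alpha}+\V(Y_t^\lambda\cdot\n)\omega_t\V_{C^{\alpha-1}}\big)$, the transport identities together with $(Y^\lambda\cdot\n)u\in C^\alpha$ and the algebra property of $C^\alpha$ give $\tfrac{d}{dt}N\lesssim\V\n u\V_{L^\infty}\,N$, while the first stationary estimate gives $\V\n u\V_{L^\infty}\lesssim\log(e+N)$. Setting $M=\log(e+N)$ turns this into $\dot M\lesssim M$, so $M\le M_0e^{ct}$, which is precisely the double-exponential bound on $N$ in \eqref{eq:Serfati_bound_vorticity}--\eqref{eq:Serfati_bounds_vectorfield} and the single-exponential bound $\V\n u\V_{L^\infty}\le Ce^{ct}$. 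I expect the genuine difficulty to be concentrated entirely in the two stationary estimates -- in particular identifying the correct correction matrix $A_t$ in the Serfati inversion and proving the near-field kernel cancellation uniformly over the whole family $\{Y^\lambda\}$ -- whereas the commutation identities and the Gronwall closure are structural; the careful bookkeeping in the negative Hölder scale $C^{\alpha-1}$ and the uniformity in $\lambda$ are the remaining places that demand attention.
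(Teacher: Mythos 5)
You should first be aware that the paper does not prove this statement at all: it is quoted verbatim from Bae--Kelliher \cite[Theorem 8.1]{BK1} as background material in the extensions subsection, so there is no internal proof to compare against. That said, your outline is a correct reconstruction of the strategy used in the cited literature (Chemin/Serfati as systematized by Bae--Kelliher), and it also mirrors the architecture this paper uses for its own results. Your two transport identities check out exactly: for $D_t=\pr_t+u\cdot\n$ one has $D_t\big((Y\cdot\n)\omega\big)=\big((Y\cdot\n)u\big)\cdot\n\omega - Y\cdot\big((\n u)^T\n\omega\big)=0$ by symmetry of the cross terms, and the same cancellation plus $\n\cdot u=0$ gives $D_t(\n\cdot Y)=0$. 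Your closure scheme (two frozen-time estimates plus Gronwall, with $M=\log(e+N)$ linearizing the log-Lipschitz loss into $\dot M\lesssim M$) is precisely the Bertozzi--Constantin scheme reviewed in Subsection \ref{subsec:Bertozzi_Constantin} and reused in the paper's proof of Theorem \ref{thm:intermediate} via Lemmas \ref{lem:key_identity} and \ref{lem:key_estimate}; and your div--curl inversion recovering $\n u$ modulo the term $\omega_t A_t$ in \eqref{eq:Serfati_bounds_velocity} is exactly the computation the paper carries out in Lemma \ref{lem:velocity_circle}, where the identity $W_1\pr_2u_1-W_2\pr_1u_1=W\cdot\n u_2-W_1\omega$ makes the correction matrix explicit. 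Two honest caveats on what remains unproved in your sketch: the near-field cancellation in the logarithmic $L^\infty$ bound must be run with an index $\lambda$ chosen pointwise from a family that is only non-degenerate in the sup over $\lambda$, which requires a genuine argument (this is where \cite{BK1} spends real effort), and the commutator $[Y\cdot\n,\,K*]\,\omega$ in the negative H\"older scale is controllable but not ``benign''; its estimate is the technical heart of propagating $\V Y_t\V_{C^\alpha}$, since $Y$ itself is not purely transported but driven by $(Y\cdot\n)u$. As a blueprint, though, your proposal is faithful to how the theorem is actually proved.
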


\begin{example*}
	Let us present two examples from \cite[Section 10]{BK1}. 
	
	\begin{itemize}
		\item[(i)] $C^{1,\alpha}$ Patches with $C^\alpha$ vortex profile: Take some $C^{1,\alpha}$-domain $\Omega_0$ and $C^\alpha$-function $f_0$, and then define $\omega_0 = \chi_{\Omega_0} f_0$. Then, we can take $Y_0^1 := \nabla^\perp\phi_0$ where $\phi_0$ is a $C^{1,\alpha}$ level set function for $\Omega_0$. In addition, we may take some vector field $Y_0^2$ so that $\{ Y_0^1, Y_0^2 \}$ satisfy all the requirements described in the above (most importantly, $Y_0^2$ should be non-vanishing whenever $Y_0^1$ vanishes). 
		
		We recover the usual vortex patch when the profile $f_0$ is a constant function. This results show that the vorticity can actually have a $C^\alpha$-profile on the patch. This particular statement also follows directly from the main result of Huang \cite{Hu}, which we discuss in the Appendix. 
		\item[(ii)] Vorticity smooth along leaves of a $C^{1,\alpha}$-foliation:  Consider $\phi_0 \in C^{1,\alpha}$ with $|\nabla^\perp\phi_0| \ge c > 0$ on $\mathbb{R}^2$, such that each level curve of $\phi_0$ crosses any vertical line exactly once. Under these assumptions, we define $\xi_{x_1}(x_2)$ so that $\phi_0(x_1,\xi_{x_1}(x_2)) = \phi_0(0,x_2)$. 
		
		Take some bounded measurable function $W : \mathbb{R} \rightarrow \mathbb{R}$ supported on some bounded interval $[c,d]$. Then, fix some $L > 0$ and define \begin{equation*}
		\begin{split}
		\omega_0(x_1,x_2) := \chi_{[-L,L]}(x_1) W(\xi_{x_1}(x_2)). 
		\end{split}
		\end{equation*} The above theorem applies to this case, simply with $Y_0 = \nabla^\perp\phi_0$. It follows that for all time, all the level curves of $\omega$ remain (uniformly in $\mathbb{R}^2$) $C^{1,\alpha}$. In the words of Bae and Kelliher, ``extreme lack of regularity of $\omega_0$ transversal to $Y_0$ does not disrupt the regularity of the flow lines.''
	\end{itemize}
\end{example*}

The generalization described in the above theorem can be easily adapted to our setting. Let us only described the necessary modifications in the assumptions. To begin with, we require that $\omega_0 \in L^1\cap L^\infty$ is $m$-fold symmetric for some $m \ge 3$, as usual. We need in addition that there is a distinguished vector field in the family, say $Y_0^{c}$, which is $m$-fold symmetric and satisfies \begin{equation*}
\begin{split}
\inf_{x \in B_0(r_0) } |Y_0^{c}(x) | \ge c_0 > 0, \qquad \mbox{ for some } r_0 > 0,
\end{split}
\end{equation*} and \begin{equation*}
\begin{split}
\V Y_0^c \V_{\mathring{C}^\alpha(\mathbb{R}^2)} + \V \nabla\cdot Y_0^c \V_{\mathring{C}^\alpha(\mathbb{R}^2)} + \V K * ((Y_0^c \cdot \nabla)\omega_0) \V_{\mathring{C}^\alpha(\mathbb{R}^2)}  < + \infty. 
\end{split}
\end{equation*} Then, we claim that the bounds \eqref{eq:Serfati_bound_vorticity},  and \eqref{eq:Serfati_bounds_vectorfield} hold, with $\mathring{C}^\alpha$ instead of $C^\alpha$ when $\lambda = c$. Moreover, the velocity will be Lipschitz in space for all time, and its gradient will belong to $\mathring{C}^\alpha$ after being corrected by a $\mathring{C}^\alpha$-matrix multiple of the vorticity. 

\subsubsection*{Symmetric Cusps}

Consider a $m$-fold symmetric set $\Omega_0$ which is a union of $C^{1,\alpha}$-cusps for some $m \ge 3$ in some ball $B_0(r_0)$ and has $C^{1,\alpha}$ boundary outside $B_0(r_0)$. 

It is possible to show that, using the methods of this paper (and the generalization described in the above), the boundaries of the cusp remain as $C^{1,\alpha}$ (uniformly up to the origin) curves for all time. This can be done as a two-step procedure -- the same strategy we have utilized to prove the propagation of $C^{1,\alpha}$-corners.

Note that the complementary region  $ B_0(r_0) \backslash \Omega_0$ is a disjoint union of regions, each of which can be given as the image of an exact sector under a $C^{1,\alpha}$-diffeomorphism of the plane fixing the origin. Therefore, in each of these regions, we can place a (divergence-free) vector field $Y_0^c$ just as in the case of $C^{1,\alpha}$-corners (see Figure \ref{fig:cusp}). This vector field can be extended to the interior of the cusp, so that $Y_0^c$ is non-vanishing in $B_0(r_0)$, $Y_0^c \in \mathring{C}^\alpha$, and finally $\nabla \cdot Y_0^c \in \mathring{C}^\alpha$.\footnote{To see this, consider the simple case of the $C^{1,1}$-cusp given by the region $\{ -x_1^2 \le x_2 \le x_1^2, x_1 \ge 0 \}$. Then, define $Y(x_1,x_2) = (1,2x_2/x_1)$ in the interior of the cusp. Then, $\partial_{x_2}Y = (0, 2/x_1)$ and $\partial_{x_1} Y = -2x_2/x_1^2$ so that $| |x|\nabla Y(x)| \in L^\infty$, which is equivalent to saying that $Y \in \mathring{C}^1$. Finally, $\nabla \cdot Y = 2/x_1$ and hence $\nabla (\nabla\cdot Y) = (-2/x_1^2,0)$, and since $|x_2| \le x_1^2$, $|x| |\nabla (\nabla\cdot Y)| \in L^\infty$ as well.  } After that, one takes a complementary vector field $Y_0^b$ which is $C^\alpha(\mathbb{R}^2)$, tangent to the boundary of the patch, with divergence in $C^\alpha(\mathbb{R}^2)$ and supported outside the ball $B_0(r_0/2)$. This construction of vector fields $\{ Y_0^c, Y_0^b \}$ gives global-in-time propagation of the $\mathring{C}^\alpha$-regularity of the patch. Moreover, the velocity is Lipschitz in space for all time.

After that, to recover the extra information that the boundary of the cusp stays in $C^{1,\alpha}$, one performs a local analysis which is parallel to the one given in \ref{subsec:local_estimate}. Indeed, the velocity generated by the cusps is uniformly $C^{1,\alpha}$ in the interior of the patch, up to the boundary. This finishes the argument.

\begin{figure}
	\includegraphics[scale=1.2]{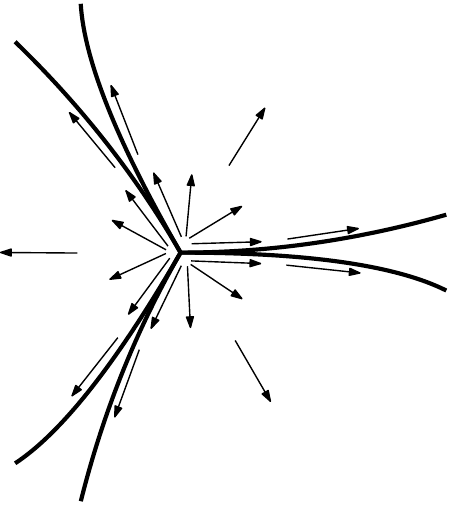} 
	\centering
	\caption{Vector field associated with a symmetric union of cusps}
	\label{fig:cusp}
\end{figure}


\section{Ill-posedness results for vortex patches with corners}\label{sec:illposed}

In this section we will give several results which show that vortex patches with corners which do not fall in the well-posedness results cannot retain a corner structure continuously in time. These are based on a general local expansion result for the velocity field associated to a bounded vorticity profile. As is well known, boundedness of the vorticity does not imply Lipschitz continuity of the velocity field; however, it turns out to be possible to give a first-order expansion of the velocity field near the origin (or any point) which isolates the non-Lipschitzian part in an explicit way. This expansion is reminiscent of the Key Lemma of Kiselev and \v{S}ver\'ak \cite{KS} but it is without any symmetry assumptions on the vorticity and it is valid for \emph{all} $x\in\mathbb{R}^2$. After giving this expansion, we use it to show ill-posedness for vortex patches with corners. In the case where the vortex patch satisfies an odd symmetry, we can actually prove immediate cusp formation. When there is just a single corner, we just show discontinuity though we believe that there is actually cusp formation and further investigation into this question is given in the next section. For corners which are only locally $m-$fold symmetric we also show ill-posedness by applying the results of the first author and Masmoudi \cite{EM1} to show that the Lipschitz bound on the velocity field of a locally $m$-fold symmetric patch may be lost immediately. This shows that the global symmetry assumptions which give the propagation of regularity proven in the previous two sections cannot be replaced by local symmetry assumptions. 

\subsection{An expansion for the velocity field associated to a bounded vorticity profile}

We now state the first and most important lemma toward the ill-posedness result which shows how one can expand the velocity field associated to a bounded vorticity profile near a point (the origin) up to "Lipschitz" error terms.  {In the following, $f \in L^\infty_c$ means that $f$ is a bounded function with compact support.  } 

\begin{lemma}\label{lem:key_radial}
	Assume that $\omega \in L^\infty_c(\mathbb{R}^2)$. Then, with polar coordinates, the corresponding velocity $u =  \nabla^\perp\Delta^{-1}\omega$ satisfies the estimate \begin{equation}\label{eq:key_radial}
	\begin{split}
	\left| u(r,\theta) - u(0) - \frac{1}{2\pi}\begin{pmatrix}
	\cos\theta \\ 
	-\sin\theta 
	\end{pmatrix} rI^s(r) + \frac{1}{2\pi}\begin{pmatrix}
	\sin\theta \\ 
	\cos\theta 
	\end{pmatrix} rI^c(r) \right| \le Cr \V \omega\V_{L^\infty}
	\end{split}
	\end{equation} with some absolute constant $C > 0$ independent on the size of the support of $\omega$. Here, \begin{equation*}
	\begin{split}
	 {u(0) = \left(
	-\frac{1}{2\pi} \int_0^\infty \int_0^{2\pi}\sin(\theta) \omega(r,\theta)d\theta dr, \,
	\frac{1}{2\pi} \int_0^\infty\int_0^{2\pi} \cos(\theta) \omega(r,\theta)d\theta dr  
	\right)^T,} 
	\end{split}
	\end{equation*} \begin{equation*}
	\begin{split}
	I^{s}(r) := \int_r^{\infty} \int_{0}^{2\pi} \sin(2\theta) \frac{\omega(s,\theta) }{s}d\theta ds, 
	\end{split}
	\end{equation*} and \begin{equation*}
	\begin{split}
	I^{c}(r) :=  \int_r^{\infty} \int_{0}^{2\pi} \cos(2\theta) \frac{\omega(s,\theta) }{s}d\theta ds. 
	\end{split}
	\end{equation*} 
\end{lemma}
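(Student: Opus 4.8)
The plan is to work directly from the Biot--Savart representation
$$u(x) = \frac{1}{2\pi}\int_{\mathbb{R}^2}\frac{(x-y)^\perp}{|x-y|^2}\,\omega(y)\,dy,$$
and to isolate the logarithmically growing part of $u(x)-u(0)$ by a single near/far decomposition at the scale $r=|x|$. First I would evaluate $u(0)$ by setting $x=0$ in the kernel and passing to polar coordinates $y=(s\cos\phi,s\sin\phi)$: since $(-y)^\perp/|y|^2 = s^{-1}(\sin\phi,-\cos\phi)$, the area element $s\,ds\,d\phi$ cancels one power of $s$ and produces exactly the stated expression for $u(0)$, the integral converging because $\omega$ is bounded with compact support (so there is no issue at $0$ nor at infinity). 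The remaining task is to estimate
$$u(x)-u(0) = \frac{1}{2\pi}\int_{\mathbb{R}^2}\left[K(x-y)-K(-y)\right]\omega(y)\,dy,\qquad K(z)=\frac{z^\perp}{|z|^2},$$
and to show it agrees with the explicit $I^s,I^c$ term up to an $O(r\|\omega\|_{L^\infty})$ error.

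I would split this integral into the near region $\{|y|<2r\}$ and the far region $\{|y|\ge 2r\}$. The near region contains both singular points $y=0$ and $y=x$, but each kernel has only the size $1/|z|$, which is integrable in two dimensions: both $\int_{|y|<2r}|y|^{-1}\,dy$ and $\int_{|y|<2r}|x-y|^{-1}\,dy$ are $O(r)$ (for the second, translate by $x$ to a ball of radius $3r$), so the near contribution is bounded by $Cr\|\omega\|_{L^\infty}$. Crucially, the explicit main term involving $I^s(r),I^c(r)$ only samples $\{|y|\ge r\}$, so nothing from the near region needs to be matched.

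On the far region $|y|\ge 2r$ I would Taylor-expand the kernel to first order, writing $K(x-y)-K(-y)=\nabla K(-y)\cdot x + R(x,y)$. Along the segment joining $-y$ and $x-y$ one has $|w|\ge|y|/2$, hence $|\nabla^2 K(w)|\le C|y|^{-3}$ and the remainder obeys $|R|\le Cr^2|y|^{-3}$; integrating gives $\int_{|y|\ge 2r}r^2|y|^{-3}\,dy = Cr^2\int_{2r}^\infty s^{-2}\,ds = O(r)$, with a constant independent of the size of the support. For the linear term I would use the explicit gradient $\nabla K(z)=\sigma(z)/|z|^2$, with $\sigma$ the symmetric matrix recalled in Subsection \ref{subsec:local_estimate}; evaluating at $z=-y$ and converting to polar coordinates yields the compact form
$$\nabla K(-y)\cdot x = \frac{r}{s^2}\begin{pmatrix}\sin(2\phi-\theta)\\ -\cos(2\phi-\theta)\end{pmatrix}.$$
Expanding $\sin(2\phi-\theta)$ and $\cos(2\phi-\theta)$ and integrating $s^{-1}\omega(s,\phi)$ over $\{s\ge 2r\}$ reproduces precisely the claimed combination of the truncated second angular Fourier moments, i.e.\ the $I^s,I^c$ term but with lower cutoff $2r$ rather than $r$. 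Replacing the cutoff $2r$ by $r$ costs $r\,(I^{s}(r)-I^{s}(2r))$ and its $I^c$ analogue, each bounded by $r\cdot 2\pi\|\omega\|_{L^\infty}\log 2 = O(r\|\omega\|_{L^\infty})$, which completes the identification.

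The main obstacle I anticipate is not any single estimate but the careful bookkeeping needed to guarantee that every error term is genuinely $O(r\|\omega\|_{L^\infty})$ with a constant uniform in the diameter of the support; the far-field integral $\int_{2r}^\infty s^{-2}\,ds$ is exactly what supplies this uniformity, and the $2r$-versus-$r$ reconciliation is the one place where a logarithm could sneak back in if handled carelessly. I would emphasize that the singular principal-value nature of $\nabla K$ never actually intervenes, since we only difference the kernel $K$ (whose angular averages make the first-order remainder classically integrable on the far region) rather than differentiating $u$ itself.
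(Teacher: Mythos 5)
Your proposal is correct, but it follows a genuinely different route from the paper's. The paper proves the lemma by decomposing $\omega$ into its angular Fourier modes, $\omega = \omega^0+\omega^1+\omega^2+\omega^{\mathfrak{r}}$, solving the radial ODE for the stream function of each low mode explicitly (the constant $u(0)$ emerges from the mode-$1$ analysis, the $I^s,I^c$ terms from the mode-$2$ analysis), and then estimating the velocity of the remainder $\omega^{\mathfrak{r}}$ by a Biot--Savart argument that uses orthogonality of $\omega^{\mathfrak{r}}$ to $1,\sin\phi,\cos\phi,\sin 2\phi,\cos 2\phi$ to subtract the first-order kernel terms for free and gain the $|x|^2|y|^{-3}$ decay. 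Your argument dispenses with the mode decomposition and the ODEs entirely: one near/far splitting at scale $2r$, a first-order Taylor expansion of $K$ in the far region, and the polar identity $\nabla K(-y)x = \frac{r}{s^2}\left(\sin(2\phi-\theta),\,-\cos(2\phi-\theta)\right)^T$, whose angular dependence automatically projects onto the second harmonics and thus produces exactly $I^s(2r),I^c(2r)$; the cutoff adjustment from $2r$ to $r$ costs only $Cr\Vert\omega\Vert_{L^\infty}\log 2$. Note that the key analytic ingredient (the $Cr^2|y|^{-3}$ bound on the second-order Taylor remainder of $K$) is the same one the paper uses for $\omega^{\mathfrak{r}}$; the difference is that you apply it to all of $\omega$ and keep the linear term explicitly, whereas the paper kills the linear term by orthogonality and computes the low modes separately. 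Your route is shorter and makes transparent that the lemma is nothing more than a first-order expansion of the Biot--Savart kernel with the non-Lipschitz part isolated in the truncated second angular moments; what the paper's longer route buys is the explicit stream functions $\psi^{1,i},\psi^{2,i}$ for each mode, which are of independent use elsewhere in the paper (the radially homogeneous $1$D system and the effective equations of Section \ref{effective}), and a mode-by-mode display of which harmonics can and cannot generate the logarithmic divergence.

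One caveat: your polar evaluation at $x=0$ actually gives $u(0)=\frac{1}{2\pi}\bigl(\int_0^\infty\int_0^{2\pi}\sin\phi\,\omega\,d\phi\,ds,\ -\int_0^\infty\int_0^{2\pi}\cos\phi\,\omega\,d\phi\,ds\bigr)^T$, which is the \emph{negative} of the formula displayed in the lemma, so your claim to have reproduced "exactly the stated expression" is not literally accurate. This is not a flaw in your computation: the paper's own proof, via \eqref{eq:first_1}--\eqref{eq:first_2}, produces the same signs you do (e.g., $u_1\to\frac12\int_0^\infty f^{1,s}$ with $f^{1,s}(r)=\frac1\pi\int_0^{2\pi}\sin\theta'\,\omega(r,\theta')\,d\theta'$), and a point vortex placed at $(d,0)$ confirms that the velocity at the origin points downward. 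The displayed formula for $u(0)$ in the statement therefore carries a sign typo, and your argument, like the paper's, proves the estimate with the corrected constant vector.
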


\begin{remark}
	The idea of the proof is to simply decompose $\omega$ as \begin{equation*}
	\begin{split}
	\omega(r,\theta) = \sum_{ m \ge 0} \left( \sin(m\theta)f^{m,s}(r) + \cos(m\theta) f^{m,c}(r) \right),
	\end{split}
	\end{equation*} and compute, more or less explicitly, the velocity vector field corresponding to each term on the right hand side. 
\end{remark}

\begin{proof} Using polar coordinates, we write down the following decomposition of the vorticity: \begin{equation*}
	\begin{split}
	\omega = \omega^0 + \omega^1 + \omega^2 + \omega^\mathfrak{r},
	\end{split}
	\end{equation*} where \begin{equation*}
	\begin{split}
	\omega^0(r) := \frac{1}{2\pi}\int_0^{2\pi} \omega(r,\theta)d\theta 
	\end{split}
	\end{equation*} is the radial component, \begin{equation*}
	\begin{split}
	\omega^m(r,\theta) := \sin(m\theta) \frac{1}{\pi}\int_0^{2\pi} \sin(m\theta')\omega(r,\theta')d\theta' + \cos(m\theta) \frac{1}{\pi}\int_0^{2\pi} \cos(m\theta')\omega(r,\theta')d\theta' 
	\end{split}
	\end{equation*} is the $m$-fold symmetric component for $m = 1, 2$, and finally $\omega^\mathfrak{r}:= \omega - \omega^0 - \omega^1 - \omega^2$. Then, we can accordingly write \begin{equation*}
	\begin{split}
	u = u^0 + u^1 + u^2 + u^\mathfrak{r}, 
	\end{split}
	\end{equation*} where $u^m := \nabla^\perp \Delta^{-1}\omega^m$ for $m \in \{ 0, 1, 2, \mathfrak{r} \}$. We estimate each component of velocity separately. 
	
	\medskip
	
	\textbf{Radial part}
	
	\medskip
	
	We first solve $\Delta\Psi = \omega$ assuming that $\omega = \omega^0(r)$, i.e. when the vorticity is a radial function. In this case, it is well-known that the stream function is given by \begin{equation*}
	\begin{split}
	\Psi^0(r) = \int_0^r \frac{1}{s}\int_0^s \tau\omega^0(\tau)d\tau ds 
	\end{split}
	\end{equation*} and the velocity is then \begin{equation*}
	\begin{split}
	u^0(r,\theta) = \frac{1}{r}\int_0^r s\omega^0(s)ds \begin{pmatrix}
	-\sin\theta \\ \cos\theta 
	\end{pmatrix}.
	\end{split}
	\end{equation*} In particular, \begin{equation}\label{eq:radial}
	\begin{split}
	\left| \frac{u^0(r,\theta)}{r} \right| \le C \V \omega^0\V_{L^\infty} \le C \V \omega\V_{L^\infty}. 
	\end{split}
	\end{equation}

	\medskip
	
	\textbf{1-fold symmetric part}
	
	\medskip
	
	Next, we assume that $\omega = \omega^1(r,\theta) = \sin\theta f^{1,s}(r) + \cos\theta f^{1,c}(r)$. In this case, we write $\Psi^1(r,\theta) = \sin\theta \psi^{1,s}(r) + \cos\theta \psi^{1,c}(r)$, and consider the equations \begin{equation*}
	\begin{split}
	\pr_{rr} \psi^{1,i} + \frac{1}{r}\pr_r \psi^{1,i} - \frac{1}{r^2} \psi^{1,i} = f^{1,i}, \qquad i = s, c. 
	\end{split}
	\end{equation*} We then have \begin{equation*}
	\begin{split}
	\pr_r \left(\pr_r\psi^{1,i} + \frac{1}{r}\psi^{1,i} \right) = f^{1,i}
	\end{split}
	\end{equation*} and from $r^{-1}\psi^{1,i}(r), \pr_r\psi^{1,i}(r) \rightarrow 0$ as $r \rightarrow \infty$, \begin{equation*}
	\begin{split}
	\frac{1}{r}\pr_r (r\psi^{1,i}) =\pr_r\psi^{1,i} + \frac{1}{r}\psi^{1,i} = -\int_r^{\infty} f^{1,i}(s)ds 
	\end{split}
	\end{equation*} and hence \begin{equation*}
	\begin{split}
	\psi^{1,i}(r) = - \frac{1}{r}\int_0^r s\int_s^\infty f^{1,i}(\tau)d\tau ds. 
	\end{split}
	\end{equation*} From the formula \begin{equation*}
	\begin{split}
	u^1(r,\theta) = \pr_r \Psi^1(r,\theta) \begin{pmatrix}
	-\sin\theta \\ \cos\theta
	\end{pmatrix} - \frac{1}{r} \pr_{\theta} \Psi^1(r,\theta)\begin{pmatrix}
	\cos\theta \\ \sin\theta 
	\end{pmatrix},
	\end{split}
	\end{equation*} taking the first component, one obtains \begin{equation*}
	\begin{split}
	u^1_1(r,\theta) = \sin^2\theta \left( \frac{\psi^{1,s}}{r} + \int_r^\infty f^{1,s}(s)ds \right) - \cos^2\theta \frac{\psi^{1,s}(r)}{r} + \sin\theta\cos\theta \left( \frac{2\psi^{1,c}}{r} + \int_r^\infty f^{1,c}(s)ds \right).
	\end{split}
	\end{equation*} Observing the bound \begin{equation*}
	\begin{split}
	\left| \frac{2\psi^{1,i}}{r} + \int_r^\infty f^{1,i}(s)ds \right| \le Cr \V f^{1,i} \V_{L^\infty} \le Cr \V \omega \V_{L^\infty}
	\end{split}
	\end{equation*} we rewrite $u^1_1$ in the form \begin{equation*}
	\begin{split}
	u_1^1(r,\theta) &= \frac{1}{2}\int_r^\infty f^{1,s}(s)ds - \left(\frac{1}{2} - \sin^2\theta\right) \left( \frac{2\psi^{1,s}}{r} + \int_r^\infty f^{1,s}(s)ds \right) \\
	&\qquad\qquad +\sin\theta\cos\theta \left( \frac{2\psi^{1,c}}{r} + \int_r^\infty f^{1,c}(s)ds \right), 
	\end{split}
	\end{equation*} and finally arrive at the following bound: \begin{equation}\label{eq:first_1}
	\begin{split}
	\left| u_1^1(r,\theta) - \frac{1}{2}\int_0^\infty f^{1,s}(s)ds \right| \le Cr \V \omega\V_{L^\infty}.
	\end{split}
	\end{equation} On the other hand, for the second component of velocity we obtain \begin{equation}\label{eq:first_2}
	\begin{split}
	\left| u_1^2(r,\theta) + \frac{1}{2}\int_0^\infty f^{1,c}(s)ds \right| \le Cr \V \omega\V_{L^\infty}.
	\end{split}
	\end{equation}
	
	\medskip
	
	\textbf{2-fold symmetric part}
	
	\medskip
	
	We now need to solve $\Delta\Psi = \omega$ in the case when $\omega(r,\theta) = \sin(2\theta)f^{2,s}(r)$ and $\cos(2\theta)f^{2,c}(r)$, respectively. Setting $\Psi(r,\theta) = \sin(2\theta)\psi^{2,s}(r,\theta)$ and $\cos(2\theta)\psi^{2,c}(r,\theta)$ respectively gives the relations \begin{equation}\label{eq:psi_f}
	\begin{split}
	\pr_{rr}\psi^{2,i} + \frac{1}{r}\pr_r \psi^{2,i} - \frac{4}{r^2}\psi^{2,i} = f^{2,i}, \qquad i = s, c. 
	\end{split}
	\end{equation} Then, one may rewrite it as \begin{equation*}
	\begin{split}
	\pr_r \left( \frac{1}{r}\pr_r \psi^{2,i} + \frac{2}{r^2} \psi^{2,i} \right) = \frac{f^{2,i}}{r},
	\end{split}
	\end{equation*} and since we are looking for a solution with bounds $|\psi^{2,i}(r)| \le C\ln(1 + r)$ and $|\pr_r \psi^{2,i}(r)| \le Cr^{-1}$, we obtain \begin{equation*}
	\begin{split}
	\frac{1}{r}\pr_r \psi^{2,i} + \frac{2}{r^2} \psi^{2,i}  = - \int_r^\infty \frac{f^{2,i}(s)}{s}ds , 
	\end{split}
	\end{equation*} and integrating once more, \begin{equation*}
	\begin{split}
	\psi^{2,i}(r) = -\frac{1}{r^2} \int_0^r s^3 \int_s^\infty \frac{f^{2,i}(\tau)}{\tau} d\tau ds 
	\end{split}
	\end{equation*}
	Now, we compute $u^{2,s} = \nabla^\perp( \sin(2\theta)\psi^{2,s} )$ as well as $u^{2,c} = \nabla^\perp( \cos(2\theta) \psi^{2,c})$. In the case of $u^{2,s}$, we have \begin{equation*}
	\begin{split}
	u^{2,s}(r,\theta) = \pr_r (\sin(2\theta)\psi^{2,s}) \begin{pmatrix}
	-\sin\theta \\ \cos\theta
	\end{pmatrix} - \frac{1}{r}\pr_{\theta} (\sin(2\theta)\psi^{2,s}) \begin{pmatrix}
	\cos\theta \\ \sin\theta 
	\end{pmatrix}.
	\end{split}
	\end{equation*} Taking the first component, after a bit of rearranging we get: \begin{equation*}
	\begin{split}
	u^{2,s}_1(r,\theta) &= -\sin(2\theta)\sin\theta \pr_r \psi^{2,s} - 2\cos(2\theta)\cos\theta \frac{\psi^{2,s}}{r} \\
	& = \sin(2\theta)\sin\theta \left( \frac{4}{r} \psi^{2,s} + r \int_r^\infty \frac{f^{2,s}}{s}ds  \right) - \left( \frac{r}{2} \int_r^\infty \frac{f^{2,s}}{s}ds - \frac{r}{2} \int_r^\infty \frac{f^{2,s}}{s}ds + \frac{2}{r} \psi^{2,s} \right)\cos\theta 
	\end{split}
	\end{equation*} We note that \begin{equation*}
	\begin{split}
	\left| \frac{4}{r} \psi^{2,s} + r \int_r^\infty \frac{f^{2,s}}{s}ds  \right| \le Cr\V f^{2,s}\V_{L^\infty}
	\end{split}
	\end{equation*} uniformly in $r \ge 0$, with some absolute constant $C > 0$. Hence, we obtain \begin{equation*}
	\begin{split}
	u^{2,s}_1(r,\theta) = \frac{r\cos\theta}{2} \int_r^\infty \frac{f^{2,s}}{s}ds + \frac{2\sin(2\theta)\sin\theta - \cos\theta}{2} \left(\frac{4}{r} \psi^{2,s} + r \int_r^\infty \frac{f^{2,s}}{s}ds \right)
	\end{split}
	\end{equation*} and \begin{equation}\label{eq:second_sin_1}
	\begin{split}
	\left| \frac{u_1^{2,s}(r,\theta)}{r\cos\theta} - \frac{1}{2}\int_r^\infty \frac{f^{2,s}}{s} ds  \right| \le C \V f^{2,s}\V_{L^\infty} \le C \V \omega\V_{L^\infty}. 
	\end{split}
	\end{equation} Similarly, for the second component of velocity, we obtain \begin{equation}\label{eq:second_sin_2}
	\begin{split}
	\left| \frac{u_2^{2,s}(r,\theta)}{r\sin\theta} + \frac{1}{2}\int_r^\infty \frac{f^{2,s}}{s} ds  \right| \le C \V f^{2,s}\V_{L^\infty} \le C \V \omega\V_{L^\infty}. 
	\end{split}
	\end{equation} Next, in the case of $u^{2,c}$, \begin{equation*}
	\begin{split}
	u^{2,c}(r,\theta) = \pr_r (\cos(2\theta)\psi^{2,c}) \begin{pmatrix}
	-\sin\theta \\ \cos\theta
	\end{pmatrix} - \frac{1}{r}\pr_{\theta} (\cos(2\theta)\psi^{2,c}) \begin{pmatrix}
	\cos\theta \\ \sin\theta 
	\end{pmatrix}.
	\end{split}
	\end{equation*} Similarly as in the case of $u^{2,s}$, we rearrange it to obtain \begin{equation*}
	\begin{split}
	u^{2,c}(r,\theta) &= -\left(\frac{4}{r} \psi^{2,c} + r \int_r^\infty \frac{f^{2,c}}{s}ds\right) \begin{pmatrix}
	\cos(2\theta)\sin\theta \\ -\cos(2\theta)\cos\theta 
	\end{pmatrix} \\
	&\qquad + \left(\frac{r}{2} \int_r^\infty \frac{f^{2,c}}{s}ds -\frac{2}{r} \psi^{2,c} - \frac{r}{2} \int_r^\infty \frac{f^{2,c}}{s}ds \right)\begin{pmatrix}
	\sin\theta \\ \cos\theta 
	\end{pmatrix},
	\end{split}
	\end{equation*} and in particular, \begin{equation}\label{eq:second_cos}
	\begin{split}
	\left| \frac{u_1^{2,c}(r,\theta)}{r\sin\theta} + \frac{1}{2} \int_r^\infty \frac{f^{2,c}}{s}ds  \right| , \qquad \left| \frac{u_2^{2,c}(r,\theta)}{r\cos\theta} + \frac{1}{2} \int_r^\infty \frac{f^{2,c}}{s}ds  \right| \le C \V \omega\V_{L^\infty}. 
	\end{split}
	\end{equation}

	\medskip
	
	\textbf{Remainder}
	
	\medskip
	
	We shall assume that $\omega(r,\theta) = \omega^\mathfrak{r}(r,\theta)$, the point being that \begin{equation*}
	\begin{split}
	\int_0^{2\pi} \omega^\mathfrak{r}(r,\theta) \cos(m\theta) d\theta = 0 = \int_0^{2\pi} \omega^\mathfrak{r}(r,\theta) \sin(m\theta) d\theta
	\end{split}
	\end{equation*} for all $r  \ge 0$ and $m = 0, 1, 2$. In this case, we simply use the Biot-Savart kernel: \begin{equation*}
	\begin{split}
	u^\mathfrak{r}(x) = \frac{1}{2\pi}\int_{ \mathbb{R}^2} \frac{(x-y)^\perp}{|x-y|^2} \omega^\mathfrak{r}(y) dy . 
	\end{split}
	\end{equation*} Taking the first component, we write \begin{equation*}
	\begin{split}
	u^\mathfrak{r}_1(x) = \frac{1}{2\pi} \int_{ |y| \le 2|x|} \frac{-(x_2-y_2)}{|x-y|^2} \omega^\mathfrak{r}(y) dy +  \frac{1}{2\pi} \int_{ |y| > 2|x|} \frac{-(x_2-y_2)}{|x-y|^2} \omega^\mathfrak{r}(y) dy ,
	\end{split}
	\end{equation*} and the first piece is bounded by \begin{equation*}
	\begin{split}
	\left|\frac{1}{2\pi} \int_{ |y| \le 2|x|} \frac{-(x_2-y_2)}{|x-y|^2} \omega^\mathfrak{r}(y) dy \right| \le C\V \omega^\mathfrak{r}\V_{L^\infty} \int_{ |y| \le 2|x|} \frac{1}{|x-y|} dy \le C|x| \V \omega\V_{L^\infty}. 
	\end{split}
	\end{equation*} Regarding the second piece, we first use that \begin{equation*}
	\begin{split}
	\int_{|y| > 2|x|} \frac{y_2}{|y|^2}\omega^\mathfrak{r}(y) dy = 0
	\end{split}
	\end{equation*} to rewrite it as \begin{equation*}
	\begin{split}
	-\frac{1}{2\pi} \int_{ |y| > 2|x|} \left[ \frac{x_2-y_2}{|x-y|^2} + \frac{y_2}{|y|^2}  \right]\omega^\mathfrak{r}(y) dy,
	\end{split}
	\end{equation*} and note that \begin{equation*}
	\begin{split}
	\frac{x_2-y_2}{|x-y|^2} + \frac{y_2}{|y|^2}  = \frac{- x_1(2y_1y_2) + x_2(y_1^2 - y_2^2) + y_2|x|^2}{|x-y|^2|y|^2}. 
	\end{split}
	\end{equation*} Then, using that \begin{equation*}
	\begin{split}
	\int_{ |y| > 2|x|} \frac{2y_1y_2}{|y|^4} \omega^\mathfrak{r}(y) dy = 0  = \int_{ |y| > 2|x|} \frac{y_1^2-y_2^2}{|y|^4} \omega^\mathfrak{r}(y) dy ,
	\end{split}
	\end{equation*} we have that \begin{equation*}
	\begin{split}
	\frac{1}{2\pi} \int_{ |y| > 2|x|} \frac{-(x_2-y_2)}{|x-y|^2} \omega^\mathfrak{r}(y) dy  =- \frac{1}{2\pi} \int_{ |y| > 2|x|} \left[ \frac{x_2-y_2}{|x-y|^2} + \frac{y_2}{|y|^2}   +x_1 \frac{2y_1y_2}{|y|^4} - x_2 \frac{y_1^2 - y_2^2}{|y|^4}   \right]\omega^\mathfrak{r}(y) dy ,
	\end{split}
	\end{equation*} and the expression in the large brackets equals \begin{equation*}
	\begin{split}
	\frac{y_2|x|^2|y|^2 + ( x_2(y_1^2 - y_2^2) -2x_1y_1y_2)(2x\cdot y - |x|^2) }{|x-y|^2|y|^4} 
	\end{split}
	\end{equation*} which is bounded in absolute value by $C|x|^2|y|^{-3}$ for some uniform constant $C > 0$ in the region $|y| > 2|x|$. Therefore, \begin{equation*}
	\begin{split}
	\left| \frac{1}{2\pi} \int_{ |y| > 2|x|} \frac{-(x_2-y_2)}{|x-y|^2} \omega^\mathfrak{r}(y) dy \right| \le C |x|^2\V \omega\V_{L^\infty} \int_{ |y| > 2|x|} \frac{1}{|y|^3} dy \le C|x|\V\omega\V_{L^\infty}. 
	\end{split}
	\end{equation*} We have shown the desired bound \begin{equation}\label{eq:remainder_1}
	\begin{split}
	|u_1^\mathfrak{r}(x)| \le C|x| \V \omega\V_{L^\infty}. 
	\end{split}
	\end{equation} A completely parallel argument establishes that \begin{equation}\label{eq:remainder_2}
	\begin{split}
	|u_2^\mathfrak{r}(x)| \le C|x| \V \omega\V_{L^\infty}. 
	\end{split}
	\end{equation}
	
	\medskip
	
	Combining the estimates \eqref{eq:radial}, \eqref{eq:first_1}, \eqref{eq:first_2}, \eqref{eq:second_sin_1}, \eqref{eq:second_sin_2}, \eqref{eq:second_cos}, \eqref{eq:remainder_1}, and \eqref{eq:remainder_2} finishes the proof. \end{proof}

\subsection{Loss of boundary regularity for odd-odd patches}


In this subsection, we demonstrate that under the odd-odd symmetry, vortex patches with a corner may  {continuously lose regularity of the boundary with }time. By the odd-odd symmetry, we mean that the vorticity satisfies $\omega(x_1,x_2) = -\omega(-x_1,x_2) = -\omega(x_1,-x_2)$ on $\mathbb{R}^2$ (or on $\mathbb{T}^2 = [-1,1)^2$). Equivalently, one may consider vorticities which is odd in $x_1$ on the upper half-plane $\mathbb{R} \times \mathbb{R}_+$, with the slip boundary condition. 

In the result below, we consider an odd-odd patch with four corners meeting at the origin and also tangent to the $x_1$-axis (see Figure \ref{fig:oddodd}). It shows that the ``angle'' of each corner at the origin immediately becomes $\pi/2$ for $t > 0$, and 0 for $t < 0$. 

\begin{theorem}\label{thm:odd_odd_Hoelder}
	Consider an odd-odd vortex patch supported on $\Omega_0 \subset \mathbb{R}^2$ such that $$\overline{\Omega}_0 \cap \{ (x_1,x_2) : 0 \le x_1, x_2 \le 1/2  \} = \{ (x_1,x_2): 0 \le x_1 \le x_2 \le 1/2 \}~.$$ Consider the trajectories of points which initially lie on the diagonal $\Phi(t,(x,x)) =: z(t,x)$. Then, there exist constants $T^*,\delta > 0$, such that  in the ball $[0,\delta]^2$, we have bounds \begin{equation}\label{eq:bootstrap_first}
	\begin{split}
	2z_1(t,x)^{\beta(t)} \ge z_2(t,x) \ge \frac{1}{2} z_1(t,x)^{\alpha(t)},
	\end{split}
	\end{equation} for some strictly decreasing, positive, and continuous functions defined on $[0,T^*]$ with $\alpha(0) = \beta(0) = 1$. 
	In particular, the angle of the patch at the origin becomes immediately $\pi/2$. On the other hand, if one considers the backwards in time evolution, there exists some time interval $[-T',0)$ with $T'>0$, during which the angle of the patch at the origin is zero.	
\end{theorem}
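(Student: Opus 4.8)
The plan is to track the trajectories $z(t,x)=\Phi(t,(x,x))$ of the diagonal for $t<0$ and to show that, for each fixed $t\in[-T',0)$, the image curve $x\mapsto z(t,x)$ becomes tangent to the $x_2$-axis at the origin, i.e. $\lim_{x\to0^+}z_1(t,x)/z_2(t,x)=0$. Since the $x_2$-axis is itself a boundary of the positive patch and is invariant under the flow — by the odd-odd symmetry $u_1\equiv0$ on $\{x_1=0\}$ and $u_2\equiv0$ on $\{x_2=0\}$, so the open first quadrant is invariant and $z_1,z_2>0$ are preserved — this tangency is precisely the statement that the corner has collapsed into a cusp, i.e. the angle is zero. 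The whole argument rests on Lemma \ref{lem:key_radial}. By the odd-odd symmetry one has $u(0)=0$ and the $\cos(2\theta)$ angular moment of $\omega$ vanishes identically in $r$, so $I^c\equiv0$, while the $\sin(2\theta)$ moment is positive near the origin so that $I^s(r)>0$ and $I^s(r)\gtrsim\ln\frac1r$ for small $r$. Writing $r\cos\theta=z_1$, $r\sin\theta=z_2$, the estimate \eqref{eq:key_radial} reads $u_1=\frac{I^s(|z|)}{2\pi}z_1+O(|z|)$ and $u_2=-\frac{I^s(|z|)}{2\pi}z_2+O(|z|)$, a hyperbolic saddle with the axes as eigendirections. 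Forward in time $z_1$ expands and $z_2$ contracts (the wedge opens to $\pi/2$); running time backwards reverses these roles, which is exactly what drives the diagonal toward the $x_2$-axis.

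The key mechanism is a damped scalar ODE for the ratio. Put $\tau=-t\in(0,T']$ and $\rho(\tau)=z_1(\tau)/z_2(\tau)$, so $\rho(0)=1$. Using $\frac{dz}{d\tau}=-u(z)$ together with the two expansions above,
\[
\frac{d\rho}{d\tau}=\frac{1}{z_2}\frac{dz_1}{d\tau}-\frac{\rho}{z_2}\frac{dz_2}{d\tau}=-\frac{I^s(|z|)}{\pi}\,\rho+O(1)+O(\rho),
\]
where the $O(1)$ term is precisely the $O(|z|)$ error of \eqref{eq:key_radial} divided by $z_2$ (bounded once we know $z_1\lesssim z_2$, hence $|z|\approx z_2$), and $O(\rho)$ is lower order. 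Because $I^s(|z|)\to\infty$ as we approach the origin, this is a linear ODE with large damping and bounded forcing, so by a comparison (barrier) argument $\rho$ is driven down to $O\!\big(1/I^s(|z|)\big)$. Quantitatively, the Yudovich estimate gives $|z(\tau,x)|\le x^{e^{-c\tau}}$, hence $I^s(|z|)\gtrsim\ln\frac{1}{|z|}\gtrsim e^{-c\tau}\ln\frac1x$, and therefore for each fixed $\tau$,
\[
\frac{z_1(\tau,x)}{z_2(\tau,x)}=\rho(\tau,x)\le\frac{C}{e^{-c\tau}\ln\frac1x}\xrightarrow[x\to0^+]{}0.
\]
This is exactly the tangency, and hence the vanishing of the angle, claimed for every $t=-\tau\in[-T',0)$.

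To make this rigorous I would run a continuity/bootstrap argument on a short interval. The bootstrap assumptions are: $\rho\le1$ (so that the quadrant geometry and $|z|\approx z_2$ hold); $z_2$ increasing with the Yudovich upper bound controlling its smallness; and the patch staying close enough to the four-wedge configuration near the origin that $I^s(\cdot,t)>0$ and $I^s(r,t)\gtrsim\ln\frac1r$. The conservation of $\V\omega\V_{L^\infty}$ and the continuity-in-time hypothesis keep these valid on some maximal $[-T',0)$, and the ratio estimate above shows the assumptions cannot be the first to fail, closing the bootstrap. As in the forward part, one may record the outcome through a time-dependent exponent, e.g. $z_1\le z_2^{\,1+\mu(\tau)}$ with $\mu(\tau)>0$, which is the cusp analogue of the functions $\alpha(t),\beta(t)$ in \eqref{eq:bootstrap_first}.

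The genuinely delicate point — the main obstacle — is maintaining the \emph{positivity and logarithmic size} of $I^s(|z|,t)$, that is, of the angular moment $\int_0^{2\pi}\sin(2\theta)\,\omega(s,\theta,t)\,d\theta$ for $s$ near $0$, throughout the backward evolution. This is self-referential: the coefficient that creates the cusp is itself a functional of the degenerating geometry, and as the wedges collapse onto the $x_2$-axis the profile concentrates near $\theta=\pi/2$, where $\sin(2\theta)$ vanishes, so the moment \emph{decreases}. One must show it decreases slowly enough to remain positive and divergent on $[-T',0)$ — and here it is crucial that \emph{any} divergent lower bound, not necessarily the full $\ln\frac1r$, already forces $\rho\to0$ — which is where the short-time restriction, the continuity hypothesis, and a careful tracking of the leading wedge angles (as in the forward analysis run in reverse) enter. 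The error in \eqref{eq:key_radial} being only $O(r)$ is borderline but harmless, since it appears in the ratio equation merely as the bounded forcing that the growing damping overwhelms.
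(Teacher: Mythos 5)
Your setup is sound: the reduction to the diagonal trajectories, the invariance of the axes under odd--odd symmetry, the identities $u(0)=0$, $I^c\equiv 0$, and the hyperbolic leading term $u\approx\frac{I^s(|z|)}{2\pi}(z_1,-z_2)$ are all correct readings of Lemma \ref{lem:key_radial}, and you have correctly located the delicate point. But the way you propose to handle that point cannot work, and this is a genuine gap, not a technicality. Your engine is ``growing damping overwhelms bounded forcing,'' i.e.\ the bound $I^s(|z(\tau,x)|,-\tau)\gtrsim e^{-c\tau}\ln\frac1x$ for fixed $\tau>0$ as $x\to0^+$. This hypothesis is not just hard to maintain; it is \emph{refuted by its own consequences}. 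If it held on $[0,\tau]$, your ratio ODE would force every trajectory started at $x''$ to satisfy $\rho(\tau,x'')\lesssim 1/\ln\frac1{x''}$; since the patch at time $-\tau$ is bounded by the $x_2$-axis and the image of the diagonal, and Yudovich gives $\ln\frac{1}{x''}\approx e^{\pm c\tau}\ln\frac1\sigma$ for the curve point at radius $\sigma$, the angular width of the patch at radius $\sigma$ would be $\lesssim e^{c\tau}/\ln\frac1\sigma$. But $\sin(2\theta)$ vanishes linearly at $\theta=\pi/2$, so the moment density at radius $\sigma$ is $O(\mathrm{width}^2)$, and then $I^s(r,-\tau)\lesssim e^{2c\tau}\int_r^\delta\frac{d\sigma}{\sigma\,(\ln\frac1\sigma)^2}+C\le C'$ \emph{uniformly in $r$} --- contradicting the assumed divergence for small $x$. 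So the bootstrap is self-defeating: the cusp you are creating saturates the damping at $O(1/\tau)$, and no lower bound on $I^s$ that diverges as $x\to0$ can survive.

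This feeds into a second, equally fatal problem: the error structure of Lemma \ref{lem:key_radial} is too coarse to finish even with the correct (saturating) bound on $I^s$. The $O(|z|)$ error, divided by $z_2$, is an additive $O(1)$ forcing, so $\rho'\le-\frac{I^s}{\pi}\rho+C$ only drives $\rho$ down to the equilibrium level $\sim C\pi/I^s\sim\tau$; for fixed $t=-\tau$ the ratio stalls at a positive value of order $|t|$ and never tends to $0$ as $x\to0^+$. Your closing remark that the $O(r)$ error is ``borderline but harmless, since the growing damping overwhelms it'' is thus wrong on both counts backward in time: the damping does not grow, and the forcing is not overwhelmed. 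This is precisely why the paper's proof does not use Lemma \ref{lem:key_radial} at this stage but the Kiselev--\v{S}ver\'ak Key Lemma \ref{lem:key}, whose error is only \emph{logarithmic in the degenerating ratio}, $|B_j|\le C(1+\ln(1+\frac{x_{3-j}}{x_j}))$. With $g=\ln(1+z_1/z_2)$ (in the paper's orientation) one gets $g'\ge 2I-Cg$, where the honest lower bound $I\gtrsim\frac{\tilde\delta^{c\tau}-x^{c\tau}}{c\tau}$ comes from the Yudovich--H\"older a priori control of the collapse (not from any persistent log-divergence); although this $I$ is bounded for fixed $\tau$, its time integral $\int_0^\tau I\,ds\gtrsim\ln(\tau\ln\frac1x)$ still diverges as $x\to0^+$, and Gronwall then gives $g\to\infty$, i.e.\ tangency. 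Both ingredients --- the degraded-but-integrable lower bound on $I$, and the relative (logarithmic) error --- are indispensable and absent from your proposal; note also that the resulting collapse rate is only a power of $\ln\ln$-type, far weaker than the $1/\ln\frac1x$ rate you assert.

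Two smaller points. First, you address only the backward half of Theorem \ref{thm:odd_odd_Hoelder}; the forward half --- the explicit exponent bounds \eqref{eq:bootstrap_first} and the angle becoming $\pi/2$ --- is only asserted by ``reversing roles,'' whereas it needs its own argument. Ironically, your mechanism does work there: forward in time the wedge opens, the moment stays of size $\ln\frac1{|z|}$, and the growing damping genuinely beats the $O(1)$ forcing. Second, the theorem is unconditional, so the ``continuity-in-time hypothesis'' you lean on in the bootstrap is not available to you.
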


One may formally define the (cosine of the) angle as follows: given a domain $U \subset [0,1]^2$ (which is assumed to intersect any small square $[0,\delta]^2$), \begin{equation}\label{def:angle}
\begin{split}
\cos\theta_U := \lim_{\delta \rightarrow 0^+} \sup_{x,x' \in U\cap [0,\delta]^2 } \frac{x\cdot x'}{|x||x'|}~.
\end{split}
\end{equation}

\begin{figure}
	\includegraphics[scale=0.8]{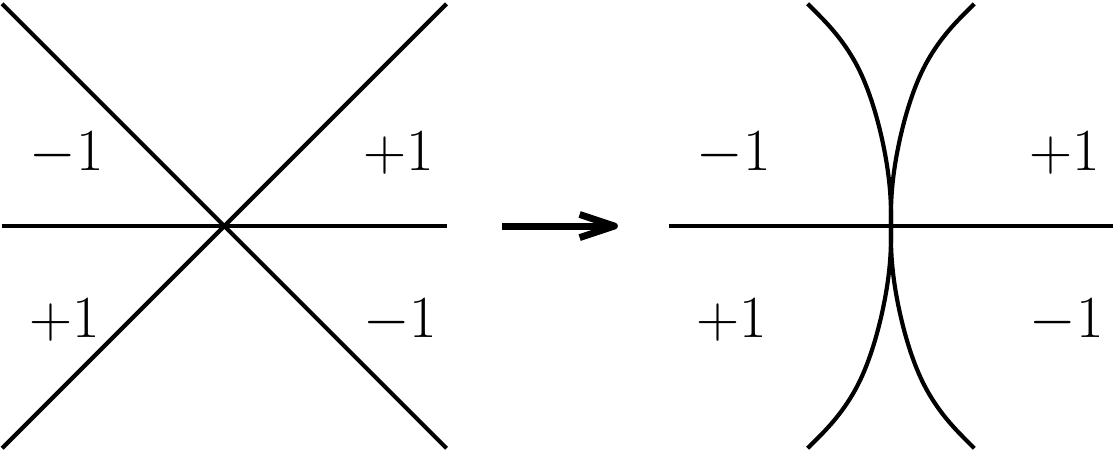} 
	\centering
	\caption{Evolution of a corner with odd-odd symmetry.}
	\label{fig:oddodd}
\end{figure}

We may consider these data on the upper half-plane, and in this case, the boundary of the initial patch $\partial\Omega_0$ is given as the graph of a $C^{0,1}$ and $C^{1,0}$-function, respectively, near the origin. This property is not maintained for any small time $t > 0$. On the other hand, if initially one considers odd-odd patch given by the region below the graph of a $C^{1,\alpha}$-function whose derivative vanish at the origin (see Figure \ref{fig:oddodd_cusp}), it can be shown using the ideas of previous sections that the solution continues to satisfy these properties. See also the recent work of Kiselev, Ryzhik, Yao, and Zlato{\v{s}} \cite{KRYZ} where they show (among other things) well-posedness for cusps touching the boundary. In this sense, these two results show that the vortex patch problem is ill-posed, when its boundary on the upper half-plane is only $C^{0,1}$ or $C^1$.

\begin{remark}
	The second part of the theorem was established in an earlier work by Hoff and Perepelitsa \cite{HP}, with a similar patch initial data but having just one odd symmetry with respect to the $x_1$-axis. It is expected that the dynamics in that case is equivalent to the odd-odd symmetry case, up to a translation of the corner point. Here we offer a simplified proof using the Key Lemma. 
\end{remark}



\begin{figure}
	\includegraphics[scale=0.8]{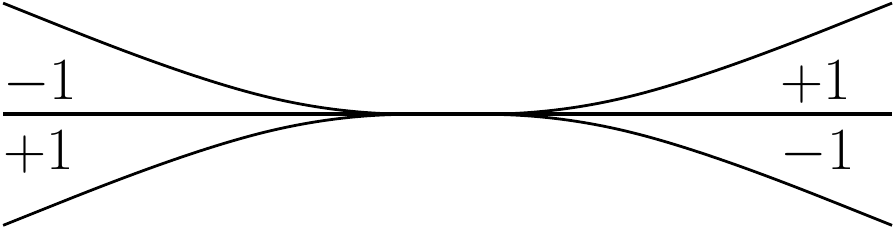} 
	\centering
	\caption{A $C^{1,\alpha}$-cusp with odd-odd symmetry.}
	\label{fig:oddodd_cusp}
\end{figure}

We now recall the key lemma of Kiselev-\v{S}ver\'{a}k \cite{KS} and Zlato\v{s} \cite{Z}: \begin{lemma}\label{lem:key}
	Let $\omega(t,\cdot) \in L^\infty( \mathbb{R}^2 )$ be odd-odd. For $x_1,x_2 \in (0,1/2]$, we have \begin{equation}\label{eq:key}
	\begin{split}
	(-1)^j\frac{ u_j(t,x)}{x_j} = \frac{4}{\pi} \int_{Q(2x)} \frac{y_1y_2}{|y|^4} \omega(t,y) dy + B_j(t,x) 
	\end{split}
	\end{equation} where $Q(2x):=[2x_1,1]\times[2x_2,1]$ and \begin{equation*}
	\begin{split}
	\left| B_j(t,x)  \right| \le C \V \omega(t,\cdot)\V_{L^\infty} \left( 1 + \ln\left( 1 + \frac{x_{3-j}}{x_j} \right)   \right)
	\end{split}
	\end{equation*} for $j \in \{ 1, 2\}$. 
\end{lemma}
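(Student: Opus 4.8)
The plan is to pass from the whole-plane Biot--Savart integral to an integral over the first quadrant by folding the four reflected contributions, to read off the leading term of the resulting folded kernel, and then to control the remainder by splitting the quadrant into the far rectangle $Q(2x)$ and a near region. Throughout I use that $\omega$ is supported in (equivalently periodic on) $[-1,1]^2$ and odd--odd, so that its radial and first angular modes integrate to zero; in particular $u(0)=0$, which is why \eqref{eq:key} carries no constant term. Writing $u=K*\omega$ with $K(x)=\tfrac{1}{2\pi}\tfrac{x^\perp}{|x|^2}$ and pairing each $y=(y_1,y_2)\in[0,1]^2$ with its reflections $(\pm y_1,\pm y_2)$, whose vorticity values carry the signs $(+,-,-,+)$, I obtain
\[
u_j(x)=\frac{1}{2\pi}\int_{[0,1]^2}G_j(x,y)\,\omega(y)\,dy ,
\]
where $G_j$ is an explicit sum of four rational kernels (for $j=2$, the four terms $\pm\tfrac{x_1\mp y_1}{(x_1\mp y_1)^2+(x_2\mp y_2)^2}$, and the analogous combination with the roles of the coordinates interchanged for $j=1$). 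A direct computation gives $G_j(0,y)=0$, so that division by $x_j$ is natural, and moreover $\nabla_x G_j(0,y)$ has a single nonzero component,
\[
\partial_{x_k}G_j(0,y)=0\ \ (k\neq j),\qquad \partial_{x_j}G_j(0,y)=\pm\,\frac{8\,y_1y_2}{|y|^4},
\]
the sign being fixed so that $(-1)^j\tfrac{1}{2\pi}\,\partial_{x_j}G_j(0,y)=\tfrac{4}{\pi}\tfrac{y_1y_2}{|y|^4}$. This produces exactly the main kernel in \eqref{eq:key}.

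Next I would split $[0,1]^2=Q(2x)\sqcup R$, where $R=\{y_1<2x_1\}\cup\{y_2<2x_2\}$ is the near region. On $Q(2x)$ one has $y_1\ge 2x_1$ and $y_2\ge 2x_2$, so each of the four reflected points is at distance $\gtrsim|y|$ from $x$ and $G_j$ is smooth there; I write $(-1)^j\tfrac{G_j(x,y)}{x_j}=\tfrac{8y_1y_2}{|y|^4}+\mathcal E(x,y)$, with $\mathcal E$ the first-order Taylor remainder. The contribution of the main term over $Q(2x)$ is precisely the integral kept in \eqref{eq:key}, and (with $\sin\theta\cos\theta=y_1y_2/|y|^2$ in polar coordinates) it is the genuinely large, $\sim\ln(1/|x|)$, piece. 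On $R$ I would bound $(-1)^jG_j/x_j$ directly: the odd--odd folding cancels the leading $1/|x-y|^2$ singularity near $y=x$, leaving a kernel whose integral over the two thin strips is $\le C\V\omega\V_{L^\infty}\bigl(1+\ln(1+x_{3-j}/x_j)\bigr)$. Assembling, the far-field remainder $\int_{Q(2x)}\mathcal E\,\omega$ together with the near-region integral constitute $B_j$.

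The hard part is the sharp control of $\mathcal E$ on $Q(2x)$ in the anisotropic regime $x_j\ll x_{3-j}$. A crude second-derivative bound $|\mathcal E|\lesssim |x|^2/(x_j|y|^3)$ is genuinely too weak: integrated over $Q(2x)$ it yields a factor $x_{3-j}/x_j$, far larger than the permitted $\ln(1+x_{3-j}/x_j)$. One must instead exploit the precise algebraic form of the folded kernel — in effect the angular cancellation carried by the $y_1y_2/|y|^2$ weight — to show that the remainder, integrated over the rectangle (rather than over a quarter-disk), costs only a logarithm. It is illuminating to cross-check the main coefficient against the radial expansion already established: for odd--odd $\omega$, Lemma \ref{lem:key_radial} gives $u(0)=0$ and $I^c(r)=0$, while the identities $\sin(2\theta)=2y_1y_2/|y|^2$ and $\tfrac1s\,d\theta\,ds=|y|^{-2}dy$ yield $I^s(r)=2\int_{|y|>r}\tfrac{y_1y_2}{|y|^4}\omega\,dy$; since $u_1=\tfrac{1}{2\pi}x_1 I^s(r)+O(r\V\omega\V_{L^\infty})$ and $u_2=-\tfrac{1}{2\pi}x_2 I^s(r)+O(r\V\omega\V_{L^\infty})$, one recovers $(-1)^j u_j/x_j=\tfrac{4}{\pi}\int_{\{|y|>r\}\cap[0,\infty)^2}\tfrac{y_1y_2}{|y|^4}\omega+O(\V\omega\V_{L^\infty})$ up to the fixed orientation convention, confirming the constant $\tfrac4\pi$. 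However, dividing the error $O(r\V\omega\V_{L^\infty})$ by $x_j$ gives $O(\V\omega\V_{L^\infty}\,|x|/x_j)$, which blows up off the diagonal; this is exactly the loss that the sharper Lemma \ref{lem:key} removes. Thus the entire content of Lemma \ref{lem:key} beyond Lemma \ref{lem:key_radial} is the replacement of the quarter-disk $\{|y|>r\}$ by the rectangle $Q(2x)$ with only a logarithmic penalty — which is precisely the computation of Kiselev--\v{S}ver\'ak \cite{KS} and Zlato\v{s} \cite{Z}, and which I would carry out via the strip decomposition of $R$ described above.
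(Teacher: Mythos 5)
You should know at the outset that the paper contains no proof of Lemma \ref{lem:key}: it is recalled from Kiselev--\v{S}ver\'ak \cite{KS} and Zlato\v{s} \cite{Z}, followed only by a one-line remark asserting that it ``follows from'' Lemma \ref{lem:key_radial}. So your proposal can only be judged as a self-contained argument, and as such it has a genuine gap, located exactly where you yourself place it. Your setup is correct and is the original Kiselev--\v{S}ver\'ak strategy: the odd--odd folding of the Biot--Savart kernel, the identities $G_j(0,y)=0$ and $\partial_{x_k}G_j(0,y)=0$ for $k\ne j$, and the leading kernel $\partial_{x_j}G_j(0,y)=\mp 8y_1y_2/|y|^4$ all check out (modulo the orientation convention you flag; the paper's Lemma \ref{lem:key_radial} and the quoted \eqref{eq:key} are indeed off by a sign). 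But the entire content of the lemma is the estimate $|B_j|\le C\Vert\omega\Vert_{L^\infty}\bigl(1+\ln(1+x_{3-j}/x_j)\bigr)$, i.e.\ that the near-region integral and the Taylor remainder over $Q(2x)$ cost only a logarithm of the aspect ratio, uniformly in the anisotropic regime $x_j\ll x_{3-j}$. You correctly show that the crude second-derivative bound produces the factor $x_{3-j}/x_j$ instead, announce that the finer algebraic cancellation of the folded kernel must be exploited, and then defer that computation to \cite{KS,Z}. That deferred computation \emph{is} the proof; nothing in your text closes it. (Admittedly, this leaves you no worse off than the paper, whose treatment is the citation itself.)

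What you get right, and what is worth recording, is your criticism of the paper's remark. For odd--odd data, Lemma \ref{lem:key_radial} gives $u(0)=0$, $I^c\equiv 0$, and $\tfrac{1}{2\pi}I^s(|x|)=\tfrac{4}{\pi}\int_{\{|y|>|x|\}\cap\{y_1,y_2>0\}}\tfrac{y_1y_2}{|y|^4}\,\omega\,dy$, and replacing this quarter-disk complement by the rectangle $Q(2x)$ is harmless; but the error term of Lemma \ref{lem:key_radial}, once divided by $x_j$, is $O(\Vert\omega\Vert_{L^\infty}\,x_{3-j}/x_j)$, which is nowhere near $O\bigl(\ln(1+x_{3-j}/x_j)\bigr)$. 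So the remark is accurate only in the direction of the larger coordinate (equivalently, at near-diagonal points), and Lemma \ref{lem:key} is strictly sharper than its claimed corollary status. Moreover, the sharp logarithmic form is precisely what the paper uses: in the proof of Theorem \ref{thm:odd_odd_Hoelder}, the trajectory bounds give $z_2/z_1\le x^{-\epsilon}$ with $\epsilon=\epsilon(T^*)\to 0$ as $T^*\to 0$, and the argument surrounding \eqref{eq:boundforerror} needs $|B_1|\lesssim 1+\epsilon\ln(1/x)$ so that it can be absorbed by the main term $I\gtrsim\ln(1/x)$; the crude bound $z_2/z_1\le x^{-\epsilon}$ would make $|B_1|$ polynomially large in $1/x$ and destroy the proof. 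So your diagnosis that the lemma cannot be reduced to Lemma \ref{lem:key_radial} off the diagonal is correct, and it shows the paper's remark is an overstatement --- but it also shows that your own proposal, which ultimately rests on the same deferral to \cite{KS,Z}, has not supplied the estimate that makes the lemma true.
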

\begin{remark}
We note that this actually follows from the more general expansion given in Lemma \ref{lem:key_radial}.
\end{remark}
For simplicity of notation, we will denote the integral in \eqref{eq:key} as \begin{equation*}
\begin{split}
I(t,x):= \frac{4}{\pi} \int_{Q(2x)} \frac{y_1y_2}{|y|^4} \omega(t,y) dy~.
\end{split}
\end{equation*}
On the other hand, we have the well-known log-Lipschitz bound for velocity: for any $x,x'$ with $|x-x'| < 1/2$, \begin{equation*}
\begin{split}
|u(t,x)- u(t,x')| \le C\V \omega(t,\cdot)\V_{L^\infty} |x-x'| \ln \frac{1}{|x-x'|}~.
\end{split}
\end{equation*}

\begin{proof}[Proof of Theorem \ref{thm:odd_odd_Hoelder}]
	
	
	
	
	We consider the case $t \ge 0$. We shall work within a short time interval $[0,T^*]$ for some $T^* > 0$, and in several places, the value of $T^*$ will be taken to be sufficiently small for the arguments to work. 
	
	We begin with a simple observation. Note that on the diagonal $x = (x',x')$ with $0 < x'$, we have  \begin{equation*}
	\begin{split}
	\frac{u_1(t,x)}{u_2(t,x)} = - \frac{I(t,x) + B_1(t,x) }{I(t,x) + B_2(t,x)}
	\end{split}
	\end{equation*} and $|B_j(t,x)| \le C$ for all time. Clearly, one can find some small $\delta_1 > 0$ and $T^* > 0$ such that $I(t,(\delta_1,\delta_1)) \ge 10C $  for all $ 0 \le t \le T^*$, simply because $I(t,(\delta_1,\delta_1))$ is continuous in $t, \delta_1$ and $I(0,(\delta_1,\delta_1)) \rightarrow +\infty$ as $\delta_1 \rightarrow 0^+$. Therefore, we take $\delta_1 \ge \delta_2 > 0$ such that the triangle $\{ 0 \le x_1 \le x_2 \le \delta_2 \}$ is contained in $\Phi(t,\Omega_0)$ for all $0<t<T^*$. 
	
	Consider a point on the diagonal $(x,x)$ with $0<x<\delta \ll \delta_2$ (the value of $\delta>0$ will be specified later) and denote its trajectory by $z(t,x) = (z_1(t,x),z_2(t,x)):=\Phi(t,(x,x))$. From the basic log-Lipschitz estimate on $u_2$,  \begin{equation*}
	\begin{split}
	\left| \frac{d}{dt} z_2(t,x) \right| \le C z_2(t,x)  \ln \frac{c}{z_2(t,x)}~, 
	\end{split}
	\end{equation*} (since $u_2(t,(z_1(t,x),0)) = 0$ by odd symmetry) and upon integration, we deduce that $z_2(t,x) \le c x^{\exp(-Ct)}$. Proceeding analogously for $z_1(t)$, we obtain $z_1(t,x) \ge c  x^{\exp(Ct)}$ this time. Inserting these crude bounds, \begin{equation}\label{eq:boundforerror}
	\begin{split}
	|B_1(t,z(t,x))| \le C\left( 1 + \exp(2CT^*) \ln \frac{1}{x} \right) 
	\end{split}
	\end{equation} for $0 \le t \le T^*$. Moreover, with $\beta(t):= \exp(-2Ct)$, we obtain \begin{equation*}
	\begin{split}
	z_1(t,x)^{\beta(t)} \ge \frac{1}{2} {z_2(t,x)},
	\end{split}
	\end{equation*}  for $ 0 \le t \le T^*$ by choosing $T^*$ smaller if necessary. 
	
	A lower bound for the integral $I(t,z(t,x))$ comes from the fact that $\Phi(t,\Omega_0)$ contains a triangle. We could have chosen $\delta > 0$ small so that for all $x < \delta$, its trajectory satisfies the bound $ z_2(t) \le \delta_2$. In particular, the region $Q(z(t,x))$ contains the triangle with vertices $(z_2(t,x),z_2(t,x)), (\delta_2,z_2(t,x))$, and $(\delta_2,\delta_2)$. Hence \begin{equation*}
	\begin{split}
	I(t,z(t,x)) \ge c \ln \frac{\delta_2}{z_2(t,x)} \ge c\exp(-Ct) \ln \frac{\delta_2'}{x}
	\end{split}
	\end{equation*} and comparing this with \eqref{eq:boundforerror}, we could have chosen $\delta, T^* > 0$ smaller so that for $0<t < T^*$ and $0<x<\delta$, \begin{equation*}
	\begin{split}
	I(t,z(t,x)) \ge \frac{1}{10} |B_1(t,z(t,x))|~. 
	\end{split}
	\end{equation*} Therefore, we may neglect the $B_1$-term in \eqref{eq:key} at the cost of changing the multiplicative constant, and deduce \begin{equation*}
	\begin{split}
	-\frac{u_1(t,z(t,x))}{z_1(t,x)} \ge c \ln \frac{\delta_2'}{x}~.
	\end{split}
	\end{equation*} In turn, this ensures that \begin{equation*}
	\begin{split}
	z_1(t,x) \le c' x^{1+ct}
	\end{split}
	\end{equation*} with $c' \rightarrow 1$ as $T^* \rightarrow 0^+$. From the trivial bound $z_2(t,x) \ge x$, we obtain  \begin{equation*}
	\begin{split}
	{z_2(t,x)} \ge \frac{1}{2} {z_1(t,x)^{\alpha(t)}},
	\end{split}
	\end{equation*} with $\alpha(t) := (1+ct)^{-1}$. This finishes the proof of the first part. 
	
	We now consider the backwards in time dynamics. Instead of reversing time, we revert the sign of vorticity, so that now initially the direction of velocity is southeast on the diagonal segment. As in the above, we set $z(t,x) := \Phi(t,(x,x))$, and restrict our attention to $0<x<\delta$ and $ 0 \le t \le T'$, for small $\delta, T' > 0$ to be chosen below. Similarly as before, using either the Key Lemma or the log-Lipschitz estimate on velocity gives \begin{equation*}
	\begin{split}
	z_2(t,x) \le z_1(t,x) \le 2 z_2(t,x)^{\gamma(t)} \le 2x^{\gamma(t)}
	\end{split}
	\end{equation*} for all $0 < x < \delta$ and $0 \le t \le T'$ with some sufficiently small $\delta,T' > 0$. Here $\gamma(t)> 0$ is some continuous monotonically decreasing function with $\gamma(0) = 1$ and $\gamma(t) < 1 - ct$. This gives a lower bound on the integral \begin{equation*}
	\begin{split}
	I(t,z(t,x)) \ge I(t,(z_1(t,x),z_1(t,x)) \ge c \frac{\delta^{\eta(t)} - z_1^{\eta(t)} }{\eta(t)} \ge c' \frac{\tilde{\delta}^{ct} - x^{ct} }{ct}
	\end{split}
	\end{equation*} where $\eta(t) = 2(1/\gamma(t) -1) \gtrsim t $ satisfies $\eta(0) = 0$ and is monotonically increasing with $t$. 
	Applying the Key Lemma to each of $z_1(t,x)$ and $z_2(t,x)$, we see that \begin{equation*}
	\begin{split}
	\frac{d}{dt} \left(  \frac{z_1(t,x)}{z_2(t,x)}  \right) \ge \frac{z_1(t,x)}{z_2(t,x)} \left( 2I(t,z(t,x)) - C \ln\left( 1 + \frac{z_1(t,x)}{z_2(t,x)} \right) \right),
	\end{split}
	\end{equation*} or equivalently, \begin{equation*}
	\begin{split}
	\frac{d}{dt} \ln\left( 1 + \frac{z_1(t,x)}{z_2(t,x)} \right)\ge c' \frac{\tilde{\delta}^{ct} - x^{ct} }{ct} - C \ln\left( 1 + \frac{z_1(t,x)}{z_2(t,x)} \right).
	\end{split}
	\end{equation*} Fix some small $x > 0$ and consider the ODE \begin{equation*}
	\begin{split}
	\frac{d}{dt} f^{(x)}(t) = \frac{\tilde{\delta}^{ct} - x^{ct} }{ct} - Cf^{(x)}(t), \qquad f(0) = \ln 2. 
	\end{split}
	\end{equation*} It is straightforward to show that, for all sufficiently small $0 < t \le T'$, we have \begin{equation*}
	\begin{split}
	\lim_{x \rightarrow 0^+} \frac{f^{(x)}(t)}{x} = +\infty. 
	\end{split}
	\end{equation*} Then, this implies that \begin{equation*}
	\begin{split}
	\lim_{x \rightarrow 0^+} \frac{z_1(t,x)}{z_2(t,x)} = +\infty
	\end{split}
	\end{equation*} for all $0 < t \le T'$, which shows that the angle of the patch is zero in the same time interval.
\end{proof}

\begin{remark} In the result above, the initial corner angle of the patch can be an arbitrary number strictly between 0 and $\pi/2$. Moreover, a straightforward modification of the proof shows that there is an initial patch with corner angle zero whose angle immediately becomes $\pi/2$ for $t > 0$. This can be done for instance using a patch of the form \begin{equation*}
	\begin{split}
	\mathrm{cl}(\Omega_0) \cap [0,1/2]^2 = \left\{ (x_1,x_2) : 0 \le x_2 \le x_1 \left(\ln \ln\frac{1}{x_1}\right)^{-1}  \right\} \cap [0,1/2]^2~.
	\end{split}
	\end{equation*}
\end{remark}

\begin{remark}
	We note that very recently, the key lemma was utilized to obtain double exponential rate of growth in time for the curvature of smooth vortex patches touching the horizontal axis -- see \cite{KiLi}. 
\end{remark}

\subsection{Non-continuity of the angle in a vortex patch with a corner}

In this subsection, we prove two non-continuity results. The first is descriptive in that it gives a lower bound on the angular movement of particles close to the corner. The second one asserts that if at any time a good portion of the mass of a vortex patch (or sequence of vortex patches) asymptotically is in a sector, then the location of that sector  {cannot vary continuously in time.} We now state and prove our first theorem in this direction. The reader should take note that the first theorem is easier to prove and contains two of the ideas contained in the proof of the second theorem.  
\begin{theorem}
Assume that $\Omega_0\subset B_{1/2}(0)$ is a 2-fold symmetric open set and  $\Omega_0\cap B_{1/4}(0)=S_{-\theta_1,\theta_1}\cup S_{\pi-\theta_1,\pi+\theta_1}\cap B_{1/4}(0)$ with $\theta_1 \in (0,\pi/2)\cup(\pi/2,\pi)$. Then, there exists a fixed constant $c>0$, a sequence of radii $\epsilon_n\rightarrow 0$, and a sequence of times $t_n\rightarrow 0$ so that for all $n\in\mathbb{N}$ we have \[ \theta_1+c\leq \arctan\Big(\frac{\Phi_2(t_n,x)}{\Phi_1(t_n,x)}\Big)\leq \theta_1+10c \] for all $x$ with $\epsilon_n\leq |x|\leq 2\epsilon_n$  {in $S_{-\theta_1,\theta_1}$}.
\end{theorem}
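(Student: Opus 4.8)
The plan is to run the same scheme as in the proof of Theorem \ref{thm:odd_odd_Hoelder}, but to replace the Kiselev--\v{S}ver\'ak splitting by the sharper expansion of Lemma \ref{lem:key_radial}, which needs no symmetry and isolates the non-Lipschitz part of the velocity explicitly. First I would evaluate, for the initial configuration, the two scalar quantities $I^s(r)$ and $I^c(r)$ appearing in \eqref{eq:key_radial}. Since the union $S_{-\theta_1,\theta_1}\cup S_{\pi-\theta_1,\pi+\theta_1}$ is symmetric about the $x_1$--axis, the angular integral $\int_0^{2\pi}\sin(2\theta)\,\omega\,d\theta$ vanishes inside $B_{1/4}(0)$, so $I^s(r)$ stays bounded as $r\to 0$; on the other hand $\int_0^{2\pi}\cos(2\theta)\,\omega\,d\theta=2\sin(2\theta_1)\neq 0$ there, so $I^c(r)=2\sin(2\theta_1)\ln\frac1r+O(1)$ diverges logarithmically. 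Inserting this into \eqref{eq:key_radial} and converting to polar components produces, up to a genuinely bounded error, an angular equation of the schematic form $\dot\theta\approx-\tfrac{1}{2\pi}\cos(2\theta)\,I^c(r)$ together with a coupled radial equation $\tfrac{d}{dt}\ln r\approx-\tfrac{1}{2\pi}\sin(2\theta)\,I^c(r)$. The decisive feature is that the rotation rate carries the factor $\ln\frac1r$, hence blows up as the base point approaches the corner.

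The core of the argument is to track the trajectory $z(t,x)=\Phi(t,x)$ of a point in the annular sector and integrate these ODEs. As in Theorem \ref{thm:odd_odd_Hoelder}, I would first establish crude a priori bounds: the log--Lipschitz estimate forces $|z(t,x)|$ to change by at most a controlled (double--exponential) factor on a short interval $[0,t_n]$, so the base points stay deep inside $B_{1/4}(0)$ and $\ln\frac{1}{|z(t,x)|}\approx\ln\frac1{|x|}$ up to constants. These same bounds guarantee, exactly as in the odd--odd case, that a definite triangular chunk of the patch survives near the origin, which is what keeps a two--sided estimate $c\,|\sin(2\theta_1)|\ln\frac1r\le |I^c|\le C\,|\sin(2\theta_1)|\ln\frac1r$ valid with the correct sign, so that the leading term of $\dot\theta$ dominates the bounded error. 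This feedback/bootstrap step is where I expect the main difficulty to lie: one must show that the configuration near the corner does not deform so quickly as to destroy the size and sign of $I^c$ before the prescribed rotation has occurred.

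Granting the bootstrap, the dynamics is a hyperbolic (saddle--type) rotation whose rate diverges like $\ln\frac1r$: points are swept toward the attracting separatrix, and because the drift is toward a fixed ray the angular spread of the sector \emph{contracts} as it rotates. I would exploit the divergence of the rate to tune the two free parameters. For each fixed small $\epsilon$ the accumulated rotation $\rho(t,\epsilon)$ behaves like $C\,t\,\ln\frac1\epsilon$ (times an $O(1)$ angular factor), so it is continuous and monotone in $t$ and sweeps through an $O(1)$ range as $t$ ranges over an interval that shrinks to $0$ as $\epsilon\to0$; an intermediate--value selection then yields, for a sequence $\epsilon_n\to0$, times $t_n\to0$ at which the trailing edge of the annular sector has just crossed $\theta=\theta_1$ by the prescribed amount. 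The radial thinness $\epsilon_n\le|x|\le 2\epsilon_n$ keeps the rate uniform across the annulus (the spread in $\ln\frac1{|x|}$ is only $\ln 2$), while the contraction toward the separatrix keeps the angular image inside a window of width below $9c$; together these place $\arctan(z_2/z_1)$ in $[\theta_1+c,\theta_1+10c]$ for every admissible $x$, with $c>0$ chosen small depending only on $\theta_1$.

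Finally I would assemble the pieces and check that all discarded contributions are genuinely lower order: the bounded part of \eqref{eq:key_radial}, the $I^s$ term, and the rigid rotation coming from the radial average $\omega^0$ each contribute only $O(t_n)$ to the angle, hence are negligible against the $O(1)$ rotation produced by the $I^c\ln\frac1r$ term. The two places demanding genuine care are (i) the sign bookkeeping in passing from \eqref{eq:key_radial} to the polar ODEs, which determines on which side of $\theta_1$ the attracting separatrix lies and therefore the direction of the guaranteed rotation, and (ii) the uniform--in--$x$ capture into the narrow window, which is precisely where the saddle contraction and the intermediate--value choice of $(\epsilon_n,t_n)$ must be combined quantitatively.
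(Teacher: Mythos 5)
Your plan follows the paper's own proof quite closely: that proof also runs off Lemma \ref{lem:key_radial}, the Yudovich/log-Lipschitz control of radii, the working time scale $t_n\log\frac{1}{\epsilon_n}=\gamma\ll 1$, a ``frozen configuration'' bootstrap guaranteeing that the moments (hence, by \eqref{eq:key_radial}, the velocity) stay close to their initial values on that time scale, and the resulting ODE for the angle (the paper phrases it for the ratio $\Phi_2/\Phi_1$). Your moment computation ($I^s$ bounded by the reflection symmetry of $\Omega_0$, $I^c(r)=2\sin(2\theta_1)\log\frac1r+O(1)$) and your polar ODEs are exactly the right consequences of \eqref{eq:key_radial}, and your intermediate-value choice of $t_n$ is a harmless variant of the paper's explicit choice $t_n=\gamma/\log\frac{1}{\epsilon_n}$. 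Two details. First, the mechanism protecting the moments is not the ``surviving triangular chunk'' of Theorem \ref{thm:odd_odd_Hoelder}: without odd symmetry there are no invariant axes. What the paper proves instead is the quantitative statement that every point at radius at least $\epsilon_n/10$ turns by an angle $O(\gamma)$ within time $t_n$; this yields simultaneously the two-sided bound on $I^c$ \emph{and} the smallness of $I^s$ at positive times, and both are needed in your angular equation $\dot\theta\approx-\frac{1}{2\pi}\left(I^s\sin 2\theta+I^c\cos 2\theta\right)$. Second, the sign you flagged is a real discrepancy: with \eqref{eq:key_radial} as stated, $I^c>0$ gives $\dot\theta<0$ on $|\theta|<\pi/4$, i.e.\ a clockwise jump for $\theta_1\in(0,\pi/2)$, opposite to the orientation in the statement and in the paper's in-proof formula for $u_0$; one of the two signs must be reconciled, though only the direction, not the size, of the jump is at stake.

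The genuine gap is your final containment step: the claim that saddle contraction compresses the image of the whole annular sector $\{\epsilon_n\le|x|\le2\epsilon_n\}\cap S_{-\theta_1,\theta_1}$ into the window $[\theta_1+c,\theta_1+10c]$. On the only time scale your bootstrap (and the paper's) permits, this is quantitatively impossible: every particle at radius comparable to $\epsilon_n$ has angular speed at most $C\log\frac{1}{\epsilon_n}$, so over $[0,t_n]$ it turns by at most $Ct_n\log\frac{1}{\epsilon_n}=C\gamma$, and the contraction factor toward the attracting ray is only $e^{-O(\gamma)}$. Hence the image of the annular sector still has angular width at least $2\theta_1-C\gamma\gg 9c$, and a particle starting on the far edge $\theta=-\theta_1$ ends at angle $-\theta_1+O(\gamma)$, nowhere near $\theta_1+c$. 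Forcing the required compression would need $t_n\log\frac{1}{\epsilon_n}\gtrsim\theta_1$, an $O(1)$ rescaled time, which is exactly the regime where the bootstrap gives no control; this is the limitation the paper itself records right after the proof (trajectories of points of size $\epsilon$ can only be tracked for times of order $|\log\epsilon|^{-1}$). The paper's proof never attempts this: it tracks only points on (a neighborhood of) the upper boundary ray $\theta=\theta_1$ --- its key claim is formulated for $x\in\partial\Omega_0\cap B_{1/4}(0)\cap\{x_1,x_2\ge 0\}$ --- obtains the lower bound $\theta_1+c$ from the differential inequality for $\Phi_2/\Phi_1$, and gets the upper bound $\theta_1+10c$ from nothing more than the crude turn bound $C\gamma\le 10c$; no contraction enters. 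Your proposal becomes a correct rendition of the paper's argument once the conclusion is asserted only for particles starting within $O(\gamma)$ of the edge $\theta=\theta_1$, which is also all that the ``jump'' content of the theorem, as interpreted in the paper's remark, requires.
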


\begin{remark}
The theorem indicates that the patch "jumps" up at least by the angle $c$ once $t>0$. 
\end{remark}

\begin{proof}

First, it is not difficult to show that since $\Omega_0\subset B_\frac{1}{2}(0)$, we have that for all $t\geq 0$ \[|u(x)-u(y)|\leq 100|x-y||\log|x-y||\] for all $|x-y|<\frac{1}{2}$. To see this, we need only observe that $\V\omega\V_{L^\infty}\leq 1$ and $\V\omega\V_{L^1}\leq 1$ and run a standard potential theory argument. Moreover, it suffices to consider the case $\theta_1 \in (0,\pi/2)$, since otherwise one can argue with the complement of $\Omega_0$ instead, which has acute angles.  

Next, we define $\Phi(t,x_0)$ to be the position of a particle initially at $x_0$ at time $t$. Using the log-Lipschitz bound on the velocity field above and the (generalized) Gronwall lemma we have the following bounds on a small interval of time: \[|x|^{1+2Ct}\leq |x|^{e^{Ct}}\leq |\Phi(t,x)|\leq |x|^{e^{-Ct}}\leq |x|^{1-2Ct}.\] Now we take $\gamma_1=\min\{\frac{1}{40C},\frac{1}{40}\}$ so that if $|t|\leq t_0:=\frac{\gamma}{|\log|x_0||}$ we get that \[\frac{1}{2}\leq \frac{|\Phi(t,x)|}{|x|}\leq 2\] if $|x|\geq \frac{1}{10}|x_0|$ where we used that $|x_0|^{\frac{1}{2|\log|x_0||}}=\sqrt{e}$. Next, we claim that if $x\in \partial\Omega_0\cap B_{\frac{1}{4}}(0)\cap\{x_1,x_2\geq 0\}$ is so that $|x|\geq \frac{1}{10}|x_0|$ and if $t<\frac{\gamma}{|\log|x_0||}$ with $\gamma\leq \gamma_1$ small enough we have: \[\theta_1-\epsilon\leq \arctan\Big(\frac{\Phi_2(t_0,x)}{\Phi_1(t_0,x)}\Big)\leq \theta_1+\epsilon<\frac{\pi}{2}\] for given $\epsilon>0$ small. Indeed, for all such $x$ and $t$ we have that \[|u(t,x)|\leq 2C|x| |\log|x_0||.\] Consequently, \[|\Phi(t,x_0)-x_0|\leq C\gamma |x|\] from which the claim follows. The reader should notice that $\gamma$ depends linearly on $\epsilon$. In particular, if $|x|\geq |x_0|$ and $t<\frac{\gamma}{|\log|x_0||},$ the "bulk" of the vortex patch does not move too much.
 
Let $A_0=\Omega_0\cap \{\frac{1}{4}|x_0|\leq x\leq 4|x_0|\}.$ From Lemma \ref{lem:key_radial} we now see that we have the following bound for all $t\in [0,\frac{\gamma}{|\log|x_0||}]:$ \[|u(t,x)-u(0,x)|\leq 10\epsilon |x||\log|x||+C|x|\leq 20\epsilon|x||\log|x||\] if $|x|$ is sufficiently small. However, invoking Lemma \ref{lem:key_radial} again we see that $u_0(x)=\sin(2\theta_1)\log\frac{1}{|x|}(x_2,x_1)+C|x|$ for $C$ a fixed constant. Now we notice, putting together the preceding considerations, that \[\frac{d}{dt} \Big(\frac{\Phi_2(t,x)}{\Phi_1(t,x)}\Big)\geq \frac{\sin(2\theta_1)}{2}|\log|x_0||\Big(1-\frac{\Phi_2(t,x)^2}{\Phi_1(t,x)^2}-C\epsilon\Big).\] It is now easy to see that $\frac{\Phi_2(t,x)}{\Phi_1(t,x)}$ grows by a fixed constant depending only on $\gamma$ over the time interval $[0,\frac{\gamma}{|\log|x_0||}]$ by rescaling time by $|\log|x_0||$. This concludes the proof. 
\end{proof}

It is possible that this proof can be strengthened to actually give cusp formation for such an initial patch. Essentially, for points of size $\epsilon$, we can only track the evolution of the point for time $c|\log(\epsilon)|^{-1}$ where $c$ is a small universal constant. (Hence the variable $t\ln\frac{1}{r}$ is natural, which is used explicitly in Section \ref{effective}.) Any improvement in this time-scale would be a step forward. Our next result is slightly more general in that we make no assumption on the initial configuration of the patch except to say that the patch is asymptotically close to a sector as $r\rightarrow 0$. 

\begin{theorem}\label{thm:illposed-acute}
There exists an absolute constant $M>0$ such that there are no angles $\theta_1(t)$ and $\theta_2(t)$ which depend continuously on any time interval $[0,\delta]$ with the property that \begin{equation}\label{eq:angle_continuity}
\begin{split}
\limsup_{r\rightarrow 0}\sup_{t\in [0,\delta]}\frac{|(\Omega(t)\Delta S_{\theta_1(t),\theta_2(t)})\cap B_r(0)|}{|B_r(0)|}<\frac{\theta_2(0)-\theta_1(0)}{M},
\end{split}
\end{equation} if $0<\theta_2(0)-\theta_1(0)<\frac{\pi}{2}.$
\end{theorem}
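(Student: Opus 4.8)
The strategy is a proof by contradiction built on the velocity expansion of Lemma \ref{lem:key_radial}. Suppose continuous angles $\theta_1(t),\theta_2(t)$ exist satisfying \eqref{eq:angle_continuity}, write $\phi_0:=\theta_2(0)-\theta_1(0)\in(0,\pi/2)$, and fix a single small time $t_*\in(0,\delta]$. I would first dispose of the translational (first Fourier mode) part of the velocity. The formula for $u(0)$ in Lemma \ref{lem:key_radial} shows that if $u(0,t)\neq 0$ then the corner point is advected a distance $\sim|u(0,t)|\,t$ away from the origin, so at scales $r\ll|u(0,t)|t$ the ball $B_r(0)$ lies entirely inside or entirely outside $\Omega(t)$; this makes the left-hand side of \eqref{eq:angle_continuity} at least a fixed fraction and contradicts the hypothesis outright. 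Hence the assumed closeness forces $u(0,t)\equiv0$, and by Lemma \ref{lem:key_radial} the leading near-origin behaviour of $u$ is the logarithmically growing rotation carried by $I^s(r),I^c(r)$.

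Writing the angular velocity as $\dot\theta=u_\theta/r$ and projecting the expansion onto $e_\theta=(-\sin\theta,\cos\theta)$ yields
\[ \dot\theta(r,\theta)=\frac{1}{2\pi}\bigl(-I^s(r)\sin 2\theta-I^c(r)\cos 2\theta\bigr)+O(\V\omega\V_{L^\infty}). \]
For an idealized sector $S_{\theta_1,\theta_2}$ the angular integrals defining $I^s,I^c$ are constant in $s$ near the origin, so $I^s(r)\sim\tfrac12(\cos2\theta_1-\cos2\theta_2)\ln\tfrac1r$ and $I^c(r)\sim\tfrac12(\sin2\theta_2-\sin2\theta_1)\ln\tfrac1r$; substituting and using a sum-to-product identity gives
\[ \dot\theta(r,\theta)\approx -\frac{\sin\phi_0}{2\pi}\,\ln\tfrac1r\,\cos\bigl(2\theta-(\theta_1+\theta_2)\bigr). \]
Thus, up to the scalar factor $\lambda(r):=\tfrac{1}{2\pi}\ln\tfrac1r$, the angle obeys an autonomous flow on the circle with vector field $V(\theta)=-\sin\phi_0\cos(2\theta-(\theta_1+\theta_2))$, whose fixed points (the separatrices) lie at $\theta=\tfrac{\theta_1+\theta_2}{2}\pm\tfrac\pi4$. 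Because $\phi_0<\pi/2$, the sector $[\theta_1,\theta_2]$ sits strictly between the unstable point $\tfrac{\theta_1+\theta_2}2+\tfrac\pi4$ (above) and the stable point $\theta_s:=\tfrac{\theta_1+\theta_2}2-\tfrac\pi4$ (below); in fact $V\le-\tfrac12\sin2\phi_0<0$ throughout $[\theta_1,\theta_2]$ and $\theta_s<\theta_1$. So the ideal flow drives the entire arc monotonically downward, out of the sector.

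To pass from the ideal sector to the true patch I would control the error using the closeness hypothesis itself. A dyadic estimate shows that if the symmetric-difference fraction in \eqref{eq:angle_continuity} is bounded by $\epsilon_0:=\phi_0/M$, then on each dyadic annulus the $s^{-1}$-weighted angular symmetric difference is $O(\epsilon_0)$, whence $\int_r^{1}s^{-1}(\text{angular symmetric difference at radius }s)\,ds\le C\epsilon_0\ln\tfrac1r$; therefore $I^s(r;t),I^c(r;t)$ for $\chi_{\Omega(t)}$ differ from their sector values by at most $C\epsilon_0\ln\tfrac1r$, and the true flow is $\dot\theta\approx\lambda(r)\,\tilde V(\theta)$ with $\tilde V=V+O(\epsilon_0)$. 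Since radial transport is slow (the Yudovich estimate keeps $\ln\tfrac1{|\Phi_t(x)|}$ comparable to $\ln\tfrac1{|x|}$ on $[0,t_*]$), a particle starting at radius $\rho$ evolves its angle by this autonomous flow run for effective time $\tau=\lambda(\rho)\,t_*\to\infty$ as $\rho\to0$. Provided $M$ is large enough that $\tilde V<0$ on the sector and the perturbed stable point stays below $\theta_1$, every such particle is pushed below $\theta_1(0)$, so for all $\rho$ below a threshold $\rho_0(t_*)>0$ the image of the initial arc lies entirely below $\theta_1(t_*)\approx\theta_1(0)$. Combining this with the $t=0$ closeness (which guarantees the cross-section at radius $\rho$ is \emph{mostly} the advected arc), the patch cross-section at time $t_*$ is essentially disjoint from the arc $(\theta_1(t_*),\theta_2(t_*))$, so the angular symmetric difference is at least $\theta_2(t_*)-\theta_1(t_*)\ge\phi_0/2$. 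Integrating in $\rho$ gives $|(\Omega(t_*)\Delta S_{\theta_1(t_*),\theta_2(t_*)})\cap B_r(0)|/|B_r(0)|\ge\phi_0/(4\pi)$ for every $r<\rho_0$, and taking $\limsup_{r\to0}\sup_t$ contradicts \eqref{eq:angle_continuity} as soon as $M\ge4\pi$.

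The main obstacle is the \emph{uniform} control of the $O(\epsilon_0)$ perturbation of the vector field. Keeping $\tilde V<0$ on the sector requires $\epsilon_0\lesssim\sin2\phi_0$, and since $\sin2\phi_0\to0$ as $\phi_0\to\pi/2$ — the right-angle borderline, where the sector edges coincide with the separatrices and the drift degenerates — maintaining an \emph{absolute} constant $M$ forces one to show that the symmetric-difference error perturbs $V$ by strictly less than its own size on the arc. This delicate near-$\pi/2$ estimate, which is consistent with the special status of the $90^\circ$ corner treated separately in the paper, is where the real work lies; the remaining ingredients (the reduction to $u(0)=0$, the dyadic error bound, the autonomous-flow analysis, and the final measure-theoretic integration) are comparatively routine given Lemma \ref{lem:key_radial}.
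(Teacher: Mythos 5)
Your proposal follows the same route as the paper's proof: Lemma \ref{lem:key_radial}, reduction of the near-origin dynamics to the angular flow $\dot\theta\approx-\frac{\sin\phi_0}{2\pi}\ln\frac{1}{r}\cos\bigl(2\theta-(\theta_1+\theta_2)\bigr)$ with separatrices at $\frac{\theta_1+\theta_2}{2}\pm\frac{\pi}{4}$, the dyadic bound showing the hypothesis perturbs $I^s,I^c$ by $O(\epsilon_0\ln\frac{1}{r})$, and a final measure count. Those shared ingredients are computed correctly (your sign of the rotation is in fact the right one for $\omega=\chi_{\Omega}\ge 0$; the opposite sign in the paper is immaterial since the two conventions are mirror images under $x_2\mapsto -x_2$). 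But the proposal does not prove the theorem, and the gap you flag at the end is exactly where the paper's proof does something you don't. You insist that the perturbed drift $\tilde V$ be negative on the \emph{entire} arc $[\theta_1,\theta_2]$, which costs $\epsilon_0\lesssim\sin\phi_0\cos\phi_0$ and hence $M\gtrsim 1/\cos\phi_0\to\infty$ as $\phi_0\to\pi/2$. The paper never asks for this. It first uses that $\phi_0\approx\sin\phi_0$ on $(0,\pi/2)$, so the hypothesis makes the \emph{relative} error in the logarithmic coefficient $O(1/M)$ uniformly in $\phi_0$; it then runs the crossing estimate only on a strictly smaller cone $\tilde T$ of half-angle $\arctan(\sin\theta_2(0))<\theta_2(0)$, on which the drift $1-\tan^2\theta-O(1/M)$ is bounded below by an absolute constant, so particles there cross the axis in rescaled time $O(1)$; particles that have crossed are then trapped by invariant cones $V_1,\dots,V_4$ constructed around the separatrices; and the contradiction is closed by bookkeeping of measure, in which emptying (most of) the lower half of the sector already violates \eqref{eq:angle_continuity}, while the mass near the sector edges that is never controlled dynamically is simply absorbed as a bounded fraction. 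In short: uniform drift on a sub-cone, plus invariant regions, plus a factor-two measure deficit --- not uniform drift on the whole sector.

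There is a second gap you do not acknowledge: fixing $t_*>0$ and letting $\rho\to 0$ breaks your final counting step. Over a \emph{fixed} time the Yudovich flow only keeps $\log\frac{1}{|\Phi_{t_*}(x)|}$ comparable to $\log\frac{1}{|x|}$, i.e.\ it moves radii by powers, $r\mapsto r^{1\pm ct_*}$, not by constant factors. Hence the preimage of $B_\rho(0)$ under $\Phi_{t_*}$ contains a ball of radius $\approx\rho^{1-ct_*}$, of measure $\rho^{-2ct_*}|B_\rho(0)|\to\infty$ relative to $|B_\rho(0)|$. The hypothesis only bounds the ``bad'' mass (patch outside the sector) by a $\phi_0/M$ fraction of each ball \emph{at time zero}, so it cannot prevent bad mass of measure far exceeding $|B_\rho(0)|$ from being transported into the sector region inside $B_\rho(0)$ by time $t_*$ and refilling it; your assertion that the cross-section at time $t_*$ is ``essentially disjoint'' from the arc is therefore unjustified, and without it there is no lower bound on the symmetric difference. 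The paper avoids this by coupling time to scale: the contradiction is derived at time $t_\delta\le C/|\log\delta|$ inside $B_\delta(0)$. Since the angular speed at radius $\delta$ is of size $\alpha|\log\delta|$, the crossing completes within this short time, during which radii change only by an absolute factor $e^{O(1)}$; thus the $\phi_0/M$-fraction bound at time $0$ transfers to time $t_\delta$ and the count closes. This coupled variable $t\log\frac{1}{r}$ is precisely the effective time made explicit in Section \ref{effective}, and any repair of your argument has to work at that scale rather than at a fixed positive time.
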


\begin{remark}
This theorem says that the vortex patch $\Omega(t)$ near $x=0$ cannot asymptotically be approximated by a sector even with a small (but non-zero) error depending on the initial size of the angle. Of course, this implies that the corner could never remain a regular corner continuously in time since that would imply that the limit in the statement of the theorem vanishes. Note also that this theorem also applies to the case of several (or infinitely many) vortex patches. One way to interpret the result is to say that acute or obtuse corners cannot be formed dynamically in time. 
\end{remark}

\begin{remark}
We give the statement and proof for $2-$fold symmetric patches but the proof remains the same for single corners. The only difference is that we have to factor out translation with respect to the velocity at the corner. Otherwise, the expansion of the velocity field and all other arguments are identical. 
\end{remark}

\begin{proof}
Toward a contradiction, assume such $\theta_1(t), \theta_2(t)$ and $\delta>0$ exist. By rotation invariance and continuity, we may assume that $\theta_1(0)=-\theta_2(0)$ while $-\pi/4< \theta_1(t)<0< \theta_2(t)<\pi/4$ for all $t\in [0,\delta]$.  
Now let us expand the velocity field $u$.
It is easy to see that (see explicit computations in Subsection \ref{subsec:computations}) \[u_0(x)= \frac{1}{2\pi}\sin(2\theta_2(0))\log \frac{1}{|x|} \begin{pmatrix}
x_2 \\ x_1 
\end{pmatrix} +O(|x|)\] as $|x|\rightarrow 0$.  Now define $\alpha:= \frac{1}{2\pi}\sin(2\theta_2(0))>0.$
Then we have \[u(t, x)= \alpha\log\frac{1}{|x|}\begin{pmatrix}
x_2 \\ x_1 
\end{pmatrix} +O((\epsilon_1+\frac{\alpha}{M})|x||\log \frac{1}{|x|})\] as $t,|x|\rightarrow 0$ using \ref{eq:angle_continuity} and continuity of the angles. In particular, if $|x|$ and $t$ are small enough (depending only on $\alpha$), we can essentially neglect the second term in regions where the sizes of $x_1$ and $x_2$ are comparable. To see this, by continuity of $\theta_i$ there exists $t_1<\delta$ so that if $t\in [0,t_1]$, \[|\sin(2\theta_2(t))-\sin(2\theta_1(t))-2\sin(2\theta_2(0))|+|\cos(2\theta_2)-\cos(2\theta_1(t))|<\epsilon_1\] for given $\epsilon_1 > 0 $. 
Next, from the assumption \ref{eq:angle_continuity}, there exists $\delta_1>0$ so that if $x\in B_{\delta_1}(0)$ we have that \[|u(t,x)-u_{\theta_1(t),\theta_2(t)}(x)|\leq \frac{C\alpha}{M} \log\frac{1}{|x|}.\] Here $u_{\theta_1(t),\theta_2(t)}(x)$ is the velocity associated with $S_{\theta_1(t),\theta_2(t)} \cup S_{\theta_1(t)+\pi,\theta_2(t)+\pi}$. 

Next, we  claim that there exist $\delta_2>0$ and $t_2<t_1$ so that for all $x_0\in B_{\delta_2}(0)$ we have that the solution to the ODE: \[\dot{\Phi}(t)=u(t,\Phi(t))\] \[\Phi(0)=x_0\] satisfies that $\Phi(t)\in B_{\delta_1}(0)$ for all $t\in [0,t_2]$. This is due to the trivial estimate: \[|\Phi(t)|\leq |x_0|^{\exp(-Ct)}\] which we know from the Yudovich theory. In particular, we may take $\delta_2=\delta_1^2$ and let $t_2 < t_1$ small independent of $\delta_1$. 

Summing up the preceding considerations, given $\epsilon>0$, we can find a radius $\delta_2>0$ and a time interval $[0,t_2]$ so that for all $x_0\in B_{\delta_2}(0)$ the associated trajectory $\Phi(t,x)$ remains in the ball $B_{\delta_1}(0)$ for all $t\in [0,t_2]$ where we know: \[|u(t,x) - \alpha \log\frac{1}{|x|}\begin{pmatrix}
x_2 \\ x_1 
\end{pmatrix}|\leq \epsilon\alpha  |x| \log\frac{1}{|x|}.\]

Notice that the vector field $(x_2,x_1)$ is tangent to the lines $z_2=z_1$ and $z_2=-z_1$ and that the flow associated to this vector field is hyperbolic near $0$. We now want to observe that $u$ is "almost" hyperbolic: indeed, consider the region $V_1=\{ 0\leq (1-\eta)z_1\leq z_2\leq (1+\eta)z_1 \}$ for $\eta=3\epsilon<\frac{1}{2}$. We claim that a particle $X(0)$ starting in $B_{\delta_2}(0)\cap V_1$ never escapes $B_{\delta_1}(0)\cap V_1$ for $t\in [0,t_2]$. Indeed, by construction, we know that $X(t)\in B_{\delta_1}(0)$ for all $t\in [0,t_2]$. Thus, to conclude, it suffices to show that $u(z)\cdot n(z)<0$ for all $z\in \partial V_{1} \cap B_{\delta_1}(0)\setminus\{0\}$ where $n(z)$ is the unique outer normal to $V_{1}$ at such $z$. Now we compute: If $z_2=(1+\eta)z_1$ we have \[\sqrt{(1+\eta)^2+1}(u(z)\cdot n(z))=-\alpha(1+\eta) u_1(z)+u_2(z)\leq\alpha(-(1+\eta)^2+1)z_1\log\frac{1}{|z|}+\epsilon\alpha |z|\log\frac{1}{|z|}<0.\]  Similarly, if $z_2=(1-\eta)z_1$
\[\sqrt{(1-\eta)^2+1}(u(z)\cdot n(z))=(1-\eta)u_1(z)-u_2(z)\leq \alpha((1-\eta)^2-1)z_1\log\frac{1}{|z|}+\epsilon\alpha |z|\log\frac{1}{|z|}<0.\]  Thus, any particle starting in $B_{\delta_2}\cap V_1$ stays in $V_1$ for all $t\in [0,\delta_2]$. Since we are only concerned with $t\in [0,t_2]$ and $x\in B_{\delta_2}(0),$ $V_1$ is an invariant region. It can be shown similarly that \[V_{3}:=-V_1\] is an invariant region. 
Now consider $V_2=\{z^\perp: z\in V_1\}$ and $V_4=\{z^\perp: z\in V_3\}.$ A similar calculation shows that $u(z)\cdot n(z)<0$ for points on $\partial V_2\cap B_{\delta_1}(0)$ and $\partial V_4\cap B_{\delta_1}(0)$. 

Now we are ready to show that $\theta_i$ cannot be continuous. We will do this by showing that most of the vortex patch is immediately pushed up close to the line $z_1=z_2.$ This will contradict the continuity of $\theta_i$. Take $X(0)\in T:= \{|z_2|< (1-\eta)z_1\}\cap B_{\delta}(0)$ for some $\delta\leq \delta_2$. By our choice of $t_2$ and the Yudovich bound, we know that $X(t)\in B_{\sqrt{\delta}}(0)$ for all $t\in [0,t_2]$. If for some $t_*\in [0,t_2]$, $X(t)$ hits $\partial T$, it must be that it hits the line $z_2=(1-\eta)z_1$ since the velocity field is pushing into $T$ on the lower boundary. Thereafter, $X(t)$ does not exit $V_1$ until after time $t_2$. Now let us study what happens for $t\in [0,t_*]$. By using the expansion of the velocity again, we find for $t\in [0,t_*]:$ 
\[\frac{d}{dt} \Big(\frac{X_2}{X_1}\Big)\geq \alpha \log\frac{1}{|X(t)|}(1-\frac{X_2^2}{X_1^2})-\epsilon\frac{|X|}{X_1}\log\frac{1}{|X|}\geq \alpha\log\frac{1}{|X|}(1-\frac{2\epsilon}{\alpha}-\frac{X_2^2}{X_1^2}).\]
Thus, \[\frac{d}{dt}\Big(\frac{X_2}{X_1}\Big)\geq \alpha \log\frac{1}{\sqrt{\delta}}(1-\frac{2\epsilon}{\alpha}-\frac{X_2^2}{X_1^2}).\] 
Letting $\lambda=\frac{X_2}{X_1},$ we see: 
\[\lambda'(t)\geq \frac{1}{2}\alpha \log\frac{1}{|X(0)|}(1-\frac{2\epsilon}{\alpha}-\lambda^2).\] 
Consequently, $\lambda$ is increasing, so long as $-\sqrt{1-\frac{2\epsilon}{\alpha}}<\lambda<\sqrt{1-\frac{2\epsilon}{\alpha}}$. In fact, so long as $\lambda<\frac{1}{2}$ we see: 
\[\lambda(t)\geq -\sqrt{1-\frac{2\epsilon}{\alpha}}+(\sqrt{1-\frac{2\epsilon}{\alpha}}+\lambda(0))\exp(ct\log\frac{1}{|X(0)|})\] for $c=\frac{1}{4}\alpha$.
In particular, if $\lambda(0)<0$, we have that $\lambda(t)$ will hit $0$ before the time $\frac{\log\Big( \sqrt{1-\frac{2\epsilon}{\alpha}}+\lambda(0)\Big)}{c\log|X_0|}.$

Now let $\tilde T=\{|z_2|\leq \sin(\theta_1(0))z_1\}.$ It is easy to see that if $X(0)\in B_\delta(0)\cap \tilde T$, then there exists $t_\delta\leq \frac{C}{|\log\delta|},$ with $C$ independent of $\epsilon$ so that $X_2(t)\geq 0$ for all $t\in [t_\delta,t_2].$ In particular, all particles initially in $\tilde T\cap B_\delta(0)$ are transported to the region $z_2\geq 0$ by the time $t_\delta$ and they never leave this region for $t\in [0,t_2]$. By the Yudovich bound  \[|X(0)|^{\exp(Ct)}\leq X(t)\leq |X(0)|^{\exp(-Ct)}\] we have that at time $t_\delta$, there are no trajectories in $B_\delta(0)\cap \{z_2\leq 0\leq z_1\}$ which began in $\tilde T$. Thus, all particles which lie in $B_\delta(0)\cap \{z_2\leq 0\leq z_1\}$ at time $t_\delta$ must have come from the set $B_{C\delta}(0)\cap \{\sin(2\theta_1(0))z_1\leq z_2\leq 2z_1\}$. By assumption, the total area of this arbitrarily small relative to the size of $|B_\delta(0)|$ as $\delta\rightarrow 0$. Now let's estimate the measure of $\Omega(t)\cap S_{-\theta_1(0),\theta_1(0)}\cap B_\delta(0)$ at time $t_\delta$. According to the assumption, the measure of this set should be approximately $2\theta_1(0)|B_\delta(0)|$. We will show that it is bounded by $(\theta_1(0)+o(1))|B_\delta(0)|$ as $\delta\rightarrow 0$, which is a contradiction.  Indeed, \[\Big|\Omega(t_\delta)\cap S_{-\theta_1(0),\theta_1(0)}\cap B_\delta(0)\Big|\leq |B_\delta(0)|\theta_1(0)+|\Omega(t_\delta)\cap S_{-\theta_1(0),0}\cap B_\delta(0)|\] \[ \leq |B_\delta(0)|\theta_1(0)+|\Omega(0)\cap B_{C\delta}(0)\cap S_{-\frac{3\pi}{8},-\theta_1(0)}|\leq |B_\delta(0)|\theta_1(0)+\frac{C}{M}|B_\delta(0)|2\theta_1(0). \]  This finishes the proof. 
\end{proof}

\subsection{Ill-posedness for vortex patches with corners of size $\pi/2$}

The purpose of this subsection is to establish the following proposition.

\begin{proposition}\label{prop:90}
	For generic initial vortex patches $\Omega_0$ with $\Omega_0\cap B_1(0)=S_{-\pi/4,\pi/4}\cup S_{{3\pi/4},{5\pi/4}}$ and $\Omega_0$ smooth and compactly supported outside of $B_{\frac{1}{2}}(0)$ we have that the associated unique vortex patch solution $\Omega(t)$ does not keep a pair of regular corners of size $\pi/2$ for positive time. That is, $\partial \Omega(t)\cap B_{\frac{1}{2}}(0)$ cannot be written as the intersection of two $C^{1,\alpha}$ curves lying in $C([0,\delta]; C^{1,\alpha})$ for any $\delta>0$ and $\alpha>0$.
\end{proposition}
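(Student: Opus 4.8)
The plan is to argue by contradiction. Suppose that on some interval $[0,\delta]$ the set $\partial\Omega(t)\cap B_{1/2}(0)$ consists of two $C^{1,\alpha}$ arcs, depending continuously on $t$ in $C^{1,\alpha}$, and meeting at the origin with angle exactly $\pi/2$. Since $\Omega_0$ is $2$-fold symmetric the first angular moments of $\omega$ vanish, so by Lemma \ref{lem:key_radial} we have $u(t,0)=0$ and the origin is a fixed point of the flow. For $s<1$ the vorticity is the exact union of two $90^\circ$ sectors, whose $\sin(2\theta)$-moment vanishes while its $\cos(2\theta)$-moment equals $2$; hence $I^c(r)\sim 2\log\frac1r\to\infty$ while $I^s(r)\to I^s(0)$, a finite constant. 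Feeding this into Lemma \ref{lem:key_radial}, the velocity near the origin is, to leading order, the logarithmically growing saddle flow $u(x)\approx \tfrac{1}{2\pi}\log\frac{1}{|x|}\begin{pmatrix}x_2\\x_1\end{pmatrix}$ already recorded in Subsection \ref{subsec:computations}, whose separatrices lie exactly along the diagonals $\theta=\pm\pi/4$, i.e.\ along the two corner edges; the $I^s$-term together with the $O(r)$ error of Lemma \ref{lem:key_radial} is a bounded correction.

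The decisive structural point is the asymmetry between the two edges. The leading angular velocity is $\dot\theta\approx -\tfrac{1}{2\pi}I^c(r)\cos(2\theta)$, so one diagonal is the contracting (stable) separatrix along which the angular dynamics is \emph{repelling} with rate $\sim\frac1\pi\log\frac1r$, while the other is expanding and angularly attracting. On the expanding edge the attracting angular dynamics preserves the $C^1$ structure, so no contradiction is available there. On the contracting edge, however, the transverse forcing from the $I^s$-term does not vanish: a direct computation gives $\dot\theta=\pm\tfrac{1}{2\pi}I^s(r)+(\text{repelling term})+O(1)$ on that edge. I would first observe that the near field contributes nothing to this forcing, since $I^s(0)=\int_1^\infty\!\int_0^{2\pi}\sin(2\theta)\,\tfrac{\omega(s,\theta)}{s}\,d\theta\,ds$ depends only on the far field; the set of admissible profiles with $I^s(0)=0$ is closed with empty interior, so $I^s(0)\neq0$ for a generic patch, and one can even arrange $|I^s(0)|$ to exceed the \emph{absolute} constant in the error term of Lemma \ref{lem:key_radial} by enlarging the support.

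Granting a nonzero net transverse forcing, the core of the proof is to track material points on the contracting edge and show that its innermost portion is swept off the diagonal. At $t=0$ the edge sits exactly on the repelling separatrix; the $O(1)$ forcing pushes each material point off it, and the repulsion of rate $\sim\frac1\pi\log\frac1r$ then amplifies the deviation. Quantitatively, a point at radius $\rho$ requires only time $t_\rho\sim(\log\frac1\rho)^{-1}\to0$ to rotate by an $O(1)$ angle; this is precisely the rescaling $\tau=t\log\frac1r$ anticipated in Subsection \ref{subsec:computations} and Section \ref{effective}. Hence for every fixed $t>0$ all points at sufficiently small radius have rotated by $O(1)$, and since smaller radii rotate faster, the edge angle $\Theta_t(r)$ does not converge as $r\to0$. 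The edge therefore has no tangent line at the origin (it cusps or spirals), contradicting the assumed membership in $C([0,\delta];C^{1,\alpha})$ and the persistence of a $\pi/2$ corner. This is consistent with, and explains, the exclusion of the angle $\pi/2$ from Theorem \ref{thm:illposed-acute} and the survival of $C^{1,\alpha}$ cusps in the reflection-symmetric situation.

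I expect the main obstacle to be twofold. First, the $O(r)$ error in Lemma \ref{lem:key_radial} enters $\dot\theta$ at the same $O(1)$ order as the $I^s$-forcing, so one cannot extract a clean rotation rate; the fix is to use only that the \emph{total} forcing is generically nonzero (and can be made to dominate the absolute error constant), together with the fact that the diverging repulsion coefficient amplifies any nonzero forcing. Second, one must upgrade the linearized angular picture to a genuine $O(1)$ deviation, i.e.\ control the flow once $\theta$ leaves a neighborhood of the separatrix; I expect this to be handled by a differential inequality in the variable $\tau=t\log\frac1{|x|}$ and a (generalized) Gronwall argument in the spirit of the proofs of Theorems \ref{thm:odd_odd_Hoelder} and \ref{thm:illposed-acute}. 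Making the word \emph{generic} precise (an open dense family, or an explicit large-support construction) is then a routine final step.
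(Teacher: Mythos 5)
Your setup is on target: the $2$-fold symmetry fixing the origin, the expansion of Lemma \ref{lem:key_radial}, the log-saddle with separatrices along the edges, and the identification of the finite $\sin(2\theta)$-moment as the generic quantity (your $I^s(0)$ is, up to a constant, exactly the paper's $\int y_1y_2|y|^{-4}\tilde\omega\,dy$) all match the ingredients of the paper's proof. But the core of your argument — sweeping material points on the contracting edge off ``the diagonal'' and letting the $\log\frac1r$ repulsion amplify the deviation — has a genuine gap: it treats the separatrices as fixed along $\theta=\pm\pi/4$ for $t>0$, whereas the contradiction hypothesis only provides two $C^{1,\alpha}$ curves whose tangent directions at the origin may \emph{rotate} continuously in time. (Rotation of exact corners is not a pathology to be ruled out; it is exactly what happens for the symmetric corners of Theorem \ref{mainthm:wellposedness}.) Under the hypothesis, the log-saddle separatrices follow the current tangent directions, so the repulsion acts on the deviation from the \emph{rotating} edge, not from the fixed diagonal. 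If the corner rotates at precisely the rate of the transverse forcing on the edge you track, the material points on that edge simply travel with it: the relative deviation stays zero, the $\log\frac1r$ amplification never engages, and points at radius $\rho$ rotate at an $O(1)$ rate rather than in time $(\log\frac1\rho)^{-1}$. A nonzero forcing on a single edge therefore produces no contradiction; it can be absorbed entirely into a rigid rotation of the corner, which is compatible with $C([0,\delta];C^{1,\alpha})$.

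What cannot be absorbed by any rigid rotation is the \emph{difference} of the forcings on the two edges, and this is what the paper's proof uses. From your own computation, the finite parts of the angular velocity on the two edges are $\mp\frac{1}{2\pi}I^s+O(1)$, i.e.\ they differ by $\sim\frac1\pi I^s$, which is generically nonzero (the paper achieves this by a $2$-fold symmetric perturbation supported away from the origin, whose local velocity is $\mathrm{diag}(a,-a)x+O(|x|^2)$ with $a\neq0$). The paper then argues at frozen time: it shows that under the hypothesis each tangent vector obeys $\alpha'(t)=\lim_{\lambda\to0^+}\lambda^{-1}\bigl(u(t,\lambda\alpha(t))\cdot\alpha(t)^\perp\bigr)\,\alpha(t)^\perp$ — the limits exist precisely because the edges are eigendirections of the log-saddle, so the divergent term drops out — and these limits are continuous in $t$. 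Since Theorem \ref{thm:illposed-acute} forces the angle to remain exactly $\pi/2$, the two rotation rates would have to coincide; generically they do not at $t=0$, contradiction. Note this route is entirely soft and sidesteps the time-scale obstruction you flag in your last paragraph (coefficients of the expansion drifting over times longer than $(\log\frac1\rho)^{-1}$), which is the known hard difficulty acknowledged in the paper after the ``jump'' theorem. To repair your proposal, replace the single-edge sweep-off by this two-edge differential-rotation argument; your genericity step and expansion computations can then be kept essentially verbatim.
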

\begin{remark}
	In the above, "generic" means that given any vortex patch $\Omega_0$ with a 90-degree corner at the origin, it can be perturbed very slightly by a smooth perturbation very far away from the origin so that the corner does not remain regular for positive time. 
\end{remark}

\begin{remark}
	It will be apparent from the proof that we actually only need the two curves to be uniformly $C^1$ continuously in time.  
\end{remark}
\begin{proof}
	 { 
		Assume that for some $\delta, \alpha > 0$, the boundary $\partial\Omega(t) \cap B_{\frac{1}{2}(0)}$ is written as the intersection of two $C^{1,\alpha}$ curves. For such curves to exist (for any vortex patch with a right angle at time zero), a necessary condition is that the curves intersect at a right angle for all $t\in [0,\delta]$. Let $\alpha(t)$ and $\beta(t)$ denote the tangent vectors to the two curves. This is an immediate consequence of Theorem \ref{thm:illposed-acute} proved in the above. We claim that for $0\le t \le\delta$ the following holds: \[\alpha'(t)=\lim_{\lambda\rightarrow 0^+}\frac{u(t,\lambda\alpha(t))\cdot \alpha(t)^\perp}{\lambda}\alpha(t)^\perp,\] and \[\beta'(t)=\lim_{\lambda\rightarrow 0^+}\frac{u(t,\lambda\beta(t))\cdot \beta(t)^\perp}{\lambda}\beta(t)^\perp.\] To establish the claim, we just need to show that so long as $\Omega$ can be locally written as an intersection of two $C^{1,\alpha}$ curves for some $\alpha>0$, then the limits above exist. The definition of tangent vector then implies that the tangent vector actually evolves as claimed. The existence of the limit follows since $\frac{1}{|x|}u(t,x)\cdot x^\perp$ can be written as a smooth function of $\frac{x}{|x|}$ plus a logarithmic term which vanishes when $x=\alpha(t)$ and $x=\beta(t)$ plus a term which vanishes as $|x|\rightarrow 0$. To see that the log term vanishes in this case, observe that (using \eqref{eq:velgrad_sectors}) \[u(x)=\log|x| A x +l.o.t.\] where "l.o.t." is the part which is smooth in $\frac{x}{|x|}$ plus the vanishing term, where $A$ is a constant matrix whose eigenvectors are $\alpha(t)$ and $\beta(t)$. Consequently, we must have that $$A\alpha(t) \cdot \alpha(t)^\perp= A\beta(t) \cdot \beta(t)^\perp=0.$$
		 
		In fact, the assumption implies that the limits are continuous in time. As a consequence, if we have that \[\lim_{\lambda\rightarrow 0^+}\frac{u_0(\lambda\alpha(0))\cdot \alpha(0)^\perp}{\lambda}\not=\lim_{\lambda\rightarrow 0^+}\frac{u_0(\lambda\beta(0))\cdot \beta(0)^\perp}{\lambda},\] then we will have a contradiction. If equality holds for $\Omega_0$, we can just choose a small perturbation (keeping the 2-fold symmetry) of the patch away from the origin so that equality does not hold at time zero. To be more precise, for a two-fold symmetric perturbation $\tilde{\omega}$ supported away from the origin, if we denote the corresponding velocity by $\tilde{u}$, we may arrange that locally near $x = 0$ \begin{equation*}
		\begin{split}
		\tilde{u}(x) = \begin{pmatrix}
		a & 0 \\
		0 & -a
		\end{pmatrix} \begin{pmatrix}
		x_1 \\
		x_2
		\end{pmatrix} + O(|x|^2) 
		\end{split} 
		\end{equation*}  with $a \ne 0$ since $a$ is a constant multiple of \begin{equation*}
		\begin{split}
		\int_{\mathbb{R}^2} \frac{y_1y_2}{|y|^4} \tilde{\omega}(y) dy 
		\end{split}
		\end{equation*} which can be made nonzero. } 
\end{proof}

\subsection{Loss of Lipschitz continuity for locally symmetric patches}
In this subsection we prove the following result:
\begin{theorem}\label{thm:local-symmetry}
	There is a vortex patch $\Omega_0$ with corners which is locally four-fold symmetric about the origin for which the velocity field satisfies:
	\[\sup_{0<t<\delta}\V\nabla u(t)\V_{L^\infty}=+\infty\] for all $\delta>0$ despite the initial velocity field being Lipschitz continuous. 
\end{theorem}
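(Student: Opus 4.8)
The plan is to build an initial patch that is an exact $4$-fold symmetric union of sectors near the origin — so that it carries a genuine corner yet its self-induced velocity is Lipschitz at $t=0$ — while arranging the far field to break the $4$-fold symmetry, thereby forcing a nonzero second angular harmonic to appear instantaneously. The guiding identity is the explicit computation \eqref{eq:velgrad_sectors}: for a profile supported on sectors, the \emph{only} obstruction to $\nabla u \in L^\infty$ is the pair of second-order Fourier coefficients $(c,s) = -\frac{1}{4\pi}\int_0^{2\pi} h(\theta)(\cos 2\theta,\sin 2\theta)\,d\theta$ of the angular profile $h$, which multiply the $\log\frac{1}{|x|}$ term. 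Local $4$-fold symmetry forces $(c,s)=0$ and hence $\nabla u_0 \in L^\infty$; the entire content of the theorem is that $(c,s)$ cannot remain zero for $t>0$.

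First I would fix the local profile to be a $4$-fold symmetric union of sectors whose fourth angular Fourier coefficient $\hat h_4$ is nonzero (a generic such union works), smooth away from the origin, and paste onto it outside a ball $B_0(r_0)$ a smooth, compactly supported piece $\omega^{far}_0$ that is \emph{not} $4$-fold symmetric. Using Lemma \ref{lem:key_radial} I would expand the velocity near the origin as $u(t,x)=u(t,0)+S(t)x+(\text{log term})+O(|x|^{1+\alpha})$, where $S(t)$ is a bounded strain-plus-rotation matrix coming from the bulk and the log term carries the coefficients $(c(t),s(t))$. By the previous step the log term vanishes at $t=0$, and $\omega^{far}_0$ can be chosen so that the traceless symmetric (strain) part of $S(0)$ is nonzero — a finite-dimensional, open, generic condition that I can verify directly from the Biot--Savart kernel.

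The heart of the matter is the transport step. A strain field $u=Sx$ induces, on the unit circle, a purely second-harmonic angular velocity $\dot\theta = -a\sin 2\theta + b\cos 2\theta$, so under the transport $\partial_t h + \dot\theta\,\partial_\theta h = 0$ it couples neighbouring angular modes $k \mapsto k\pm 2$, while the rotational (antisymmetric) part of $S$ merely rotates the profile rigidly and creates no new modes. Since $h_0$ is $4$-fold symmetric it contains only modes $\equiv 0 \pmod 4$, so the nonzero fourth mode is carried \emph{directly} into the second mode at first order in time: one computes $\frac{d}{dt}(c,s)\big|_{t=0}$ to be a nonzero multiple of $(\text{strain})\times \hat h_4$, whence $(c(t),s(t)) = t\,(c',s')+o(t)$ with $(c',s')\neq 0$. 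Feeding this back into \eqref{eq:velgrad_sectors} gives $\nabla u(t,x)\sim \log\frac{1}{|x|}\cdot(\text{nonzero matrix})$ as $x\to 0$, so $\|\nabla u(t)\|_{L^\infty}=+\infty$ for every $t\in(0,\delta)$, and therefore $\sup_{0<t<\delta}\|\nabla u(t)\|_{L^\infty}=+\infty$ for all $\delta>0$, even though $\nabla u_0\in L^\infty$. (The choice $m=4$ is what makes this a first-order effect; for $m=3$ the coupling $4\to 2$ only becomes available at second order.)

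The hard part — and the step for which I would invoke Elgindi--Masmoudi \cite{EM1} — is making the transport step rigorous: the Fourier computation above is formal, and one must control the genuinely coupled evolution of the singular profile $h$ together with the regular bulk, verify that the generated second coefficient does not cancel, and check that the expansion of Lemma \ref{lem:key_radial} stays valid along the flow while the corner persists. This is precisely the instantaneous symmetry-breaking / norm-inflation phenomenon analyzed in \cite{EM1}; the only extra point is to confirm that their framework tolerates a profile $h$ that is a characteristic function (a union of sectors) rather than a smooth function, which it does, since the entire mechanism depends only on the low angular Fourier modes of $h$.
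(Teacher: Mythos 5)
Your construction and mechanism coincide with the paper's: a locally $4$-fold symmetric union of sectors (so that $\nabla u_0 \in L^\infty$, by the vanishing of the second angular harmonic in \eqref{eq:velgrad_sectors}), plus a smooth patch $\Omega_p$ supported away from the origin whose velocity has nonzero strain at the origin, with the instability driven by the pairing of that strain against the nonzero fourth harmonic of the sector profile. One small omission: the paper takes $\Omega_p$ to be $2$-fold symmetric so that the perturbation's velocity vanishes at the origin and the corner stays put; without this your expansion $u(t,x)=u(t,0)+S(t)x+\cdots$ has a drift that must be factored out.

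The genuine gap is in how you close the argument. Your plan is to track $(c(t),s(t))$ dynamically, show $\frac{d}{dt}(c,s)\big|_{t=0}\neq 0$, and "feed this back into \eqref{eq:velgrad_sectors}." But \eqref{eq:velgrad_sectors} applies only to vorticities that are exact functions of $\theta$ near the origin, and for $t>0$ the patch does not retain this structure — that it cannot is precisely the ill-posedness you are proving — so the coefficients $(c(t),s(t))$ are not even well defined at positive times without the very structure whose persistence fails. Deferring this step to \cite{EM1} as "making the transport step rigorous" mischaracterizes what that framework provides: it does not rigorize any dynamic mode-coupling picture; it is a \emph{frozen-time} criterion. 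What must be verified — and what the paper verifies — is that $\nabla u_0 \in L^\infty$ while the initial pressure, determined by $\Delta p_0 = 2\det(\nabla u_0)$, satisfies $\Vert D^2 p_0\Vert_{L^q} \geq Cq$ for all $q\geq 2$; Sections 7--8 of \cite{EM1} then give the conclusion by a contradiction argument that never requires the sector structure to persist. Concretely: writing $\tilde u_0(x) = G(\theta)x + 2G'(\theta)x^\perp + O(|x|^2)$ with $4G+G''=\chi_{[-\pi/8,\pi/8]}$ and choosing $\nabla u_{0,p}(0)$ to be a pure shear of size $K$, the determinant contains the cross term $K\,\partial_1\bigl(G(\theta)x_2 - 2G'(\theta)x_1\bigr)$, and by Lemma \ref{lem:key_radial} its inverse-Laplacian Hessian is logarithmically singular at the origin if and only if $\int_0^{2\pi}G(\theta)\cos(4\theta)\,d\theta \neq 0$, which follows from the ODE for $G$. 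This nonzero pairing is exactly your "strain times $\hat h_4$" computation, so the first-order calculation you propose is the right one — but it must be packaged as a statement about $D^2 p_0$ at time zero, not as a time derivative of Fourier coefficients of a profile whose existence at positive times is exactly what breaks down.
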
  This result shows that the global symmetry assumption in Theorem \ref{mainthm:wellposedness} \emph{cannot} be replaced by a local one. This is done using the framework introduced in \cite{EM1}. 
\begin{proof}
Let $\tilde\Omega_0$ be a compactly supported vortex patch which near the origin is equal to $ \theta\in \bigcup [-\pi/8+k\pi/2,\pi/8+k\pi/2], k=0,1,2,3\}$. In particular, we assume that $\tilde\Omega_0$ satisfies the conditions of Theorem \ref{mainthm:wellposedness}. Now we define $\Omega_0$ by $\Omega_0={\tilde\Omega_0} \cup {\Omega_p},$ where $0\not\in\overline{\Omega_p}$  and $\Omega_p$ is compactly supported and 2-fold symmetric. The index $p$ in $\Omega_p$ stands for perturbation. Note that $\Omega_0$ is 2-fold symmetric but need not be 4-fold symmetric. We further require that $\Omega_0$ be infinitely smooth away from $0$. 
We now claim that for some special choices of $\Omega_p$ we have that for all $\delta>0$ \[\sup_{0<t<\delta}\V\nabla u(t)\V_{L^\infty}=+\infty.\] Following Section 8 of \cite{EM1}, to prove this, all we need to show is that the associated velocity field $u_0$ satisfies the following estimates: 
\[\nabla u_0\in L^\infty,\] 
\[\V D^2p_0\V_{L^q}\geq C q,\,\,\,\]
for some fixed constant $C>0$ and all $q\geq 2$, where $p_0$ is the initial Eulerian pressure given by:
\[\Delta p_0=2\det(\nabla u_0).\]
Now we write $u_0=\tilde u_0+u_{0,p}$ where $\tilde u_0=\nabla^\perp\Delta^{-1}\chi_{\tilde\Omega_0}$ while $u_{0,p}=\nabla^\perp\Delta^{-1}\chi_{\Omega_p}.$ Note that $u_{0,p}$ is infinitely smooth in some neighborhood of $0$. In particular, \[u_{0,p}(x)=\nabla u_{0,p}(0) x+O(|x|^2)\] as $|x|\rightarrow 0$. On the other hand, 
\[\tilde u_0(x)=G(\theta) x+2G'(\theta)x^\perp +O(|x|^2)\] as $|x|\rightarrow 0$, with $G$ the unique $\frac{\pi}{2}$-periodic solution to \[4G+G'' = \chi_{[-\frac{\pi}{8},\frac{\pi}{8}]}.\] Note that $G$ is even with respect to $\theta$. 
It is then easy to see that the $O(|x|^2)$ terms in the expansions of $u_{0,p}$ and $\tilde u_0$ are negligible and that, in a neighborhood of $0$, \[\Delta p_0=2\det(\nabla u_{0,p}(0)+\nabla (G(\theta)x+2G'(\theta) x^\perp))+f,\] with $f\in C^\alpha$ for all $\alpha<1$. 
Now let's choose $u_{0,p}$ so that $$\nabla u_{0,p}(0)= \left[ {\begin{array}{cc}0 & K \\0 & 0 \\\end{array} } \right]$$ for some large constant $K$. Then we see that \[\Delta p_0=2K\partial_1(G(\theta)x_2-2G'(\theta)x_1)+F\] with $\V F\V_{L^\infty}\leq C$ for some fixed constant $C$ independent of $K$. In particular, \[ \V D^2\Delta^{-1} (F\chi)\V_{L^q}\leq Cq\] for all $q\in [1,\infty)$ where $\chi$ is a smooth cut-off function which is identically $1$ near zero and $C$ again is a fixed constant independent of $K$. Now let us consider $|D^2\Delta^{-1}(\partial_1(G(\theta)x_2-2G'(\theta)x_1)|_{L^q}.$ It is easy to see that it suffices to show that there exists a small constant $c>0$ so that \[|\nabla\Delta^{-1}(\partial_1(G(\theta)x_2-2G'(\theta)x_1)|\geq c|x|\log\frac{1}{|x|}.\] As we have shown in our expansion of the velocity field in Lemma \ref{lem:key_radial}, it suffices to show that the quantity $I$ below is non-zero: \begin{equation*}
\begin{split}
I & :=\int_0^{2\pi}\int_{|x|}^1\frac{\sin(2\theta)}{r^2}(\partial_1(G(\theta)x_2-2G'(\theta)x_1)rdrd\theta \\
& =\log\frac{1}{|x|}\int_0^{2\pi}\sin(2\theta)(-G'(\theta)\sin^2(\theta)+2G''(\theta)\sin(\theta)\cos(\theta)-2G'(\theta))d\theta. 
\end{split}
\end{equation*}  Noting that $G$ is $\pi/2$ periodic, we see that $\int_0^{2\pi} G'(\theta)\sin(2\theta)=0$. Moreover, we note that \[\sin(2\theta)\sin^2(\theta)=\frac{1}{2}\sin(2\theta)-\frac{1}{4}\sin(4\theta)\] and \[\sin(2\theta)\sin(\theta)\cos(\theta)=\frac{1}{2}\sin^{2}(2\theta)=\frac{1}{4}(1-\cos(4\theta)).\] 
Now we see: \[\int_0^{2\pi}G'(\theta)\sin(2\theta)=0,\] \[-\int_0^{2\pi} G'(\theta)\sin(2\theta)\sin^2(\theta)=\frac{1}{4}\int_0^{2\pi}G'(\theta)\sin(4\theta),\]
\[2\int_0^{2\pi}G''(\theta)\sin(\theta)\cos(\theta)\sin(2\theta)=-\frac{1}{2}\int_0^{2\pi}G''(\theta)\cos(4\theta).\]
Thus, \[I=7\log\frac{1}{|x|}\int_0^{2\pi}G(\theta)\cos(4\theta)d\theta\not=0\] as can be easily seen from the relation \[G''(\theta)+4G(\theta)=\chi_{[-\frac{\pi}{8},\frac{\pi}{8}]}\] by multiplying by $\cos(4\theta)$. Thus by choosing $K$ sufficiently large, we can ensure that \[\V\nabla u_0\V_{L^\infty}<\infty\] while \[\V D^2 p_0\V_{L^q}\geq q\] for all $q\geq 2$. Now we may apply the arguments of Section 7 of \cite{EM1} to conclude. 
\end{proof}

\section{Effective system for the boundary evolution near the corner}\label{effective}

In the previous section we established that a non-right angle in a vortex patch does not propagate continuously in time. Then the following question may be raised: what exactly happens to the vortex patch near the corner? 
The purpose of this section is to propose some asymptotic models which we believe describe the behavior of a vortex patch. The key tool is the (rigorous) expansion of the velocity field in the previous section.

\subsection{The formal evolution equation near the corner}

From now on, we shall assume that the vorticity is two-fold symmetric around the origin. Here, this symmetry assumption is just for simplicity and does not seem to alter the qualitative dynamics near the origin, except for the translation of the patch which can be fixed using the Galilean invariance. Moreover, assume that the vorticity is supported in a small ball of radius less than 1, which is guaranteed for some nonempty time interval if the initial vorticity has this property. Then, we have from Lemma \ref{lem:key_radial} that \begin{equation}\label{eq:key_radial_general}
\begin{split}
u(r,\theta) =   \frac{1}{2\pi} \begin{pmatrix}
\cos(\theta) \\
-\sin(\theta) 
\end{pmatrix} rI^s(r) - \frac{1}{2\pi} \begin{pmatrix}
\sin(\theta) \\
\cos(\theta)
\end{pmatrix} rI^c(r)  + O(r)
\end{split}
\end{equation}  where \begin{equation}\label{eq:key_integrals}
\begin{split}
I^s(r) = \int_r^1 \int_0^{2\pi} \sin(2\theta') \frac{\omega(s,\theta')}{s} d\theta' ds, \quad I^c(r) = \int_r^1 \int_0^{2\pi} \cos(2\theta') \frac{\omega(s,\theta')}{s} d\theta' ds.
\end{split}
\end{equation} From \eqref{eq:key_integrals}, note that in the limit $r \rightarrow 0^+$, if for \textit{majority} of $s \in [r,1]$ we have a lower bound on the integral of $\omega(s,\cdot)$ against either $\sin(2\theta)$ or $\cos(2\theta)$ on the circle, then either $|I^s(r)| \gg 1$ or $|I^c(r)| \gg 1$, and it is reasonable to believe that the behavior of the vorticity at the origin is determined only by the first two terms in the right hand side of \eqref{eq:key_radial_general}. Moreover, the term of order $r$ cannot account for a sudden change of angle, such as  instantaneous cusp or spiral formation.  Then, we may formally consider the following modified Euler equation: \begin{equation}\label{eq:formal_system_prelim}
\begin{split}
\pr_t\omega + \frac{1}{2\pi} \left[ \begin{pmatrix}
\cos(\theta) \\
-\sin(\theta) 
\end{pmatrix} I^s(r) - \begin{pmatrix}
\sin(\theta) \\
\cos(\theta)
\end{pmatrix}  I^c(r) \right] r\cdot\nabla \omega = 0. 
\end{split}
\end{equation} The measure $\frac{ds}{s}$ in the expression \eqref{eq:key_integrals} suggests that we write the vorticity as \begin{equation}\label{eq:formal_ansatz}
\begin{split}
\omega(t,r,\theta) = \mathsf{g}(t\ln\frac{1}{r},\theta) + \mbox{remainder},
\end{split}
\end{equation} where we formally assume that the remainder term is negligible for some time interval $[0,t^*]$ in the limit $r \rightarrow 0^+$ compared to the $\mathsf{g}$-term. Using \eqref{eq:formal_ansatz} and neglecting the terms involving the remainder, we obtain from \begin{equation*}
\begin{split}
 I^s(r) = \frac{1}{t} \int_0^{t\ln\frac{1}{r}} \int_0^{2\pi} \sin(2\theta') \mathsf{g}(t\ln\frac{1}{s},\theta')d\theta' d(t\ln\frac{1}{s}) ,
\end{split}
\end{equation*} \begin{equation*}
\begin{split}
  I^c(r) = \frac{1}{t} \int_0^{t\ln\frac{1}{r}} \int_0^{2\pi} \cos(2\theta') \mathsf{g}(t\ln\frac{1}{s},\theta')d\theta' d(t\ln\frac{1}{s}) 
\end{split}
\end{equation*} that after introducing the variable $\tau = t\ln\frac{1}{r}$, \begin{equation*}
\begin{split}
&\ln\frac{1}{r} \pr_{\tau}\mathsf{g}  + \frac{1}{2\pi t} \left[  \begin{pmatrix}
\cos(\theta) \\
-\sin(\theta) 
\end{pmatrix} \int_0^\tau \int_0^{2\pi} \sin(2\theta') \mathsf{g}(\tau',\theta')d\theta' d\tau' - \begin{pmatrix}
\sin(\theta) \\
\cos(\theta)
\end{pmatrix} \int_0^\tau \int_0^{2\pi} \cos(2\theta') \mathsf{g}(\tau',\theta')d\theta' d\tau'  \right] \\
&\qquad \cdot \left[ \frac{1}{r} \pr_{\theta}\mathsf{g} \begin{pmatrix}
-\sin(\theta) \\
\cos(\theta)
\end{pmatrix} - \frac{t}{r} \pr_{\tau}\mathsf{g} \begin{pmatrix}
\cos(\theta)\\
\sin(\theta)
\end{pmatrix} \right] = 0.
\end{split}
\end{equation*} Dividing by $\ln\frac{1}{r}$, \begin{equation*}
\begin{split}
&\pr_{\tau}\mathsf{g}-\frac{1}{2\pi\tau} \left[  \sin(2\theta)\left(\int_0^\tau \int_0^{2\pi} \sin(2\theta') \mathsf{g}(\tau',\theta')d\theta' d\tau'\right) +\cos(2\theta) \left( \int_0^\tau \int_0^{2\pi} \cos(2\theta') \mathsf{g}(\tau',\theta')d\theta' d\tau'  \right)\right]\pr_{\theta} \mathsf{g}  \\
&\qquad = \frac{t}{\tau} \left[ \cos(2\theta)\left(\int_0^\tau \int_0^{2\pi} \sin(2\theta') \mathsf{g}(\tau',\theta')d\theta' d\tau'\right) +\sin(2\theta) \left( \int_0^\tau \int_0^{2\pi} \cos(2\theta') \mathsf{g}(\tau',\theta')d\theta' d\tau'  \right)    \right]\pr_{\tau}\mathsf{g}.
\end{split}
\end{equation*} For $0 \le t \ll 1$, again formally we may drop the entire right hand side, which results in the following transport system for $\mathsf{g}$: \begin{equation}\label{eq:formal_model}
\begin{split}
\pr_{\tau}\mathsf{g}-\frac{1}{2\pi\tau} \left[  \sin(2\theta)\left(\int_0^\tau \int_0^{2\pi} \sin(2\theta') \mathsf{g}(\tau',\theta')d\theta' d\tau'\right) +\cos(2\theta) \left( \int_0^\tau \int_0^{2\pi} \cos(2\theta') \mathsf{g}(\tau',\theta')d\theta' d\tau'  \right)\right]\pr_{\theta} \mathsf{g}  = 0. 
\end{split}
\end{equation} We now investigate the system \eqref{eq:formal_model} in a number of concrete situations. 

\subsection{Evolution of a corner under the odd symmetry}

We now consider the simpler case of vorticity which is odd in the $x_1$-axis. Together with the two-fold symmetry assumption, we have that the vorticity is odd also in the $x_2$-axis. Then, in \eqref{eq:formal_model}, the term involving integration against $\cos(2\theta')$ vanishes, and we are left with \begin{equation}\label{eq:formal_model_odd}
\begin{split}
0 = \pr_{\tau} \mathsf{g} - \left(\frac{1}{2\pi\tau}\int_0^\tau \int_0^{2\pi}\sin(2\theta')\mathsf{g}(\tau',\theta') d\theta'd\tau'\right) \sin(2\theta)\pr_{\theta}\mathsf{g}.
\end{split}
\end{equation} 
Using the Fourier expansion
\begin{equation}\label{eq:main_exp_odd_case}
\begin{split}
\mathsf{g}(\tau,\theta) = \sum_{ k \ge 1} g_k(\tau)\sin(2k\theta),
\end{split}
\end{equation} we may rewrite \eqref{eq:formal_model_odd} in the following equivalent form: \begin{equation}\label{eq:main_eq_odd_fourier}
\begin{split}
\dot{g}_k(\tau) = \left(  \frac{1}{2\pi\tau}\int_0^\tau g_1(\tau')d\tau' \right) \left((k-1)g_{k-1}(\tau) - (k+1)g_{k+1}(\tau)\right),
\end{split}
\end{equation} where we have used the convention that $g_0 \equiv 0$. From \eqref{eq:main_eq_odd_fourier}, it is straightforward to see that the Bahouri-Chemin solution is characterized as the unique stationary solution to \eqref{eq:formal_model_odd}. 

We now consider the case where the initial data is locally a union of corners attached on the $x_1$-axis: that is, $$\mathsf{g}(0,\theta) = \pm \left( \mathbf{1}_{[0,A_0] \cup [\pi,A_0+\pi]} - \mathbf{1}_{[-A_0,0] \cup [-A_0 + \pi,\pi]}\right)$$ for some $0 \le A_0 < \pi/2$. Then, we have that $\mathsf{g}(\tau,\theta) = \pm(\mathbf{1}_{[0,A(\tau)] \cup [\pi,A(\tau)+\pi]} - \mathbf{1}_{[-A(\tau),0]\cup [-A(\tau)+\pi,\pi]})$ with $A(0) = A_0$ from the transport nature of the system \eqref{eq:formal_model_odd}. In the case of the negative sign (i.e. when the vorticity is negative on the positive quadrant), we obtain from \eqref{eq:formal_model_odd} that $A(\cdot)$ satisfies \begin{equation}\label{eq:char_odd_case_negative}
\begin{split}
\dot{A}(\tau) &=  \frac{4\sin(2A(\tau))}{2\pi\tau}\int_0^{\tau} \int_0^{A(\tau')}\sin(2\theta')d\theta' d\tau' \\
&= \frac{\sin(2A(\tau))}{\pi\tau}\int_0^{\tau} 1 - \cos(2A(\tau'))d\tau',
\end{split}
\end{equation} and in the opposite sign case,  \begin{equation}\label{eq:characteristic_odd_case}
\begin{split}
\dot{A}(\tau) = -\frac{\sin(2A(\tau))}{\pi\tau}\int_0^{\tau} 1 - \cos(2A(\tau'))d\tau'.
\end{split}
\end{equation} Note that \eqref{eq:char_odd_case_negative}, \eqref{eq:characteristic_odd_case} can be rewritten into the form of a second order ordinary differential equation; using $a := 2A$, we have \begin{equation}\label{eq:char_odd_case_ODE}
\begin{split}
\ddot{a}(\tau) = \frac{\cos(a(\tau))}{\sin(a(\tau))} (\dot{a}(\tau))^2 - \frac{1}{\tau} \left( \dot{a}(\tau) \mp \frac{2}{\pi}  \sin(a(\tau)) (1 - \cos(a(\tau)) )  \right) 
\end{split}
\end{equation} with initial data \begin{equation}\label{eq:char_odd_case_ODE_initial}
\begin{split}
a(0) = 2A_0, \quad \dot{a}(0) = \pm  \frac{2}{\pi}  \sin(2A_0) (1 - \cos(2A_0) ) .
\end{split}
\end{equation} Using \eqref{eq:char_odd_case_negative}, it is direct to see that for all $0 \le \tau$, $A_0 \le A(\tau) < \frac{\pi}{2}$. Since $\cos(2\cdot)$ is a decreasing function on $[0,\pi/2]$, we have that \begin{equation*}
\begin{split}
\dot{A}(\tau) \ge \frac{\sin(2A(\tau))}{\pi} (1 - \cos(2A)),
\end{split}
\end{equation*} and this guarantees that $A(\tau) \rightarrow \frac{\pi}{2}$ at exponential speed as $ \tau \rightarrow +\infty$. The dynamics is more delicate in the case of \eqref{eq:characteristic_odd_case};  while it is not difficult to show that the solution decays to 0 as $\tau$ goes to infinity with bounds \begin{equation}\label{eq:odd_decay_bounds}
\begin{split}
\frac{c}{1 + \tau} \le A(\tau) \le \frac{C}{1 + \tau^{1/2}},\quad \tau \ge 0 ,
\end{split}
\end{equation} obtaining the rate of decay is an interesting problem. Direct numerical simulations show that $A$ decays as $\tau^{-1}$. Let us show that the integral \begin{equation}\label{eq:integral}
\begin{split}
\int_0^\infty 1 - \cos(2A(\tau))d\tau 
\end{split}
\end{equation} is bounded by a constant depending only on $A_0$. Otherwise, for any large $M > 0$, one can find $\tau^* = \tau^*(M) > 0$ such that \begin{equation*}
\begin{split}
\forall \tau \ge \tau^*,\quad \int_0^\tau 1 - \cos(2A(\tau'))d\tau' \ge M. 
\end{split}
\end{equation*} Then \begin{equation*}
\begin{split}
\forall \tau \ge \max(\tau^*,\tau') ,\quad \dot{A}(\tau) \le -\frac{M}{10\pi\tau} {A}(\tau)
\end{split}
\end{equation*} where $\tau' = \tau'(A_0) > 0$ is chosen that $\sin(2A(\tau')) \ge \frac{A(\tau')}{10}$,  and \begin{equation*}
\begin{split}
\forall \tau \ge  \max(\tau^*,\tau'),\quad\frac{c}{1+\tau} \le A(\tau) \le C(M) \tau^{-\frac{M}{10}},
\end{split}
\end{equation*} where $C(M) > 0$ is a constant depending only on $M$. Taking $\tau \rightarrow +\infty$, we obtain a contradiction. If the ansatz \eqref{eq:formal_ansatz} were correct, the bound on the integral in \eqref{eq:integral} implies that for small $t > 0$, we have \begin{equation*}
\begin{split}
\lim_{ r\rightarrow 0^+}\frac{|u(t,r,\theta)|}{r} \le \frac{C}{t},
\end{split}
\end{equation*} where $u(t,\cdot)$ is the solution associated with initial vorticity given by $\omega_0(r,\theta) = \mathsf{g}(0,\theta) \mathbf{1}_{ \{ r \le \frac{1}{10} \} }$. 

\subsection{Evolution of a single corner}
 
We now consider initial data of the form \begin{equation*}
\begin{split}
\mathsf{g}(0,\theta) = \mathbf{1}_{[A_0-B_0,A_0+B_0]} + \mathbf{1}_{[A_0-B_0+\pi,A_0+B_0+\pi]}.
\end{split}
\end{equation*} The initial corner is centered at $A_0$ and has angle $2B_0$, with two-fold symmetry.  Without loss of generality, we may set $A_0 = 0$. From \begin{equation*}
\begin{split}
\mathsf{g}(\tau,\theta) = \mathbf{1}_{[A-B,A+B]} + \mathbf{1}_{[A-B+\pi,A+B+\pi]},
\end{split}
\end{equation*} we obtain, by evaluating \eqref{eq:formal_model} at $A + B$ and $A-B$, that \begin{equation*}
\begin{split}
(A \pm B)'(\tau)   = -\frac{1}{2\pi\tau} &\left[ \sin(2A(\tau) \pm 2B(\tau)) \int_0^\tau \cos(2A(\tau')-2B(\tau')) - \cos(2A(\tau') + 2B(\tau')) d\tau' \right. \\
&\qquad \left. +\cos(2A(\tau) \pm 2B(\tau)) \int_0^\tau \sin(2A(\tau') + 2B(\tau')) - \sin(2A(\tau') - 2B(\tau')) d\tau'   \right].
\end{split}
\end{equation*} It follows that \begin{equation}\label{eq:G'}
\begin{split}
\tau B'(\tau) = -\frac{1}{\pi}&\left( \sin(2B)\cos(2A) \int_{ 0}^{\tau} \sin(2B(\tau'))\sin(2A(\tau')) d\tau' \right. \\
&\quad\left. - \sin(2B)\sin(2A) \int_0^\tau \sin(2B(\tau'))\cos(2A(\tau'))d\tau'\right)
\end{split}
\end{equation} and \begin{equation}\label{eq:F'}
\begin{split}
\tau A'(\tau) = -\frac{1}{\pi}&\left( \cos(2B)\sin(2A) \int_{ 0}^{\tau} \sin(2B(\tau'))\sin(2A(\tau')) d\tau' \right. \\
&\quad \left.  + \cos(2B)\cos(2A) \int_0^\tau \sin(2B(\tau'))\cos(2A(\tau'))d\tau'\right).
\end{split}
\end{equation} We may rewrite \eqref{eq:G'}--\eqref{eq:F'} in the form \begin{equation*}
\begin{split}
B'(\tau) = -\frac{\sin(2B(\tau))}{\pi\tau} \int_0^\tau \sin(2B(\tau')) \sin(2A(\tau') - 2A(\tau))  d\tau'
\end{split}
\end{equation*} and \begin{equation*}
\begin{split}
A'(\tau) = -\frac{\cos(2B(\tau))}{\pi\tau } \int_0^\tau \sin(2B(\tau')) \cos(2A(\tau') - 2A(\tau))  d\tau'.
\end{split}
\end{equation*} This implies that if $-\pi/4 \le A \le 0$ in an interval $[0,\tau']$, then $A'(\tau) \le 0$, which in turn implies $B'(\tau) \ge 0$. 

Similarly as in the case of odd symmetry, it is possible to turn the above system into a system of second order ordinary differential equations.   Differentiating both sides of \eqref{eq:G'}--\eqref{eq:F'} gives \begin{equation*}
\tau\begin{pmatrix}
B''  \\  A''
\end{pmatrix} = - \begin{pmatrix}
B' \\  A'
\end{pmatrix} - \frac{1}{\pi}\begin{pmatrix}
0 \\ \cos(2B)\sin(2B)
\end{pmatrix} + M' M^{-1} \tau\begin{pmatrix}
B' \\  A' 
\end{pmatrix}
\end{equation*} with \begin{equation*}
\begin{split}
M = \begin{pmatrix}
\sin(2B)\cos(2 A) & -\sin(2B)\sin(2 A) \\
\cos(2B)\sin(2 A) & \cos(2B)\cos(2 A)
\end{pmatrix}.
\end{split}
\end{equation*}
Then
\begin{equation*}
\begin{split}
M' = 2B' \begin{pmatrix}
\cos(2B)\cos(2 A) & -\cos(2B)\sin(2 A) \\
-\sin(2B)\sin(2 A) & -\sin(2B)\cos(2 A)
\end{pmatrix} + 2A' \begin{pmatrix}
-\sin(2B)\sin(2 A) & -\sin(2B)\cos(2 A) \\
\cos(2B)\cos(2 A) & -\cos(2B)\sin(2 A)
\end{pmatrix}
\end{split}
\end{equation*} and \begin{equation*}
\begin{split}
M^{-1} = \frac{1}{\sin(2B)\cos(2B)} \begin{pmatrix}
\cos(2B)\cos(2 A) & \sin(2B)\sin(2 A) \\
-\cos(2B)\sin(2 A) & \sin(2B)\cos(2 A)
\end{pmatrix}.
\end{split}
\end{equation*} Then finally, we arrive at the system \begin{equation}\label{eq:ode_single_corner}
\tau\begin{pmatrix}
B''  \\  A''
\end{pmatrix} = - \begin{pmatrix}
B' \\  A'
\end{pmatrix} - \frac{1}{\pi}\begin{pmatrix}
0 \\ \cos(2B)\sin(2B)
\end{pmatrix} + \frac{2\tau}{\sin(2B)\cos(2B)} \begin{pmatrix}
\cos^2(2B)(B')^2 - \sin^2(2B)( A')^2 \\
-\sin^2(2B)B' A' + \cos^2(2B) B' A' 
\end{pmatrix}.
\end{equation} The initial condition is given by \begin{equation}\label{eq:ode_single_corner_id}
\begin{split}
B(0) =B_0, B'(0) = 0, A(0) = 0, A'(0) = -\frac{1}{\pi}\cos(2B_0)\sin(2B_0). 
\end{split}
\end{equation} Numerical simulations suggest that $A \rightarrow A_\infty$ and $B \rightarrow 0$ for some constant $A_\infty $ depending on $B_0$, as $\tau \rightarrow +\infty$. This suggests instantaneous cusping without spiral formation, which is comparable with the direct numerical study on vortex patches  {in \cite{CS,CD}.}  

\appendix

\section*{Appendix}

\section{Local well-posedness for symmetric patches}\label{sec:LWP}
 
The local well-posedness results for smooth vortex patches is usually obtained via an iteration scheme, using the contour dynamics equation (see for instance \cite{B}, \cite{MB}). An alternative approach which works directly with the flow maps restricted to the patch was described in an illuminating work of Huang \cite{Hu}.\footnote{The main result of this work is that $C^{1,\alpha}$-patches in 3D is locally well-posed under the Euler equations (see also an earlier work of Serfati \cite{Serfati3D}). In the three-dimensional case, the vorticity does not remain a constant inside the patch even if initially so, and therefore the contour dynamics approach is not available.} This method originates from a previous work of Friedman and Huang \cite{FH}, and it seems to be applicable for a wide variety of situations. We shall adopt this approach to show local well-posedness (as well as continuation criteria) in the setting of Section \ref{sec:intermediate}, i.e., patches admitting a level set function $\phi$ with $\nabla^\perp\phi \in \mathring{C}^\alpha$. 

The starting point of this method is to write the 2D Euler equation purely in terms of the flow maps: \begin{equation}\label{eq:flow_formulation}
\begin{split}
\Phi(t,x) &= x + \int_0^t \int_{\mathbb{R}^2} K(\Phi(x,s)-y)\omega_0(\Phi^{-1}_t(y)) dy ds,  \\
&= x + \int_0^t \int_{\Omega_0} K(\Phi(s,x) - \Phi(s,z))  dzds. 
\end{split}
\end{equation} At this point, note that we only need to know $\Phi(t,\cdot)$ on $\Omega_0$ to determine the velocity of the Euler equation everywhere in $\mathbb{R}^2$. It is easy to show that the above formulation is equivalent to the (usual) weak formulation of the 2D Euler equations under $\omega \in L^\infty \cap L^1$, and the Yudovich theorem gives that there is a unique solution $\Phi$ satisfying \eqref{eq:flow_formulation}.

The formulation \ref{eq:flow_formulation} suggests one to build an iteration scheme; all that is necessary to appropriately define the space of functions. Following \cite{Hu}, we consider \begin{equation}\label{eq:LWP_space}
\begin{split}
B(M,T) &= \left\{ \Phi(t,x) \in X : \Phi(0,x) = x, \Phi(t,0) = 0, \Vert \Phi \Vert_{X} \le M, \sup_{\overline{\Omega}_0 \times [0,T]} |\nabla\Phi(t,x) - I | \le 1/2 \right\}
\end{split}
\end{equation} where the space $X$ is defined for functions $\Phi : \overline{\Omega}_0 \times [0,T] \rightarrow \mathbb{R}^2$ with $\det(\nabla_x\Phi) \equiv 1$ by the norm \begin{equation*}
\begin{split}
\V \Phi\V_X = \sup_{t \in [0,T]} \left( \V \nabla_x \Phi \V_{\mathring{C}^\alpha(\overline{\Omega}_0)} + \V \partial_t \Phi\V_{L^\infty(\overline{\Omega}_0)} \right).
\end{split}
\end{equation*} That is, we have simply replaced the assumption in \cite{Hu} that $\nabla\Phi(t,\cdot)$ is uniformly $C^{\alpha}$ (up to the boundary of $\overline{\Omega}_0$) by $\mathring{C}^\alpha$. The extra assumption that $\Phi(t,0) = 0$ holds will be guaranteed by symmetry. Under the assumption $|\nabla\Phi(t,x) - I| \le 1/2$, it follows that the inverse map $\Phi_t^{-1} : \overline{\Omega}_t \rightarrow \overline{\Omega}_0$ is Lipschitz with $|\nabla \Phi_t^{-1}| \le 2$. Moreover, it is elementary to verify that for $\Phi \in B(M,T)$, $\nabla \Phi_t^{-1}$ belongs to $\mathring{C}^\alpha(\overline{\Omega}_t)$ with norm depending only on $M$ (see below Lemma \ref{lem:calculus}). 

Then, we define a mapping $F$, \begin{equation*}
\begin{split}
F(\Phi)(t,x) := x + \int_0^t \int_{\Omega_0} K(\Phi(s,x) - \Phi(s,z))  dzds,
\end{split}
\end{equation*} so that a fixed point of $F$ provides a solution to the 2D Euler equation on $[0,T]$ with initial data $\omega_0 = \chi_{\Omega_0}$. 

We need to propagate the regularity of $\phi$ in time, where the level set function $\phi_0$ is given together with the initial data $\Omega_0$. We observe that, as long as $\Phi \in B(M,T)$, by defining \begin{equation*}
\begin{split}
\phi(t,x) := \phi_0 (\Phi^{-1}_t(x)),\qquad x \in \overline{\Omega}_t,
\end{split}
\end{equation*} we have \begin{equation}\label{eq:level_set_bound_via_flow}
\begin{split}
\sup_{t \in [0,T]} \mathring{\Gamma}_t :=   \sup_{t \in [0,T]}\left(\frac{\V \nabla^\perp\phi_t(\cdot)\V_{\mathring{C}^\alpha(\Omega_t)}}{\V \nabla^\perp\phi_t(\cdot)\V_{\inf(\partial\Omega_t)}} \right)^{1/\alpha} \le  \mathfrak{C}(M), 
\end{split}
\end{equation} where, here and in the following, we use the notation $\mathfrak{C}(M)$ to denote a positive and increasing function of $M > 0$ depending on $\mathring{\Gamma}_0$. This function may change from a line to another. 

We are now in a position to formally state the local well-posedness results:
\begin{proposition}\label{prop:LWP}
	Assume that $\Omega_0$ is $m$-fold symmetric for some $m \ge 3$ admitting a level set $\phi_0$ satisfying Definition \ref{def:level_set_circle}. Then there exists some $T > 0$, depending only on $\mathring{\Gamma}_0$, such that there is a unique local solution $\Phi \in X$ of \eqref{eq:flow_formulation}. In particular, we can extend the solution beyond some $T^*$ as long as $\sup_{t \in [0,T^*)}\Gamma_t < + \infty$. 
\end{proposition}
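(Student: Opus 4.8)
The plan is to establish local well-posedness by a contraction mapping argument for the operator $F$ on the complete metric space $B(M,T)$, following the scheme of Huang \cite{Hu} but adapting every estimate from the classical $C^\alpha$-setting to the scale-invariant $\mathring{C}^\alpha$-setting. The non-standard features are the non-compactly-supported patch (handled by $m$-fold symmetry, which guarantees the convergent Biot--Savart representation and the constraint $\Phi(t,0)=0$) and the scale-invariant norm, which requires care near the origin. First I would verify that $F$ maps $B(M,T)$ into itself: writing $F(\Phi)(t,x)=x+\int_0^t\int_{\Omega_0} K(\Phi(s,x)-\Phi(s,z))\,dz\,ds$, the key estimates are that (i) $\partial_t F(\Phi)$ is bounded in $L^\infty(\overline{\Omega}_0)$ by the log-Lipschitz velocity bound under symmetry (Lemma 2.7-type estimate recalled in Subsection \ref{subsec:symmetries_and_critical}), giving $\V\partial_t F(\Phi)\V_{L^\infty}\le \mathfrak{C}(M)$; and (ii) $\nabla_x F(\Phi)$ stays in $\mathring{C}^\alpha(\overline{\Omega}_0)$. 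For (ii) the crucial input is Lemma \ref{lem:key_identity}, which controls $\V\nabla u\,\nabla^\perp\phi\V_{\mathring{C}^\alpha}$ in terms of $\V\nabla u\V_{L^\infty}$ and $\V\nabla^\perp\phi\V_{\mathring{C}^\alpha}$, together with the logarithmic $L^\infty$-bound on $\nabla u$ from Lemma \ref{lem:key_estimate}. Using \eqref{eq:level_set_bound_via_flow} to propagate $\mathring{\Gamma}_t\le\mathfrak{C}(M)$, these two lemmas close the estimate $\V\nabla_x F(\Phi)\V_{\mathring{C}^\alpha}\le M/2 + T\,\mathfrak{C}(M)$, so that choosing $T=T(\mathring{\Gamma}_0)$ small (after fixing $M\approx 2\V\nabla\Phi_0\V$) forces $\V F(\Phi)\V_X\le M$ and $\sup|\nabla F(\Phi)-I|\le 1/2$.

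Next I would establish that $F$ is a contraction on $B(M,T)$ in a suitable weaker metric (for instance the $\sup_t\V\cdot\V_{\mathring{C}^{\alpha'}}$ norm with $\alpha'<\alpha$, or the $C^0$-in-space metric, to avoid losing a derivative). Given $\Phi,\tilde\Phi\in B(M,T)$, one writes the difference $F(\Phi)-F(\tilde\Phi)$ and splits the kernel difference $K(\Phi(s,x)-\Phi(s,z))-K(\tilde\Phi(s,x)-\tilde\Phi(s,z))$; the log-Lipschitz modulus of continuity of $K$ convolved against an $L^\infty$ profile produces a factor that is small for small $T$ and is linear in $\V\Phi-\tilde\Phi\V$ up to a logarithmic correction. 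The symmetry assumption $m\ge 3$ is again essential here, since it is what upgrades the a priori singular kernel behavior at the origin into the scale-invariant log-Lipschitz bound that makes the contraction constant controllable uniformly up to $x=0$. The Banach fixed point theorem then yields a unique $\Phi\in B(M,T)$ solving \eqref{eq:flow_formulation}, and uniqueness in the full Yudovich class follows since any such $\Phi$ is the Yudovich solution.

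For the continuation criterion, I would argue that the local existence time $T$ depends only on $\mathring{\Gamma}_0$ (more precisely on $M$, which is determined by $\mathring{\Gamma}_0$), so that the solution can be restarted at any time $T^*$ as long as the quantity controlling $M$, namely $\sup_{t<T^*}\mathring{\Gamma}_t$ (equivalently $\Gamma_t$ in the notation of the statement), remains finite. The point is that the estimates above are genuinely a priori: the right-hand sides depend on the solution only through $\mathring{\Gamma}_t$ and $\V\nabla u_t\V_{L^\infty}$, and the latter is itself bounded by a function of $\mathring{\Gamma}_t$ via Lemma \ref{lem:key_estimate}. Hence if $\sup_{t\in[0,T^*)}\Gamma_t<+\infty$ one obtains a uniform lower bound on the restart time near $T^*$, precluding blow-up of the solution there and allowing extension past $T^*$.

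The main obstacle I anticipate is the interplay between the scale-invariant norm $\mathring{C}^\alpha$ and the behavior exactly at the origin. The standard Huang argument treats $\nabla\Phi$ as uniformly $C^\alpha$ on a compact $\overline{\Omega}_0$, but here $\Omega_0$ need not be compact and the norm weights differences by $|x|^{-\alpha}$, so every kernel estimate must be shown to hold uniformly down to $x=0$ without the usual compactness crutch. Concretely, verifying that $\nabla_x F(\Phi)$ lies in $\mathring{C}^\alpha$ and that the contraction estimate is uniform near the origin is exactly where Lemma \ref{lem:key_identity} and the symmetrization of the far-field kernel (as in the proof of Lemma \ref{lem:key_estimate}, term $I_3$) must be invoked in the iteration, rather than merely for the a priori bound; checking that the constants there do not degenerate along the iterates, and that the map $\Phi\mapsto\Phi_t^{-1}$ preserves $\mathring{C}^\alpha$ uniformly in $M$ (the elementary calculus facts deferred to Lemma \ref{lem:calculus}), is the technical heart of the argument.
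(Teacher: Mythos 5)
Your overall scheme — the Huang-style iteration for $F$ on $B(M,T)$, the propagation of $\mathring{\Gamma}_t\le\mathfrak{C}(M)$ via \eqref{eq:level_set_bound_via_flow}, and the restart argument for the continuation criterion — is the same as the paper's, but there is a genuine gap at the self-map step. Lemma \ref{lem:key_identity} controls only the \emph{directional} derivative $\nabla u\,\nabla^\perp\phi$ in $\mathring{C}^\alpha$; it says nothing about the remaining components of $\nabla u$. To bound $\V \nabla F(\Phi)\V_{\mathring{C}^\alpha}$ you must estimate $\int_0^t \nabla u(s,\Phi(s,\cdot))\,\nabla\Phi(s,\cdot)\,ds$, and the columns of $\nabla\Phi$ are push-forwards of the coordinate directions, not of $\nabla^\perp\phi$; so what is needed is the \emph{full} matrix $\nabla u$ in $\mathring{C}^\alpha(\overline{\Omega}_t)$. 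The paper supplies exactly this missing ingredient through Lemma \ref{lem:velocity_circle} (Serfati's inversion: from $W\cdot\nabla u\in\mathring{C}^\alpha$ with $|W|\ge c_0>0$, one recovers all of $\nabla u$ by using $\nabla\cdot u=0$ and $\nabla\times u\equiv 1$ inside the patch), and Subsection \ref{subsec:Bertozzi_Constantin} explicitly flags that this inversion is what the local theory requires. Applying it also forces a construction absent from your proposal: $\nabla^\perp\phi_0$ is only non-degenerate near $\partial\Omega_0$ and in general must vanish somewhere inside $\Omega_0$, so one has to patch it with an auxiliary non-vanishing field $\tilde{W}_0$ (with $\nabla\cdot \tilde{W}_0\in\mathring{C}^\alpha$) in the interior, push the combined field forward by the flow, and verify that both the lower bound and the $\mathring{C}^\alpha$ bound survive with constants $\mathfrak{C}(M)$. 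Without Lemma \ref{lem:velocity_circle} and this vector field, your estimate (ii) does not close.

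A secondary flaw: $F$ is not a strict contraction, so the Banach fixed point theorem cannot be invoked as you claim. The difference estimates (the paper's Lemma \ref{lem:iteration_lipschitz}) carry a logarithmic correction inherited from the log-Lipschitz velocity: after optimizing the splitting parameter one gets a modulus of the form $\rho\mapsto \rho\left(1+\left|\log\rho\right|\right)$, whose ratio to $\rho$ is unbounded as $\rho\rightarrow 0$, so no uniform contraction factor exists on $B(M,T)$, however small $T$ is. Passing to a weaker H\"older norm $\mathring{C}^{\alpha'}$ does not repair this, since the loss is logarithmic in the \emph{size of the difference}, not a loss of regularity. The paper instead runs the Picard iterates $\Phi_{n+1}=F(\Phi_n)$, proves $\rho_n(t)\le \mathfrak{C}(M)\int_0^t \rho_{n-1}(s)\left(1+\log\left(1+\rho_{n-1}(s)\right)\right)ds$ for $\rho_n=\V\Phi_{n+1}-\Phi_n\V_{W^{1,\infty}(\overline{\Omega}_0)}$, and concludes convergence by an Osgood-type argument (as in \cite{MP}); this is the replacement your second step needs.
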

In the case of $C^{1,\alpha}$-patch with symmetric corners, we have: 
\begin{proposition}\label{prop:LWP2}
	Assume that $\Omega_0$ is a $C^{1,\alpha}$-patch with symmetric corners satisfying Definition \ref{def:C_1alpha_corner}. Then there exists some $T > 0$, depending only on its initial $C^{1,\alpha}$-characteristic  $\Gamma_0$, such that $\sup_{t \in [0,T]} \Gamma_t < + \infty$: that is,
	the associated flow map $\Phi \in X$ on the time interval $[0,T]$ provided by Proposition \ref{prop:LWP} satisfies $\Phi_t(x,f_0(x)), \Phi_t(x,g_0(x)) \in C^{1,\alpha}_x[0,\delta_0]$ uniformly in $t\in [0,T]$. In particular, we can extend the solution beyond some $T^* > 0$ as long as $\sup_{t \in [0,T^*)}\Gamma_t < + \infty$. 
\end{proposition}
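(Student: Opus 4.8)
The plan is to upgrade the solution $\Phi \in X$ furnished by Proposition \ref{prop:LWP} by establishing an a priori bound on the $C^{1,\alpha}$-characteristic $\Gamma_t$ of the boundary near the corner, and then to read off local existence in the stronger class by running exactly this bound inside the fixed-point scheme used for Proposition \ref{prop:LWP}. Throughout I would work on a single short interval $[0,T]$ with $T = T(\Gamma_0)$. Since the corner has a genuine opening angle, for $T$ small the tangent directions $F_t = f_t'(0)$, $G_t = g_t'(0)$ (which rotate with bounded speed, as $\nabla u$ is bounded under symmetry) stay close to $F_0, G_0 \neq 0$; hence the boundary never rotates enough to spoil the graph representation, and there is no need to re-initialize the coordinate frame on subintervals as in the proof of Theorem \ref{thm:symmetric_corner}.

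From Proposition \ref{prop:LWP} I have $\V\Phi\V_X \le M(\Gamma_0)$, hence the frozen-time bounds $\V\nabla u_t\V_{L^\infty}, \V\nabla\Phi_t\V_{L^\infty}, \V\nabla\Phi_t^{-1}\V_{L^\infty} \le C(M)$ on $[0,T]$, together with uniform $C^{1,\alpha}$-regularity of $\partial\Omega_t$ away from the origin (standard, cf. \cite{Da2}). The crux is the behaviour at the corner. I would track the boundary trajectories $\eta(t,x) = \Phi(t,(x,f_0(x)))$ and $\tilde\eta(t,x) = \Phi(t,(x,g_0(x)))$, differentiate in $x$ to get $\pr_t(\pr_x\eta) = \nabla u(t,\eta)\,\pr_x\eta$, and use the $L^\infty$-gradient bound to control $\pr_x\eta$ above and below. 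For $T$ small this keeps $|\pr_x\eta^1| > c_0/2$, so $\eta^1_t$ is invertible on some $[0,\delta_t]$ with $\delta_t = c(T)\delta_0$, and the reconstructed curves $f_t = \eta^2_t \circ (\eta^1_t)^{-1}$, $g_t = \tilde\eta^2_t\circ(\tilde\eta^1_t)^{-1}$ have $C^{1,\alpha}$-norms controlled by $\V\pr_x\eta\V_{C^\alpha} + \V\pr_x\tilde\eta\V_{C^\alpha}$ through the inverse-function-theorem estimates and the $\mathring{C}^\alpha$-calculus of Lemma \ref{lem:calculus}.

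With $\Omega_t$ now presented as a $C^{1,\alpha}$-patch with a symmetric corner, I would invoke Lemma \ref{lem:corner_Holder} to bound $\V\nabla u_t\circ\eta_t\V_{C^\alpha[0,\delta_t]}$ by $C(\V f_t\V_{C^{1,\alpha}} + \V g_t\V_{C^{1,\alpha}})(1 + \V\nabla u_t\V_{L^\infty}) + C\delta_t^{-\alpha}$, i.e.\ by $C(M,\delta_0)(\V\pr_x\eta\V_{C^\alpha} + \V\pr_x\tilde\eta\V_{C^\alpha}) + C(\delta_0)$. Feeding this back into the differentiated flow equation and using that $C^\alpha$ is an algebra yields the closed differential inequality
$$\frac{d}{dt}\left(\V\pr_x\eta(t)\V_{C^\alpha} + \V\pr_x\tilde\eta(t)\V_{C^\alpha}\right) \le C(M,\delta_0)\left(1 + \V\pr_x\eta(t)\V_{C^\alpha} + \V\pr_x\tilde\eta(t)\V_{C^\alpha}\right),$$
so that Gronwall gives a finite bound on $[0,T]$ depending only on $\Gamma_0$; in particular $\sup_{t\in[0,T]}\Gamma_t < \infty$. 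The continuation criterion then follows in the usual way.

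The main obstacle is the circular dependence in this Gronwall step: Lemma \ref{lem:corner_Holder} presupposes that the boundary is already $C^{1,\alpha}$ up to the corner, while the bound it produces is precisely what is needed to sustain that regularity. This is why the argument cannot be a naive bootstrap on the limit solution and must instead be installed inside the iteration of Proposition \ref{prop:LWP}: I would restrict the ball $B(M,T)$ to flows whose image patch is a genuine $C^{1,\alpha}$-corner patch with $\Gamma \le M'$, and verify that $F$ preserves this subset for $T = T(\Gamma_0)$ small --- here each iterate is honestly $C^{1,\alpha}$, so Lemma \ref{lem:corner_Holder} applies to it legitimately --- recovering the fixed point as in Proposition \ref{prop:LWP}, with the uniform corner bound passing to the limit by lower semicontinuity. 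A secondary nuisance is the careful tracking of how $\delta_t$ must be shrunk (depending only on $T$ and $\delta_0$) to keep $\eta^1_t$ invertible while the $\delta_t^{-\alpha}$ term stays under control.
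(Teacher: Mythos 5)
Your proposal is correct and follows essentially the same route as the paper: the paper's proof also runs the a priori estimates from the proof of Theorem \ref{thm:symmetric_corner} (boundary trajectories, inverse function theorem, Lemma \ref{lem:corner_Holder}, Gronwall) uniformly along the iterates $\Phi_n$ of the fixed-point scheme from Proposition \ref{prop:LWP}, shrinking $T$ depending only on $\Gamma_0$, and then lets the limit $\Phi$ inherit the uniform $C^{1,\alpha}$ bound. Your resolution of the circularity --- making each iterate honestly a $C^{1,\alpha}$-corner patch so that Lemma \ref{lem:corner_Holder} applies legitimately, with the bound surviving the limit --- is precisely how the paper's argument works.
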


The proof is a direct consequence of the following estimates: 

\begin{lemma}\label{lem:iteration_bound}
	For any initial data $\Omega_0$ satisfying Definition \ref{def:level_set_circle}, there exists some $M, T > 0$ depending only on $\mathring{\Gamma}_0$ so that $F$ maps the space $B(M,T)$ to itself. 
\end{lemma}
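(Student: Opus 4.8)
The plan is to run a Picard-type fixed point argument on the map $F$ and verify, via direct estimates, that for suitable $M,T$ depending only on $\mathring\Gamma_0$ the image $F(\Phi)$ lies back in $B(M,T)$. First I would fix the three structural constraints defining $B(M,T)$ and check them one at a time for $\Psi := F(\Phi)$. The constraint $\Psi(0,x)=x$ is immediate from the definition of $F$, since the time integral vanishes at $t=0$. The constraint $\Psi(t,0)=0$ is where the $m$-fold symmetry enters: if $\Phi$ is $m$-fold symmetric (which I would arrange is preserved by $F$, as the Biot–Savart kernel commutes with rotations and $\Omega_0$ is symmetric), then the velocity field $u=K*\chi_{\Omega_t}$ is an $m$-fold symmetric vector field, and by the cancellation lemma from Subsection \ref{subsec:symmetries_and_critical} one has $|u(x)|\le C|x|$, so in particular $u(t,0)=0$ and the origin is a fixed point of the flow; integrating gives $\Psi(t,0)=0$. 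The incompressibility $\det(\nabla_x\Psi)\equiv 1$ follows because $F(\Phi)$ is the time-integral of a divergence-free velocity along the flow, so I would note the standard Liouville-type identity.

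Next I would estimate the norm $\V\Psi\V_X = \sup_{t}(\V\nabla_x\Psi\V_{\mathring C^\alpha}+\V\partial_t\Psi\V_{L^\infty})$. For the time derivative, $\partial_t\Psi(t,x)=\int_{\Omega_0}K(\Phi(t,x)-\Phi(t,z))\,dz = u(t,\Phi(t,x))$, and the scale-invariant bound $|u(x)|\le C|x|$ together with $\Phi\in B(M,T)$ gives $\V\partial_t\Psi\V_{L^\infty(\overline\Omega_0)}\le \mathfrak C(M)$ uniformly; crucially this does not grow with $T$. For the spatial gradient, differentiating under the integral yields $\nabla_x\Psi = I + \int_0^t (\nabla u)(s,\Phi(s,x))\,\nabla_x\Phi(s,x)\,ds$. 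Here the key input is the $\mathring C^\alpha$ theory of Subsection \ref{subsec:symmetries_and_critical}: since $\omega_s=\chi_{\Omega_s}$ admits a level set with $\nabla^\perp\phi_s\in\mathring C^\alpha$ and bound \eqref{eq:level_set_bound_via_flow}, Lemmas \ref{lem:key_identity}–\ref{lem:key_estimate} supply $\V\nabla u_s\V_{L^\infty}\le \mathfrak C(M)$ and a directional $\mathring C^\alpha$-control; composing with $\Phi(s,\cdot)$ and using the chain-rule/composition estimates for $\mathring C^\alpha$ (the calculus of Lemma \ref{lem:calculus} in the Appendix, and the remark that $\nabla\Phi_s^{-1}\in\mathring C^\alpha$ with norm controlled by $M$) gives $\V(\nabla u_s)\circ\Phi_s\V_{\mathring C^\alpha(\overline\Omega_0)}\le\mathfrak C(M)$. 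Then $\V\nabla_x\Psi(t)\V_{\mathring C^\alpha}\le \V I\V + t\,\mathfrak C(M)$, and I would first choose $M:=2(\V I\V_{\mathring C^\alpha}+1)$ (say), and then $T=T(M)$ small enough that $t\,\mathfrak C(M)\le M/2$ for all $t\le T$, closing the bound $\V\Psi\V_X\le M$.

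Finally, the non-degeneracy constraint $\sup_{\overline\Omega_0\times[0,T]}|\nabla\Psi-I|\le 1/2$ follows from the same identity: $|\nabla_x\Psi(t,x)-I|\le \int_0^t\V\nabla u_s\V_{L^\infty}\V\nabla_x\Phi_s\V_{L^\infty}\,ds\le t\,\mathfrak C(M)$, so shrinking $T$ further (still depending only on $M$, hence on $\mathring\Gamma_0$) forces this below $1/2$. The main obstacle I anticipate is not any single estimate but the self-consistency of the bound \eqref{eq:level_set_bound_via_flow}: one must verify that while $\Phi$ ranges over $B(M,T)$, the transported level set $\phi_t=\phi_0\circ\Phi_t^{-1}$ genuinely stays in $\mathring C^\alpha$ with the non-degeneracy constant $\V\nabla^\perp\phi_t\V_{\inf(\partial\Omega_t)}$ bounded below by a quantity depending only on $M$ and $\mathring\Gamma_0$, so that Lemma \ref{lem:key_estimate} can be applied with a $T$-independent logarithmic bound on $\V\nabla u\V_{L^\infty}$. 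This requires the composition lemma controlling $\mathring C^\alpha$-norms under bi-Lipschitz $\mathring C^{1,\alpha}$ maps (deferred to Lemma \ref{lem:calculus}), and care that the constants there do not secretly depend on $T$; once that is in hand, the three verifications above combine to give the claim. The contraction property, needed for uniqueness, I would establish by an entirely parallel estimate on $F(\Phi^{(1)})-F(\Phi^{(2)})$ after possibly shrinking $T$ again.
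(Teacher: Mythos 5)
Your overall skeleton coincides with the paper's: verify the structural constraints of $B(M,T)$ (the symmetry argument giving $F(\Phi)(t,0)=0$, incompressibility, $F(\Phi)(0,x)=x$), then bound $\partial_t F(\Phi)=u_s\circ\Phi_s$ in $L^\infty$ and $\nabla F(\Phi)=I+\int_0^t(\nabla u_s)\circ\Phi_s\,\nabla\Phi_s\,ds$ in $\mathring{C}^\alpha$, and finally shrink $T$ depending only on $M$ and $\mathring{\Gamma}_0$. However, there is a genuine gap at the single step that carries all of the analytic content: your claim that Lemmas \ref{lem:key_identity}--\ref{lem:key_estimate}, combined with the composition calculus of Lemma \ref{lem:calculus}, yield $\V(\nabla u_s)\circ\Phi_s\V_{\mathring{C}^\alpha(\overline{\Omega}_0)}\le\mathfrak{C}(M)$. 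Lemma \ref{lem:key_estimate} controls only $\V\nabla u_s\V_{L^\infty}$, and Lemma \ref{lem:key_identity} controls only the \emph{directional} quantity $\nabla u_s\,\nabla^\perp\phi_s$ in $\mathring{C}^\alpha$; neither gives the full matrix $\nabla u_s$ in $\mathring{C}^\alpha(\overline{\Omega}_s)$, which is what your formula for $\nabla F(\Phi)$ requires. The obstruction is not cosmetic: since $\phi_s>0$ exactly on $\Omega_s$ and vanishes on $\partial\Omega_s$, the function $\phi_s$ attains an interior maximum, so $\nabla^\perp\phi_s$ \emph{must} vanish at interior points of the patch. Directional H\"older control along a direction field that degenerates in the interior says nothing about $\nabla u_s$ there, and no chain-rule or composition estimate can manufacture regularity in the missing directions.

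The paper closes exactly this gap with two ingredients absent from your proposal. First, one constructs an auxiliary vector field $W_0$ on $\overline{\Omega}_0$ that coincides with $\nabla^\perp\phi_0$ near $\partial\Omega_0$ but is patched, in the interior region where $\nabla^\perp\phi_0$ may vanish, with a complementary nonvanishing field $\tilde{W}_0$ satisfying $\nabla\cdot W_0\in\mathring{C}^\alpha$; one then shows that the push-forward $W_t=(W_0\cdot\nabla)\Phi_t$ retains $\V W_t\V_{\mathring{C}^\alpha(\overline{\Omega}_t)}\le\mathfrak{C}(M)$ together with the lower bound $\V W_t\V_{\inf(\overline{\Omega}_t)}\ge(\mathfrak{C}(M))^{-1}$ while $\Phi$ ranges over $B(M,T)$. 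Second, one invokes Lemma \ref{lem:velocity_circle} (Serfati's inversion): because $\nabla\cdot u=0$ and $\nabla\times u\equiv 1$ inside the patch, the directional bound on $W_t\cdot\nabla u$ together with the nondegeneracy $|W_t|\ge c>0$ can be algebraically inverted to recover the full bound $\V\nabla u\V_{\mathring{C}^\alpha(\overline{\Omega}_t)}\le\mathfrak{C}(M)$. (An alternative route, which the paper mentions but does not pursue, is elliptic regularity using the fact that $u$ is harmonic inside $\Omega_t$; you invoke neither mechanism.) Once this full gradient bound is in hand, the remaining steps you wrote --- the symmetry argument, the $L^\infty$ bounds, and the smallness of $T$ --- go through essentially as in the paper; by contrast, the ``self-consistency'' of \eqref{eq:level_set_bound_via_flow} that you single out as the main obstacle is handled routinely by Lemma \ref{lem:calculus} and is not where the difficulty lies.
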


\begin{lemma}\label{lem:iteration_lipschitz}
	Assume that we are in the situation where Lemma \ref{lem:iteration_bound} holds. Then, there exists some $0 < T_1 \le T$, depending only on $M$ and $\mathring{\Gamma}_0$, so that for any $\Phi, \tilde{\Phi} \in B(M,T)$, \begin{equation*}
	\begin{split}
	&\V F(\Phi)(t) - F(\tilde{\Phi})(t) \V_{L^\infty(\overline{\Omega}_0)} \\
	&\qquad\le \mathfrak{C}(M) \int_0^t  \V \Phi_s - \tilde{\Phi}_s\V_{L^\infty(\overline{\Omega}_0)} \left( 1 + \log\left( 1 + \V \Phi_s - \tilde{\Phi}_s\V_{L^\infty(\overline{\Omega}_0)} \right) \right)ds
	\end{split}
	\end{equation*} and \begin{equation*}
	\begin{split}
	&\V \nabla F(\Phi)(t) - \nabla F(\tilde{\Phi})(t) \V_{L^\infty(\overline{\Omega}_0)} \\
	&\qquad\le \mathfrak{C}(M) \int_0^t  \V \nabla\Phi_s - \nabla\tilde{\Phi}_s \V_{L^\infty(\overline{\Omega}_0)} \left( 1 + \log\left( 1 + \V \nabla\Phi_s - \nabla\tilde{\Phi}_s \V_{L^\infty(\overline{\Omega}_0)}\right)  \right)  ds 
	\end{split}
	\end{equation*} hold for any $t \in [0,T_1]$. 
\end{lemma}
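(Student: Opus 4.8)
The plan is to prove the two Lipschitz-type estimates in Lemma \ref{lem:iteration_lipschitz} by writing the difference $F(\Phi)-F(\tilde\Phi)$ explicitly and exploiting the log-Lipschitz structure of the Biot--Savart kernel $K$, together with the uniform bounds that membership in $B(M,T)$ provides. Recall that
\begin{equation*}
\begin{split}
F(\Phi)(t,x) - F(\tilde\Phi)(t,x) = \int_0^t \int_{\Omega_0} \left[ K(\Phi_s(x)-\Phi_s(z)) - K(\tilde\Phi_s(x)-\tilde\Phi_s(z)) \right] dz\, ds.
\end{split}
\end{equation*}
First I would fix $s$ and treat the inner integral. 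Writing $a = \Phi_s(x)-\Phi_s(z)$ and $b = \tilde\Phi_s(x)-\tilde\Phi_s(z)$, we have $|a-b| \le 2\V \Phi_s - \tilde\Phi_s\V_{L^\infty(\overline\Omega_0)}$, so the difference of kernel values is controlled by the modulus of continuity of $K$. The issue is that $K$ has a $1/|x|$ singularity, so $\nabla K \sim |x|^{-2}$ and a naive bound diverges. This is precisely where the log-Lipschitz phenomenon enters: one splits the $z$-integral into the region where $|a|, |b|$ are small (say $\le \V \Phi_s-\tilde\Phi_s\V_{L^\infty}$) and its complement. On the near region one uses the crude bound $|K(a)-K(b)| \le |K(a)| + |K(b)|$ together with integrability of $1/|w|$ in two dimensions; on the far region one uses the mean value theorem with $|\nabla K| \lesssim |w|^{-2}$. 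Carrying out the two pieces and optimizing yields the modulus $\eta(r) = r(1+\log(1+r))$ with $r = \V\Phi_s-\tilde\Phi_s\V_{L^\infty}$, after integrating over $\Omega_0$ whose diameter and volume are fixed. The nondegeneracy $|\nabla\Phi_s - I| \le 1/2$ guarantees that $|a| \approx |x-z|$ and $|b| \approx |x-z|$, so the change of variables does not degenerate and all implied constants depend only on $M$ through $\mathfrak{C}(M)$; this gives the first displayed estimate.

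For the gradient estimate, I would differentiate $F(\Phi)$ in $x$. The spatial derivative hits the first argument of $K$, producing $\nabla K(\Phi_s(x)-\Phi_s(z)) \cdot \nabla_x \Phi_s(x)$, an expression defined in the principal-value sense. Writing $\nabla F(\Phi) - \nabla F(\tilde\Phi)$ and adding and subtracting the natural intermediate term decomposes the difference into (i) a piece proportional to $\nabla K(a) - \nabla K(b)$ multiplied by $\nabla\Phi_s$, and (ii) a piece proportional to $\nabla K(b)$ multiplied by $\nabla\Phi_s - \nabla\tilde\Phi_s$. Piece (ii) is the cleaner one: since $\nabla K$ defines a Calder\'on--Zygmund operator, the singular integral $\int_{\Omega_0} \nabla K(b) (\cdots) dz$ applied to the factor $(\nabla\Phi_s - \nabla\tilde\Phi_s)$ is bounded, modulo the usual log loss, by $\V\nabla\Phi_s - \nabla\tilde\Phi_s\V_{L^\infty}(1 + \log(1 + \V\nabla\Phi_s-\nabla\tilde\Phi_s\V_{L^\infty}))$, where the logarithm arises exactly as in the $L^\infty$-bound of the geometric lemma; here one uses that $\nabla\tilde\Phi_s \in \mathring C^\alpha$ uniformly (with norm $\mathfrak{C}(M)$) to control the commutator contribution. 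Piece (i) is handled as in the non-gradient case, using $|\nabla K(a)-\nabla K(b)| \lesssim |w|^{-3}|a-b|$ on the far region, and $|a-b| \le 2\V\Phi_s - \tilde\Phi_s\V_{L^\infty} \le C\V\nabla\Phi_s - \nabla\tilde\Phi_s\V_{L^\infty}$ since both maps agree (being the identity) at the origin by the constraint $\Phi(t,0)=0$ in $B(M,T)$.

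The main obstacle is controlling the principal-value singular integral in piece (ii) with the correct logarithmic dependence rather than a full $C^\alpha$ bound. The key observation is that we must not take a $C^\alpha$-norm of $\nabla u$; instead we only need its $L^\infty$-norm applied to the difference $\nabla\Phi_s - \nabla\tilde\Phi_s$, and the logarithmic factor appears precisely because the Calder\'on--Zygmund bound on $L^\infty$ fails by a logarithm. I would split the singular integral at scale $\ell = \V\nabla\Phi_s-\nabla\tilde\Phi_s\V_{L^\infty}$: on $|x-z| \le \ell$ the H\"older continuity of $\nabla\tilde\Phi_s$ (norm $\mathfrak C(M)$) controls the cancellation of the principal value, contributing $\mathfrak C(M)\ell^\alpha \cdot \ell^{1-\alpha}$ after accounting for the kernel; on $|x-z| > \ell$ one bounds $|\nabla K| \lesssim |x-z|^{-2}$ directly, and integrating from $\ell$ to the fixed diameter of $\Omega_0$ produces the factor $\log(1/\ell)$, i.e. the desired $\ell(1 + \log(1+\ell))$ after reinstating the multiplicative $\ell$. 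All constants collapse into $\mathfrak C(M)$, the time integration is immediate, and choosing $T_1 \le T$ small enough (depending only on $M$ and $\mathring\Gamma_0$) ensures the bootstrap bounds used along the way remain valid throughout $[0,T_1]$, completing the proof.
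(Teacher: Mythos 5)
Your first estimate reproduces the paper's argument almost exactly (split at $|x-z|\sim\epsilon$, use $|K(a)-K(b)|\le C|a-b|(|a|^{-2}+|b|^{-2})$ in the far region and local integrability of $|K(w)|\sim|w|^{-1}$ in the near region, then optimize $\epsilon=\V\Phi_s-\tilde\Phi_s\V_{L^\infty}$), and your splitting of the gradient difference into pieces (i) and (ii) agrees with the paper's reduction to a key integral "modulo trivially bounded terms." But piece (ii) is simpler than you make it: after the measure-preserving change of variables $y=\tilde\Phi_s(z)$ (recall $\det\nabla\tilde\Phi_s\equiv1$ in $X$), the factor $\mathrm{p.v.}\int_{\Omega_0}\nabla K(\tilde\Phi_s(x)-\tilde\Phi_s(z))\,dz$ is exactly $\nabla\tilde u_s(\tilde\Phi_s(x))$ up to the bounded local term, hence it is controlled in $L^\infty$ by $\mathfrak{C}(M)$ via (the proof of) Lemma \ref{lem:iteration_bound}; no Calder\'on--Zygmund or commutator analysis is needed and no logarithm arises from this piece.

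The genuine gap is piece (i), which is where the entire difficulty of the lemma sits. You propose to handle it "as in the non-gradient case," but that case relied on local integrability of $|K|$; here $|\nabla K(w)|\sim|w|^{-2}$ is \emph{not} locally integrable in two dimensions, so near $z=x$ one cannot put absolute values inside the integral, and the principal-value cancellation must be exploited. The paper does this by changing variables in each p.v. integral separately, so the near-field difference reduces to $\int_{\Sigma}|\nabla K|$ over the symmetric difference $\Sigma$ of the image sets $\Phi_s(B_{\epsilon|x|}(x))-\Phi_s(x)$ and $\tilde\Phi_s(B_{\epsilon|x|}(x))-\tilde\Phi_s(x)$; the thinness of that shell is precisely what requires the $\mathring{C}^\alpha$ bounds on $\nabla\Phi_s,\nabla\tilde\Phi_s$ (Huang's geometric claim, together with the case distinction on $d(x,\partial\Omega_0)$ and the interior-ball property of $\Omega_0$), and it yields $\mathfrak{C}(M)\bigl(\epsilon^\alpha+\V\nabla\Phi_s-\nabla\tilde\Phi_s\V_{L^\infty}\bigr)$. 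None of this appears in your sketch. Two of your quantitative choices also fail. In the far region you bound $|a-b|$ by $2\V\Phi_s-\tilde\Phi_s\V_{L^\infty}$, uniformly in $z$; integrating $|x-z|^{-3}$ over $|x-z|>\epsilon$ then gives $\epsilon^{-1}\V\nabla\Phi_s-\nabla\tilde\Phi_s\V_{L^\infty}$, which is $O(1)$ for your choice $\epsilon=\ell$, not $O(\ell\log(1/\ell))$; one needs instead the pointwise bound $|a_z-b_z|\le\V\nabla\Phi_s-\nabla\tilde\Phi_s\V_{L^\infty}|x-z|$, vanishing as $z\to x$, to produce the integrable $|x-z|^{-2}$ and hence the logarithm (and your inequality $\V\Phi_s-\tilde\Phi_s\V_{L^\infty}\le C\V\nabla\Phi_s-\nabla\tilde\Phi_s\V_{L^\infty}$ tacitly assumes points of $\overline{\Omega}_0$ can be joined to the origin inside the region where the gradients are controlled, which Definition \ref{def:level_set_circle} does not guarantee). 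Finally, the splitting scale must satisfy $\epsilon^\alpha\sim\V\nabla\Phi_s-\nabla\tilde\Phi_s\V_{L^\infty}$, as in the paper, not $\epsilon\sim\V\nabla\Phi_s-\nabla\tilde\Phi_s\V_{L^\infty}$: with your choice the H\"older-controlled near region contributes $\ell^\alpha$, which is neither $O(\ell)$ nor $O(\ell\log(1/\ell))$ as $\ell\to 0$ and, being a non-Osgood modulus, would destroy the fixed-point iteration in Proposition \ref{prop:LWP} that this lemma is designed to feed.
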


Assuming the statements of Lemmas \ref{lem:iteration_bound} and \ref{lem:iteration_lipschitz}, let us just provide a sketch of the proof, as the argument is parallel to \cite[Proof of Theorem 4.1]{Hu}. 

\begin{proof}[Proof of Proposition \ref{prop:LWP}]
	Take $M$ and $T_1$ such that the map $F $ sends $B(M,T_1)$ to itself, and moreover, for any $\Phi, \tilde{\Phi} \in B(M,T_1)$, \begin{equation*}
	\begin{split}
	&\V F(\Phi)(t) - F(\tilde{\Phi})(t) \V_{W^{1,\infty}(\overline{\Omega}_0)} \\
	&\qquad \le C(M) \int_0^t \V \Phi(s) - \tilde{\Phi}(s)\V_{W^{1,\infty}(\overline{\Omega}_0)} \left(  1 + \log\left(1 + \V \Phi(s) - \tilde{\Phi}(s)\V_{W^{1,\infty}(\overline{\Omega}_0)}\right)\right)ds
	\end{split}
	\end{equation*} for $t \in [0,T_1]$. Here, $M$ and $T_1$ depends only on $\mathring{\Gamma}_0$. Define a sequence $\{ \Phi_n \}_{n \ge 0}$ in $B(M,T_1)$ by \begin{equation*}
	\begin{split}
	\Phi_0(t,x) = x, \qquad \Phi_{n+1}(t,x) = F(\Phi_n)(t,x), \quad n \ge 0. 
	\end{split}
	\end{equation*} It is straightforward to see that at each step of the iteration, the flow is $m$-fold symmetric around the origin and therefore $\Phi_n(t,0) = 0$ for all $n \ge 0$. Setting \begin{equation*}
	\begin{split}
	\rho_n(t) := \V \Phi_{n+1}(t) - \Phi_n(t) \V_{W^{1,\infty}(\overline{\Omega}_0)},
	\end{split}
	\end{equation*} we have \begin{equation*}
	\begin{split}
	\rho_n(t) \le \mathfrak{C}(M) \int_0^t \rho_{n-1}(s)\left( 1  + \log\left(1 + \rho_{n-1}(s)\right)\right) ds. 
	\end{split}
	\end{equation*} This is sufficient to deduce that, taking a smaller value of $T_1$ depending only on $M$ if necessary (see \cite[Chapter 2]{MP} for instance), there exists a function $\Phi:[0,T_1]\times \Omega_0  \rightarrow \mathbb{R}^2$ such that \begin{equation*}
	\begin{split}
	\V \Phi_n - \Phi \V_{L^\infty([0,T_1];W^{1,\infty}(\overline{\Omega}_0))} \rightarrow 0. 
	\end{split}
	\end{equation*} At this point, it is easy to see that $\Phi$ actually belongs to $B(M,T_1)$ and $F(\Phi) = \Phi$. Therefore, we have constructed a solution, which belongs to the desired class, to the 2D Euler equation with initial data $\Omega_0$ on the time interval $[0,T_1]$. 
	
	We briefly comment on the issue of continuing the solution past $T_1$. (All the details can be found in \cite{Hu}.) Take $\Omega_{T_1}$ as the new initial data, which has associated level set function $\phi_{T_1}$ with its characteristic $\mathring{\Gamma}_{T_1}$. Going through the exact same iteration scheme again with this new data, one obtains a unique solution on some time interval $[0,T_2]$, with $T_2 = T_2(\mathring{\Gamma}_{T_1}) > 0$. 
	Then, by putting this solution together with the previous one, we obtain a patch solution, admitting a $\mathring{C}^{1,\alpha}$-level set, to the 2D Euler equation on the time interval $[0, T_1 + T_2]$ with initial data $\Omega_0$. This procedure can go on as long as we have a bound on $\mathring{\Gamma}_t$. This finishes the proof. 
\end{proof}

\begin{proof}[Proof of Proposition \ref{prop:LWP2}]
	The assumptions given in Definition \ref{def:C_1alpha_corner} are strictly stronger than the ones in Definition \ref{def:level_set_circle}, so we may work inside the time interval within which we have available the iterates $\Phi_n$ and the limit $\Phi$ belonging to the class $X$, defined in the above proof of Proposition \ref{prop:LWP}. It suffices to carry the information that, by shrinking $T$ if necessary in a way only depending on $\Gamma_0$, for some time interval $[0,T]$, each of $\Phi_n$ satisfies following the H\"older estimate uniformly in $n$:  \begin{equation*}
	\begin{split}
	\V \Phi_n(t,(x,f_0(x))) \V_{C^{1,\alpha}[0,\delta_0]} + \V \Phi_n(t,(x,g_0(x))) \V_{C^{1,\alpha}[0,\delta_0]} \le C(\Gamma_0) < \infty.
	\end{split}
	\end{equation*} This follows directly from the a priori estimates given in the proof of Theorem \ref{thm:symmetric_corner}. It is not difficult to see that $\Phi$ inherits the same H\"older estimate. 
\end{proof}

The lemmas \ref{lem:iteration_bound}, \ref{lem:iteration_lipschitz}, and the bound \eqref{eq:level_set_bound_via_flow} are direct consequences of the following simple lemmas. The first one provides substitutes for the usual calculus inequalities on $C^\alpha$-spaces. 

\begin{lemma}\label{lem:calculus}
	Let $f$ and $g$ be $\mathring{C}^\alpha$ functions on some domain $\Omega \subset \mathbb{R}^2$. Then we have \begin{equation}\label{eq:product}
	\begin{split}
	\V f g \V_{\mathring{C}^\alpha} \le C \left( \V f \V_{\mathring{C}^\alpha} \cdot \V g \V_{L^\infty} +  \V f \V_{L^\infty} \cdot \V g \V_{\mathring{C}^\alpha} \right)
	\end{split}
	\end{equation} and if we assume further that $|f| > 0$ on $\Omega$, \begin{equation}\label{eq:division}
	\begin{split}
	\V 1/f \V_{\mathring{C}^\alpha} \le C(\V f \V_{\inf(\Omega)}) \V f \V_{\mathring{C}^\alpha}.
	\end{split}
	\end{equation} Moreover, if $\Psi$ is a Lipschitz diffeomorphism of $\mathbb{R}^2$ with $\Psi(0) = 0$, then \begin{equation}\label{eq:composition}
	\begin{split}
	\V f \circ \Psi\V_{\mathring{C}^\alpha} \le C\left( \V \nabla\Psi\V_{L^\infty}, \V \nabla \Psi \V_{\inf} \right) \V f \V_{\mathring{C}^\alpha}.
	\end{split}
	\end{equation}
\end{lemma}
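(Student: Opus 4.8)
The plan is to reduce all three inequalities to a single \emph{local} H\"older estimate that expresses the defining feature of $\mathring{C}^\alpha$: a function in this space is genuinely $C^\alpha$ on the scale of its distance to the origin, with a seminorm that scales like $|x|^{-\alpha}$. Throughout I write $P(x):=|x|^\alpha$, so that by definition $\V f\V_{\mathring{C}^\alpha}=\V f\V_{L^\infty}+\V Pf\V_{\dot{C}^\alpha}$, and I record the elementary fact $\V P\V_{\dot{C}^\alpha}\le 1$, which follows from $\big||x|^\alpha-|x'|^\alpha\big|\le\big||x|-|x'|\big|^\alpha\le|x-x'|^\alpha$ for $0<\alpha\le1$. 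The naive product rule in $\dot{C}^\alpha$ is unavailable here because $Pf$ is not bounded (it grows like $|x|^\alpha$), so the main device will instead be a dichotomy between points that are \emph{near}, meaning $|x-x'|\le\tfrac12\max(|x|,|x'|)$, and \emph{far}.

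First I would establish the key lemma: if $f\in\mathring{C}^\alpha$ and $x,x'$ are near, with $\rho:=\max(|x|,|x'|)$, then
\begin{equation*}
|f(x)-f(x')|\le C\,\V f\V_{\mathring{C}^\alpha}\,\rho^{-\alpha}\,|x-x'|^\alpha .
\end{equation*}
The proof is a one-line algebraic splitting: since nearness forces $|x|,|x'|\in[\rho/2,\rho]$, I write $f=(Pf)/P$ and decompose
\begin{equation*}
f(x)-f(x')=\frac{(Pf)(x)-(Pf)(x')}{P(x)}+(Pf)(x')\Big(\frac{1}{P(x)}-\frac{1}{P(x')}\Big).
\end{equation*}
The first term is bounded using $\V Pf\V_{\dot{C}^\alpha}\le\V f\V_{\mathring{C}^\alpha}$ and $P(x)\ge(\rho/2)^\alpha$; the second using $|(Pf)(x')|\le\rho^\alpha\V f\V_{L^\infty}$ together with $\V P\V_{\dot{C}^\alpha}\le1$ and the lower bounds on $P$. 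Both contribute $C\V f\V_{\mathring{C}^\alpha}\rho^{-\alpha}|x-x'|^\alpha$.

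With this lemma the product and quotient bounds follow from the near/far dichotomy applied to $\V P\,fg\V_{\dot{C}^\alpha}$ and $\V P/f\V_{\dot{C}^\alpha}$. In the far regime one simply takes absolute values: $\max(|x|,|x'|)^\alpha\le(2|x-x'|)^\alpha$, so the $L^\infty$ bounds on $fg$ (respectively $1/f$, using $|f|\ge\V f\V_{\inf(\Omega)}$) give the claim, after noting $\V f\V_{\mathring{C}^\alpha}\ge\V f\V_{\inf(\Omega)}$ to absorb constants. In the near regime, for the product I write $P\,fg=(Pf)\,g$ and split the difference as $[(Pf)(x)-(Pf)(x')]\,g(x)+(Pf)(x')\,[g(x)-g(x')]$, estimating the first bracket by $\V Pf\V_{\dot{C}^\alpha}$ and the increment $g(x)-g(x')$ by the local lemma applied to $g$; the factor $|(Pf)(x')|\le\rho^\alpha\V f\V_{L^\infty}$ cancels the $\rho^{-\alpha}$ produced by the lemma, yielding exactly \eqref{eq:product}. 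For the quotient I split $P/f$ analogously and use the local lemma on $f$ together with $|f(x)f(x')|\ge\V f\V_{\inf(\Omega)}^2$, which produces the constant $C(\V f\V_{\inf(\Omega)})\sim\V f\V_{\inf(\Omega)}^{-2}$ in \eqref{eq:division}.

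The composition bound \eqref{eq:composition} follows the same scheme, the one new point being that the bi-Lipschitz map $\Psi$ distorts dyadic scales, so the near/far threshold must be chosen in terms of the Lipschitz constants. Since $\Psi$ is a bi-Lipschitz homeomorphism with $\Psi(0)=0$, there are $0<\ell\le L$ (controlled by $\V\nabla\Psi\V_{\inf}$ and $\V\nabla\Psi\V_{L^\infty}$) with $\ell|x-x'|\le|\Psi(x)-\Psi(x')|\le L|x-x'|$, hence $\ell|x|\le|\Psi(x)|\le L|x|$. I would take the near regime to be $|x-x'|\le\tfrac{\ell}{2L}\max(|x|,|x'|)$, which guarantees that the image points $y=\Psi(x)$, $y'=\Psi(x')$ satisfy $|y-y'|\le\tfrac12\max(|y|,|y'|)$, so the local lemma applies to $f$ at $y,y'$; combining it with $|y-y'|\le L|x-x'|$, $\max(|y|,|y'|)\ge\ell\max(|x|,|x'|)$, and $\V P\V_{\dot{C}^\alpha}\le1$ controls $\V P\,(f\circ\Psi)\V_{\dot{C}^\alpha}$ there, while the far regime is again handled by the $L^\infty$ bound. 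The only genuine obstacle in the whole argument is the non-boundedness of $Pf$, which is precisely what the near/far dichotomy and the local lemma are designed to bypass; the scale distortion in the composition step is the mildest additional complication, resolved by the $\ell/L$ choice of threshold.
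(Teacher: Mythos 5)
Your proposal is correct and follows essentially the same route as the paper's proof: the paper likewise opens with the observation that $|f(x)-f(x')|\le C\V f\V_{\mathring{C}^\alpha}|x-x'|^\alpha/|x|^\alpha$ for points with comparable moduli, then handles each inequality by the same near/far dichotomy ($|x-x'|\le |x|/2$ versus $|x-x'|>|x|/2$) with a Leibniz-type splitting for the product and the factorization through $|\Psi(x)-\Psi(x')|$ for the composition. The only cosmetic difference is that the paper's local estimate is stated under comparability of $|x|,|x'|$ rather than nearness, so for the composition it applies directly to the image points without your $\ell/(2L)$ adjustment of the threshold.
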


\begin{proof}
	Let us note first that for two points at comparable distance, i.e. if $x \ne x'$ satisfy $c_1|x'| \le |x| \le c_2|x'|$, \begin{equation*}
	\begin{split}
	\frac{|f(x) - f(x')|}{|x-x'|^\alpha} \le C \frac{\V f \V_{\mathring{C}^\alpha}}{|x|^\alpha}
	\end{split}
	\end{equation*} with $C$ depending on $c_1, c_2$. 
	
	We begin with \eqref{eq:product}. First, we have an $L^\infty$-bound $\V fg\V_{L^\infty} \le \V f \V_{L^\infty} \cdot \V g \V_{L^\infty}$. Now take two points $x \ne x' \in \Omega$ and assume without loss of generality that $|x| \ge |x'|$. Consider two cases, (i) $|x-x'| \le |x|/2$ and (ii) $|x - x'| > |x|/2$. In the latter case, \begin{equation*}
	\begin{split}
	\frac{|x|^\alpha f(x)g(x) - |x'|^\alpha f(x')g(x')}{|x-x'|^\alpha} &= \frac{|x|^\alpha \left(f(x)g(x) - f(x')g(x') \right) + \left( |x|^\alpha - |x'|^\alpha \right) f(x')g(x')}{|x-x'|^\alpha} \\
	&\le C \V f\V_{L^\infty} \cdot \V g \V_{L^\infty}. 
	\end{split}
	\end{equation*} Next, when (i) holds, we rewrite \begin{equation*}
	\begin{split}
	&\frac{|x|^\alpha \left(f(x)g(x) - f(x')g(x') \right) + \left( |x|^\alpha - |x'|^\alpha \right) f(x')g(x')}{|x-x'|^\alpha} \\
	&\qquad= \frac{|x|^\alpha (f(x) - f(x'))g(x)}{|x-x'|^\alpha} + \frac{|x|^\alpha f(x')(g(x) - g(x'))}{|x-x'|^\alpha} + \frac{|x|^\alpha - |x'|^\alpha}{|x-x'|^\alpha}f(x')g(x'),
	\end{split}
	\end{equation*} which is bounded in absolute value by the right hand side of \eqref{eq:product}, noting that \begin{equation*}
	\begin{split}
	|f(x) - f(x')| \le C \V f \V_{\mathring{C}^\alpha} \frac{|x-x'|^\alpha}{|x|^\alpha}
	\end{split}
	\end{equation*} whenever $|x  -x'| \le |x|/2$. The proof of \eqref{eq:division} is strictly analogous, so let us omit it. 
	
	To show the last statement \eqref{eq:composition}, it suffices to treat the case when $|x'| \le |x|$ and $|x - x'| \le |x|/2$. Moreover, it suffices to bound the quantity \begin{equation*}
	\begin{split}
	|x|^\alpha \frac{|f(\Psi(x)) - f (\Psi(x'))|}{|x-x'|^\alpha} &= |x|^\alpha \frac{|f(\Psi(x)) - f (\Psi(x'))|}{|\Psi(x)-\Psi(x')|^\alpha} \cdot \frac{|\Psi(x)-\Psi(x')|^\alpha}{|x-x'|^\alpha}.
	\end{split}
	\end{equation*}  Note that since $\Psi(0) = 0$, \begin{equation*}
	\begin{split}
	\V \nabla\Psi\V_{\inf} \le \frac{|\Psi(z)|}{|z|} \le \V \nabla\Psi\V_{L^\infty}
	\end{split}
	\end{equation*} for any $z$, and since we have $|x'| \le |x| \le 2|x'|$, there exists some constants $c_1, c_2 > 0$ so that \begin{equation*}
	\begin{split}
	c_1 |\Psi(x')| \le |\Psi(x)| \le c_2 |\Psi(x')|. 
	\end{split}
	\end{equation*} This allows us to bound \begin{equation*}
	\begin{split}
	|x|^\alpha \frac{|f(\Psi(x)) - f (\Psi(x'))|}{|\Psi(x)-\Psi(x')|^\alpha} \cdot \frac{|\Psi(x)-\Psi(x')|^\alpha}{|x-x'|^\alpha} &\le C\V f \V_{\mathring{C}^\alpha} \cdot \frac{|x|^\alpha}{|\Psi(x)|^\alpha} \cdot \left( \V \nabla \Psi\V_{L^\infty} \right)^\alpha.
	\end{split}
	\end{equation*} This finishes the proof. 
\end{proof}

Next, we shall need the piece of information that in the setting of Proposition \ref{prop:LWP}, for each fixed time $t$, the velocity gradient $ \nabla u_t$ actually belongs to $\mathring{C}^\alpha(\overline{\Omega}_t)$. In the case of $C^{1,\alpha}$-patches, this is a direct consequence of velocity being $C^{1,\alpha}$ on the boundary, since then $\Delta u_t = 0$ in $\Omega$ and hence an elliptic regularity statement applies. It is likely that such an argument could be used here, but let us adopt the approach of Serfati \cite{Ser1} (see also recent papers by Bae and Kelliher \cite{BK1}, \cite{BK2}):

\begin{lemma}\label{lem:velocity_circle}
	Let $W $ be a vector field on a domain $\Omega$ with components in $\mathring{C}^\alpha(\overline{\Omega})$. Assume further that $|W| \ge c_0 > 0$ on $\Omega$. Then, for $\omega = \chi_{\Omega}$, the associated velocity satisfies \begin{equation*}
	\begin{split}
	\V \nabla u\V_{\mathring{C}^\alpha(\overline\Omega)} \le C(c_0) \V W\cdot\nabla u\V_{\mathring{C}^\alpha(\overline\Omega)}. 
	\end{split}
	\end{equation*}
\end{lemma}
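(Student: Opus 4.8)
The goal is to prove Lemma \ref{lem:velocity_circle}: given a vector field $W$ with components in $\mathring{C}^\alpha(\overline\Omega)$ and $|W| \ge c_0 > 0$ on $\Omega$, the velocity $u = K * \chi_\Omega$ satisfies $\V \nabla u\V_{\mathring{C}^\alpha(\overline\Omega)} \le C(c_0) \V W\cdot\nabla u\V_{\mathring{C}^\alpha(\overline\Omega)}$.

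\begin{proof}[Proof sketch of Lemma \ref{lem:velocity_circle}]
The plan is to exploit the two algebraic constraints satisfied by $\nabla u$ inside the patch, namely $\nabla\cdot u = 0$ and $\nabla\times u = 1$ (since $\omega = 1$ on $\Omega$), together with the single known directional derivative $W\cdot\nabla u$, in order to algebraically reconstruct the full matrix $\nabla u$ pointwise. Concretely, write $W = (W_1,W_2)$ and consider the four entries $\partial_j u_i$ of $\nabla u$. The divergence-free and unit-curl conditions read $\partial_1 u_1 + \partial_2 u_2 = 0$ and $\partial_1 u_2 - \partial_2 u_1 = 1$, which express two of the four entries in terms of the other two. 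The known quantity $W\cdot\nabla u = (W_1\partial_1 u_1 + W_2 \partial_2 u_1, W_1 \partial_1 u_2 + W_2 \partial_2 u_2)$ supplies two further scalar relations. First I would assemble these four scalar equations into a linear system for the four unknown entries of $\nabla u$ and check that the coefficient matrix is invertible precisely when $|W|^2 = W_1^2 + W_2^2 \ne 0$, with the determinant being a nonzero multiple of $|W|^2$; this is where the hypothesis $|W| \ge c_0 > 0$ enters, guaranteeing that the inverse matrix has entries controlled by $c_0^{-2}$ times polynomials in $W_1, W_2$.

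Granting invertibility, each entry of $\nabla u$ is expressed as $\nabla u = \mathcal{L}(W)\,(W\cdot\nabla u) + \mathcal{R}(W)$, where $\mathcal{L}(W)$ is a matrix whose entries are rational functions of $W_1,W_2$ with denominator $|W|^2$ (arising from Cramer's rule), and $\mathcal{R}(W)$ collects the contribution of the inhomogeneous curl term, again a rational function of $W$ with denominator $|W|^2$. The next step is to estimate the $\mathring{C}^\alpha$-norm of this expression. Since $\V\cdot\V_{\mathring{C}^\alpha}$ is an algebra norm up to constants and is stable under division by a quantity bounded below (this is exactly the content of the product estimate \eqref{eq:product} and the quotient estimate \eqref{eq:division} in Lemma \ref{lem:calculus}), the coefficient $\mathcal{L}(W)$ has $\mathring{C}^\alpha$-norm bounded by $C(c_0)\,(1 + \V W\V_{\mathring{C}^\alpha})$-type quantities, and similarly for $\mathcal{R}(W)$. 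Applying \eqref{eq:product} once more to the product $\mathcal{L}(W)\,(W\cdot\nabla u)$ yields
\[
\V \nabla u\V_{\mathring{C}^\alpha(\overline\Omega)} \le C(c_0)\left( \V W\cdot\nabla u\V_{\mathring{C}^\alpha(\overline\Omega)} + \V \mathcal{R}(W)\V_{\mathring{C}^\alpha(\overline\Omega)} \right).
\]
The inhomogeneous piece $\mathcal{R}(W)$, being built from $W$ and its lower-order data, is bounded independently of $u$; but since the statement is homogeneous in the relevant scaling one absorbs it by noting that the curl contribution is itself a bounded multiple of $1$, and in the $\mathring{C}^\alpha$ scale a bounded function is controlled, so this term can be folded into the constant or, more cleanly, dominated by the left-hand side through the same reconstruction applied to the constant curl.

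The main obstacle I anticipate is bookkeeping rather than conceptual: one must verify that the $\mathring{C}^\alpha$-calculus genuinely applies up to the boundary $\partial\Omega$, where $\nabla u$ is a priori only known to be bounded (the anti-symmetric part of $\nabla u$ jumps across $\partial\Omega$). The resolution is that the identity $\nabla u = \mathcal{L}(W)(W\cdot\nabla u) + \mathcal{R}(W)$ is a pointwise identity valid in the open set $\Omega$, and all quantities on the right extend continuously (indeed in $\mathring{C}^\alpha$) up to $\overline\Omega$ by hypothesis on $W$ and on $W\cdot\nabla u$; hence the left-hand side inherits uniform $\mathring{C}^\alpha$-regularity up to the boundary. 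A secondary subtlety is ensuring that the scale-invariant weight $|x|^\alpha$ built into $\mathring{C}^\alpha$ interacts correctly with the rational functions of $W$; but since Lemma \ref{lem:calculus} is stated precisely for this weighted norm and requires only a positive lower bound on the denominator, the estimates \eqref{eq:product}--\eqref{eq:division} close the argument directly. This completes the proof.
\end{proof}
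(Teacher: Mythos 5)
Your proposal is correct and takes essentially the same route as the paper's proof: there too, $\nabla u$ is reconstructed algebraically from $W\cdot\nabla u$ together with $\nabla\cdot u=0$ and $\nabla\times u=\omega\equiv\mathrm{const}$ inside the patch (the paper writes the inversion as an explicit $2\times 2$ identity with the matrix $|W|^{-2}\left(\begin{smallmatrix} W_1 & -W_2\\ W_2 & W_1\end{smallmatrix}\right)$ acting on $(W\cdot\nabla u_1,\, W\cdot\nabla u_2 - W_1\omega)^T$, rather than your $4\times 4$ Cramer's-rule system), and the estimate is then closed with the product and quotient bounds \eqref{eq:product}--\eqref{eq:division} of Lemma \ref{lem:calculus}, exactly as you do. The inhomogeneous curl contribution you flag is the $W_1\omega$ term in the paper's identity and is handled there in the same implicit way (so the constant in both arguments really depends on $\V W\V_{\mathring{C}^\alpha}$ as well as $c_0$, which is how the lemma is actually invoked in Lemma \ref{lem:iteration_bound}); hence that point is not a gap relative to the paper.
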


\begin{proof}
	With $W = (W_1,W_2)$ and $u = (u_1,u_2)$, one computes that \begin{equation*}
	\begin{split}
	\begin{pmatrix}
	\partial_1 u_1 \\
	\partial_2 u_1
	\end{pmatrix} = \frac{1}{|W|^2} \begin{pmatrix}
	W_1 & -W_2 \\
	W_2 & W_1
	\end{pmatrix} \begin{pmatrix}
	W_1 \pr_1 u_1 + W_2 \pr_2 u_1 \\
	W_1 \pr_2 u_1 - W_2 \pr_1 u_1 
	\end{pmatrix}
	\end{split}
	\end{equation*} and note that using $\pr_1 u_1 + \pr_2 u_2 = 0$ as well as $\pr_1u_2-\pr_2 u_1 = \omega \equiv \mathrm{constant}$, \begin{equation*}
	\begin{split}
	W_1 \pr_2 u_1 - W_2 \pr_1 u_1  = W\cdot\nabla u_2 -  W_1 \omega,
	\end{split}
	\end{equation*} so that using \eqref{eq:product} and \eqref{eq:division}, we conclude that $\nabla u_1 \in \mathring{C}^\alpha$. It follows that $\nabla u_2 \in \mathring{C}^\alpha$ as well. 
\end{proof}

\begin{remark}
	 To apply the above lemma to the setting of Proposition \ref{prop:LWP}, taking $W_0 := \nabla^\perp\phi_0$ is strictly speaking not allowed since it may vanish at some points in the interior of the initial patch $\Omega_0$. This can be simply fixed as follows (see \cite[Section 10]{BK1}). First, we know that for points $x \in \Omega$ with $d(x,\partial\Omega_0) < \delta|x|$, $|\nabla^\perp\phi_0|$ is bounded from below with a constant uniform in $|x|$, where $\delta$ can be taken as $1/(10 \mathring{\Gamma}_0)$, for instance. Then it suffices to take a vector field $\tilde{W}_0$ which does not vanish for points $x \in \Omega_0$ with $d(x,\partial\Omega_0) \ge  \delta|x|$. It is easy to require in addition that $\tilde{W}_0$ vanishes on $\partial\Omega_0$ and $\nabla \cdot \tilde{W}_0 \in \mathring{C}^\alpha(\mathbb{R}^2)$.\footnote{To construct such a vector field, one first considers the family of annuli $A_n = \{ x \in \mathbb{R}^2 :  2^{-n-1} < |x| < 2^{-n+1} \}$. By rescaling the region $A_n \cap \Omega$ to a domain of size $O(1)$, we obtain a region with boundary in $C^{1,\alpha}$. Then in this rescaled subset of the annulus one constructs easily a vector field in $C^\alpha$ with desired properties. Rescaling it back, and patching all the vector fields together finishes the construction of $\tilde{W}_0$.} Then, we evolve the vector field by \begin{equation*}
	 \begin{split}
	 \tilde{W}(t,\Phi(t,x)) := (\tilde{W}_0(x) \cdot \nabla) \Phi(t,x),
	 \end{split}
	 \end{equation*} which is consistent with the evolution of vector fields having the form $\nabla^\perp\phi$ for some scalar function $\phi$ advected by the flow. 
\end{remark}

\begin{proof}[Proof of Lemma \ref{lem:iteration_bound}]
	Given an initial vortex patch $\Omega_0$ satisfying conditions of Proposition \ref{prop:LWP}, we fix a vector field $\tilde{W}_0$ described in the remark following Lemma \ref{lem:velocity_circle}, as well as the level set $\phi_0$. Then, one may fix a vector field $W_0$ which coincides with $\nabla^\perp\phi_0$ near $\partial\Omega_0$ and with $\tilde{W}_0$ in a region where $\nabla^\perp\phi_0$ vanishes. We have $\nabla\cdot W_0 \in \mathring{C}^\alpha$. 
	
	We have \begin{equation*}
	\begin{split}
	F(\Phi)(t,x) = x + \int_0^t u(s,\Phi(s,x)) ds
	\end{split}
	\end{equation*} as well as \begin{equation*}
	\begin{split}
	(\nabla F(\Phi))(t,x) = I + \int_0^t \nabla u(s,\Phi(s,x)) \nabla\Phi(s,x) ds. 
	\end{split}
	\end{equation*} We claim that the push-forward of the vector field $W_0$ (recall that ${W}(t,\Phi(t,x)) := ({W}_0(x) \cdot \nabla) \Phi(t,x)$) satisfies $\sup_{t \in [0,T]} \V W_t\V_{\mathring{C}^\alpha(\overline{\Omega}_t)} \le \mathfrak{C}(M)$ as well as $\inf_{t \in [0,T]} \V W_t\V_{\inf(\overline{\Omega}_t)} \ge (\mathfrak{C}(M))^{-1} > 0$ (see \cite{BK1}, \cite{BK2} for complete details of this proof in the context of $C^\alpha$ vector fields -- the proof can be adapted to our setting with straightforward modifications). It then follows from Lemma \ref{lem:velocity_circle} that \begin{equation*}
	\begin{split}
	\V \nabla u\V_{\mathring{C}^\alpha(\overline{\Omega}_t)} \le \mathfrak{C}(M). 
	\end{split}
	\end{equation*} Then, using the inequalities from \ref{lem:calculus} we immediately obtain \begin{equation*}
	\begin{split}
	\V \nabla F(\Phi) \V_{\mathring{C}^\alpha}  \le \mathfrak{C}(M)T
	\end{split}
	\end{equation*} and also \begin{equation*}
	\begin{split}
	\sup_{\overline{\Omega}_0 \times [0,T]}| \nabla F(\Phi) - I| \le \mathfrak{C}(M)T. 
	\end{split}
	\end{equation*} Taking $T$ sufficiently small, we see that $F(\Phi) \in B(M,T)$. 
\end{proof}

Finally, we give a sketch of the proof of Lemma \ref{lem:iteration_lipschitz}.

\begin{proof}[Proof of Lemma \ref{lem:iteration_lipschitz}]
	Fix some $x \in \Omega_0$ and $t \in [0, T_1]$, and let us first obtain a bound on $| F(\Phi)(t,x) - F(\tilde{\Phi})(t,x)|$. We need to estimate \begin{equation}\label{eq:Quantity}
	\begin{split}
	\int_{\Omega_0} \left| K(\Phi(s,x) - \Phi(s,z)) - K(\tilde{\Phi}(s,x) - \tilde{\Phi}(s,z)) \right| dz 
	\end{split}
	\end{equation} for each $s \in [0,t]$. We split the integral: when $|z-x| > \epsilon$, we have \begin{equation*}
	\begin{split}
	&\int_{\Omega_0 \backslash B_\epsilon(x)} \left| K(\Phi(s,x) - \Phi(s,z)) - K(\tilde{\Phi}(s,x) - \tilde{\Phi}(s,z)) \right| dz  \\ &\qquad\le \mathfrak{C}(M) \int_{\Omega_0 \backslash B_\epsilon(x)} \V \Phi(s) - \tilde{\Phi}(s)\V_{L^\infty}  \cdot \frac{1}{|x-z|^2} dz  \le \mathfrak{C}(M) \V \Phi(s) - \tilde{\Phi}(s)\V_{L^\infty}  \left(1 + |\log(\epsilon)| \right),
	\end{split}
	\end{equation*} whereas \begin{equation*}
	\begin{split}
	\int_{\Omega_0 \cap B_\epsilon(x)} \left| K(\Phi(s,x) - \Phi(s,z)) - K(\tilde{\Phi}(s,x) - \tilde{\Phi}(s,z)) \right| dz \le \mathfrak{C}(M) \int_{\Omega_0 \cap B_\epsilon(x)} \frac{1}{|x-z|}dz \le \mathfrak{C}(M)\epsilon. 
	\end{split}
	\end{equation*} We have used the following elementary inequality: \begin{equation*}
	\begin{split}
	|K(a) - K(b)| \le C|a-b| \left( \frac{1}{|a|^2} + \frac{1}{|b|^2} \right).
	\end{split}
	\end{equation*} Choosing $\epsilon =  \V \Phi(s) - \tilde{\Phi}(s)\V_{L^\infty} $ establishes the desired inequality (assuming that the latter quantity is non-zero -- otherwise the result is trivial). 
	
	Turning to the next inequality, one sees that the key is to obtain a bound on the following integral: \begin{equation*}
	\begin{split}
	\int_{\Omega_0} \left| \nabla K(\Phi(s,x) - \Phi(s,z)) - \nabla K(\tilde{\Phi}(s,x) - \tilde{\Phi}(s,z)) \right| dz,
	\end{split}
	\end{equation*} modulo the terms which are trivially bounded by $\mathfrak{C}(M) \V \nabla\Phi - \nabla\tilde{\Phi}\V_{L^\infty}$. 
	
	To begin with, take some constant $\epsilon_0 > 0$ (depending only on $\Omega_0$) with the property that, for any $x \in \partial\Omega_0$, there is an open ball of radius $4\epsilon_0|x|$ contained in $\Omega_0$ and whose boundary contains $x$. Now let us take some $\epsilon < \epsilon_0$, whose value will be determined later. We shall consider two cases: (i) $d(x,\partial\Omega_0) > 2\epsilon|x|$, (ii) $d(x,\partial\Omega_0) \le 2\epsilon|x|$. 
	
	When (i) holds, let us split the integral as \begin{equation*}
	\begin{split}
	\int_{\Omega_0 \backslash B_{\epsilon|x|}(x)} + \int_{\Omega_0 \cap B_{\epsilon|x|}(x)} \left| \nabla K(\Phi(s,x) - \Phi(s,z)) - \nabla K(\tilde{\Phi}(s,x) - \tilde{\Phi}(s,z)) \right| dz,
	\end{split}
	\end{equation*} and in the former region, we further decompose into regions where $\epsilon|x| < |z-x| \le 10|x|$ and $10|x| < |z-x|$. Then, in the case $\epsilon|x| < |z-x| \le 10|x|$, using the mean value theorem with the decay of $\nabla\nabla K$ gives a bound \begin{equation*}
	\begin{split}
	\mathfrak{C}(M) \V \nabla\Phi - \nabla\tilde{\Phi}\V_{L^\infty}(1 + |\log(\epsilon)|). 
	\end{split}
	\end{equation*} Then, when $10|x| < |z-x|$ holds, one first symmetrizes the kernel to gain extra decay and then use the mean value theorem to obtain \begin{equation*}
	\begin{split}
	\mathfrak{C}(M) \V \nabla\Phi - \nabla\tilde{\Phi}\V_{L^\infty}.
	\end{split}
	\end{equation*} In the latter region, the integral is bounded by \begin{equation*}
	\begin{split}
	\int_{\Sigma} |\nabla K(z) dz| \le C \int_{\Sigma} |z|^{-2}dz,
	\end{split}
	\end{equation*} where \begin{equation*}
	\begin{split}
	\Sigma &= ( \Phi(s,B_{\epsilon|x|}(x)) - \Phi(s,x) ) \Delta ( \tilde{\Phi}(s,B_{\epsilon|x|}(x)) - \tilde{\Phi}(s,x) ) \\
	&:= \{ y - \Phi(s,x) : y \in \Phi(s,B_{\epsilon|x|}(x))  \} \Delta \{ y - \tilde{\Phi}(s,x) : y \in \tilde{\Phi}(s,B_{\epsilon|x|}(x)) \}. 
	\end{split}
	\end{equation*} For any unit vector $\omega$, define \begin{equation*}
	\begin{split}
	r_1(\omega) = \min\{ r > 0 : r\omega \in \Sigma \}, \qquad r_2(\omega) = \max\{ r > 0 : r\omega \in \Sigma\}.
	\end{split}
	\end{equation*} Then, the claim of Huang \cite[(4.20) on p. 531]{Hu} translates in our setting to give that (after the usual scaling argument in $|x|$) \begin{equation*}
	\begin{split}
	r_1(\omega) \ge (\mathfrak{C}(M))^{-1} \epsilon|x|, \qquad r_2(\omega) \le \mathfrak{C}(M) \epsilon|x| \left( \epsilon^\alpha + \V \nabla\Phi - \nabla\tilde{\Phi} \V_{L^\infty} \right).
	\end{split}
	\end{equation*}   Using these bounds, we integrate \begin{equation*}
	\begin{split}
	\int_{\Sigma} |\nabla K(z) dz| &\le \mathfrak{C}(M) \int_{\partial B_1(0)} \int_{r_1(\omega)}^{r_2(\omega)} \frac{1}{r} dr d\omega \le \mathfrak{C}(M) \int_{\partial B_1(0)} \log\left( 1 + \frac{r_2(\omega) - r_1(\omega)}{r_1(\omega)}  \right) d\omega \\
	&\le \mathfrak{C}(M) \int_{\partial B_1(0)} \frac{r_2(\omega) - r_1(\omega)}{r_1(\omega)}d\omega \le \mathfrak{C}(M)(\epsilon^\alpha +  \V \nabla\Phi - \nabla\tilde{\Phi} \V_{L^\infty} ).
	\end{split}
	\end{equation*} We have established the desired bound on \eqref{eq:Quantity}, and it follows immediately that \begin{equation*}
	\begin{split}
	| \nabla u(s,\Phi(s,x)) - \nabla\tilde{u}(s,\tilde{\Phi}(s,x)) | \le \mathfrak{C}(M)(\epsilon^\alpha +  \V \nabla\Phi - \nabla\tilde{\Phi} \V_{L^\infty} \left( 1 + |\log(\epsilon)| \right) )
	\end{split}
	\end{equation*} when (i) holds, and with $\epsilon < \epsilon_0$. 
	
	Now, when (ii) holds for $x \in \Omega_0$, we can select (by the assumption on $\epsilon_0$) a point $y \in \Omega_0$, such that $d(y,\partial\Omega_0) \ge 2\epsilon|x|$ and $|x-y| \le 2\epsilon|x|$. Then, \begin{equation*}
	\begin{split}
	|\nabla u(s,\Phi(s,x)) - \nabla\tilde{u}(s,\tilde{\Phi}(s,x))| &\le |\nabla u(s,\Phi(s,x)) - \nabla u(s,\Phi(s,y))| \\
	&\qquad + |\nabla u(s,\Phi(s,y)) - \nabla\tilde{u}(s,\tilde{\Phi}(s,y))| \\
	&\qquad + |\nabla\tilde{u}(s,\tilde{\Phi}(s,y)) - \nabla\tilde{u}(s,\tilde{\Phi}(s,x))|\\
	&\le \mathfrak{C}(M) (\epsilon^\alpha +  \V \nabla\Phi - \nabla\tilde{\Phi} \V_{L^\infty} \left( 1 + |\log(\epsilon)| \right) ) + \mathfrak{C}(M) \epsilon^\alpha,
	\end{split}
	\end{equation*} where we have used that $\nabla u, \nabla \tilde{u} \in \mathring{C}^\alpha$: \begin{equation*}
	\begin{split}
	|\nabla u(s,\Phi(s,x)) - \nabla u(s,\Phi(s,y))| \le \mathfrak{C}(M) \frac{|\Phi(s,x) - \Phi(s,y)|^\alpha}{|\Phi(s,x)|^\alpha} \le \mathfrak{C}(M) \V\nabla \Phi\V_{L^\infty}^\alpha \cdot \frac{|x-y|^\alpha}{|x|^\alpha} \le \mathfrak{C}(M) \epsilon^\alpha,
	\end{split}
	\end{equation*} and similarly for the other term. 
	
	At this point, observe that \begin{equation*}
	\begin{split}
	\frac{d}{dt} \left|  \nabla\Phi(t,x) - \nabla\tilde{\Phi}(t,x)  \right| \le \mathfrak{C}(M),
	\end{split}
	\end{equation*} so that \begin{equation*}
	\begin{split}
	\V \nabla\Phi(t,\cdot) - \nabla\tilde{\Phi}(t,\cdot) \V_{L^\infty} \le \mathfrak{C}(M)t,
	\end{split}
	\end{equation*} and therefore by taking $T_1$ sufficiently small, relative to $M$ and $\Omega_0$, it can be assumed that \begin{equation*}
	\begin{split}
	\sup_{t \in [0,T_1]} \V \nabla\Phi - \nabla\tilde{\Phi} \V_{L^\infty} \le \frac{1}{10} \epsilon_0^\alpha.
	\end{split}
	\end{equation*} Now we may take $\epsilon^\alpha = \V \nabla\Phi(\cdot,s) - \nabla\tilde{\Phi}(\cdot,s) \V_{L^\infty}$ for each $s  \in [0,T_1]$ (or just a sufficiently small constant when the latter is zero). This finishes the proof. 
\end{proof}

In the course of the above local well-posedness proof, we needed to prove that the flow maps having regularity $\nabla\Phi \in \mathring{C}^\alpha$ implies that the corresponding velocity gradient satisfies $\nabla u \in \mathring{C}^\alpha$. For completeness we show that the converse also holds. 

\begin{proposition}
	Let $u$ be a vector field with regularity $\nabla u \in L^\infty([0,T);\mathring{C}^{\alpha}(\mathbb{R}^2))$ for some $0 < \alpha \le 1$ and satisfy $u(t,0) = 0$ for all $t \in [0,T)$. Then the associated flow map $\Phi$ satisfies \begin{equation*}
	\begin{split}
	\V \nabla\Phi(t)\V_{\mathring{C}^\alpha(\mathbb{R}^2)} \le M = M(t, \sup_{s \in [0,t]}\V \nabla u(s)\V_{\mathring{C}^\alpha(\mathbb{R}^2)} ) . 
	\end{split}
	\end{equation*}
\end{proposition}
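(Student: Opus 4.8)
The plan is to run a Grönwall argument in Lagrangian coordinates, entirely parallel to the proof of Theorem \ref{thm:intermediate}, but now for the Jacobian $\nabla\Phi$ itself. Differentiating the flow equation $\partial_t\Phi(t,x) = u(t,\Phi(t,x))$ in $x$ gives $\partial_t \nabla\Phi(t,x) = \nabla u(t,\Phi(t,x))\,\nabla\Phi(t,x)$, and since $\nabla\Phi(0,\cdot) = I$, integration yields
\[
\nabla\Phi(t,x) - \nabla\Phi(t,x') = \int_0^t \left[ \nabla u(s,\Phi_s(x))\nabla\Phi(s,x) - \nabla u(s,\Phi_s(x'))\nabla\Phi(s,x') \right] ds.
\]
First I would record the elementary $L^\infty$ bounds: because $\nabla u \in L^\infty$, standard ODE theory gives $\V\nabla\Phi(t)\V_{L^\infty}, \V\nabla\Phi_t^{-1}\V_{L^\infty} \le \exp(\int_0^t \V\nabla u(s)\V_{L^\infty}ds) =: L(t) \le M$; combined with $u(t,0) = 0$, hence $\Phi_t(0) = 0$, this yields $L(t)^{-1}|x| \le |\Phi_t(x)| \le L(t)|x|$, so trajectories stay at distance comparable (with constant depending on $M$) to their initial distance from the origin for all $s \in [0,t]$.

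Next, as in the proof of Lemma \ref{lem:key_identity}, I would reduce the $\mathring{C}^\alpha$ seminorm of $\nabla\Phi$ to the ``near'' regime of points $x,x'$ with $|x'| \le |x|$ and $|x-x'| \le |x|/2$: for $|x-x'| > |x|/2$ the difference of the weighted values $|x|^\alpha\nabla\Phi(x) - |x'|^\alpha\nabla\Phi(x')$ is controlled by $\V\nabla\Phi\V_{L^\infty}|x-x'|^\alpha$, while in the near regime one has $\big||x|^\alpha\nabla\Phi(x) - |x'|^\alpha\nabla\Phi(x')\big| \le |x|^\alpha|\nabla\Phi(x)-\nabla\Phi(x')| + C\V\nabla\Phi\V_{L^\infty}|x-x'|^\alpha$. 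Setting
\[
N(t) := \sup_{|x'|\le|x|,\,|x-x'|\le|x|/2} \frac{|x|^\alpha\,|\nabla\Phi(t,x)-\nabla\Phi(t,x')|}{|x-x'|^\alpha},
\]
it then suffices to bound $N(t)$. Splitting the integrand above into $\nabla u(s,\Phi_s(x))[\nabla\Phi(s,x)-\nabla\Phi(s,x')]$ and $[\nabla u(s,\Phi_s(x)) - \nabla u(s,\Phi_s(x'))]\nabla\Phi(s,x')$, the first piece contributes $\V\nabla u(s)\V_{L^\infty}N(s)$ after multiplying by $|x|^\alpha/|x-x'|^\alpha$. For the second piece I would invoke the comparable-points estimate from the beginning of the proof of Lemma \ref{lem:calculus}: since $|\Phi_s(x)| \approx |\Phi_s(x')|$, one has $|\nabla u(s,\Phi_s(x)) - \nabla u(s,\Phi_s(x'))| \le C\V\nabla u(s)\V_{\mathring{C}^\alpha}|\Phi_s(x)|^{-\alpha}|\Phi_s(x)-\Phi_s(x')|^\alpha$, and feeding in $|\Phi_s(x)|^{-\alpha} \le M|x|^{-\alpha}$, $|\Phi_s(x)-\Phi_s(x')|^\alpha \le M|x-x'|^\alpha$ and $\V\nabla\Phi(s,x')\V_{L^\infty}\le M$ turns this piece into $C(M)\V\nabla u(s)\V_{\mathring{C}^\alpha}$. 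Hence $N(t) \le \int_0^t \V\nabla u(s)\V_{L^\infty}N(s)\,ds + C(M)\int_0^t \V\nabla u(s)\V_{\mathring{C}^\alpha}\,ds$, and Grönwall's lemma closes the estimate, giving the desired bound $M = M(t, \sup_{s\le t}\V\nabla u(s)\V_{\mathring{C}^\alpha})$.

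The step I expect to be the genuine point of the argument --- rather than routine bookkeeping --- is the control of the second piece, and specifically the need for the two trajectories $\Phi_s(x)$, $\Phi_s(x')$ to remain at comparable distances from the origin throughout $[0,t]$. This is exactly where the hypothesis $u(t,0)=0$ enters: it fixes the origin as a stagnation point of the flow, which is what makes the origin-anchored weight $|x|^\alpha$ defining $\mathring{C}^\alpha$ compatible with transport. Without a fixed origin the scale-invariant norm need not propagate at all, as the Bahouri--Chemin example discussed in Subsection \ref{subsec:symmetries_and_critical} illustrates. Everything else --- the $L^\infty$ flow bounds, the near/far reduction, and the Grönwall iteration --- is standard once this comparability is in hand.
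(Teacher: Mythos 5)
Your proof is correct and follows essentially the same route as the paper's: a Gronwall estimate on the weighted H\"older quotient of $\nabla\Phi$ in Lagrangian coordinates, with the hypothesis $u(t,0)=0$ entering exactly as you identify, through the two-sided comparability $L(t)^{-1}|x|\le |\Phi_t(x)|\le L(t)|x|$. The only difference is bookkeeping: the paper estimates $\frac{d}{dt}\bigl(|a|^\alpha\nabla\Phi(t,a)-|b|^\alpha\nabla\Phi(t,b)\bigr)$ directly for arbitrary pairs, reweighting $\nabla u$ by $|\Phi(t,a)|^\alpha$ and $|\Phi(t,b)|^\alpha$, whereas you first reduce to comparable points and work with the unweighted quotient $|x|^\alpha|\nabla\Phi(x)-\nabla\Phi(x')|/|x-x'|^\alpha$; both close identically.
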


\begin{proof}
	Since the velocity is Lipschitz, there is a unique solution to \begin{equation*}
	\begin{split}
	\frac{d}{dt} \Phi(t,a) = u(t, \Phi(t,a)), \qquad \Phi(0,a) = a,
	\end{split}
	\end{equation*} which defines the flow map $\Phi(t,\cdot) : \mathbb{R}^2 \rightarrow \mathbb{R}^2 $ for each $t \in [0,T)$. Clearly $\Phi(t,0) = 0$.

Taking two points $a \ne b$, we obtain that \begin{equation*}
\begin{split}
\left| \frac{d}{dt}\frac{|\Phi(t,a) - \Phi(t,b)|}{|a-b|}  \right| \le \V \nabla u(t,\cdot) \V_{L^\infty(D)} \frac{|\Phi(t,a) - \Phi(t,b)|}{|a-b|} ,
\end{split}
\end{equation*} and therefore, by integrating in time, we obtain \begin{equation*}
\begin{split}
\exp\left( -\int_0^t  \V \nabla u(s,\cdot) \V_{L^\infty(D)}ds   \right) \le \frac{|\Phi(t,a) - \Phi(t,b)|}{|a-b|} \le \exp\left( \int_0^t  \V \nabla u(s,\cdot) \V_{L^\infty(D)}ds   \right).
\end{split}
\end{equation*} 

We now proceed to obtain $\mathring{C}^\alpha$-estimates for the gradient of the flow. We know that $\Phi(t,\cdot)$ is differentiable almost everywhere, and it is straightforward to show that the gradient (defined almost everywhere) satisfies \begin{equation*}
\begin{split}
\frac{d}{dt} \nabla\Phi(t,a) = \nabla u(t,\Phi(t,a)) \nabla\Phi(t,a)
\end{split}
\end{equation*} for almost every $a \in D$. For two points $a, b \in D$, we write \begin{equation*}
\begin{split}
\frac{d}{dt} \left( |a|^\alpha\nabla\Phi(t,a) - |b|^\alpha\nabla\Phi(t,b) \right) &= \left[ |a|^\alpha \nabla u(t,\Phi(t,a)) - |b|^\alpha \nabla u(t,\Phi(t,b)) \right] \nabla \Phi(t,a) \\
&\qquad - \nabla u(t,\Phi(t,b)) \left[ |a|^\alpha \nabla\Phi(t,a) - |b|^\alpha \nabla\Phi(t,b) \right] \\
&\qquad + \left( |a|^\alpha - |b|^\alpha \right) \nabla u(t,\Phi(t,b)) \nabla\Phi(t,a).
\end{split}
\end{equation*} We further write \begin{equation*}
\begin{split}
|a|^\alpha \nabla u(t,\Phi(t,a)) - |b|^\alpha \nabla u(t,\Phi(t,b)) &= \left( \frac{|a|}{|\Phi(t,a)|} \right)^\alpha \left( |\Phi(t,a)|^\alpha \nabla u(t,\Phi(t,a)) - |\Phi(t,b)|^\alpha \nabla u (t,\Phi(t,b)) \right) \\
&\qquad + \left[ |a|^\alpha - |b|^\alpha  + \frac{(|\Phi(t,b)|^\alpha - |\Phi(t,a)|^\alpha)|a|^\alpha}{ |\Phi(t,a)|^\alpha} \right] \nabla u(t,\Phi(t,b)).
\end{split}
\end{equation*} 
Hence, \begin{equation*}
\begin{split}
&\left| \frac{d}{dt} \frac{\left|  |a|^\alpha\nabla\Phi(t,a) - |b|^\alpha\nabla\Phi(t,b)\right| }{|a- b|^\alpha} \right| \\
&\qquad \le \left( \frac{|a|}{|\Phi(t,a)|} \right)^\alpha \frac{ |\Phi(t,a)|^\alpha \nabla u(t,\Phi(t,a)) - |\Phi(t,b)|^\alpha \nabla u (t,\Phi(t,b))| }{|\Phi(t,a) - \Phi(t,b)|^\alpha} \Vert \nabla \Phi(t,\cdot)\Vert_{L^\infty}^{1+\alpha} \\
& \qquad + \left( 1 + \V \nabla \Phi(t,\cdot)\V_{L^\infty} \left( \frac{|a|}{|\Phi(t,a)|} \right)^\alpha  \right) \V \nabla u(t,\cdot) \V_{L^\infty} \\
&\qquad + \V \nabla u(t,\cdot)\V_{L^\infty} \frac{\left|  |a|^\alpha\nabla\Phi(t,a) - |b|^\alpha\nabla\Phi(t,b)\right| }{|a- b|^\alpha} \\
&\qquad + \V \nabla u(t,\cdot)\V_{L^\infty}\V \nabla \Phi(t,\cdot)\V_{L^\infty}.
\end{split}
\end{equation*}
Note that \begin{equation*}
\begin{split}
\frac{|a|}{|\Phi(t,a)|} \le \exp\left( \int_0^t \V \nabla u(s,\cdot) \V_{L^\infty(D)}ds  \right),
\end{split}
\end{equation*} and therefore we have the bound \begin{equation*}
\begin{split}
&\left| \frac{d}{dt} \frac{\left|  |a|^\alpha\nabla\Phi(t,a) - |b|^\alpha\nabla\Phi(t,b)\right| }{|a- b|^\alpha} \right| \\
&\qquad\le \exp\left( C\int_0^t \V \nabla u(s,\cdot) \V_{L^\infty(D)}ds  \right) \left(1 + \V \nabla u(t,\cdot)\V_{\mathring{C}^\alpha} + \frac{\left|  |a|^\alpha\nabla\Phi(t,a) - |b|^\alpha\nabla\Phi(t,b)\right|}{|a-b|^\alpha}  \right).
\end{split}
\end{equation*} Integrating in time using the Gronwall inequality, the proof is complete. \qedhere  

\end{proof}

\bibliographystyle{plain}
\bibliography{thesis}

\end{document}